\documentclass{article}

\usepackage[margin=1in]{geometry} 
\usepackage{amsmath,amssymb,amsthm}
\usepackage{amsmath}
\usepackage{amsfonts}

\usepackage{hyperref}
\usepackage{mathrsfs}
\usepackage{mathtools}
\usepackage{graphicx}
\usepackage{tikz-cd}
\usepackage{xcolor}
\usepackage{indentfirst}
\usepackage[shortlabels]{enumitem}
\usepackage[capitalize]{cleveref}
\usepackage[overload]{empheq}
\usepackage[title]{appendix}
\usepackage[T1]{fontenc}
\usepackage{bbm}

\newcommand{\ZZ}{\mathbb{Z}}
\newcommand{\NN}{\mathbb{N}}
\newcommand{\QQ}{\mathbb{Q}}
\newcommand{\FF}{\mathbb{F}}

\DeclareMathOperator{\Spec}{Spec}

\DeclareMathOperator{\Hom}{Hom}

\newcommand{\catC}{\mathcal{C}}
\newcommand{\catD}{\mathcal{D}}
\newcommand{\catE}{\mathcal{E}}

\newcommand{\op}{\text{op}}

\newcommand{\Space}{\mathcal{S}}
\newcommand{\ProVect}{\operatorname{Pro} (\Vect_k^\omega)}

\newcommand{\LSym}{\mathbb{L}\Sym}
\newcommand{\adicn}{\adic_{\leq n}}
\newcommand{\cotn}{\cot_{\leq n}}
\newcommand{\KD}{\mathfrak{D}}

\renewcommand{\nu}{\operatorname{nu}}
\makeatletter
\newcommand{\colim@}[2]{%
  \vtop{\m@th\ialign{##\cr
    \hfil$#1\operator@font colim$\hfil\cr
    \noalign{\nointerlineskip\kern-\ex@}\cr}}%
}

\newcommand{\colim}{%
  \mathop{\mathpalette\colim@{\scriptscriptstyle}}\nmlimits@
}

\DeclareMathOperator{\weirdleq}{\substack{  _{} \vspace{-3.8pt} \\ \eqslantless }}
\DeclareMathOperator{\Solid}{Solid}

\DeclareMathOperator{\Fun}{Fun}
\DeclareMathOperator{\Sp}{Sp}

\DeclareMathOperator{\Map}{Map}
\DeclareMathOperator{\Ind}{Ind}

\DeclareMathOperator{\Lie}{Lie}

\DeclareMathOperator{\CAlg}{CAlg}
\DeclareMathOperator{\Poly}{Poly}
\DeclareMathOperator{\cofib}{cofib}
\DeclareMathOperator{\fib}{fib}
\DeclareMathOperator{\sqz}{sqz}

\DeclareMathOperator{\End}{End}

\DeclareMathOperator{\Set}{Set}

\DeclareMathOperator{\Vect}{Vect}
\DeclareMathOperator{\Pro}{Pro}

\DeclareMathOperator{\Fil}{Fil}
\DeclareMathOperator{\Gr}{Gr}
\DeclareMathOperator{\Tot}{Tot}
\DeclareMathOperator{\adic}{adic}
\DeclareMathOperator{\free}{free}

\renewcommand{\Bar}{\operatorname{Bar}}
\DeclareMathOperator{\id}{id}

\DeclareMathOperator{\Sym}{Sym}

\DeclareMathOperator{\art}{art}
\DeclareMathOperator{\LMod}{LMod}

\DeclareMathOperator{\Mod}{Mod}
\DeclareMathOperator{\Der}{Der}

\DeclareMathOperator{\perf}{perf}

\DeclareMathOperator{\Ab}{Ab}
\DeclareMathOperator{\ds}{ds}
\DeclareMathOperator{\Alg}{Alg}

\DeclareMathOperator{\Moduli}{Moduli}

\DeclareMathOperator{\bNil}{bNil}
\DeclareMathOperator{\Nil}{Nil}

\DeclareMathOperator{\forget}{forget}

\DeclareMathOperator{\triv}{triv}

\DeclareMathOperator{\ab}{ab}
\DeclareMathOperator{\LieAlgd}{LieAlgd}

\DeclareMathOperator{\surj}{surj}
\DeclareMathOperator{\an}{an}
\DeclareMathOperator{\Arr}{Arr}
\DeclareMathOperator{\st}{st}
\DeclareMathOperator{\target}{target}
\DeclareMathOperator{\PolyMod}{PolyMod}
\DeclareMathOperator{\ev}{ev}
\DeclareMathOperator{\aug}{aug}
\DeclareMathOperator{\coAlg}{coAlg}
\DeclareMathOperator{\exact}{exact}
\DeclareMathOperator{\const}{const}

\renewcommand{\epsilon}{\varepsilon}

\usepackage{blindtext}

\usepackage{thmtools,thm-restate}

\newtheorem{theorem}{Theorem}
\newtheorem{lemma}[theorem]{Lemma}
\newtheorem{prop}[theorem]{Proposition}
\newtheorem{cor}[theorem]{Corollary}

\theoremstyle{definition}
\newtheorem{defi}[theorem]{Definition}
\newtheorem{rem}[theorem]{Remark}
\newtheorem{example}[theorem]{Example}
\newtheorem{construction}[theorem]{Construction}
\newtheorem{notation}[theorem]{Notation}

\numberwithin{equation}{section}
\numberwithin{lemma}{section}
\numberwithin{theorem}{section}
\numberwithin{cor}{section}
\numberwithin{defi}{section}
\numberwithin{rem}{section}
\numberwithin{example}{section}
\numberwithin{prop}{section}
\numberwithin{construction}{section}
\numberwithin{notation}{section}
\numberwithin{conjecture}{section}
\numberwithin{problem}{section}

\begin{document}

\title{Nilpotent Deformation Theory}
\author{Sofía Marlasca Aparicio}
\date{}

\maketitle

\begin{abstract}
	Deformation theory studies how varieties and other algebro-geometric objects vary in families. A central part of the subject is formal deformation theory, where one deforms over an Artinian base; such deformation problems are governed by Lie algebraic models. \\

	We pose the question of deforming varieties over nilpotent but not necessarily Artinian bases. These turn out to be classified by the same Lie algebraic models plus some topological structure. More precisely, we will consider partition Lie algebroids in the category of ultrasolid modules, a variation of the solid modules of Clausen and Scholze that give a well-behaved category akin to topological modules. \\

	To approach this result, we decompose deformation problems into n-nilpotent layers. Each of these layers is individually easier to understand, and is classified by simpler variants of partition Lie algebras.
\end{abstract}

\tableofcontents

\section{Introduction}

In formal deformation theory, one considers deformations of algebro-geometric objects over local Artinian bases. If we are working over a base field $k$, we can think of these as finite nilpotent thickenings of $\Spec (k)$. This originally goes back to the work of Kodaira-Spencer (\cite{Kodaira-Spencer}), Schlessinger (\cite{Schlessinger}) and Artin (\cite{Artin}).

In this thesis, we will consider deformations over non-finite nilpotent thickenings. That is, local rings with residue field $k$ and nilpotent maximal ideal that are not necessarily Artinian.

\begin{example}
	\label{example-jetscheme}
	Let $X = \Spec (k[x,y]/(xy))$, two lines glued at the origin. If $A$ is a local Artinian $k$-algebra with residue field $k$, an \emph{$A$-valued arc} is a map $\Spec (A[[t]]) \to X$. In our example, this corresponds to choosing two formal power series $f,g \in A[[t]]$ such that $f g = 0$. We can easily see that the functor sending a $k$-algebra to its space of arcs in $X$ is representable by the scheme
	\begin{align*}
		J := \Spec(k[a_0, a_1, \dots, b_0, b_1, \dots]/(c_0, c_1, \dots))
	\end{align*}
	where $c_i = \sum_{j= 0}^i a_j b_{i -j}$.

	Let $A_\infty := k[a_0, a_1, \dots, b_0, b_1, \dots]/(c_0, c_1, \dots)$ and let $I = (a_0, a_1, \dots, b_0, b_1)$. The trivial $k$-valued arc (that is, $f = g = 0$) is given by the natural map $A_\infty \to A_\infty/I \cong k$. This factorises through the ring $A_\infty/I^n$, which is a nilpotent thickening of $k$. This factorisation corresponds precisely to a deformation of this arc. Additionally, the map $A_\infty \to A_\infty/I^n$ cannot be written as a filtered colimit of maps $A_\infty \to B_i$, where $B_i$ is local and Artinian.
\end{example}

It is well-known that formal deformation theory is governed by Lie algebraic models. This connection was made by Deligne (\cite{Deligne}), Drinfeld (\cite{Drinfeld}), and further developed by Goldman-Millson (\cite{Goldman}), Kontsevich (\cite{Kontsevich}), and Manetti (\cite{Manetti}).

A key insight is to consider deformations over animated rings, which gives rise to the modern language of \emph{formal moduli problems}. That is, a functor that sends a finite nilpotent thickening $A$ to the space of deformations over $A$, satisfying some geometric conditions.

\begin{notation}
	Let $R$ be a ring with a fixed map $R \to k$. We will write $\CAlg_R^{\an}$ for the $\infty$-category of animated $R$-algebras, and $\CAlg_{R/ /k}^{\an}$ for the $\infty$-category of animated $R$-algebras equipped with a compatible map to $k$.

	Finally, let $\CAlg_{R/ /k}^{\an, \surj} \subset \CAlg_{R/ /k}^{\an}$ be the full subcategory of those $A \in \CAlg_{R/ /k}^{\an, \surj}$ such that $\pi_0 (A) \to k$ is surjective.
\end{notation}

\begin{defi}
	An animated ring $A \in  \CAlg_{R/ /k}^{\an, \surj}$ is \emph{local and Artinian} if
	\begin{enumerate}
		\item $\pi_0 (A)$ is local and Artinian with residue field $k$.
		\item $\pi_i (A) \cong 0$ for $i \gg 0$.
		\item $\pi_i (A)$ is a finitely generated $\pi_0 (A)$-module for all $i$.
	\end{enumerate}
	We will write $\CAlg_{R/ /k}^{\an, \art} \subset \CAlg_{R/ /k}^{\an}$ for the full subcategory of local and Artinian algebras.
\end{defi}

\begin{defi}
	An \emph{$R/k$ formal moduli problem} is a functor $F: \CAlg_{R/ /k}^{\an, \art} \to \Space$, where $\Space$ is the $\infty$-category of spaces, such that
	\begin{itemize}
		\item $F(k) \simeq *$.
		\item $F$ preserves $\pi_0$-surjective pullbacks.
	\end{itemize}
\end{defi}

Lurie (\cite{DAGX}) and Pridham (\cite{Pridham}) showed that in characteristic zero, there is an equivalence between differential graded Lie algebras and $k/k$ formal moduli problems.

There are many interesting examples in characteristic $p > 0$ and in mixed characteristic. For example, the Serre-Tate theorem (\cite{KatzSerreTate}) shows that abelian varieties in characteristic $p > 0$ are unobstructed, meaning that they admit a formal lift to characteristic zero. In positive characteristic, the Lie algebraic models for deformation theory are more subtle. Brantner-Mathew (\cite{partition}) showed an equivalence between $k/k$ formal moduli problems in positive characteristic and the category of \emph{partition Lie algebras}. They also proved a similar result for $R/k$ formal moduli problems, where $R$ is a complete local Noetherian ring with residue field $k$.

This was further extended for $R/k$-formal moduli problems, where $R \to k$ is a finite purely inseparable extension of fields, by Brantner-Waldron (\cite{BrantnerWaldron}), and then more generally by Brantner-Magidson-Nuiten (\cite{foliations}). They established a correspondence between $R/k$ formal moduli problems and Lie algebraic models. In particular, they defined a monad on the category of chain complexes of $k$-modules over $L_{k/R}^\vee [1]$, where $L_{k/R}^\vee [1]$ is the shifted dual of the cotangent complex, which we call the \emph{partition Lie algebroid monad}, and the free algebras can be described via the equivariant cohomology of the partition complex. The $\infty$-category of \emph{partition Lie algebroids} is the category of algebras over this monad.

\begin{theorem}[\cite{foliations}, Theorem 5.26]
	There is an equivalence of $\infty$-categories between $R/k$ formal moduli problems and partition Lie algebroids.
\end{theorem}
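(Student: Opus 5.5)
The plan is to follow the Lurie--Brantner--Mathew axiomatic approach via a deformation context and Koszul duality. First, fix the deformation context: inside $\CAlg_{R//k}^{\an,\surj}$ take as basic elementary objects the trivial square-zero extensions $k \oplus k[n]$ for $n \geq 0$. The first step is to show that every object of $\CAlg_{R//k}^{\an,\art}$ is obtained from $k$ by a finite sequence of pullbacks along maps $k \to k \oplus k[n+1]$, i.e.\ small extensions. This uses a Postnikov/cotangent complex argument: for $A$ Artinian, $\pi_0(A)\to k$ factors as square-zero extensions by $k$, and the higher homotopy groups are finite length. Since formal moduli problems preserve such pullbacks, $F$ is determined by its values on these elementary pieces and is a left Kan extension along this generating process.

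Second, define a Koszul duality functor $\KD : (\CAlg_{R//k}^{\an,\surj})^{\op} \to \Mod_k$, namely $A \mapsto \cot(A)^\vee$ where $\cot(A) = L_{k/A}[-1]$. More precisely, I would use $L_{k/A}$ relative to $R$, so that $\KD(k)$ picks up $L_{k/R}^\vee[1]$, making the target naturally modules over (or augmented to) $L_{k/R}^\vee[1]$. The key computation is that $\KD$ sends the trivial square-zero extension $k\oplus V$ with $V$ finite to the free partition Lie algebroid on $V^\vee[-1]$ over $L^\vee_{k/R}[1]$. Equivalently, the monad $T=\KD\circ(\text{right adjoint})$ restricted to finite coconnective objects agrees with the partition Lie algebroid monad of \cite{foliations}. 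This identification should come from the explicit description of $\cot$ of a square-zero extension through derived symmetric powers $\LSym$ and the divided-power/partition-complex formula. Then $\KD$ lifts to partition Lie algebroids.

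Third, verify the two Lurie axioms. (a) $\KD$ sends pullbacks of small extensions of Artinian algebras to pushouts of partition Lie algebroids. This holds because $\cot$ converts such pullbacks to pushouts, by connectivity estimates (a Blakers--Massey/Goodwillie-type argument for $\cot$ on $\pi_0$-surjective squares). (b) For Artinian $A$, the unit $A \to \mathfrak{C}\KD(A)$ is an equivalence, where $\mathfrak{C}$ is the Chevalley--Eilenberg adjoint. Checking this on $k\oplus k[n]$ and inducting along small extensions reduces it to (a) plus the free computation. With these, the standard theorem (DAG X, Theorem 1.3.12, in the form of \cite{partition}) gives $F \mapsto$ the Lie algebroid $\mathfrak g$ with $F(A)=\Map(\KD(A),\mathfrak g)$. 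This is an equivalence, since partition Lie algebroids are generated under sifted colimits by the free ones on finite coconnective modules, and the monad is compatible with these colimits.

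I expect the main obstacle to be step two. There one must identify the monad $\KD\circ\mathfrak C$ with the partition Lie algebroid monad in the relative setting where $L_{k/R}$ may be non-discrete and infinite, e.g.\ for inseparable field extensions. There one must also check that this monad preserves the needed sifted colimits, i.e.\ is built from a right-left extension of its values on finite objects. I would first attempt it by reducing to $R = k$ via base change along $R \to k$, using the fibre sequence $L_{k/R}\otimes\ldots$, and otherwise follow the comonadic Barr--Beck argument with a careful finiteness/completeness analysis of $\cot$.
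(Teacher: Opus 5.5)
Your architecture is the standard one, and it is also how this paper treats its nilpotent analogue (\cref{prop-deftheory}, \cref{prop-KDfullyfaithful}). The ingredients are the deformation context of the $k\oplus k[n]$, the functor $A\mapsto\cot(A)^\vee$ refined to partition Lie algebroids over $L_{k/R}^\vee[1]$, and \cite{DAGX}, Theorem 1.3.12.

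\textbf{The gap is Step 3(a).} It is the real content of the theorem, and the reason you give for it is false. Pushouts of partition Lie algebroids are not computed on underlying modules, and $\cot$ does not send small pullbacks to pushouts (equivalently, pullbacks) of modules.

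Already for $R=k$ this fails. We have $k[\epsilon]/\epsilon^2\simeq k\times_{k\oplus k[1]}k$, a small extension of exactly the type in Step 1. Since $k\oplus k[1]\simeq\LSym_k(k[1])$, we get $\cot(k\oplus k[1])\simeq k[1]$. But $k[\epsilon]/\epsilon^2$ is a complete intersection, so $\cot(k[\epsilon]/\epsilon^2)\simeq k\oplus k[1]$, which is not $\Omega\cot(k\oplus k[1])\simeq k$. The discrepancy sits one degree above the bottom, so no Blakers--Massey-type connectivity estimate can repair it.

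What is true is that $\KD(k[\epsilon]/\epsilon^2)$ is the pushout $0\sqcup_{\KD(k\oplus k[1])}0$ computed \emph{among partition Lie algebras}. That pushout is the free algebra on $k$, with underlying module $k\oplus k[-1]$, and the extra class is invisible to any module-level argument.

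The missing ingredient is a convergence statement for $\cot$. The standard argument is \cite{partition}, Theorem 4.20; in \cite{foliations} the same step rests on comonadicity of $\cot\dashv\sqz$ restricted to complete objects and finite-type complexes (Theorem 1.10 there). It runs as follows:
\begin{enumerate}
\item Resolve $A$, $B$, $C$ by $(\sqz\circ\cot)^{\bullet+1}$.
\item Levelwise, $X^\bullet=A^\bullet\times_{C^\bullet}B^\bullet$ is $\sqz$ of a pullback of modules. Hence $\KD(X^\bullet)$ is levelwise a pushout of free algebroids, because the free functor preserves colimits.
\item This reduces the claim to $|\KD(X^\bullet)|\simeq\KD(\Tot X^\bullet)$.
\end{enumerate}
Step 3 needs two things. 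First, $\cot$ must commute with this particular totalisation, which is not evidently $\cot$-split; this is handled through completeness of the adic filtration, whose associated graded is $\LSym$ of $\cot$ placed in weight one. Second, dualising must turn that totalisation into a geometric realisation, which requires finite-type control. Compare \cref{prop-truncatedpushout}, where finite $n$-nilpotent adic filtrations make both points work.

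You invoke ``Barr--Beck with a completeness analysis'' only as a fallback for Step 2, but the difficulty is misplaced there. Step 2 is essentially how the partition Lie algebroid monad is constructed in \cite{foliations}: as the sifted-colimit-preserving extension of $\cot^\vee\circ\sqz(-^\vee)$ from finite-type coconnective objects, as in \cref{section-duallytruncatedLie} here. Step 3(a), and hence 3(b), is where comonadicity and convergence are indispensable.

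Finally, base change along $R\to k$ cannot reduce you to $R=k$. It lands over $k\otimes_R k$, and the anchor to $L_{k/R}^\vee[1]$ is genuine structure, for example when $k/R$ is purely inseparable.
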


Our aim is to establish a similar equivalence for functors encoding deformations over nilpotent but not necessarily Artinian bases.

\begin{example}
	Given any animated $k$-algebra $B$ equipped with a map $B \to k$, we have a $k/k$ formal moduli problem given by the functor $\Map_{\CAlg_{k/ /k}^{\an}} (B, -)$. Following \cref{example-jetscheme}, the deformations of the trivial $k$-valued jet on $X$ are given by $\Map_{\CAlg_{k/ /k}^{\an}} (A_\infty, -)$. The associated partition Lie algebra has chain complex $L_{k/A_\infty}^\vee [1]$.

	An important insight is that this chain complex has a natural \emph{topological structure}. That is, $L_{k/A_\infty}$ is not finite, so its dual can be written as an inverse limit of finite type chain complexes. We must replace $k$-vector spaces with a suitable topological analogue.
\end{example}

More generally, we will consider partition Lie algebroids whose underlying chain complex has some topological structure. To make this precise, we first need a suitable abelian category of topological modules.

\subsection{Ultrasolid modules}

Condensed mathematics is a new framework recently introduced by Clausen and Scholze (see \cite{condensed} and \cite{analytic}) that makes topology and algebra interact in a more well-behaved manner. For example, in the case of solid modules, this provides a new generalisation of complete topological modules that forms a complete and cocomplete abelian category (\cite{condensed}, Proposition 7.5). Solid modules over $\QQ$ of $\FF_p$ admit a particularly nice description.

\begin{prop}[\cite{condensed_localisation}, Theorem 1.1.4]
	\label{prop-nicesoliddescription}
	If $k = \QQ$ or $\FF_p$, the category of solid $k$-modules admits compact projective generators of the form $\prod_I k$, where $I$ is any set.
\end{prop}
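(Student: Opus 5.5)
We reduce to the standard description of solid modules over $\ZZ$: by \cite{condensed}, the compact projective generators of $\Solid_\ZZ$ are the modules $\prod_I \ZZ$, since $\ZZ[S]^\blacksquare \cong \prod_I \ZZ$ for every profinite set $S$ (with $I$ countable when $S$ is metrisable, and arbitrary in general via Nöbeling's theorem). We then pass to $k = \QQ$ or $\FF_p$ by base change. Solid $k$-modules are solid $\ZZ$-modules with a $k$-module structure, and the free solid $k$-module on a profinite set is $k \otimes^\blacksquare \ZZ[S]^\blacksquare \cong k \otimes^\blacksquare \prod_I \ZZ$. The compact projective generators of $\Solid_k$ are therefore these base changes, and the problem becomes identifying $k \otimes^\blacksquare \prod_I \ZZ$ with $\prod_I k$ and checking that all such products occur.

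\textbf{Case $k = \FF_p$.} Here $\FF_p \otimes^\blacksquare \prod_I \ZZ = \cofib(p : \prod_I \ZZ \to \prod_I \ZZ)$, because $\FF_p = \cofib(p:\ZZ\to\ZZ)$ is compact. Products are exact in condensed abelian groups, so this cofibre is $\prod_I \FF_p$.

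\textbf{Case $k = \QQ$.} This is the main obstacle, since $\QQ \otimes^\blacksquare \prod_I \ZZ = \colim_n \prod_I \ZZ$ (multiplication by $n!$), which is not $\prod_I \QQ$. I would not expect the free objects themselves to be products. Instead I would show that $\prod_I \QQ$ is a compact projective solid $\QQ$-module and that these objects generate.
\begin{itemize}
\item[] \emph{Projectivity.} A map $\prod_I \QQ \to M$ is determined, up to divisibility, by its restriction along the inclusion $\prod_I \ZZ \to \prod_I \QQ$. Since $M$ is uniquely divisible, I would identify $\Hom_\QQ(\prod_I\QQ, M)$ with $\Hom_\ZZ(\prod_I \ZZ, M)$, possibly after taking the derived version and checking that $\mathrm{RHom}_\ZZ(\prod_I\QQ/\prod_I\ZZ, M)$ vanishes on uniquely divisible solid $M$. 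Projectivity and compactness of $\prod_I\QQ$ would then follow from those of $\prod_I\ZZ$ in $\Solid_\ZZ$.
\item[] \emph{Compatibility with the colimit.} The key input is the solid vanishing $\Hom(\prod_I\ZZ, \bigoplus_J\ZZ) = \bigoplus_{I}\bigoplus_J \ZZ$. Together with compactness, this is what makes the identification of $\Hom_\QQ$ with $\Hom_\ZZ$ work across the colimit defining $\QQ \otimes^\blacksquare \prod_I \ZZ$.
\end{itemize}

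\textbf{Generation.} This part is uniform in $k$: every solid $k$-module is a quotient of sums of the $k \otimes^\blacksquare \prod_I \ZZ$, and each of these maps to $\prod_I k$. I would show that $k\otimes^\blacksquare \prod_I\ZZ \to \prod_I k$ is split injective, using that $\prod_I k$ is projective, or else show directly that $\Hom(\prod_I k, M)=0$ for all $I$ forces $M=0$. In either case the products $\prod_I k$ generate.
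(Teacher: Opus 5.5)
The paper gives no proof of this statement; it is quoted from Brav--Konovalov. Your argument therefore has to stand on its own.

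Your $\FF_p$ case is fine: $\FF_p\otimes^\blacksquare\prod_I\ZZ\cong\prod_I\FF_p$ because $\FF_p\otimes^\blacksquare(-)$ is the cokernel of $p$ and products are exact (compactness of $\FF_p$ plays no role).

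The $\QQ$ case is not just incomplete: in the category you work in, $\Mod_\QQ(\Solid_\ZZ)$, whose compact projective generators are the $\QQ\otimes^\blacksquare\prod_J\ZZ$, the object $\prod_\NN\QQ$ is \emph{not} compact projective. A compact projective object there would be a retract, hence a quotient, of a single $\QQ\otimes^\blacksquare\prod_J\ZZ$. But a map $\prod_J\ZZ\to\prod_\NN\QQ$ is a sequence $\lambda_i\in\Hom(\prod_J\ZZ,\QQ)=\bigoplus_J\QQ$. So on underlying groups the image of $\QQ\otimes^\blacksquare\prod_J\ZZ$ lies in $\bigcup_n\frac1n\prod_i\frac1{D_i}\ZZ$ for fixed integers $D_i$. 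This misses $(1/p_i)_i$ for primes $p_i>i$ with $p_i\nmid D_i$.

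Accordingly, your projectivity step is false. If restriction $\Hom_\QQ(\prod_I\QQ,M)\to\Hom_\ZZ(\prod_I\ZZ,M)$ were bijective for all uniquely divisible solid $M$, Yoneda in $\Mod_\QQ(\Solid_\ZZ)$ would give $\QQ\otimes^\blacksquare\prod_I\ZZ\cong\prod_I\QQ$, which you yourself observed is false. Explicitly, for $I$ infinite, $M=\prod_I\QQ/(\QQ\otimes^\blacksquare\prod_I\ZZ)$ is solid, uniquely divisible and nonzero. The quotient map is a nonzero map $\prod_I\QQ\to M$ killing $\prod_I\ZZ$, so $\mathrm{RHom}_\ZZ(\prod_I\QQ/\prod_I\ZZ,M)\neq 0$.

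The generation step has an independent flaw: projectivity of $\prod_I k$ gives no retraction of a map \emph{into} it.

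The statement holds in a different category. Take the solid structure on $\QQ$ given by $\QQ$-valued measures, $\QQ[S]^\blacksquare:=\varprojlim_i\QQ[S_i]\cong\prod_I\QQ$. Its modules form a localisation of $\Mod_\QQ(\Solid_\ZZ)$ that kills objects like the $M$ above. There generation is tautological. What has to be proved is that this is an analytic ring structure, i.e.\ that $\mathrm{RHom}(\prod_I\QQ,C)\simeq\mathrm{RHom}(\QQ[S],C)$ for complexes $C$ of direct sums of products of copies of $\QQ$.

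Your idea can be rescued in that form by restricting the test objects:
\begin{itemize}
\item In condensed abelian groups, $\prod_I\QQ\simeq\varinjlim_{f\in\NN^I}\prod_I\ZZ$ along diagonal multiplication maps.
\item Since $\Hom(\prod_I\ZZ,\bigoplus_j\prod_{K_j}\QQ)=\bigoplus_j\prod_{K_j}\bigoplus_I\QQ$, these transition maps become isomorphisms after applying $\mathrm{RHom}(-,C)$.
\item This gives $\mathrm{RHom}(\prod_I\QQ,C)\simeq\mathrm{RHom}(\prod_I\ZZ,C)\simeq\mathrm{RHom}(\ZZ[S],C)$.
\end{itemize}
So the identification you wanted is valid for such $C$, but not for arbitrary uniquely divisible solid modules. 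That is precisely the difference between the two categories.
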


However, this is no longer true over more general fields (\cite{goodtheorysolid}). In \cite{ultrasolid} we introduced the category of \emph{ultrasolid modules}, where we use \cref{prop-nicesoliddescription} as a definition rather than a property. We recall this construction below.

\begin{defi}
	We write $\Pro (\Vect_k ^\omega)$ for the pro-completion (\cite{categories}, Definition 6.1.1) of finite-dimensional vector spaces. This is the category obtained by formally adding cofiltered limits to $\Vect_k^\omega$. By duality, this is equivalent to $\Vect_k ^\op$, so its objects are of the form $\prod_I k$ for some set $I$ and $\Hom (\prod_I k, k) = \bigoplus_I k$. The product $\prod_I k$ carries a profinite structure by writing it as $\varprojlim _{J \subset I} \bigoplus_J k$, where the limit is taken over all finite subsets of $I$.

	The category $\Solid_k ^\heartsuit$ of \emph{ultrasolid $k$-modules} is the ($1$-categorical) sifted cocompletion (\cite{sifted}, \S 2) of $\ProVect$. That is, $\Solid_k^\heartsuit$ is the category obtained by freely adding filtered colimits and reflexive coequalizers to $\Pro(\Vect_k^\omega)$. This can be identified with finite-product-preserving functors $(\ProVect)^{\op} \to \Ab$ that satisfy a left Kan extension condition (\cite{ultrasolid}, Definition 2.4).

	We will write $\Solid_k$ for the $\infty$-category of chain complexes of ultrasolid modules.
\end{defi}

This gives a complete and cocomplete abelian category with compact projective generators of the form $\prod_I k$ for any field $k$. 

\subsection{Nilpotent formal moduli problems}

We now return to our goal of considering deformations over nilpotent bases. As before, we fix a ring $R$ and a map $R \to k$, where $k$ is a field.

\begin{defi}
	An object $A \in \CAlg_{R/ /k}^{\an}$ is \emph{bounded and nilpotent} if
	\begin{enumerate}
		\item $\pi_i (A) \cong 0$ for $i \gg 0$.
		\item $\pi_0 (A) \twoheadrightarrow k$ is surjective and $\mathfrak{m} := \ker (\pi_0 (A) \to k)$ is nilpotent; that is, $\mathfrak{m}^n = 0$ for $n \gg 0$.
	\end{enumerate}
	We will write $\CAlg_{R/ /k}^{\an, \bNil} \subset \CAlg_{R/ /k}^{\an}$ for the full subcategory of bounded and nilpotent algebras.
\end{defi}

\begin{example}
	\label{example-sqz}
	Given $V \in \Mod_{k, \geq 0}$, we can define the \emph{trivial square-zero extension} $k \oplus V$. That is, an animated ring whose underlying chain complex is $k \oplus V$ with trivial multiplication on $V$. If $V$ is eventually coconnective and not of finite type, the animated ring $k \oplus V$ is bounded and nilpotent but not Artinian.

	By the universal property of the cotangent complex, for any $A \in \CAlg_{R/ /k}^{\an}$, there is an equivalence
	\begin{align*}
		\Map_{\CAlg_{R/ /k}^{\an}} (A, k \oplus V) \simeq \Map_{\Mod_k} (L_{A/R} \otimes_A k, V)
	\end{align*}
	Thus, for any map $\alpha: L_{k/R}[-1] \to V$, we can define the \emph{square-zero extension} as the pullback diagram
	\[
		\begin{tikzcd}
			k \oplus_\alpha V \arrow{r} \arrow{d} & k \arrow{d}{\alpha} \\
			k \arrow{r}{0} & k \oplus V[1]
		\end{tikzcd}
	\]
	We have that $\fib (k \oplus_\alpha V \to k) \simeq V$, and there is trivial multiplication on this fibre. Hence, if $V$ is eventually coconnective, $k \oplus_\alpha V$ is bounded and nilpotent, but not Artinian unless $V$ is of finite type.
\end{example}

\begin{defi}
	A \emph{nilpotent formal moduli problem} is a functor $F : \CAlg_{R/ /k}^{\an, \bNil} \to \Space$ such that
	\begin{enumerate}
		\item $F(k) \simeq *$.
		\item $F$ preserves $\pi_0$-surjective pullbacks.
		\item $F$ preserves $\kappa$-filtered colimits for some cardinal $\kappa$.
	\end{enumerate}
	We will write $\Moduli_{R/k}^\blacksquare$ for the $\infty$-category of nilpotent formal moduli problems.
\end{defi}

\begin{construction}[Tangent complex]
	\label{construction-tangentcomplex}
	Let $F: \CAlg_{R/ /k}^{\an, \bNil} \to \Space$ be a nilpotent formal moduli problem. Write $\Mod_{k, \geq 0}^{< \infty} \subset \Mod_{k, \geq 0}$ for the full subcategory of eventually coconnective chain complexes. We have a functor
	\begin{align*}
		T_{k/F}[1] : (\Mod_{k, \geq 0}^{< \infty})_{L_{k/R}[-1]/} \to \Space
	\end{align*}
	given by
	\begin{align*}
		(L_{k/R}[-1] \xrightarrow{\alpha} V) \mapsto F(k \oplus_\alpha V	)
	\end{align*}
	that is accessible and preserves $\pi_0$-surjective pullbacks. We can identify $T_{k/F}[1] $ with an object in $(\Solid_k)_{/L_{k/R}^\vee [1]}$ (\cref{lemma-cotcomplexasfunctor}), which we will call the \emph{tangent complex} of $F$.
\end{construction}

We will define a sifted-colimit-preserving monad $\Lie^{\pi}_{\Delta, k/R}$ on $(\Solid_k)_{/L_{k/R}^\vee [1]}$, which we will call the \emph{partition Lie algebroid monad}. We use the same name as \cite{foliations}, since whenever $R \to k$ has finite type cotangent complex, this coincides with the partition Lie algebroid monad defined in (op.\ cit) on the subcategory $(\Mod_k)_{/L_{k/R}^\vee [1]} \subset (\Solid_k)_{/L_{k/R}^\vee [1]}$.

However, notice that when $R \to k$ does not have finite type cotangent complex, $L_{k/R}^\vee [1]$ has nontrivial topological structure. We will write $\LieAlgd_{\Delta, k/R}^{\pi, \blacksquare}$ for the $\infty$-category of partition Lie algebroids on $(\Solid_k)_{/L_{k/R}^\vee [1]}$.

\begin{theorem}
	The tangent complex (\cref{construction-tangentcomplex}) refines to an equivalence of $\infty$-categories
	\begin{align*}
		T_{k/-} [1] : \Moduli_{R/k}^\blacksquare \xrightarrow{\simeq} \LieAlgd_{\Delta, k/R}^{\pi, \blacksquare}
	\end{align*}
\end{theorem}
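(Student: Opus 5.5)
We follow the general strategy of Brantner--Mathew and Brantner--Magidson--Nuiten: build a deformation context with a Koszul duality functor and check the axioms of Lurie's abstract formal moduli problem theorem, replacing $\Mod_k$ by $\Solid_k$.

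\textbf{Step 1: the Koszul duality adjunction.} Define a contravariant functor $\KD : \CAlg_{R/ /k}^{\an} \to (\LieAlgd_{\Delta, k/R}^{\pi,\blacksquare})^{\op}$ sending $A$ to the ultrasolid dual of the relative cotangent complex, $L_{k/A}^\vee$, viewed over $L_{k/R}^\vee[1]$ via $L_{k/R} \to L_{k/A}$. The monad $\Lie^\pi_{\Delta,k/R}$ should be designed so that this lands in algebras. The key is a compatibility of ultrasolid duality $\Vect_k^{\op} \simeq \ProVect$ with $\LSym$: for $V$ of finite type we have a comparison of $(\LSym^n V)^\vee$ with the divided-power/coinvariant construction. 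Extend the monad from pro-finite-type objects by sifted colimits, which is why the compact projectives $\prod_I k$ of $\Solid_k^\heartsuit$ are used. The functor $\KD$ has a right adjoint $\mathfrak{C}^*$ (Chevalley--Eilenberg cochains), and a moduli problem is obtained as $\mathfrak{g} \mapsto \Map_{\LieAlgd}(\KD(-), \mathfrak{g})$.

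\textbf{Step 2: the n-nilpotent layers.} Filter $\CAlg_{R/ /k}^{\an,\bNil}$ by the full subcategories $\CAlg^{\bNil}_{\leq n}$ of algebras with $\mathfrak{m}^n = 0$. Every bounded nilpotent $A$ lies in some layer, and $\Moduli^\blacksquare_{R/k}$ should be the limit of the corresponding layered moduli categories. In each layer, the relevant monad involves only finitely many Sym-powers; this is the truncated partition Lie algebroid $\Lie^{\pi}_{\leq n}$. For each $n$ we prove that $n$-nilpotent moduli problems are equivalent to $\Lie^\pi_{\leq n}$-algebras by the Lurie argument. The axioms to check are the following.
\begin{enumerate}[(a)]
\item Every $n$-nilpotent bounded algebra is an iterated ($\pi_0$-surjective) pullback of square-zero extensions $k\oplus_\alpha V$ with $V$ eventually coconnective, as in \cref{example-sqz}. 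We prove this via Postnikov towers together with the $\mathfrak{m}$-adic filtration, allowing filtered colimits since $V$ need not be finite (hence the $\kappa$-accessibility condition).
\item $\KD$ sends these pullbacks to pushouts.
\item The unit $A \to \mathfrak{C}^*\KD(A)$ is an equivalence on square-zero extensions, i.e. on $k\oplus_\alpha V$.
\end{enumerate}
Point (c) reduces to the double dual $V \to V^{\vee\vee}$ in $\Solid_k$, which holds because ultrasolid duality exchanges $\Vect_k$ and $\ProVect$ perfectly; this is exactly where ultrasolid rather than discrete modules are needed. The tangent complex of \cref{construction-tangentcomplex}, identified via \cref{lemma-cotcomplexasfunctor}, is then the underlying object, and it is conservative and preserves sifted colimits, so Barr--Beck gives monadicity for each layer.

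\textbf{Step 3: pass to the limit over $n$.} We need $\Lie^\pi_{\Delta,k/R} \simeq \lim_n \Lie^\pi_{\leq n}$ compatibly, so that algebras over the monad are the limit of layered algebras. We also need the tangent complex of $F$ to be independent of $n$, which holds because square-zero extensions already lie in the $2$-nilpotent layer.

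The main obstacle is Step 2(b)--(c) in the non-finite setting: showing that Koszul duality turns the relevant pullbacks into pushouts and that the unit is an equivalence on non-finite-type $V$, which requires controlling $\LSym$ and its duals on ultrasolid pro-objects. We would attempt this first on compact projectives $\prod_I k$, using that $\Solid_k$ is generated by them under sifted colimits and that $\LSym^n$ commutes with the relevant cofiltered limits there.
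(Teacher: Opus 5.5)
Your proposal has a genuine gap in Step 2: the choice of layers.

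\textbf{Why full-subcategory layers fail.} You take $\CAlg^{\bNil}_{\leq n}$ to be the full subcategory of $\CAlg^{\an}_{R/ /k}$ cut out by $\mathfrak{m}^n=0$. Its mapping spaces are then those of $\CAlg^{\an}_{R/ /k}$. So the comonad on square-zero extensions is still $\cot\circ\sqz$, with the full cotangent fibre $L_{k/A}[-1]$, and the monad governing the layer is the untruncated $\Lie^{\pi}_{\Delta,k/R}$. No truncation ever appears.

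You also cannot run Lurie's argument inside such a layer, for two reasons:
\begin{enumerate}
\item These subcategories are not closed under the $\pi_0$-surjective pullbacks used to define moduli problems. For example, $k[x]/x^{n+1}\simeq k[x]/x^n\times_{k\oplus k[1]}k$, and all three corners lie in the layer.
\item There is no Barr--Beck input, since the derived adic filtration of even $k\oplus V$ is not finite.
\end{enumerate}

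\textbf{The paper's layers.} The paper instead uses the $\infty$-categories $\CAlg^{\an,\surj,\leq n}_{R/ /k}=\mathcal{P}_\Sigma(\Poly^{\surj,\leq n}_R)$ of animated $(n+1)$-nilpotent surjections. These are \emph{not} full subcategories. The functor $\iota^\infty_n$ is monadic rather than fully faithful, the extra nilpotence structure changes mapping spaces, and the left adjoint $\cot_{\leq n}$ of $\sqz$ differs from $\cot$. What makes these categories useful is that $\adic_{\leq n}$ has length $n$, so $\cot_{\leq n}$ is comonadic (\cref{prop-cotcomonadic}).

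The resulting monad $\Lie^{\pi,\weirdleq n}_{\Delta,k/R}$ also does not involve ``finitely many Sym-powers''. Its free algebras involve every arity $i$, through the complexes $T_{\leq n}(1,i)$ of chains of partitions of type $\leq n$ (\cref{prop-Lievalues}). It is $\LSym^{\nu,\leq n}_R$, on the algebra side, that is truncated in arity. Finally, the monads form an increasing tower with $\Lie^\pi_{\Delta,k/R}\simeq\varinjlim_n\Lie^{\pi,\weirdleq n}_{\Delta,k/R}$. This is a colimit, not a limit, and it is what yields $\LieAlgd^{\pi,\blacksquare}_{\Delta,k/R}\simeq\varprojlim_n\LieAlgd^{\pi,\weirdleq n,\blacksquare}_{\Delta,k/R}$.

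\textbf{Consequence for step (b).} Your step (b), which you rightly flag as the crux, therefore has no working mechanism. Checking on the generators $\prod_I k$ does not help. The difficulty is not how the monad behaves on compact projectives, but the absence of any comonadicity or completeness for $\cot$ on non-finite-type algebras.

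The paper proves no layerwise moduli equivalences. It uses the layers only on the algebra side:
\begin{enumerate}
\item Comonadicity gives that $\KD^{\leq n}$ sends $\pi_0$-surjective pullbacks to pushouts (\cref{prop-truncatedpushout}).
\item Every bounded nilpotent algebra, and every map between such, lifts along some $\iota^\infty_m$ (\cref{prop-Nilimage}). This rests on $\cot\simeq\varprojlim_n\cot_{\leq n}$ for objects of $\CAlg^{\an,\surj,\leq m}_{R/ /k}$ (\cref{prop-cotcolim}), hence on the connectivity estimates for $T_{\leq n}(m,i)$ (\cref{lemma-cotconnectivity}).
\item The equivalence $\KD(A)\simeq\varinjlim_n\KD^{\leq n}(A)$, together with $\Lie^\pi\simeq\varinjlim_n\Lie^{\pi,\weirdleq n}$, exhibits the pushout of the $\KD$'s as a filtered colimit of truncated pushouts (\cref{prop-pushoutascolimit}, \cref{prop-KDpushout}). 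This gives full faithfulness on $\CAlg^{\an,\bNil}_{R/ /k}$.
\end{enumerate}
Only then is Lurie's theorem applied, to the untruncated $\KD$.

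\textbf{Two smaller points.}
\begin{enumerate}
\item In (a), no filtered colimits are needed. Refined Postnikov towers write every bounded nilpotent algebra as a finite iterated pullback of objects $k\oplus M[j]$, with $M$ a discrete $k$-module of arbitrary dimension.
\item $\LieAlgd^{\pi,\blacksquare}_{\Delta,k/R}$ is not presentable. Lurie's axioms must therefore be verified for $\sigma$-ultrasolid algebroids and $\sigma$-small moduli problems, and the resulting equivalences glued over strong limit cardinals $\sigma$ (\cref{prop-accessiblefmp}, \cref{prop-smallfmp}). Your proposal does not address this.
\end{enumerate}
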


\begin{rem}
	We will (informally) justify the connection between formal moduli problems and partition Lie algebroids. The square-zero extension in \cref{example-sqz} gives a functor $\sqz: (\Mod_{k, \geq 0})_{L_{k/R}[-1]/} \to \CAlg_{R/ /k}^{\an, \surj}$. This admits a left adjoint $\cot : \CAlg_{R/ /k}^{\an, \surj} \to (\Mod_{k, \geq 0})_{L_{k/R}[-1] /}$ given by
	\begin{align*}
		A \mapsto (L_{k/R}[-1] \to L_{k/A}[-1])
	\end{align*}
	We can describe $\CAlg_{R/ /k}^{\an, \bNil} \subset \CAlg_{R/ /k}^{\an}$ as the smallest subcategory that contains the essential image of $\sqz : (\Mod_{k, \geq 0}^{< \infty})_{L_{k/R}[-1]/} \to \CAlg_{R/ /k}^{\an, \surj}$ and is closed under $\pi_0$-surjective pullbacks. Given that formal moduli problems commute with $\pi_0$-surjective pullbacks, all values of a formal moduli problem can be computed by knowing the values at square-zero extensions and the maps between them. By the adjunction,
	\begin{align*}
		\Map_{\CAlg_{R/ /k}^{\an}} (k \oplus_\alpha V , k \oplus_\beta W) \simeq \Map_{(\Mod_k)_{L_{k/R}[-1]/}} (\cot(k \oplus_\alpha V), W)
	\end{align*}
	so we will be interested in the comonad $\cot \circ \sqz $ on $(\Mod_{k, \geq 0})_{L_{k/R}[-1]/}$. Dualising, this gives a monad on $(\Solid_{k, \leq 0}^{\Pro})_{/L_{k/R}^\vee [1]}$, where $\Solid_{k, \leq 0}^{\Pro} \subset \Solid_k$ is the full subcategory of coconnective chain complexes with profinite homology. We will show that this uniquely extends to a sifted-colimit-preserving monad on $(\Solid_k)_{/L_{k/R}^\vee [1]}$, which is precisely the partition Lie algebroid monad.
\end{rem}

Given $A \in \CAlg_{R/ /k}^{\an, \surj}$, the arrow of chain complexes $\cot^\vee (A) \simeq (L_{k/A}^\vee[-1] \to L_{k/R}^\vee[-1])$ can be endowed with the structure of a partition Lie algebroid. This gives a functor $\KD : \CAlg_{R/ /k}^{\an, \surj} \to (\LieAlgd_{\Delta, k/R}^{\pi, \blacksquare})^{\op}$.

For our proof, we will use the formalism in \cite{DAGX}. A key ingredient is showing that the above functor restricts to a fully faithful embedding on $\CAlg_{R/ /k}^{\an, \bNil}$.

In \cite{foliations}, this is done by showing that the adjunction $\cot : \CAlg_{R/ /k}^{\an, \surj} \leftrightarrows (\Mod_{k, \geq 0})_{L_{k/R}[-1]/}: \sqz$ restricts to a comonadic adjunction between a class of complete objects and finite type chain complexes (\cite{foliations}, Theorem 1.10). In our setting, this will not be possible due to infinite objects not behaving well with completion. For example, the completion of the ring $k[x_1, x_2, \dots]$ at the maximal ideal $\mathfrak{m} := (x_1, x_2, \dots)$ is not $\mathfrak{m}$-adically complete (\cite{completions}, Example 1.8).

We will simplify this construction by fixing the order of nilpotence. That is, for a fixed $n$, we will consider maps $A \twoheadrightarrow k$ such that $\fib (A \to k)$ is $(n+1)$-nilpotent.

\begin{construction}[Animated $(n+1)$-nilpotent surjections]
	For any $n \in \NN \cup \{\infty\}$, we will construct an $\infty$-category $\CAlg_R^{\an, \surj, \leq n}$ satisfying the following properties:
	\begin{enumerate}
		\item Let $\catC$ be the category of surjections of discrete $R$-algebras $A \twoheadrightarrow B$ such that $\mathfrak{m}  = \ker (A \to B)$ satisfies $\mathfrak{m}^{n+1} = 0$. Then, there is a fully faithful embedding $\catC \subset \CAlg_{R}^{\an, \surj, \leq n}$, and the image of this embedding generates $\CAlg_R ^{\an, \surj, \leq n}$ under sifted colimits.
		\item $\CAlg_R^{\an, \surj, \leq \infty} \simeq \CAlg_R^{\an, \surj}$, the $\infty$-category of arrows of animated $R$-algebras that are $\pi_0$-surjective.
		\item There is a monadic forgetful functor $\iota_n^\infty: \CAlg_R^{\an, \surj, \leq n} \to \CAlg_R^{\an, \surj}$. An object $(A \to B)$ in its essential image satisfies that for any classes $x_1, \dots, x_{n+1} \in \pi_* (\fib (A \to B))$, we have $x_1 \dots x_{n+1} = 0$ (although this is not a sufficient condition). Thus, we can informally think of objects in $\CAlg_{R}^{\an, \surj, \leq n}$ as arrows of animated rings such that the fibre is $(n+1)$-nilpotent, with extra structure expressing the coherence conditions for this nilpotence.
		\item More generally, if $m < n$, there is a monadic right adjoint $\iota_m^n : \CAlg_{R/ /k}^{\an, \surj, \leq m} \to \CAlg_{R/ /k}^{\an, \surj, \leq n}$. Informally, an $(m+1)$-nilpotent map is $(n+1)$-nilpotent.
		\item For $n < \infty$, there is a comonadic adjunction $\cot_{\leq n} : \CAlg_{R/ /k}^{\an, \surj, \leq n} \leftrightarrows (\Mod_{k, \geq 0})_{L_{k/R}[-1]/}: \sqz $.
	\end{enumerate}
\end{construction}

We will use the adjunction of the last point above to construct \emph{dually truncated partition Lie algebroids}.

\begin{construction}[Dually truncated partition Lie algebroids]
	For each $n \in \NN \cup \{\infty\}$, we will construct a sifted-colimit-preserving monad $\Lie_{\Delta, k/R}^{\pi, \weirdleq n}$ on $(\Solid_k)_{/L_{k/R}^\vee [1]}$ satisfying the following properties:
	\begin{enumerate}
		\item The restriction of $\Lie_{\Delta, k/R}^{\pi, \weirdleq n}$ to $(\Solid_{k, \leq 0}^{\Pro})_{/L_{k/R}^\vee [1]}$ is the monad $\cot_{\leq n}  ^\vee \circ \sqz (-^\vee)$.
		\item If $L_{k/R}$ is of finite type, the monad $\Lie_{\Delta, k/R}^\pi := \Lie_{\Delta, k/R}^{\pi, \weirdleq \infty}$ restricted to $(\Mod_k)_{/L_{k/R}^\vee [1]} \subset (\Solid_k)_{/L_{k/R}^\vee [1]}$ coincides with the partition Lie algebroid monad as defined in \cite{partition} and \cite{foliations}.
		\item For $m < n$, there is a map of monads $\Lie_{\Delta, k/R}^{\pi, \weirdleq m} \to \Lie_{\Delta, k/R}^{\pi, \weirdleq n}$, which assembles into an equivalence of monads
			\begin{align*}
				\varinjlim_n \Lie_{\Delta, k/R}^{\pi, \weirdleq n} \simeq \Lie_{\Delta, k/R}^{\pi}
			\end{align*}
			and hence an equivalence of $\infty$-categories
			\begin{align*}
				\varprojlim_n \LieAlgd_{\Delta, k/R}^{\pi, \weirdleq n, \blacksquare} \simeq \LieAlgd_{\Delta, k/R}^{\pi, \blacksquare}
			\end{align*}
			where $\LieAlgd_{\Delta, k/R}^{\pi, \weirdleq n, \blacksquare}$ is the category of $\Lie_{\Delta, k/R}^{\pi, \weirdleq  n}$-algebras on $(\Solid_k)_{/L_{k/R}^\vee [1]}$.
	\end{enumerate}
\end{construction}

We will then show that any $A \in \CAlg_{R/ /k}^{\an, \bNil}$ is in the essential image of $\iota_n^\infty : \CAlg_{R/ /k}^{\an, \surj,\leq n} \to \CAlg_{R/ /k}^{\an, \surj}$ for some $n$. The decomposition of the partition Lie algebroid monad as the filtered colimit of the dually truncated monads will allow us to compute pushouts and obtain a fully faithful embedding 
\begin{align*}
	\KD : \CAlg_{R/ /k}^{\an, \bNil} \to \left( \LieAlgd_{\Delta, k/R}^{\pi, \blacksquare} \right) ^{\op}
\end{align*}
Our methods have some unexpected consequences. Remember that if $(A \twoheadrightarrow B)$ is a map in $\CAlg_R^{\an}$ that is surjective in $\pi_0$, then we can consider the filtered animated ring $\adic (A \twoheadrightarrow B)$. Even if this map has nilpotent fibre, it is not clear a priori that this filtration should be complete, since we are working with the derived adic filtration. 

For example, the associated graded of $\adic (k \oplus V \twoheadrightarrow k)$, where $V \in \Vect_k$, is the free graded $k$-algebra on $L_{k/k \oplus V}[-1]$. This cotangent complex is nontrivial in degree $1$, so the filtration $\adic (k \oplus V \twoheadrightarrow k)$ is not finite.

\begin{theorem}
	Let $(A \twoheadrightarrow B)$ be a map such that $\fib (A \twoheadrightarrow B)$ is eventually coconnective and $\ker (\pi_0 (A) \to \pi_0 (B))$ is nilpotent. Then, $\adic (A \twoheadrightarrow B)$ is complete.
\end{theorem}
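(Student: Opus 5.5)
The plan is to factor the arrow through the truncated categories and let the finite-nilpotence structure force completeness. First I reduce to the case where $A \twoheadrightarrow B$ lies in the essential image of $\iota_n^\infty : \CAlg_R^{\an, \surj, \leq n} \to \CAlg_R^{\an, \surj}$ for some finite $n$. The introduction announces that any bounded nilpotent object is of this form, and the same argument should apply relative to $B$. Write $I = \fib(A \to B)$, eventually coconnective, with $\ker(\pi_0 A \to \pi_0 B)^N = 0$. Using the Postnikov tower of $I$, I exhibit $A \to B$ as a finite composite of square-zero extensions: first $\pi_0$-level steps killing the powers $\mathfrak{m}^j/\mathfrak{m}^{j+1}$, then one step for each homotopy group $\pi_i(I)$. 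Square-zero extensions $B' \oplus_\alpha M \to B'$ lie in the image of $\iota_1^\infty$. Composites, or equivalently $\pi_0$-surjective pullbacks of such, then lie in the image of $\iota_n^\infty$ for $n$ bounded by the number of steps. This is the step I would check carefully: one must show that a composite of an $(m+1)$-nilpotent and an $(m'+1)$-nilpotent surjection is $(m+m'+1)$-nilpotent in the structured sense of $\CAlg^{\an,\surj,\leq n}$, not just on homotopy classes.

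Second, I show that for every object $X$ of $\CAlg_R^{\an, \surj, \leq n}$, the filtration $\adic(\iota_n^\infty X)$ is complete. By construction property (1), $\CAlg_R^{\an,\surj,\leq n}$ is generated under sifted colimits by discrete surjections $A_0 \twoheadrightarrow B_0$ with $\mathfrak{m}^{n+1}=0$. I expect there is a natural "truncated adic filtration" functor $\adicn$ on $\CAlg_R^{\an,\surj,\leq n}$ landing in filtered rings with $F^{n+1} = 0$, which preserves sifted colimits. It is left Kan extended from the discrete generators, where it is the honest $\mathfrak{m}$-adic filtration $A_0 \supset \mathfrak{m} \supset \dots \supset \mathfrak{m}^n \supset 0$.

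The heart of the argument is to compare $\adic \circ \iota_n^\infty$ with $\adicn$. Both preserve sifted colimits, and there is a natural map $\adic(\iota_n^\infty X) \to \adicn(X)$, since $\adicn$ satisfies the universal property of the adic filtration restricted to the image. My strategy is to prove that this map is an equivalence after completion, and that the associated graded pieces agree in weights $\le n$. The key input is that the monad $\iota_n^\infty$ followed by its left adjoint is computed, on associated gradeds, by truncating the free graded algebra $\LSym^*$ on the cotangent fibre to weights $\le n$, with the higher weights of $\gr \adic$ cancelling out. Concretely, I would prove the stronger statement that $F^{j}\adic(\iota_n^\infty X)$ is $j'$-connective for $j'$ growing with $j$, using the eventually coconnective hypothesis on $I$. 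Here $\gr^j$ is $\LSym^j$ of $L_{B/A}[-1]$, and $L_{B/A}[-1]$ is built from $I$ but has a connective part shifted up. I would combine this with the observation that, in the truncated model, $F^{(n+1)m}$ factors through $m$-fold products of a filtration that vanishes at stage $n+1$. Together these give $\lim_j F^j = 0$.

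I expect this last step to be the main obstacle: showing that the non-finite but growing-connectivity tail of the derived adic filtration has vanishing limit, using the $\leq n$ structure. This requires an explicit description of the monad on $\CAlg^{\an,\surj,\leq n}$ that I am assuming. Completeness for the original $(A\twoheadrightarrow B)$ then follows from step one.
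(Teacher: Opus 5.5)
\textbf{There is a genuine gap in your second step.} Both the comparison with $\adic_{\leq n}$ and the claim that $F^j \adic$ becomes increasingly connective are false.

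Take $A = R[x]/(x^2) \twoheadrightarrow R$, which is $\iota_1^\infty \LSym^{\leq 1}_R(R \to R)$. Write $A \simeq R[x] \otimes_{R[y]} R$ with $y \mapsto x^2$, and use that $\adic$ preserves pushouts. Then for every $j \geq 1$ you find $F^j \adic(A) \simeq \cofib(x^{j-1}R[x] \xrightarrow{x^2} x^j R[x]) \simeq R \cdot x^j$, concentrated in degree $0$, with zero transition maps. Consequently:
\begin{itemize}
\item The $F^j$ gain no connectivity. More generally, $\Gr^j \adic \simeq \LSym^j_B(L_{B/A}[-1])$ has $\pi_0 \cong \Sym^j_B(I/I^2)$ for discrete $A \twoheadrightarrow B$.
\item The weights above $n$ do not cancel.
\item $\Gr^1 \adic(A) \simeq L_{R/A}[-1] \simeq R \oplus R[1]$ is not $\cot_{\leq 1}(A) \simeq R$.
\item No map $\adic(A) \to \adic_{\leq 1}(A)$ can become an equivalence after completion: $\adic(A)$ is already complete, while $F^2 \adic(A) \neq 0$.
\end{itemize}
Completeness here comes from the transition maps, not from connectivity of the $F^j$. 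Likewise, the classical heuristic that deep filtration stages are generated by products of $F^1$ fails for the derived adic filtration.

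\textbf{The missing idea is the explicit computation on free objects} (\cref{lemma-symncomplete}). Using the bar resolution in $\CAlg_R^{\an,\surj,\leq n}$, write $X$ as a geometric realisation of objects $\LSym^{\leq n}_R(V \to V \oplus W)$. Complete filtered objects are closed under geometric realisations. Filtered colimits, by contrast, need not preserve completeness, so arbitrary sifted colimits of your discrete generators would not suffice.

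For a free object, the bar construction gives, in terms of partition complexes,
$F^j \adic \simeq \bigoplus_{i>0} (V^{\otimes i} \otimes \tilde{C}_*(S^{\geq j}(n,i), R))_{\Sigma_i} \otimes_R \LSym^*_R(W)$.
For fixed $i$, the $i$-th summand vanishes once $j > i$. For fixed $j$, the atomised-poset bound (\cref{prop-atomised}) makes it $\lceil \frac{i-j}{n+1} \rceil$-connected, uniformly in $V$. So the sum is a product of finite filtrations and $\varprojlim_j F^j \simeq 0$. This is exactly where $n < \infty$ is used; boundedness of the fibre plays no role at this stage.

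\textbf{On your first step.} It is fine as a citation of \cref{prop-Nilimage}, but the paper does not prove it through structured composites, a point you leave open. Since $\iota_n^\infty$ is not full, the real issue is lifting the classifying map of each square-zero layer over the fixed target. The paper does this by showing $\varinjlim_m \Map(\cot_{\leq m}(A), V) \simeq \Map(\cot(A), V)$ for eventually coconnective $V$, using \cref{lemma-cotpieces}. That is where boundedness enters.
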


This will be proved by using the bar resolution in $\CAlg_R^{\an, \surj, \leq n}$.

We have the following layout for this paper:

\begin{itemize}
	\item In \cref{section-animatednilpotent} we introduce animated nilpotent surjections and study some of their properties.
	\item In \cref{section-boundednilpotent} we study maps of animated rings with nilpotent fibre, and how they can be approximated with nilpotent surjections. We also show that the adic filtration of a map with nilpotent kernel is complete.
	\item In \cref{section-duallytruncatedLie} we introduce dually truncated partition Lie algebroids, and use these categories to understand partition Lie algebroids.
	\item In \cref{section-nilpotentfmp} we prove the equivalence between nilpotent formal moduli problems and partition Lie algebroids in ultrasolid modules.
\end{itemize}

\section*{Acknowledgements}

This is the last paper of my PhD and I would thus like to acknowledge everyone who has made this possible.

I want to thank Lukas Brantner for his supervision and guidance through the years, and for always encouraging me in the pursuit of mathematical research. I am also grateful to André Henriques for his mathematical education, and James Newton for his support as my college advisor. I am thankful to Jack Kelly, Juan Esteban Rodríguez Camargo and Dustin Clausen for interesting discussions that helped greatly in writing this thesis.

I would like to thank Adrià Marín Salvador, Colin McCulloch and Samuel Moore for the many conversations and all the support they provided. I am also grateful to the Mathematical Institute at the University of Oxford, for allowing me to pursue my research and further my education for the last eight years.

I am hugely indebted to all the spaces that fought for more inclusive environments in academia: Mathematrix at Oxford, and all the conferences promoting women and queer researchers in mathematics like WINGs, QuINGs and $\Spec (\overline{\QQ} (2 \pi i))$. 

Additionally, I am grateful to all the programmes and resources that have been part of my education: Fundación Talento Matemático y Científico, the Spanish Mathematical Olympiad and PROMYS Europe. Special thanks to Vicky Neale, who taught me how joyful research can be and is missed dearly.

This thesis would not have been possible without the support of countless people in my personal life. I want to thank Rafael, Lindsey, Rowena, Ella, Maryam, Nat, Rosemary, Katharine, Hannah, Debbie, Denise, Dani, Albert, Santiago, and many others for their support and encouragement, especially in the last few months of my writing. And Rachel, who made every day brighter and reminded me of the beautiful things. I am grateful to my parents and to Mariluz, for encouraging me to dream.

I am also thankful to all the wonderful people at Oxford Proud Voices. You have been my home, and being welcomed in this wonderful community has given me the courage to keep going. Finally, thank you to all the activists at Oxford for Trans Rights and many other groups, that keep our city safe and are always willing to stand up and fight back.

\section{Preliminaries}

\subsection{Filtered and graded objects}

We use this section as a reminder on filtered and graded objects, which we will later use in our study of nilpotent deformation problems. For the rest of this section, we fix a stable $\infty$-category $\catC$. In later applications, we will take $\catC = \Mod_k$.

\begin{defi}
	Let $N(\ZZ_{\geq 0})$ be the nerve of the poset $\ZZ_{\geq 0}$. The category of \emph{filtered objects in $\catC$} is defined as
	\begin{align*}
		\Fil (\catC) := \Fun (N(\ZZ_{\geq 0})^{\op}, \catC)
	\end{align*}
	Given a filtered object $X \in \Fil(\catC)$ and $i \in \ZZ_{\geq 0}$, we will often write $F^i X := X(i)$.
\end{defi}

\begin{rem}
	If $\catC$ is presentably symmetric monoidal, we can endow $\Fil(\catC)$ with a symmetric monoidal structure given by Day convolution, where we are using the additive monoidal structure on $\ZZ_{\geq 0}$.
\end{rem}

\begin{defi}
	Fix $n > 0$. We will write
	\begin{align*}
		\Fil^{\leq n} (\catC) := \Fun (\{0 \to \dots \to n \}^{\op}, \catC)
	\end{align*}
	Notice that we have a fully faithful embedding $\Fil^{\leq n} (\catC) \subset \Fil (\catC)$ whose essential image is given by those $X \in \Fil(\catC)$ such that $X(i) \simeq 0$ for $i > n$.
\end{defi}

\begin{rem}
	We can identify $\Fil^{\leq n} (\catC)$ with a localisation of $\Fil (\catC)$ whose kernel consists of $X \in \Fil(\catC)$ such that $\Gr^i (X) \simeq 0$ for $i\leq n$, so the left adjoint localisation functor inverts those maps inducing an equivalence in the first $n$ parts of the associated graded.
\end{rem}

\begin{rem}
	\label{rem-filmonoidal}
	We can endow $\Fil^{\leq n} (\catC)$ with the structure of a symmetric monoidal category. Write $(-)_{\leq n} : \Fil (\catC) \to \Fil^{\leq n} (\catC)$ for the localisation functor.

	The kernel of this localisation is an ideal, so by (\cite{HA}, Proposition 2.2.1.9), we can endow $\Fil^{\leq n} (\catC)$ with a symmetric monoidal structure $\otimes^{\leq n}$ given by
	\begin{align*}
		X \otimes^{\leq n} Y \simeq (X \otimes Y) _{\leq n}
	\end{align*}
	which makes the localisation functor symmetric monoidal.
\end{rem}

\begin{defi}
	Let $N(\ZZ_{\geq 0}^{\ds})$ be the category indexed by the non-negative integers with only identity morphisms. The category of \emph{graded objects in $\catC$} is defined as
	\begin{align*}
		\Gr (\catC) := \Fun (N(\ZZ_{\geq 0}^{\ds}), \catC)
	\end{align*}
	Given a graded object $X \in \Gr (\catC)$ and $i \in \ZZ_{\geq 0}$, we will often write $X_i := X(i)$.
\end{defi}

\begin{rem}
	Similar to filtered objects, if $\catC$ is symmetric monoidal, we can endow $\Gr (\catC)$ with a symmetric monoidal structure given by Day convolution, where we are using the additive monoidal structure on $\ZZ_{\geq 0}^{\ds}$. Given $X,Y \in \Gr (\catC)$,
	\begin{align*}
		(X \otimes Y)_i \simeq \bigoplus_{j + k = i} X_j \otimes Y_k
	\end{align*}
\end{rem}

\begin{defi}
	Fix $n > 0$. We will write
	\begin{align*}
		\Gr^{\leq n} (\catC) := \Fun (\{0, \dots, n \}, \catC)
	\end{align*}
	There is a fully faithful embedding $\Gr^{\leq n} (\catC) \to \Gr (\catC)$, whose essential image is given by those $X \in \Gr (\catC)$ such that $X_i \simeq 0$ for $i > n$.
\end{defi}

\begin{rem}
	Just as in \cref{rem-filmonoidal}, $\Gr^{\leq n} (\catC)$ can be endowed with a symmetric monoidal structure through localisation. This is essentially the symmetric monoidal product on $\Gr (\catC)$ where we discard anything in degree $> n$.
\end{rem}

\begin{defi}
	We say $X \in \Fil (\catC)$ is \emph{complete} if $\varprojlim F^i X \simeq 0$.

	Given $X \in \Fil (\catC)$, we have an \emph{associated graded} $\Gr (X)$ defined as
	\begin{align*}
		\Gr(X)_i := \cofib (F^{i+1} X \to F^i X)
	\end{align*}
	This gives a symmetric monoidal functor
	\begin{align*}
		\Gr : \Fil (\catC) \to \Gr (\catC)
	\end{align*}
	which restricts to a symmetric monoidal functor
	\begin{align*}
		\Gr: \Fil^{\leq n} (\catC) \to \Gr^{\leq n} (\catC)
	\end{align*}
\end{defi}

\begin{rem}
	If $X, Y \in \Fil (\catC)$ are complete, the map $X \to Y$ is an equivalence if and only if the map $\Gr (X) \to \Gr(Y)$ is an equivalence.

	Given $X,Y \in \Fil^{\leq n} (\catC)$, they are obviously complete, so equivalences in $\Fil^{\leq n} (\catC)$ are detected by the associated gradeds.
\end{rem}

\subsection{Poset topology}

We will use the bar resolution to compute the cotangent fibre and adic filtration of certain derived rings. We will be able to phrase these constructions in terms of poset topology.

\begin{defi}
	Let $P$ be a poset. Then, we can construct a simplicial set $\Delta (P)$ as follows: the $n$-simplices of $\Delta(P)$ are chains $x_0 \leq \dots \leq x_n$ in $P$. Face maps delete elements in the chain, and degeneracy maps insert repeated elements in the chain.

	We will not generally distinguish between $\Delta(P)$ and its geometric realisation.
\end{defi}

Notice that $\Delta(P)$ is naturally a simplicial complex, since the vertices of a simplex uniquely determine it.

\begin{defi}
	Let $P$ be a poset. We say that $P$ is \emph{atomised} if every element of $P$ lies above some minimal element and every finite set of minimal elements with an upper bound has a join.

	If $P$ is atomised, we call its minimal elements atoms and we will denote the set of atoms by $\mathcal{A} (P)$. We will say that a subset $S \subset  \mathcal{A}(P)$ \emph{generates $P$} if $P_{\geq S}$ is empty.
\end{defi}

\begin{prop}[\cite{atomised}, Theorem 2.3]
	\label{prop-atomised}
	Let $P$ be an atomised poset such that no $k$ atoms generate it. Then, $\Delta(P)$ is $(k-2)$-connected.
\end{prop}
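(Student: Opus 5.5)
The plan is a homotopy-theoretic version of the nerve/covering argument: show that the order complex of $P$ is covered by contractible pieces, one for each atom, and that every intersection of at most $k$ of these pieces is also contractible. Then the nerve of the cover has a full $(k-1)$-skeleton, and it is homotopy equivalent to $\Delta(P)$.

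\textbf{Step 1 (the cover).} We may assume $\mathcal{A}(P)$ is nonempty. If $P$ is empty, then the hypothesis with no atoms says $P_{\geq \emptyset}=P$ is nonempty, which is a contradiction for $k\geq 1$. For each atom $a$, set $U_a := \Delta(P_{\geq a})$. This is a cone on $a$, since $a$ is the minimum of $P_{\geq a}$, and hence contractible. For a finite set $S \subset \mathcal{A}(P)$, the intersection $\bigcap_{a \in S} U_a$ is $\Delta(P_{\geq S})$, because a chain lies in every $P_{\geq a}$ exactly when each of its elements does. If $P_{\geq S}$ is nonempty, then $S$ has an upper bound, so by the atomised hypothesis it has a join $\vee S$. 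Then $P_{\geq S} = P_{\geq \vee S}$, which is again a cone and so contractible. Otherwise the intersection is empty.

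\textbf{Step 2 (covering and nerve).} The subcomplexes $U_a$ do not cover $\Delta(P)$ on the nose, since a chain need not lie above a single atom. The fix is to use instead the covering of $P$ by the downward-closed-free description. More concretely, I would apply Quillen's fiber lemma to the poset map $f: P \to \mathcal{N}^{op}$, where $\mathcal{N}$ is the poset of nonempty finite sets $S$ of atoms with $P_{\geq S}\neq\emptyset$, ordered by inclusion. The map sends $x$ to the set of atoms below $x$, but that set might be infinite, so this needs modification.

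A cleaner route is the crosscut theorem. The set $C$ of atoms is a crosscut-like set: every element lies above an atom, and bounded finite subsets have joins. Then $\Delta(P)$ is homotopy equivalent to the simplicial complex $\Gamma$ whose simplices are the finite sets $S \subset \mathcal{A}(P)$ having an upper bound. To prove this, compare both complexes with the poset $Q$ of joins $\{\vee S\}$ via Quillen's fiber lemma:
\begin{itemize}
	\item For $x \in P$, the fiber $Q_{\leq x}$ has maximum the join of all atoms below $x$ when there are finitely many such atoms. In general it is a directed union of cones, hence contractible.
	\item The inclusion $Q \subset P$ has fibers $Q_{\leq x}$, which are directed and therefore contractible.
	\item The map from the face poset of $\Gamma$ to $Q$, $S \mapsto \vee S$, has fibers $\{S : \vee S \leq q\}$. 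These are the nonempty finite subsets of the atoms below $q$, which form a simplex and are contractible.
\end{itemize}

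\textbf{Step 3 (connectivity).} By hypothesis, every set of at most $k$ atoms is bounded, since otherwise it would generate $P$. Hence $\Gamma$ contains the full $(k-1)$-skeleton of the simplex on $\mathcal{A}(P)$. Its $(k-1)$-skeleton agrees with that of a contractible full simplex, so $\Gamma$ is $(k-2)$-connected. Therefore $\Delta(P) \simeq \Gamma$ is $(k-2)$-connected.

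\textbf{Main obstacle.} The hardest step is Step 2, specifically the case where infinitely many atoms lie below an element. There the fibers are filtered colimits of contractible spaces rather than cones, and one must check that Quillen's fiber lemma still applies to these infinite posets. I would handle this by noting that homotopy groups commute with filtered unions of subcomplexes.
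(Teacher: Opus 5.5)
Your argument is correct. The paper gives no proof of its own and only cites Ramras, and your route is the standard argument behind that theorem: identify $\Delta(P)$ up to homotopy with the complex $\Gamma$ of finite bounded sets of atoms, then use that $\Gamma$ contains the full $(k-1)$-skeleton of the simplex on $\mathcal{A}(P)$.

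One correction, which also shortens Step 2: the $U_a$ \emph{do} cover $\Delta(P)$. A simplex of $\Delta(P)$ is a finite chain, its least element lies above some atom $a$, and then the whole chain lies in $P_{\geq a}$. So Step 1 together with the nerve theorem already gives $\Delta(P)\simeq\Gamma$, because the nerve of $\{U_a\}$ is exactly $\Gamma$. The nerve theorem is valid for arbitrary families of subcomplexes whose nonempty finite intersections are contractible, so infinitely many atoms cause no trouble.

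Your detour through $Q$ via Quillen's fibre lemma is still valid, and the fibre lemma holds for arbitrary posets, so your ``main obstacle'' is not one. A single application of the fibre lemma to $S\mapsto\vee S$, from the face poset of $\Gamma$ straight to $P$, would suffice: its fibres are directed. The abandoned map to $\mathcal{N}^{\op}$ should be removed.

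Finally, in Step 3 you read the hypothesis as ``no set of at most $k$ atoms generates $P$''. That reading is needed: with exactly $k$ distinct atoms the statement fails when $|\mathcal{A}(P)|<k$, for example two incomparable points with $k=3$.
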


One construction we will see often is the unreduced suspension of a poset.

\begin{defi}
	Let $X$ be a topological space. Then, its \emph{unreduced suspension} $X^\diamond$ is the topological space $(X\times \Delta^1)/_{\stackrel{(x,0)\sim (x',0)}{_{(x,1)\sim (x',1)}}}$.
\end{defi}

\begin{lemma}[\cite{partition}, Lemma 2.17]
	\label{prop-unreduced}
	Let $P$ be a poset, and let $\tilde{P} := P \cup \{\hat{1}\}$, where $\hat{1} \geq x$ for all $x \in P$. Then, $\Delta(P)^\diamond$ is homotopy equivalent to the simplicial set whose $n$-simplices are given by
	\begin{align*}
		\{x_0 \leq \dots \leq x_n = \hat{1} | x_i \in \tilde{P} \} \bigsqcup \{*\}
	\end{align*}
	The face maps preserve the basepoint and delete elements in the chain, sending invalid chains to the basepoint. The degeneracy maps insert repeated elements in the chain.
\end{lemma}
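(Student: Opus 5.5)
The plan is to identify the simplicial set in the statement with a quotient of the nerve of $\tilde{P}$, and then use the fact that $\Delta(\tilde{P})$ is a cone on $\Delta(P)$.

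\emph{Identifying the simplicial set.} Write $Y$ for the simplicial set in the statement. Consider the inclusion of simplicial sets $\Delta(P) \subset \Delta(\tilde{P})$. A simplex of $\Delta(\tilde{P})$ is a chain $x_0 \leq \dots \leq x_n$ in $\tilde{P}$. Since $\hat{1}$ is the maximum of $\tilde{P}$, such a simplex fails to lie in $\Delta(P)$ exactly when $x_n = \hat{1}$. Hence the $n$-simplices of the pointed quotient $\Delta(\tilde{P})/\Delta(P)$ are the chains $x_0 \leq \dots \leq x_n = \hat{1}$ together with one basepoint.

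I then check the simplicial structure. The degeneracies of $\Delta(\tilde{P})$ insert repeated elements, and they preserve the property $x_n = \hat{1}$. The faces delete an element of the chain. A face map yields a chain in $\Delta(P)$, hence the basepoint of the quotient, precisely when it deletes the last copy of $\hat{1}$; these are the "invalid chains" of the statement. This exhibits an isomorphism of simplicial sets $Y \cong \Delta(\tilde{P})/\Delta(P)$.

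\emph{Identifying the homotopy type.} The realisation of $\Delta(\tilde{P})$ is the cone on $\Delta(P)$ with apex $\hat{1}$: every chain extends by $\hat{1}$, and the join $\Delta(P) \ast \{\hat{1}\}$ is exactly $\Delta(\tilde{P})$. The inclusion $\Delta(P) \hookrightarrow \Delta(\tilde{P})$ is a monomorphism of simplicial sets, so it realises to a CW-inclusion, hence a cofibration. Therefore the cofibre $C\Delta(P)/\Delta(P)$ is homotopy equivalent to the homotopy cofibre of $\Delta(P) \to *$. That homotopy cofibre is computed by the homotopy pushout of $* \leftarrow \Delta(P) \rightarrow *$, which is the unreduced suspension $\Delta(P)^\diamond$. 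Concretely, $C\Delta(P)/\Delta(P)$ is obtained from $\Delta(P)^\diamond = (\Delta(P)\times \Delta^1)/\!\sim$ by collapsing the contractible subspace $\{0\}$-cone point, which is a homotopy equivalence.

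\emph{The main obstacle.} The only real subtlety is the degenerate case $P = \emptyset$. There $Y$ is $S^0$, consisting of $\hat{1}$ and the basepoint. So one must adopt the standard convention that the unreduced suspension of the empty space is $S^0$, which is the homotopy pushout of $* \leftarrow \emptyset \to *$, rather than the literal quotient formula, which gives the empty set. With that convention, the argument above covers all cases uniformly.

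Apart from this, the proof rests on two checks. The first is that the face maps in the statement match those of the quotient, including the rule that invalid chains are sent to the basepoint. The second is that collapsing a subcomplex inclusion computes the homotopy cofibre. Both are routine.
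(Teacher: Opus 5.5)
Your proof is correct. The paper gives no argument of its own for this lemma and simply imports it from \cite{partition}. Your route is the standard one: identify the simplicial set with $\Delta(\tilde{P})/\Delta(P)$, observe that $\Delta(\tilde{P}) \cong \Delta(P) \star \Delta^0$ is a cone, and compute the cofibre of a cofibration into a contractible space as the homotopy pushout of $* \leftarrow \Delta(P) \to *$. Your check of the face maps, that only $d_n$ deleting the last copy of $\hat{1}$ produces an invalid chain, is exactly what is needed.

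One small correction concerns the sentence beginning ``Concretely''. For $P \neq \emptyset$ nothing further has to be collapsed. Quotienting the cone $C\Delta(P)$ by its base sends the base to one pole, and the cone point remains as the other pole. So $C\Delta(P)/\Delta(P)$ is already homeomorphic to $\Delta(P)^\diamond$ as defined in the paper, and collapsing a single point is vacuous. That sentence can simply be replaced by this observation, or dropped, since the homotopy-pushout argument before it already suffices.

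Your remark on $P = \emptyset$ is right and worth keeping. With the paper's literal quotient definition one gets $\emptyset^\diamond = \emptyset$ rather than $S^0$, so the statement genuinely requires the convention that the unreduced suspension is the homotopy pushout.
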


\section{Animated nilpotent surjections}
\label{section-animatednilpotent}

We will introduce a category of \emph{animated $n$-nilpotent surjections}. Informally, these are maps $A \twoheadrightarrow B$ of animated rings such that the fibre is $n$-nilpotent, together with coherence conditions expressing this nilpotence. This has the advantage of having simpler mapping spaces.

\subsection{Basic properties}
\label{subsection-basicproperties}

We will now fix a commutative discrete ring $R$.

\begin{defi}
	Let $n > 0$. We will write $\Poly_R^{\surj, \leq n}$ for the $1$-category of pairs of $R$-algebras of the form
	\begin{align*}
		\LSym_R^{\leq n} (V) \otimes_R \LSym_R^* (W) \xrightarrow{V \mapsto 0} \LSym_R^* (W)
	\end{align*}
	where $V,W$ are finite free $R$-modules, and $\LSym_R^{\leq n} (V)$ is the free $R$-algebra on $V$ quotiented by the ideal generated by $V^{\otimes n+1}$.

	The category of \emph{animated $(n+1)$-nilpotent surjections of $R$-algebras} is the $\infty$-category
	\begin{align*}
		\CAlg_R^{\an, \surj, \leq n} := \mathcal{P}_\Sigma (\Poly_R^{\surj, \leq n})
	\end{align*}
	In other words, this is the full subcategory of $\Fun ((\Poly_R^{\surj, \leq n})^{\op}, \Space)$ spanned by the finite-product-preserving functors.

	For $n = \infty$, we will write $\CAlg_R^{\an, \surj} := \CAlg_R^{\an, \surj, \leq \infty}$.
\end{defi}

\begin{rem}
	This definition is very much inspired by the category of \emph{animated surjections} as introduced in \cite{mao}. A map $R \to S$ of animated rings is an animated surjection if the map $\pi_0 (R) \to \pi_0 (S)$ is surjective. It turns out that this category is compactly generated by projectives of the form
	\begin{align*}
		\LSym_\ZZ^* (V) \otimes_\ZZ \LSym_\ZZ^* (W) \xrightarrow{V \mapsto 0} \LSym_\ZZ^* (W) 
	\end{align*}
	where $V$ and $W$ are finite free abelian groups (\cite{mao}, Theorem 1.12).

	Hence, our construction gives a category of surjections where the fibre is $(n+1)$-nilpotent, but there are coherence conditions that restrict these maps. For example, for $n = 1$, write $(R \oplus Rx \oplus Ry \twoheadrightarrow R)$ for the free $2$-nilpotent surjection on generators $x$ and $y$ in degree zero. Any map of $2$-nilpotent surjections from $(R \oplus Rx \oplus Ry \twoheadrightarrow R)$ to $(R \oplus Rz \twoheadrightarrow R)$ (where $z$ is taken in homological degree $i > 0$) is trivial. For $n = \infty$, this mapping space describes the free partition Lie algebra on two generators, or, more classically, the André-Quillen cohomology of $k[x,y]/(x^2, y^2, xy)$.

	It will be useful to consider this category not as $(n+1)$-nilpotent surjections $A \twoheadrightarrow B$, but rather as pairs $(A,I)$ where $I$ is the kernel ideal of the surjection. This allows us to compute limits, since a limit of surjections might not be surjective.
\end{rem}

\begin{construction}[Forgetful functor]
	\label{construction-forgetsurj}
	We will now construct a forgetful functor $F: \CAlg_R^{\an, \surj, \leq n} \to \Fun (\Delta^1, \Mod_{R, \geq 0})$. Informally, this will take a	surjection $A \twoheadrightarrow B$ to $\fib (A \to B) \to A$.

	Let $\Arr^{\st} (\Vect_R^\omega)$ be the category of arrows of the form $V \xrightarrow{i} V \oplus W$, where $i$ is the canonical inclusion map. We will often call these \emph{standard arrows}. By (\cite{mao}, Corollary 2.13), $\Fun(\Delta^1, \Mod_{R, \geq 0}) \simeq \mathcal{P}_\Sigma (\Arr^{\st} (\Vect_R^\omega))$.

	We have a functor $\Arr^{\st} (\Vect_R^\omega) \to \Poly_R^{\surj, \leq n}$ given by
	\begin{align*}
		(V \mapsto V \oplus W) \mapsto (\LSym^{\leq n}_R (V) \otimes_R \LSym^* _R (W) \to \LSym^* _R(W) )
	\end{align*}
	Since it preserves coproducts, it induces a functor $F: \CAlg_R^{\an, \surj, \leq n} \to \Fun (\Delta^1, \Mod_{R, \geq 0})$ on these subcategories of presheaves by composition.
\end{construction}

\begin{prop}
	\label{prop-surjmonadic}
	The functor $F: \CAlg_R^{\an, \surj, \leq n} \to \Fun(\Delta^1,  \Mod_{R, \geq 0})$ from \cref{construction-forgetsurj} preserves sifted colimits and is monadic. It is given on $\Poly_R^{\surj, \leq n}$ by
	\begin{align*}
		\left( \LSym^{\leq n}_R (V) \otimes_R \LSym^* _R (W) \to \LSym_R^* (W) \right) & \mapsto \\ 
																						 & \left(\bigoplus_{0 < i \leq n} \LSym^i (V) \right) \otimes_R \LSym^* (W) \to \LSym_R^{\leq n} (V) \otimes_R \LSym_R^* (W) 
	\end{align*}
\end{prop}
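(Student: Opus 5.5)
The approach is the standard recognition argument for categories of the form $\mathcal{P}_\Sigma(\mathcal{C})$, following the proof of (\cite{mao}, Theorem 1.12). I would check three things in turn: an explicit formula for $F$ on generators, preservation of sifted colimits, and conservativity. Monadicity then follows from the Barr--Beck--Lurie theorem once $F$ has a left adjoint.

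\textbf{Formula on generators.} By \cref{construction-forgetsurj}, $F$ is defined by composing a presheaf with the coproduct-preserving functor $j:\Arr^{\st}(\Vect_R^\omega)\to \Poly_R^{\surj,\leq n}$. Hence $F$ is right adjoint to the left Kan extension $j_!:\mathcal{P}_\Sigma(\Arr^{\st})\to \mathcal{P}_\Sigma(\Poly_R^{\surj,\leq n})$. For $P=(\LSym^{\leq n}_R(V)\otimes_R\LSym^*_R(W)\to \LSym^*_R(W))$, the presheaf $F(P)$ sends a standard arrow $(V'\to V'\oplus W')$ to $\Hom(j(V'\to V'\oplus W'),P)$. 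Concretely, these are maps $V'\to A$ and $W'\to A$ with $A=\LSym^{\leq n}_R(V)\otimes \LSym^*_R(W)$, where the images of $V'$ land in the kernel ideal $I=\ker(A\to \LSym^*_R(W))$. The key point is that an algebra map out of $\LSym^{\leq n}(V')\otimes\LSym^*(W')$ sending $V'$ into $I$ is the same as an arbitrary module map $V'\to I$. This holds because $I^{n+1}=0$, so the relations $V'^{\otimes n+1}=0$ are automatic. Therefore $F(P)$ is represented by the discrete arrow $I\hookrightarrow A$. Since $V$ and $W$ are free, we have $I=(\bigoplus_{0<i\leq n}\Sym^i V)\otimes \Sym^*W$, which is the stated formula. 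One must also note that discrete free modules correspond, under $\Fun(\Delta^1,\Mod_{R,\geq 0})\simeq\mathcal{P}_\Sigma(\Arr^{\st})$, to the presheaf $\Hom(-, I\to A)$. Because $I\to A$ is an injection of free modules, it is a filtered colimit of standard arrows, so this identification holds.

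\textbf{Sifted colimits and conservativity.} Sifted colimits in both $\mathcal{P}_\Sigma$ categories are computed pointwise. Precomposition with $j$ therefore commutes with them, so $F$ preserves sifted colimits, and it preserves limits as a right adjoint. For conservativity, I would use that the objects $j(V\to V\oplus W)$ for $V,W$ of rank one generate, since every object of $\Poly_R^{\surj,\leq n}$ is a finite coproduct of these. A map $f$ in $\mathcal{P}_\Sigma(\Poly)$ is an equivalence if it is an equivalence on all generators. By finite-product preservation it is enough to check on the generators $j(R\to R)$ and $j(0\to R)$. Evaluating at these is exactly evaluating $F(f)$ at the corresponding standard arrows, so $F$ reflects equivalences.

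\textbf{Main obstacle.} The delicate step is the first one. We need the identification of $F$ on generators to be compatible with the equivalence $\Fun(\Delta^1,\Mod_{R,\geq 0})\simeq\mathcal{P}_\Sigma(\Arr^{\st})$ of (\cite{mao}, Corollary 2.13). In particular, the discrete arrow $I\to A$, which is not itself standard, must give the correct object. I would handle this by writing $I\to A$ as a split injection of free modules, and using that a free complement of $I$ in $A$ exists here. This is the monomial basis: $A = I\oplus \Sym^*W$. So $I\to A$ is a filtered colimit of standard arrows, and the presheaf it represents is computed as claimed.
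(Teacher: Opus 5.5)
Your proposal is correct and follows essentially the same route as the paper. Sifted colimits are computed pointwise in $\mathcal{P}_\Sigma$. Conservativity comes from every object of $\Poly_R^{\surj,\leq n}$ being in the image of $\Arr^{\st}(\Vect_R^\omega)$, and monadicity then follows from the crude Barr--Beck--Lurie theorem; you also verify the formula on generators, which the paper leaves as an easy check.

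The step you flag as the main obstacle is lighter than you suggest. Since $(V'\to V'\oplus W')\simeq(V'\xrightarrow{\id}V')\oplus(0\to W')$, maps from a standard arrow into the discrete arrow $I\to A$ are simply $\Hom_R(V',I)\times\Hom_R(W',A)$, so no filtered-colimit presentation of $I\to A$ is needed.
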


\begin{proof}
	$F$ commutes with all limits and sifted colimits, since these are computed pointwise in $\mathcal{P}_\Sigma (\catC)$ for a small category $\catC$ (where we consider $\mathcal{P}_\Sigma(\catC)$ as a subcategory of $\Fun (\catC^{\op}, \Space)$). Additionally, the functor $\Arr^{\st} (\Vect_R^\omega) \to \Poly_R^{\surj, \leq n}$ is essentially surjective, so $F$ must be conservative.	The functor is then monadic by the crude version of the Barr-Beck-Lurie Theorem (\cite{HA}, Theorem 4.7.0.3).

	One can easily check that the functor is as given on compact projectives.
\end{proof}

\begin{construction}[Target ring]
	\label{construction-targetring}
	We have a sifted-colimit-preserving functor $\CAlg_R^{\an, \surj, \leq n} \to \CAlg_R^{\an}$ given on compact projectives by
	\begin{align*}
		\left( \LSym_R^{\leq n} (V) \otimes_R \LSym_R^* (W) \to \LSym_R^* (W) \right) \mapsto \LSym_R^* (W)
	\end{align*}
	This also commutes with coproducts of compact projectives, so it commutes with all colimits. Informally, this is the functor $(A \twoheadrightarrow B) \mapsto B$. We will often refer to this as the \emph{target ring}.
\end{construction}

\begin{rem}
	\label{rem-pairs}
	By \cref{prop-surjmonadic}, we can think of an object in $\CAlg_R^{\an, \surj, \leq n}$ as an arrow $I \to A$ in $\Mod_{R, \geq 0}$ with the structure of a monad algebra. We will often write this as a pair $(A,I)$ (so the map $I \to A$ is omitted) or as an arrow $A \twoheadrightarrow B$, where $B := \cofib (I \to A)$. Notice that $B$ has the structure of an animated ring (\cref{construction-targetring}).

	We will later see that $A$ also has the structure of an animated ring, plus extra structure accounting for the nilpotence of the kernel ideal (\cref{prop-iotamonadic}).
\end{rem}

It turns out the compact projectives used to build $\CAlg_R^{\an, \surj, \leq n}$ are precisely the free algebras on the standard arrows.

\begin{lemma}
	\label{lemma-projfree}
	Let $V, W \in \Vect_R^\omega$. Then, the object $\LSym^{\leq n}_R (V) \otimes_R \LSym^* (W) \to \LSym^* (W)$ in $\CAlg_R^{\an, \surj,\leq n}$ can be identified with the free algebra on $V \to V \oplus W$ with respect to the monad in \cref{prop-surjmonadic}.
\end{lemma}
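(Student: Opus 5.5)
The plan is a Yoneda argument inside $\mathcal{P}_\Sigma$. Write $P := (\LSym^{\leq n}_R (V) \otimes_R \LSym^*_R (W) \to \LSym^*_R (W))$, viewed as a representable presheaf in $\CAlg_R^{\an, \surj, \leq n} = \mathcal{P}_\Sigma (\Poly_R^{\surj, \leq n})$. Write $j$ for the functor $\Arr^{\st}(\Vect_R^\omega) \to \Poly_R^{\surj, \leq n}$ of \cref{construction-forgetsurj}, which sends $(V \to V \oplus W)$ to $P$. Since $F$ is restriction along $j$, for any $X \in \CAlg_R^{\an, \surj, \leq n}$ we get
\begin{align*}
	\Map_{\Fun(\Delta^1, \Mod_{R, \geq 0})} \left( (V \to V \oplus W), F(X) \right) \simeq F(X)(V \to V \oplus W) \simeq X(j(V \to V \oplus W)) \simeq \Map (P, X).
\end{align*}
The first equivalence is Yoneda in $\mathcal{P}_\Sigma (\Arr^{\st}(\Vect_R^\omega)) \simeq \Fun(\Delta^1, \Mod_{R, \geq 0})$ (\cite{mao}, Corollary 2.13). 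Here the standard arrow $(V \to V \oplus W)$ is the representable presheaf, and it corresponds to the honest arrow of modules $V \to V \oplus W$. The last equivalence is Yoneda in $\mathcal{P}_\Sigma(\Poly_R^{\surj, \leq n})$.

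All of these identifications are natural in $X$. So $P$ corepresents $\Map((V \to V \oplus W), F(-))$, which is exactly the universal property of the free algebra $\mathrm{Free}(V \to V \oplus W)$ for the left adjoint of $F$. By \cref{prop-surjmonadic} this left adjoint exists, and $F$ is monadic, so free algebras for the monad $F \circ \mathrm{Free}$ are the values of the left adjoint. Equivalently, the left adjoint of $F$ is the left Kan extension of $j$ followed by the Yoneda embedding, so it sends representables to representables.

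The only point needing care is checking that the unit is the expected map. The unit $(V \to V \oplus W) \to F(P)$ corresponds under the first equivalence to $\id_P$. By the explicit formula in \cref{prop-surjmonadic}, this is the inclusion of $V$ into $\bigoplus_{0<i\leq n}\LSym^i(V) \otimes \LSym^*(W)$ and of $V \oplus W$ into $\LSym^{\leq n}(V)\otimes\LSym^*(W)$, i.e.\ the evident generators. I expect no real obstacle. The one thing to verify is that the Yoneda identification in $\Fun(\Delta^1,\Mod_{R,\geq 0})$ really matches the representable presheaf on $(V \to V\oplus W)$ with that arrow of modules, which is part of the statement of Mao's equivalence.
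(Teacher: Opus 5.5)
Your argument is correct, and it reaches the statement by a somewhat more direct route than the paper.

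The paper argues as follows:
\begin{itemize}
\item It notes that both $\Map(P,-)$ and $\Map((V \to V \oplus W), F(-))$ commute with sifted colimits in the target, since the two sources are compact projective and $F$ preserves sifted colimits.
\item It then reduces to targets in $\Poly_R^{\surj, \leq n}$.
\item There it calls the remaining comparison clear. Concretely, that is the statement that a map of nilpotent surjections out of $P$ is the same as a module map $V \to \fib(A \to B)$ together with a module map $W \to A$.
\end{itemize}

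You instead use that $F$ is, by construction, precomposition with $j$. The Yoneda lemma in $\mathcal{P}_\Sigma(\Poly_R^{\surj,\leq n})$ and in $\mathcal{P}_\Sigma(\Arr^{\st}(\Vect_R^\omega))$ then gives the equivalence for every target $X$ at once. This is the general fact that the left adjoint $j_!$ of restriction carries representables to representables.

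Your version needs no reduction to generators, and naturality in $X$ is automatic. The paper's reduction, by contrast, implicitly requires its identification on $\Poly_R^{\surj, \leq n}$ to be natural so that it extends along sifted colimits. What the paper's version gains is that it phrases the right-hand side through the concrete arrow $\fib(A \to B) \to A$, which ties the lemma directly to the explicit formula of \cref{prop-surjmonadic}.

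Your closing identification of the unit is a consistency check, not a step the proof depends on.
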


\begin{proof}
	Let $(A \twoheadrightarrow B) \in \CAlg_R^{\an, \surj,\leq n}$. Then, we just need to establish an equivalence
	\begin{align*}
		\Map_{\CAlg_R^{\an, \surj, \leq n}} (\LSym^{\leq n} (V) \otimes_R \LSym^* (W) \to \LSym^* (W), A \to B) \simeq \Map_{\Fun (\Delta^1, \Mod_{R,	\geq 0})} (V \to V \oplus W, I \to A)
	\end{align*}
	where $I := \fib (A \to B)$.

	Notice that both sides commute with sifted colimits in the second variable due to the relevant objects being compact projective, so we only need to show this for $(A \twoheadrightarrow B) \in \Poly_R^{\surj,\leq n}$, and then it is clear.
\end{proof}

\begin{notation}
	We will abuse notation and write $\LSym^{\leq n}_R : \Fun(\Delta^1, \Mod_{R, \geq 0}) \to \Fun (\Delta^1, \Mod_{R, \geq 0})$ for the monad coming from \cref{prop-surjmonadic}, and we will write the free algebra on an arrow $V \to W$ as $\LSym^{\leq n}_R (V \to W)$. Notice that with this notation, for a standard arrow $V \to V \oplus W$ in $\Vect_R^\omega$,
	\begin{align*}
		\LSym^{\leq n}_R (V \to V \oplus W) \simeq (\LSym_R^{\leq n} (V) \otimes_R \LSym_R^* (W) \xrightarrow{V \mapsto 0} \LSym_R^* (W))
	\end{align*}
\end{notation}

\begin{lemma}
	\label{lemma-animatedsubcat}
	There is a fully faithful embedding $\CAlg^{\an}_R \to \CAlg_R^{\an, \surj, \leq n}$ given by the sifted-colimit-preserving extension of
	\begin{align*}
		\LSym_R^* (V) \mapsto \LSym^{\leq n}_R (0 \to V)
	\end{align*}
	for $V \in \Vect_R^\omega$. Its essential image are those $(A \twoheadrightarrow B)$ such that the map $A \to B$ is an equivalence of chain complexes.

	Additionally, for any $A \twoheadrightarrow B \in \CAlg_R^{\an, \surj, \leq n}$ and $C \xrightarrow{\simeq} C \in \CAlg_R^{\an} \subset \CAlg_R^{\an, \surj, \leq n}$ there is an equivalence of arrows of chain complexes
	\begin{align*}
		(A \twoheadrightarrow B) \sqcup (C \xrightarrow{\simeq} C) \simeq (A \otimes_R C \twoheadrightarrow B \otimes_R C)
	\end{align*}
\end{lemma}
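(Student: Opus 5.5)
The plan is to reduce everything to compact projectives and then extend along sifted colimits, in three steps.

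\emph{Step 1: the functor.} Write $j : \CAlg_R^{\an} \to \CAlg_R^{\an, \surj, \leq n}$ for the sifted-colimit-preserving extension of $\LSym_R^*(V) \mapsto \LSym^{\leq n}_R(0 \to V)$. On polynomial algebras the assignment is fully faithful, because both sides are free. By \cref{lemma-projfree},
\begin{align*}
\Map(\LSym^{\leq n}_R(0 \to V), \LSym^{\leq n}_R(V' \to V' \oplus W')) \simeq \Map_{\Fun(\Delta^1, \Mod_{R,\geq 0})}(0 \to V, V'' \to A')
\end{align*}
where $V'' = \bigoplus_{0<i\leq n}\LSym^i(V')\otimes\LSym^*(W')$ and $A'$ is the source ring (\cref{prop-surjmonadic}). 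The right-hand side is $\Map_{\Mod_R}(V, A')$. Specialising to $V' = 0$ gives $\Map_{\Mod_R}(V, \LSym^*(W')) \simeq \Map_{\CAlg_R^{\an}}(\LSym^*(V), \LSym^*(W'))$. Since $\CAlg_R^{\an} = \mathcal{P}_\Sigma(\Poly_R)$ and the objects $\LSym^{\leq n}_R(0 \to V)$ are compact projective in $\CAlg_R^{\an, \surj, \leq n}$, the standard argument (\cite{HTT}, Proposition 5.5.8.22) shows that $j$ is fully faithful and preserves all colimits. Moreover, $j$ admits the right adjoint $(A \twoheadrightarrow B) \mapsto A$ from the computation above: $\Map(j(\LSym^*V), -) \simeq \Map_{\Mod_R}(V, A)$. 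This endows $A$ with an animated ring structure, compatible with \cref{rem-pairs}.

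\emph{Step 2: the essential image.} If $X$ lies in the image of $j$, then $F(j(C)) = (0 \to C)$, because $F$ preserves sifted colimits and this holds on generators. So $I \simeq 0$, and $A \to B$ is an equivalence. Conversely, suppose $I = \fib(A \to B) \simeq 0$. I would show that the counit $j(A) \to X$ is an equivalence by applying the conservative functor $F$ (\cref{prop-surjmonadic}). We have $F(j(A)) = (0 \to A)$ and $F(X) = (I \to A)$, and the counit induces the identity on sources $A$, by the adjunction identification. The main obstacle is checking this last point, namely that the right adjoint of $j$ is compatible with the second component of $F$. I would verify it on compact projectives, where it is explicit, and extend using sifted colimits. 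Here the right adjoint preserves sifted colimits, since it is computed by the sifted-colimit-preserving functor $F$.

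\emph{Step 3: the coproduct formula.} Both sides commute with sifted colimits in each variable separately. The left side does because coproducts do and $j$ preserves colimits. The right side does because $F$, the target functor (\cref{construction-targetring}) and $\otimes_R$ all commute with sifted colimits. It therefore suffices to treat $X = \LSym^{\leq n}(V \to V\oplus W)$ and $C = \LSym^*(U)$. The coproduct of free objects is free on the sum of arrows $(V \to V \oplus W) \oplus (0 \to U) = (V \to V \oplus W \oplus U)$. This free object is $\LSym^{\leq n}(V)\otimes\LSym^*(W)\otimes\LSym^*(U) \to \LSym^*(W)\otimes\LSym^*(U)$, which is exactly $(A\otimes_R C \twoheadrightarrow B\otimes_R C)$. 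Naturality in both variables follows from the construction, which gives the stated equivalence of arrows of chain complexes.
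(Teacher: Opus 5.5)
Your proposal is correct. Steps 1 and 3 follow the paper's proof. Full faithfulness comes from the compact-projective criterion of HTT, Proposition 5.5.8.22; you also verify its hypotheses via \cref{lemma-projfree}, which the paper leaves implicit. The tensor formula is reduced to generators, where it amounts to the free functor $\LSym^{\leq n}_R$ preserving coproducts.

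Step 2 takes a different route. The paper uses the bar resolution. If $F(X) \simeq (0 \to A)$, then every term of the bar resolution for the monad of \cref{prop-surjmonadic} is free on an arrow with zero source, because the monad sends $0 \to M$ to $0 \to \LSym^*_R(M)$. Each term therefore lies in the image of $j$, and that image is closed under geometric realisations. You instead use the coreflection $j \dashv j^R$, where $j^R$ is the right adjoint, together with $\forget \circ j^R \simeq \ev_1 \circ F$ (with $\forget \colon \CAlg_R^{\an} \to \Mod_{R,\geq 0}$) and conservativity of $F$. This needs a little more setup but gives more. It shows $j$ is coreflective, and it names the preimage of $X$ explicitly as $j^R(X)$, which is the animated ring structure on $A$ promised in \cref{rem-pairs}. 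The paper's route is shorter and only needs the observation about arrows with zero source.

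One simplification: the point you flag as the main obstacle need not be checked on compact projectives. That check is also less explicit than you suggest. $j^R$ of a generator $\LSym^{\leq n}_R(V \to V \oplus W)$ is the truncated polynomial ring $\LSym^{\leq n}_R(V) \otimes_R \LSym^*_R(W)$, which is not one of the generators of $\CAlg_R^{\an}$. Instead, argue directly:
\begin{itemize}
\item Since $j$ is fully faithful, the unit of $j \dashv j^R$ is an equivalence.
\item The triangle identity then makes $j^R(\epsilon_X)$ an equivalence for every $X$.
\item Applying $\ev_1 \circ F \simeq \forget \circ j^R$ shows that the second component of $F(\epsilon_X)$ is an equivalence.
\end{itemize}
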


\begin{proof}
	The functor being fully faithful is a direct consequence of (\cite{HTT}, Proposition 5.5.8.22). Now suppose $(A \xrightarrow{\simeq} A) \in  \CAlg_R^{\an, \surj, \leq n}$. By its bar resolution, we can write it as a geometric realisation of objects in $\CAlg_R ^{\an} \subset \CAlg_R^{\an, \surj, \leq n}$.

	For the part about coproducts, since the forgetful functor commutes with sifted colimits, it is enough to show it for $C \simeq \LSym^*_R (U)$ and $(A \twoheadrightarrow B) \simeq \LSym^{\leq n}_R (V \to V \oplus W)$ for $U,V,W \in \Vect_R^\omega$. This follows from the fact that $\LSym^{\leq n}_R : \Fun (\Delta^1, \Mod_{R, \geq 0} ) \to \CAlg_R^{\an, \surj, \leq n}$ commutes with coproducts.
\end{proof}

\begin{prop}
	\label{prop-surjheart}
	Let $(\CAlg_R^{\an, \surj, \leq n}) ^{\heartsuit} \subset \CAlg_R^{\an, \surj, \leq n}$ be the full subcategory consisting of those $A \twoheadrightarrow B$ such that $A,B \in \Mod_R^\heartsuit$. Then, $(\CAlg_R^{\an, \surj, \leq n})^\heartsuit$ is equivalent to the full subcategory of $\Fun (\Delta^1, \CAlg_R^\heartsuit)$ consisting of those arrows $A \to B$ that are surjective in $\pi_0$ and the kernel ideal is $(n+1)$-nilpotent.
\end{prop}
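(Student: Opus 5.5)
We identify the heart with models of a Lawvere-type algebraic theory. By definition $\CAlg_R^{\an, \surj, \leq n} = \mathcal{P}_\Sigma(\Poly_R^{\surj, \leq n})$. Its discrete objects are then the finite-product-preserving functors $(\Poly_R^{\surj, \leq n})^{\op} \to \Set$, i.e.\ the $\pi_0$-truncated objects. First we check that this agrees with the stated condition $A, B \in \Mod_R^\heartsuit$. By \cref{prop-surjmonadic}, $F$ is computed pointwise and sends a functor $X$ to the arrow $I \to A$ whose underlying spaces are $X$ evaluated at the objects $\LSym^{\leq n}_R(R \to R)$ and $\LSym^{\leq n}_R(0 \to R)$. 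These values are discrete precisely when $I$ and $A$ are discrete. Since the arrow $I \to A$ is a monomorphism in the discrete setting, this matches the condition on $A$ and $B = \cofib(I \to A)$, up to a small check using the long exact sequence. The point to verify is that $I \to \pi_0$-discreteness of $A,B$ forces discreteness of $I$ and injectivity of $I \to A$. This should follow from the animated-surjection property: $\pi_0 A \to \pi_0 B$ is surjective, because the target ring functor and $F$ agree on generators, where the map is surjective.

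Next we identify the 1-category $\mathcal{P}_\Sigma^{\Set}(\Poly_R^{\surj,\leq n})$ with the category $\catC$ of pairs $(A, I)$, where $A$ is a discrete commutative $R$-algebra and $I \subset A$ is an ideal with $I^{n+1} = 0$. There is a restricted Yoneda functor
\[
\catC \to \Fun((\Poly_R^{\surj,\leq n})^{\op}, \Set), \qquad (A,I) \mapsto \Hom_\catC(-, (A,I)),
\]
using that $\Poly_R^{\surj,\leq n}$ embeds fully faithfully into $\catC$. This functor preserves finite products, since coproducts in $\Poly$ are tensor products of pairs. A map from $\LSym^{\leq n}_R(V)\otimes \LSym^*_R(W)$ to $(A,I)$ that respects ideals is exactly a choice of elements of $I$ for the basis of $V$ and elements of $A$ for the basis of $W$. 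Here the relation $V^{\otimes n+1}\mapsto 0$ holds automatically because $I^{n+1}=0$. So the free objects on the two-sorted signature (sort $I$ and sort $A$) in $\catC$ are exactly the objects of $\Poly$. Hence $\catC$ is the category of models of the two-sorted algebraic theory $\Poly^{\op}$, provided $\catC$ is a variety of algebras for this two-sorted signature. This holds because $(A,I)$ can be described by operations (ring structure, $R$-action, the $A$-module structure on $I$, the inclusion $I\to A$) and equations ($n+1$-fold products in $I$ vanish). The inclusion $I \to A$ being injective is not equational, which is the main obstacle.

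To handle injectivity, we argue with models of the theory. A general model $M$ gives a pair of sets $M_I = M(R\to R)$ and $M_A = M(0\to R)$, together with all operations. We then show that the map $M_I \to M_A$ is automatically injective and that $M_I$ is an ideal. The natural attempt is to use the quotient $B := M_A/\mathrm{im}(M_I)$ and exhibit $M_I$ as the kernel, which requires an operation recovering $x\in I$ from its image. That operation does not exist, so injectivity genuinely fails for arbitrary set-valued models, e.g.\ $M_I = R$, $M_A = 0$.

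The fix is to restrict to the subcategory cut out by the heart condition. "$A,B \in \Mod_R^\heartsuit$" means that $\cofib(I\to A)$ is discrete. For a map of discrete modules, this cofibre is discrete exactly when $I \to A$ is injective, since $\pi_1 \cofib = \ker$. So the heart is the full subcategory of models with $I \hookrightarrow A$. On that subcategory the functor $\catC \to$ models is an equivalence: it is fully faithful by the description of homs via generators, and essentially surjective since such a model is precisely an ideal $I\subset A$ closed under the operations with $I^{n+1}=0$. Passing to $B = A/I$ gives the description as nilpotent surjections $A\twoheadrightarrow B$, completing the identification.
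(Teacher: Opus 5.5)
Your argument is correct and is essentially the paper's proof. Both use the restricted Yoneda functor along $\Poly_R^{\surj,\leq n}\subset\catC$, check full faithfulness on the generators $(R[x]\to R[x])$ and $(R[x]/x^{n+1}\to R)$, and recognise heart objects as set-valued product-preserving functors whose values at those generators recover $A$ and $I$. Your explicit identification of the heart with the models whose map $I\to A$ is injective (this follows from the long exact sequence of $I\to A\to B$ because $I$ is connective) makes precise a point the paper leaves implicit.

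Two side remarks need fixing. First, $M_I=R$, $M_A=0$ is not a model: $1=0$ in $M_A$ forces $m=1\cdot m=0\cdot m=0$ for all $m\in M_I$. A genuine non-injective model is $M_A=M_I=R$ with the zero map, whose cofibre $R\oplus R[1]$ is not discrete. Second, your opening claim that $I\to A$ is automatically a monomorphism contradicts this and should be dropped.
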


\begin{proof}
	Let $\catC \subset \Fun(\Delta^1, \CAlg_R^\heartsuit)$ be the full subcategory consisting of arrows $A \to B$ that are surjective and the kernel ideal is $(n+1)$-nilpotent.

	Then, the restriction of the Yoneda embedding gives us a functor $h: \catC \to \CAlg_R^{\an, \surj, \leq n}$. That is, $(A \to B) \mapsto \Hom_{\Fun (\Delta^1, \CAlg_R^\heartsuit)} (-, A \to B)$. We first show that it is faithful. Consider two maps $f,g: (A \to B) \to (C \to D)$ in $\catC$, which we can think of as maps $f,g: A \to C$ preserving the kernel ideal. If these two maps are not the same, there is some $a \in A$ such that $f(a) \neq g(a)$.

	We have a natural identification $h(A \to B) (R[x] \to R[x]) \cong A$ and $h(C \to D) (R[x] \to R[x]) \cong C$. The natural transformations $h(f)$ and $h(g)$ at the element $(R[x] \to R[x])$ simply reduce to the maps $f,g$, so it follows that $h(f) \neq h(g)$. Hence, $h$ is faithful.

	We will now show that it is full. Suppose we have a natural transformation $F: h(A \to B) \to h(C \to D)$. As before, $F$ at the value $(R[x] \to R[x])$ induces a map $f: A \to C$. Compatibility with the map $(R[x] \to R[x]) \to (R[x]/x^{n+1} \to R)$ ensures that this map preserves the kernel ideal. Additionally, compatibility with the maps $(R[x] \to R[x]) \xrightarrow{x \mapsto y +z} (R[y,z] \to R[y,z])$ and $(R[x] \to R[x]) \xrightarrow{x \mapsto yz} (R[y,z] \to R[y,z])$ implies that this is a ring map. It easily follows that $h(f) = F$.

	Finally, we prove that $h$ is essentially surjective on the category of objects $A \twoheadrightarrow B$ such that $A$ and $B$ are discrete. We first show that the object $A \twoheadrightarrow B$ in $\CAlg_R^{\an, \surj, \leq n}$ is $0$-truncated. By the bar resolution, it suffices to show that $\Map_{\Poly_R^{\surj, \leq n}} (\LSym^{\leq n} (V \to W), (A \to B))$ is $0$-truncated, since $0$-truncated spaces are closed under limits. But this is clear since $I \to A$ is $0$-truncated in $\Fun(\Delta^1, \Mod_{R, \geq 0})$, where $I = \ker (A \to B)$.

	Now consider $(A \to B)$ as a functor $F: (\Poly_R^{\surj, \leq n})^{\op} \to \Space$. Notice that for a standard arrow $V \to V \oplus W$, we have a natural identification $F(\LSym^{\leq n} (V \to V \oplus W)) \simeq \Map_{\CAlg_R^{\an, \surj, \leq n}} (\LSym^{\leq n} (V \to V \oplus W), (A \to B))$. Since $(A \to B)$ is $0$-truncated, $F$ is a functor whose image is in $\Set \subset \Space$.

	By \cref{construction-forgetsurj}, we can also identify $A \cong F(R[x] \to R[x])$ and $I \cong F(R[x]/x^{n+1} \to R)$, where $I = \fib (A \to B)$. Compatibility with the maps in $\Poly_R^{\surj, \leq n}$ endows $A$ with the structure of an $R$-algebra and $I$ with the structure of an $(n+1)$-nilpotent ideal. One can then easily check that this is precisely the Yoneda embedding of the constructed ring map.
\end{proof}

\begin{defi}
	Let $S \in \CAlg_R^{\an}$. We define the category $\CAlg_{R/ /S}^{\an, \surj, \leq n}$ by the pullback diagram
	\[
		\begin{tikzcd}
			\CAlg_{R/ /S}^{\an, \surj, \leq n} \arrow{r} \arrow{d} & * \arrow{d}{* \mapsto S} \\
			\CAlg_R^{\an, \surj, \leq n} \arrow{r}{\target} & \CAlg_R^{\an}
		\end{tikzcd}
	\]
	Informally, it is the subcategory of $\CAlg_R^{\an, \surj, \leq n}$ consisting of arrows of the form $A \twoheadrightarrow S$ and maps that reduce to the identity on the target ring.
\end{defi}

As a special case, $S = R$ will give a useful subcategory.

\begin{prop}
	\label{prop-nonunital}
	The functor $\CAlg_{R/ / R}^{\an, \surj, \leq n} \to \Mod_{R, \geq 0}$ given by
	\begin{align*}
		(A \twoheadrightarrow R) \mapsto \fib (A \to R)
	\end{align*}
	is monadic. The relevant monad is given by the sifted-colimit-preserving monad given on $\Vect_R^\omega$ by
	\begin{align*}
		V \mapsto \bigoplus_{0 < i \leq n} V^{\otimes i}_{\Sigma _i}
	\end{align*}
\end{prop}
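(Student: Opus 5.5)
The plan is to realise $\CAlg_{R/ /R}^{\an, \surj, \leq n}$ itself as a projectively generated category and to apply the crude Barr--Beck--Lurie theorem (\cite{HA}, Theorem 4.7.0.3), exactly as in \cref{prop-surjmonadic}. The functor $G : (A \twoheadrightarrow R) \mapsto \fib (A \to R)$ is the composite of the inclusion $\CAlg_{R/ /R}^{\an, \surj, \leq n} \to \CAlg_R^{\an, \surj, \leq n}$ with $F$ from \cref{construction-forgetsurj}, followed by evaluation at the source of the arrow. $F$ preserves sifted colimits by \cref{prop-surjmonadic}. The target functor of \cref{construction-targetring} also preserves sifted colimits. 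Since sifted simplicial sets are weakly contractible, the fibre over $R$ is closed under sifted colimits in $\CAlg_R^{\an, \surj, \leq n}$, and these are computed as in the ambient category. Hence $G$ preserves sifted colimits.

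Next I produce a left adjoint on $\Vect_R^\omega$ and extend it. For $V \in \Vect_R^\omega$, set $\free(V) := \LSym^{\leq n}_R (V \to V) = (\LSym_R^{\leq n}(V) \twoheadrightarrow R)$, which is the case $W = 0$ of \cref{lemma-projfree}. For any $(A \twoheadrightarrow R)$ with $I = \fib(A \to R)$, I compute maps out of $\free(V)$ in the fibre category. These are maps in $\CAlg_R^{\an, \surj, \leq n}$, that is, maps of arrows $V \to V$ into $I \to A$ by \cref{lemma-projfree}, together with an identification of the induced target map $R \to R$ with the identity. By \cref{construction-targetring}, the target functor sends $\free(V)$ to $R = \LSym^*_R(0)$, so the target-ring datum is contractible. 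A map of arrows $(V \xrightarrow{\id} V) \to (I \to A)$ is the same as a map $V \to I$. Thus $\Map(\free(V), A \twoheadrightarrow R) \simeq \Map_{\Mod_R}(V, I)$. By the usual argument for $\mathcal P_\Sigma$, the adjoint extends to all of $\Mod_{R, \geq 0} = \mathcal P_\Sigma(\Vect_R^\omega)$ as a sifted-colimit-preserving left adjoint.

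For conservativity, I use that every object of the fibre category is a geometric realisation of free objects $\free(V)$. The hardest point is checking that such a resolution can be taken inside the fibre, rather than only in $\CAlg_R^{\an, \surj, \leq n}$, where generators $\LSym^{\leq n}(V \to V \oplus W)$ have nontrivial target. I would first try to run the bar construction for the adjunction $\free \dashv G$, which stays in the fibre because $\free$ lands there. I would then show that $G$ detects equivalences directly. A map in the fibre induces the identity on targets. By \cref{prop-surjmonadic}, $F$ is conservative, and $F$ records $I \to A$. Given that $B \simeq \cofib(I \to A) \simeq R$ is fixed, an equivalence on $I$ forces one on $A$, since $A \simeq R \oplus I$ via the unit section $R \to A$. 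So $G$ is conservative, and Barr--Beck--Lurie gives monadicity.

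Finally, I identify the monad on $\Vect_R^\omega$. By \cref{prop-surjmonadic} with $W = 0$, $G(\free(V)) = \bigoplus_{0 < i \leq n} \LSym^i_R(V)$. For $V$ finite free, $\LSym^i_R(V)$ is the classical symmetric power $V^{\otimes i}_{\Sigma_i}$, which is discrete. Because the monad preserves sifted colimits, it is the sifted-colimit-preserving extension of this functor.
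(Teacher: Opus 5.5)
Your proposal is correct and follows the same route as the paper. Like the paper, you deduce monadicity from the crude Barr--Beck--Lurie theorem via preservation of sifted colimits and conservativity, and you identify the monad by recognising $\LSym^{\leq n}_R (V \to V)$ as the free algebra on $V$ through \cref{lemma-projfree} (the case $W = 0$). You also make explicit what the paper leaves implicit, namely closure of the fibre under sifted colimits, existence of the left adjoint, and conservativity via the cofibre sequence $I \to A \to R$; the tentative bar-construction detour is unnecessary, since your direct argument already suffices.
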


\begin{proof}
	The functor is clearly conservative and preserves sifted colimits, so it is monadic. For the last part, we can use \cref{lemma-projfree} to identify $\LSym^{\leq n}_R (V \to V)$ with the free algebra on $V$.
\end{proof}

\begin{notation}
	We will write $\LSym_R^{\nu, \leq n}: \Mod_{R, \geq 0} \to \Mod_{R, \geq 0}$ for the resulting monad. Sometimes, we will abuse notation and write $\LSym_R^{\nu, \leq n} (V)$ for the image of the free algebra on $V$ in the category $\CAlg_R^{\an, \surj, \leq n}$. In other words, $\LSym_R^{\nu, \leq n} (V) \simeq \LSym_R^{\leq n} (V \to V)$.
\end{notation}

\subsection{Polynomial truncation}

\begin{construction}[Polynomial truncation functors]
	\label{construction-polyomialtruncation}
	Let $m \leq n$. Then, there is a functor $t_{\leq m} ^n : \Poly_R^{\surj, \leq n} \to \Poly_R^{\surj, \leq m}$ given by
	\begin{align*}
		\LSym^{\leq n}_R (V \to V \oplus W) \mapsto \LSym^{\leq m} _R (V \to V \oplus W)
	\end{align*}
	Informally, for an $(n+1)$-nilpotent surjection $A \twoheadrightarrow B$ with kernel ideal $I$, $t_{\leq m} ^n (A \twoheadrightarrow B) = (A/I^{m+1} \to B)$.

	Notice that by left Kan extending we get a colimit-preserving functor $t_{\leq m} ^n : \CAlg_R^{\an, \surj, \leq n} \to \CAlg_R^{\an, \surj, \leq m}$.

	We will write its right adjoint as $\iota_m ^n : \CAlg_R^{\an, \surj, \leq m} \to \CAlg_R^{\an, \surj, \leq n}$. More explicitly, if $X \in \CAlg_R^{\an, \surj, \leq m}$, we can write it as a finite-product-preserving functor $X: (\Poly_R^{\surj, \leq m})^{\op} \to \Space$. Then, $\iota_m^n(X) : (\Poly_R^{\surj, \leq n })^{\op} \to \Space$ is the composition of $X$ with $t_{\leq m}^n : \Poly_R^{\surj, \leq n} \to \Poly_R^{\surj, \leq m}$.

	Notice that sifted colimits of product-preserving functors are computed pointwise, so the functor $\iota_m^n$ commutes with sifted colimits.
\end{construction}

\begin{rem}
	\label{rem-iotapreservesmodule}
	Consider the functor $\Arr^{\st} (\Vect_R^\omega) \to \CAlg_R^{\an, \surj, \leq n}$ from \cref{construction-forgetsurj}. Then, there is a commutative diagram
	\[
		\begin{tikzcd}
			\Arr^{\st} (\Vect_R^\omega) \arrow{dr} \arrow{r} & \CAlg_R^{\an, \surj, \leq n} \arrow{d}{t_{\leq m}^n} \\
																			 & \CAlg_R^{\an, \surj, \leq m}
		\end{tikzcd}
	\]
	By the definition of these functors, this implies that there is a commutative diagram
	\[
		\begin{tikzcd}
			\CAlg_R^{\an, \surj, \leq m} \arrow{r}{\iota_m ^n} \arrow{dr}[swap]{\forget} & \CAlg_R^{\an, \surj, \leq n} \arrow{d}{\forget} \\
																									& \Fun (\Delta^1, \Mod_{R, \geq 0})
		\end{tikzcd}
	\]
	So the functor $\iota_m^n$ preserves the underlying arrow of modules. Due to this, when it is clear from context, we will omit the functor $\iota_m^n$. That is, given $(A \twoheadrightarrow B) \in \CAlg_R^{\an, \surj, \leq m}$, we will write $(A \twoheadrightarrow B) \in \CAlg_R^{\an, \surj, \leq n}$ for $\iota_m^n (A \twoheadrightarrow B)$.
\end{rem}

\begin{prop}
	\label{prop-iotamonadic}
	Let $m \leq n$. The functor $\iota_m^n$ is monadic.
\end{prop}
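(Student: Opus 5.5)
We apply the crude Barr--Beck--Lurie theorem (\cite{HA}, Theorem 4.7.0.3), as in \cref{prop-surjmonadic}. Three things must be checked: $\iota_m^n$ has a left adjoint, preserves sifted colimits (so in particular geometric realisations of $\iota_m^n$-split simplicial objects), and is conservative.

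The first two points are already in \cref{construction-polyomialtruncation}. The left adjoint is the polynomial truncation $t_{\leq m}^n$, obtained by left Kan extension. Because $\iota_m^n$ is precomposition with $t_{\leq m}^n : \Poly_R^{\surj, \leq n} \to \Poly_R^{\surj, \leq m}$, it commutes with sifted colimits: in both $\mathcal{P}_\Sigma(\Poly_R^{\surj, \leq m})$ and $\mathcal{P}_\Sigma(\Poly_R^{\surj, \leq n})$, sifted colimits are computed pointwise in $\Fun(-^{\op}, \Space)$.

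For conservativity we use \cref{rem-iotapreservesmodule}: the triangle there gives $\forget \circ \iota_m^n \simeq \forget$, where $\forget : \CAlg_R^{\an, \surj, \leq m} \to \Fun(\Delta^1, \Mod_{R, \geq 0})$ is the functor $F$ of \cref{construction-forgetsurj}. By \cref{prop-surjmonadic}, $F$ is monadic and hence conservative. So let $f$ be a map in $\CAlg_R^{\an, \surj, \leq m}$ with $\iota_m^n(f)$ an equivalence. Then $F(\iota_m^n(f)) \simeq F(f)$ is an equivalence, and therefore $f$ is an equivalence.

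Alternatively, one can argue directly from essential surjectivity. The functor $t_{\leq m}^n$ is essentially surjective onto $\Poly_R^{\surj, \leq m}$, since every generator $\LSym_R^{\leq m}(V \to V \oplus W)$ is the image of $\LSym_R^{\leq n}(V \to V \oplus W)$. Hence precomposition with $t_{\leq m}^n$ detects pointwise equivalences of presheaves. Either argument gives conservativity, and Barr--Beck--Lurie then shows $\iota_m^n$ is monadic.

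The only point needing care is the commutativity of the triangle in \cref{rem-iotapreservesmodule}, which underlies the first conservativity argument. It reduces to checking that the composite $\Arr^{\st}(\Vect_R^\omega) \to \Poly_R^{\surj, \leq n} \xrightarrow{t_{\leq m}^n} \Poly_R^{\surj, \leq m}$ agrees with the functor $\Arr^{\st}(\Vect_R^\omega) \to \Poly_R^{\surj, \leq m}$. This holds on objects and morphisms directly from the definitions. In any case, the essential-surjectivity argument avoids this point entirely.
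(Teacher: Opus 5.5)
Your proof is correct and is essentially the paper's argument. It uses the crude Barr--Beck--Lurie theorem: $\iota_m^n$ preserves sifted colimits because it is precomposition with $t_{\leq m}^n$, and it is conservative because it commutes with the conservative forgetful functor to $\Fun(\Delta^1, \Mod_{R,\geq 0})$. Your alternative conservativity argument, via essential surjectivity of $t_{\leq m}^n$ on the generators, is also valid; it mirrors how the paper proves conservativity of that forgetful functor itself.
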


\begin{proof}
	By \cref{rem-iotapreservesmodule}, $\iota_m^n$ preserves the underlying arrow in $\Mod_{R, \geq 0}$. Since the forgetful functor is conservative, $\iota_m^n$ is conservative. It also commutes with all sifted colimits, so by the crude version of the Barr-Beck-Lurie Theorem (\cite{HA}, Theorem 4.7.0.3) we are done.
\end{proof}

\begin{rem}
	In particular, setting $n = \infty$, we have a monadic functor $\iota_m^\infty: \CAlg_R^{\an, \surj, \leq m} \to \CAlg_R^{\an, \surj, \leq \infty}$. The latter is just the category of animated surjections, supporting our earlier claim that an object $A \twoheadrightarrow B$ in $\CAlg_R^{\an, \surj, \leq m}$ is just an animated surjection with $(m+1)$-nilpotent kernel ideal and extra structure accounting for the coherence conditions of the nilpotence.
\end{rem}

\begin{rem}
	The functors $\iota_m^n: \CAlg_R^{\an, \surj, \leq m} \to \CAlg_R^{\an, \surj, \leq n}$ preserve the underlying modules, so by \cref{prop-surjheart} they restrict to functors $\iota_m^n: (\CAlg_R^{\an, \surj, \leq m})^\heartsuit \to (\CAlg_R^{\an, \surj, \leq n})^\heartsuit$. 

	This corresponds to the functor sending a surjective map of rings $(A \twoheadrightarrow B)$ with $(m+1)$-nilpotent kernel ideal to itself; that is, an $(m+1)$-nilpotent ideal is necessarily $(n+1)$-nilpotent.
\end{rem}

\subsection{Filtered and graded animated nilpotent surjections}

Fix a discrete ring $R$. We will construct appropriate filtered and graded analogues of $\CAlg_R^{\an, \surj, \leq n}$.

\begin{rem}
	Let $\Fil_{\free} \Mod_{R, \geq 0} \subset \Fil \Mod_{R, \geq 0}$ be the full subcategory generated under coproducts by objects of the form $R^{\leq i}$ where $F^j R^{\leq i} \simeq 0$ for $j > i$ and $F^j R^{\leq i} \simeq R$ for $j \leq i$. In other words, $\Fil_{\free} \Mod_{R, \geq 0}$ is the subcategory of finitely generated free filtered $R$-modules where each generator is of a certain filtered weight. By (\cite{mao}, Corollary 2.13), $\Fil \Mod_{R, \geq 0} \simeq \mathcal{P}_\Sigma (\Fil_{\free} \Mod_{R, \geq 0})$.
\end{rem}

\begin{notation}
	We will write $\Fil_{\free}^0 \Mod_{R, \geq 0} \subset \Fil_{\free} \Mod_{R, \geq 0}$ for the full subcategory of finite free modules in filtered weight $0$.

	We will write $\Fil_{\free}^{\geq 1} \Mod_{R, \geq 0} \subset \Fil_{\free} \Mod_{R, \geq 0}$ for the full subcategory of finite free modules all of whose generators are in filtered weight $ > 0$. Note that this is spanned under coproducts by objects of the form $R^{\leq i}$ where $i > 0$.
\end{notation}

\begin{defi}
	Let $n > 0$. We will write $\Fil \Poly_R^{\leq n}$ for the category of filtered rings of the form $\LSym_R^{\leq n} (V) \otimes_R \LSym_R^* (W)$, where $V \in \Fil_{\free} ^{\geq 1} \Mod_{R, \geq 0}$ and $W \in \Fil_{\free}^0 \Mod_{R, \geq 0}$.

	The category of \emph{animated filtered $(n+1)$-nilpotent surjections} is
	\begin{align*}
		\Fil\CAlg_R^{\an, \surj, \leq n} := \mathcal{P}_\Sigma (\Fil\Poly_R^{\leq n})
	\end{align*}
\end{defi}

We do not include a proof of the following since they are exactly the same as in \cref{subsection-basicproperties}.

\begin{prop}
	Let $n > 0$.
	\begin{enumerate}
		\item The sifted-colimit-preserving extension of the forgetful functor $\Fil\Poly_R^{\leq n} \to \Fil \Mod_{R, \geq 0}$ is monadic. We will write $\LSym_R^{\Fil,\leq n}$ for the resulting monad.
		\item Given $V \in \Fil_{\free}^{\geq 1} \Mod_{R, \geq 0}$ and $W \in \Fil_{\free}^0 \Mod_{R, \geq 0}$, we can identify $\LSym_R^{\leq n} (V) \otimes_R \LSym_R^* (W)$ with the free $\LSym_R^{\Fil, \leq n}$-algebra on $V \oplus W$.
	\end{enumerate}
\end{prop}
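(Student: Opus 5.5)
The plan is to copy the arguments of \cref{prop-surjmonadic} and \cref{lemma-projfree}, replacing $\Arr^{\st}(\Vect_R^\omega)$ by $\Fil_{\free}\Mod_{R,\geq 0}$.

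First I construct the forgetful functor. On $\Fil\Poly_R^{\leq n}$ it is given by sending $\LSym_R^{\leq n}(V)\otimes_R \LSym_R^*(W)$ to its underlying filtered module. There is a functor $\Fil_{\free}\Mod_{R,\geq 0}\to \Fil\Poly_R^{\leq n}$: any finite free filtered module splits uniquely as $V\oplus W$, with $W$ the weight-$0$ generators and $V$ the generators of positive weight, and it is sent to $\LSym_R^{\leq n}(V)\otimes_R\LSym_R^*(W)$. This splitting is functorial for maps of filtered modules because $F^1 W=0$, so a map out of $W$ lands anywhere while $V$ must map into positive filtration. The functor preserves finite coproducts, since coproducts in $\Fil\Poly_R^{\leq n}$ are tensor products and $\LSym^{\leq n}(V\oplus V')\simeq\LSym^{\leq n}(V)\otimes\LSym^{\leq n}(V')$ modulo the truncation, which is how the category is defined. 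Restriction of presheaves along it then gives
\begin{align*}
	G: \mathcal{P}_\Sigma(\Fil\Poly_R^{\leq n})\to \mathcal{P}_\Sigma(\Fil_{\free}\Mod_{R,\geq 0})\simeq \Fil\Mod_{R,\geq 0},
\end{align*}
using (\cite{mao}, Corollary 2.13) for the last equivalence. Evaluating on compact projectives, the underlying filtered module of $\LSym_R^{\leq n}(V)\otimes_R\LSym_R^*(W)$ is recovered as $\Map(\text{free on }R^{\leq i}, -)$. One checks this on generators $R^{\leq i}$ using the free/forget adjunction, which is the content of the identification in part (2).

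Second, $G$ preserves all limits and sifted colimits, since both are computed pointwise in $\mathcal{P}_\Sigma$. It is also conservative, because the functor $\Fil_{\free}\Mod_{R,\geq 0}\to\Fil\Poly_R^{\leq n}$ is essentially surjective by construction. The crude Barr--Beck--Lurie theorem (\cite{HA}, Theorem 4.7.0.3) then gives monadicity, proving (1).

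For (2), I show that for any $X\in\Fil\CAlg_R^{\an,\surj,\leq n}$ there is a natural equivalence
\begin{align*}
	\Map(\LSym_R^{\leq n}(V)\otimes_R\LSym_R^*(W), X)\simeq \Map_{\Fil\Mod_{R,\geq 0}}(V\oplus W, G X).
\end{align*}
Both sides preserve sifted colimits in $X$, because the sources are compact projective. So it suffices to take $X$ representable. In that case the right-hand side is, by the definition of $G$ as restriction along the functor above, exactly the value of the Yoneda presheaf at the image of $V\oplus W$, which is $\LSym_R^{\leq n}(V)\otimes_R\LSym_R^*(W)$. This gives the left-hand side.

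The main obstacle I expect is making the functor $\Fil_{\free}\Mod_{R,\geq 0}\to\Fil\Poly_R^{\leq n}$ well defined on morphisms. This is where the filtered analogue of a standard arrow is encoded: weight-zero maps out of $V$ must be excluded, or handled by requiring generators of positive weight to land in $F^{\geq 1}$. I would handle it by checking directly that maps of free filtered modules respect the decomposition into $W$ and $V$ in the required triangular way, so that the induced ring maps are well defined after truncation.
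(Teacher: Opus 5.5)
Your proposal is correct and is essentially the paper's argument. The paper omits the proof as identical to \cref{construction-forgetsurj}, \cref{prop-surjmonadic} and \cref{lemma-projfree}, and you carry out that adaptation: restriction along a coproduct-preserving functor from $\Fil_{\free}\Mod_{R,\geq 0}$, conservativity from essential surjectivity, crude Barr--Beck--Lurie, and Yoneda for the free algebras. Two wordings need fixing. First, the splitting $V\oplus W$ is not unique; only the positive-weight part is canonical. Second, the coproduct in $\Fil\Poly_R^{\leq n}$ is $\LSym_R^{\leq n}(V\oplus V')\otimes_R\LSym_R^*(W\oplus W')$ rather than the plain tensor product. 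Neither affects the proof: your triangularity argument, together with the fact that $F^1$ of every object of $\Fil\Poly_R^{\leq n}$ is an $(n+1)$-nilpotent ideal, is exactly what makes the functor well defined and coproduct-preserving.
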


\begin{construction}[Adic filtration]
	Notice that there is a fully faithful embedding $\Poly_R^{\surj, \leq n} \subset \Fil\Poly_R^{\leq n}$, where for $V, W \in \Vect_R^\omega$, we identify $\LSym_R^{\leq n} (V \to V \oplus W)$ with $\LSym_R^{\Fil, \leq n} (V \oplus W)$, where we consider $V$ in filtered weight $1$ and $W$ in filtered weight $0$. Notice that this embedding commutes with finite coproducts.

	By (\cite{HTT}, Proposition 5.5.8.22 and Proposition 5.5.8.15), there is a fully faithful colimit-preserving embedding $\adic_{\leq n} : \CAlg_R^{\an, \surj, \leq n} \to \Fil\CAlg_R^{\an, \surj, \leq n}$, which we will call the \emph{adic filtration}. Notice that $F^j \adic_{\leq n}(-) \simeq 0$ for $j > n$.

	This admits a right adjoint $\aug: \Fil \CAlg_R^{\an, \surj, \leq n} \to \CAlg_R^{\an, \surj, \leq n}$.
\end{construction}

\begin{lemma}
	Let $R \in \Fil\CAlg_R^{\an, \surj, \leq n}$. Then, there is an isomorphism of arrows of chain complexes
	\begin{align*}
		\aug (R) \simeq (F^0 R, F^1 R)
	\end{align*}
	where we use the notation of \cref{rem-pairs}.
\end{lemma}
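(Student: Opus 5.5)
We reduce to compact projective generators and then check the identification by hand. Write $R$ for the filtered object, as in the statement; it is not the base ring. The target of the comparison is the arrow $F^1 R \to F^0 R$ in $\Fun(\Delta^1, \Mod_{R,\geq 0})$, which corresponds to the pair $(F^0R, F^1R)$ of \cref{rem-pairs}.

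\textbf{Step 1: $\aug$ preserves sifted colimits.} This holds because its left adjoint $\adic_{\leq n}$ comes from the embedding $\Poly_R^{\surj,\leq n} \subset \Fil\Poly_R^{\leq n}$. Hence $\aug$ is restriction of presheaves along this embedding. So $\aug$ is computed pointwise on finite-product-preserving functors and commutes with all sifted colimits, exactly as $\iota_m^n$ does in \cref{construction-polyomialtruncation}.

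\textbf{Step 2: the composite $\forget\circ\aug$ preserves sifted colimits.} By \cref{prop-surjmonadic}, the forgetful functor to $\Fun(\Delta^1,\Mod_{R,\geq 0})$ preserves sifted colimits. By Step 1, so does $\forget\circ\aug$.

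\textbf{Step 3: $(F^0(-), F^1(-))$ preserves sifted colimits.} The functor $R \mapsto (F^1R \to F^0R)$ is the forgetful functor $\Fil\CAlg_R^{\an,\surj,\leq n} \to \Fil\Mod_{R,\geq 0}$ followed by evaluation at weights $0,1$. The forgetful functor is monadic and preserves sifted colimits, by the filtered analogue of \cref{prop-surjmonadic}, and evaluation does as well.

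\textbf{Step 4: construct the comparison map.} We build a natural transformation $\forget\circ\aug \to (F^1 \to F^0)$, or its inverse, as follows.
\begin{itemize}
\item By the filtered analogue of \cref{lemma-projfree}, the free filtered algebra on $V\oplus W$, with $V$ in weight $1$ and $W$ in weight $0$, corepresents weight-$1$ and weight-$0$ points.
\item Take $V,W$ finite free. Maps from $\LSym^{\leq n}_R(V\to V\oplus W) = \adic_{\leq n}(\LSym^{\Fil,\leq n}(V\oplus W))$ into $\aug(R)$ are then maps of filtered modules $V\oplus W \to R$.
\item Such maps amount to $\Map(V, F^1R)\times\Map(W,F^0R)$.
\item By \cref{lemma-projfree}, this equals $\Map_{\Fun(\Delta^1,\Mod)}(V\to V\oplus W,\ F^1R\to F^0R)$.
\end{itemize}
Since the standard arrows generate $\Fun(\Delta^1,\Mod_{R,\geq0})$ under sifted colimits, we get an equivalence of mapping-out functors restricted to $\Arr^{\st}(\Vect_R^\omega)$. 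This already identifies the underlying arrows, as both sides are determined by their values on standard arrows.

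\textbf{Step 5: check naturality of the arrow map.} The main subtlety is making the identification of the map $F^1R \to F^0R$ with $I\to A$ natural. Concretely, we must check that the map induced by the standard arrow $V \to V\oplus W$ corresponds to the filtration transition map $F^1 \to F^0$. It is enough to check this on the generators $R^{\leq 1}$ and $R^{\leq 0}$, where it is a direct computation: the weight-$1$ generator maps to weight $0$ via the transition map.

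Alternatively, one can evaluate both sides on free filtered algebras, which generate under sifted colimits by Steps 1–3. There the claim is the explicit statement that $\aug$ of $\LSym^{\leq n}(V)\otimes\LSym^*(W)$ has underlying pair given by the weight-$0$ and weight-$1$ parts of $\LSym^{\leq n}(V)\otimes\LSym^*(W)$. This matches \cref{prop-surjmonadic} when $V$ sits in weight $1$, and extends to arbitrary weights since $F^1$ of $\LSym$ is the ideal generated by positive-weight generators.
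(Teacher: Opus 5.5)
Your Step 4 is the paper's argument: compute maps out of a standard arrow $V \to V \oplus W$ into both sides using the free--forgetful adjunction and the $\adic_{\leq n} \dashv \aug$ adjunction, then conclude by Yoneda via $\Fun(\Delta^1,\Mod_{R,\geq 0}) \simeq \mathcal{P}_\Sigma(\Arr^{\st}(\Vect_R^\omega))$. Two notational slips: the identity should read $\adic_{\leq n}(\LSym^{\leq n}_R(V \to V\oplus W)) \simeq \LSym_R^{\Fil,\leq n}(V\oplus W)$, and the last identification in Step 4 is just maps out of a standard arrow, not \cref{lemma-projfree}. Steps 1--3 and the alternative in Step 5 are unnecessary, and the naturality in Step 5 is automatic because every equivalence in the chain comes from an adjunction and is natural in the standard arrow.
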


\begin{proof}
	For a standard arrow $V \to V \oplus W$, we have natural equivalences
	\begin{align*}
		\Map_{\Fun (\Delta^1, \Mod_{R, \geq 0})} (V \to V \oplus W, \aug(R)) & \simeq \Map_{\CAlg_R^{\an, \surj, \leq n}} (\LSym^{\leq n}_R (V \to V \oplus W), \aug (R)) \\
		& \simeq \Map _{\Fil \CAlg_R^{\an, \surj, \leq n}} (\LSym^{\Fil^{\leq n}}_R (V \to V \oplus W), R) \\
		& \simeq \Map_{\Fil^{\leq n} \Mod_{R, \geq 0}} (V \to V \oplus W, R) \\
		& \simeq \Map_{\Fun (\Delta^1, \Mod_{R, \geq 0})} (V \to V \oplus W, F^1 R \to F^0 R)
	\end{align*}
	Since all of the mapping spaces commute with sifted colimits in the first variable, this is true for all $V \to W$ in $\Fun (\Delta^1, \Mod_{R, \geq 0})$. We are now done by the Yoneda embedding.
\end{proof}

We will now construct a graded analogue.

\begin{defi}
	We will write $\Gr_{\free} \Mod_{R, \geq 0} \subset \Gr \Mod_{R, \geq 0}$ for the full subcategory generated under coproducts by graded modules of the form $R^i$, where $R^i_j \simeq 0$ for $i \neq j$ and $R^i_j \simeq R$ for $i = j$. In other words, these are the finitely generated free graded modules where each generator is homogeneous of a fixed degree.

	We will write $\Gr_{\free}^0 \Mod_{R, \geq 0}$ for the category of finitely generated free graded modules in degree $0$. Similarly, we will write $\Gr_{\free}^{\geq  1} \Mod_{R, \geq 0}$ for the finitely generated free graded modules such that every generator is homogeneous of degree $> 0$.
\end{defi}

\begin{defi}
	Let $n > 0$. We will write $\Gr\Poly_R^{\leq n}$ for the category of graded algebras of the form $\LSym_R^{\leq n} (V) \otimes_R \LSym_R^* (W)$, where $V \in \Gr_{\free}^{\geq 1} \Mod_{R, \geq 0}$ and $W \in \Gr_{\free}^0 \Mod_{R, \geq 0}$.

	The category of \emph{animated graded $(n+1)$-nilpotent surjections} is
	\begin{align*}
		\Gr\CAlg_R^{\an, \surj, \leq n} := \mathcal{P}_\Sigma (\Gr \Poly_R^{\leq n})
	\end{align*}
\end{defi}

We do not include a proof of the following as it is identical to \cref{subsection-basicproperties}.

\begin{prop}
	Let $n > 0$.
	\begin{enumerate}
		\item The sifted-colimit-preserving extension of the forgetful functor $\Gr\Poly_R^{\leq n} \to \Gr \Mod_{R, \geq 0}$ is monadic. We will write $\LSym_R^{\Gr,\leq n}$ for the resulting monad.
		\item Given $V \in \Gr_{\free}^{\geq 1} \Mod_{R, \geq 0}$ and $W \in \Gr_{\free}^0 \Mod_{R, \geq 0}$, we can identify $\LSym_R^{\leq n} (V) \otimes_R \LSym_R^* (W)$ with the free $\LSym_R^{\Gr, \leq n}$-algebra on $V \oplus W$.
	\end{enumerate}
\end{prop}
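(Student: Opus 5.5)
The plan is to follow the argument of \cref{prop-surjmonadic} and \cref{lemma-projfree} almost verbatim, with $\Gr_{\free}\Mod_{R,\geq 0}$ in place of $\Arr^{\st}(\Vect_R^\omega)$. First we construct the forgetful functor. The assignment $V \oplus W \mapsto \LSym_R^{\leq n}(V)\otimes_R \LSym_R^*(W)$, with $V \in \Gr_{\free}^{\geq 1}\Mod_{R,\geq 0}$ and $W \in \Gr_{\free}^0\Mod_{R,\geq 0}$, defines a functor $\Gr_{\free}\Mod_{R,\geq 0} \to \Gr\Poly_R^{\leq n}$. Any free graded module splits canonically into its weight-$0$ part and its positive-weight part, so this functor is well-defined and essentially surjective. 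It sends finite direct sums to finite coproducts, because the tensor product of two such algebras is again of this form, with the truncation $\LSym^{\leq n}$ on the positive-weight part behaving as in the ungraded case. Restricting finite-product-preserving presheaves along it gives a functor $\Gr\CAlg_R^{\an,\surj,\leq n} = \mathcal{P}_\Sigma(\Gr\Poly_R^{\leq n}) \to \mathcal{P}_\Sigma(\Gr_{\free}\Mod_{R,\geq 0})$. The target is identified with $\Gr\Mod_{R,\geq 0}$ by (\cite{mao}, Corollary 2.13), which applies here exactly as it does in the filtered case.

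Next we verify monadicity. Limits and sifted colimits in $\mathcal{P}_\Sigma$ of a small category are computed pointwise, so the forgetful functor preserves both and in particular admits a left adjoint. It is conservative because the functor $\Gr_{\free}\Mod_{R,\geq 0}\to\Gr\Poly_R^{\leq n}$ is essentially surjective: a map of presheaves that is an equivalence on every object in the image is an equivalence everywhere. The crude Barr--Beck--Lurie theorem (\cite{HA}, Theorem 4.7.0.3) then gives monadicity, and we call the resulting monad $\LSym_R^{\Gr,\leq n}$. On compact projectives the forgetful functor returns the underlying graded module of $\LSym^{\leq n}_R(V)\otimes_R\LSym^*_R(W)$, computed weightwise.

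For part (2) we argue as in \cref{lemma-projfree}. For $X \in \Gr\CAlg_R^{\an,\surj,\leq n}$ we need a natural equivalence
\begin{align*}
\Map(\LSym_R^{\leq n}(V)\otimes_R\LSym_R^*(W), X) \simeq \Map_{\Gr\Mod_{R,\geq 0}}(V\oplus W, \forget X).
\end{align*}
Both sides preserve sifted colimits in $X$, since the source objects are compact projective on both sides. So it suffices to check the equivalence for $X$ representable in $\Gr\Poly_R^{\leq n}$. There it is the Yoneda lemma combined with the definition of the forgetful functor as restriction along $V\oplus W \mapsto \LSym_R^{\leq n}(V)\otimes_R\LSym_R^*(W)$.

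The only point needing genuine care is the bookkeeping for weight-$0$ versus positive-weight generators. The construction $V\oplus W\mapsto \LSym^{\leq n}(V)\otimes\LSym^*(W)$ has to be functorial in all maps of free graded modules, including maps from weight-$0$ generators into positive weight. This is the graded analogue of the passage from $\Arr^{\st}$ to arbitrary maps of standard arrows, and it should hold because maps of graded modules preserve weight. In particular, weight-$0$ generators can only map to weight-$0$ elements, so the nilpotence truncation on the positive-weight part is respected.
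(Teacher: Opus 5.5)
Your overall route is the one the paper intends: it omits the proof, saying it is identical to \cref{prop-surjmonadic} and \cref{lemma-projfree}. Your monadicity argument (pointwise limits and sifted colimits, conservativity from essential surjectivity of $f\colon\Gr_{\free}\Mod_{R,\geq 0}\to\Gr\Poly_R^{\leq n}$, crude Barr--Beck--Lurie) and your Yoneda argument for part (2) follow it exactly.

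\textbf{The flaw.} Your justification that $f\colon U\mapsto \LSym_R^{\leq n}(U_{>0})\otimes_R\LSym_R^*(U_0)$ sends direct sums to coproducts is false. You claim this holds ``because the tensor product of two such algebras is again of this form'', but that is not true. Take $n=1$ and $x,y$ in weight $1$. Then $R[x]/x^2\otimes_R R[y]/y^2=R[x,y]/(x^2,y^2)$ has $xy\neq 0$. It is not isomorphic to any object of $\Gr\Poly_R^{\leq 1}$, since in every such object all products of two positive-weight elements vanish. The coproduct in $\Gr\Poly_R^{\leq n}$ is not the tensor product. It is the quotient of the tensor product by the $(n+1)$-st power of its positive-weight ideal, which is $f(U\oplus U')$.

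\textbf{The repair.} The conclusion is true, and the correct reason is the key observation the whole proof needs. For any $b=\LSym_R^{\leq n}(V')\otimes_R\LSym_R^*(W')$ in $\Gr\Poly_R^{\leq n}$, the positive-weight part $b_{>0}$ is exactly the ideal generated by $V'$, so $b_{>0}^{n+1}=0$. Hence, for free graded $U$, graded algebra maps $f(U)\to b$ are the same as weight-preserving module maps $U\to b$. This gives
\begin{align*}
\Hom(f(U\oplus U'),b)\cong\Hom(f(U),b)\times\Hom(f(U'),b).
\end{align*}
The same observation also justifies two claims you left unproved:
\begin{itemize}
\item restriction along $f$ sends the representable on $b$ to the underlying graded module of $b$;
\item consequently, your restriction functor really is the sifted-colimit-preserving extension of the forgetful functor named in the statement.
\end{itemize}

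\textbf{A smaller point on functoriality.} In your last paragraph, what matters is that positive-weight generators map to positive-weight elements, so $(U_{>0})^{n+1}$ lands in $(U'_{>0})^{n+1}$. Maps from weight-$0$ generators into positive weight do not exist in $\Gr_{\free}\Mod_{R,\geq 0}$. They would be harmless anyway, since $\LSym_R^*(W)$ imposes no relations.
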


\begin{defi}
	We will write $\Gr : \Fil \CAlg_R^{\an, \surj, \leq n} \to \Gr \CAlg_R^{\an, \surj, \leq n}$ for the sifted-colimit-preserving extension of the functor $\Fil \Poly_R^{\leq n} \to \Gr \Poly_R^{\leq n}$ that takes the associated graded.
\end{defi}

\begin{defi}
	We will write $\Fil\Mod$ for the $\infty$-category of pairs $(A,M)$ where $A \in \CAlg_R^{\an}$ and $M \in \Fil \Mod_A$. We can construct this as the source of the biCartesian fibration that classifies the functor $A \mapsto \Fil \Mod_A$

	Similarly, let $\Gr\Mod$ be the $\infty$-category of pairs $(A,M)$ where $A \in \CAlg_R^{\an}$ and $M \in \Gr \Mod_A$.
\end{defi}

\begin{defi}[Filtered and graded tangent fibre]
	We construct a functor $\cot_{\leq n} : \Fil \CAlg_R^{\an, \surj, \leq n} \to \Fil \Mod$ and $\cot_{\leq n} : \Gr \CAlg_R^{\an, \surj, \leq n} \to \Gr\Mod$. Suppose $ A \in \Fil \Poly_R^{\leq n}$ or $\Gr \Poly_R^{\leq n}$. Then define
	\begin{align*}
		\cot_{\leq n} (A) := (\Gr^0 (A), I/I^2)
	\end{align*}
	where $I:= \fib (A \to \Gr^0 (A))$. If $V \in \Fil^{> 0}_{\free} \Mod_{R, \geq 0}$ and $W \in \Fil^0 _{\free} \Mod_{R, \geq 0}$, this gives $\cot_{\leq n} (\LSym_R^{\Fil, \leq n} (V \oplus W)) \simeq (\LSym_R^* (W), \LSym_R^* (W) \otimes_R V)$, and similar for the graded analogue.

	This clearly commutes with coproducts, so by (\cite{HTT}, Proposition 5.5.8.15) we get a colimit-preserving functor $\cot_{\leq n} : \Fil \CAlg_R^{\an, \surj, \leq n} \to \Fil \Mod$ and $\cot_{\leq n} : \Gr \CAlg_R^{\an, \surj, \leq n} \to \Gr \Mod$. Notice that in both cases we get $\Gr^0 \cot_{\leq n} (-) \simeq 0$.
\end{defi}

\begin{rem}
	By checking on compact projectives one has that for any $A' \in \Fil \CAlg_R^{\an, \surj, \leq n}$,
	\begin{align*}
		\Gr (\cot_{\leq n} (A')) & \simeq \cot_{\leq n} (\Gr (A')) \\
		\aug (\cot_{\leq n} (A')) & \simeq \cot_{\leq n} (\aug (A')) \\
	\end{align*}
\end{rem}

\begin{construction}[Polynomial truncation]
	Just as in the non-filtered case, for $m \leq n$ we have polynomial truncation adjunctions $t_{\leq m}^n: \Fil \CAlg_R^{\an, \surj, \leq n} \leftrightarrows \Fil\CAlg_R^{\an, \surj, \leq m} : \iota_m^n$ and $t_{\leq m}^n : \Gr \CAlg_R^{\an, \surj, \leq n} \leftrightarrows \Gr\CAlg_R^{\an, \surj, \leq m} : \iota_m^n$. The functors $\iota_m^n$ preserve the underlying filtered/graded module, and for $V \in \Fil \Mod_{R, \geq 0}$ and $W \in \Gr \Mod_{R, \geq 0}$,
	\begin{align*}
		t_{\leq m} ^n (\LSym_R^{\Fil, \leq n} (V)) & \simeq \LSym_R^{\Fil, \leq m} (V) \\
		t_{\leq m}^n (\LSym_R^{\Gr, \leq n} (W)) & \simeq \LSym_R^{\Gr, \leq m} (W)
	\end{align*}
\end{construction}

\subsection{The cotangent fibre}

\begin{defi}
	We will write $\Mod$ for the $\infty$-category of pairs $(A,M)$ where $A$ is an animated $R$-algebra and $M$ is an $A$-module. A map $(A,M) \to (B,N)$ is given by a map of animated $R$-algebras $A \to B$ and a map of modules $B \otimes_A M \to N$.

	Let $\Mod_{\geq 0} \subset \Mod$ be the full subcategory spanned by the pairs $(A,M)$ where $M$ is connective. Then, we can write $\Mod_{\geq 0} := \mathcal{P}_\Sigma (\PolyMod)$, where $\PolyMod$ is the subcategory of pairs $(\LSym_R^* (V), \LSym_R^* (V) \otimes_R W)$, where $V,W \in \Vect_R^\omega$.
\end{defi}

\begin{construction}[Trivial square-zero extension]
	We construct a functor 
	\begin{align*}
		\triv: \Mod_{\geq 0} \to \CAlg_R^{\an, \surj, \leq n}
	\end{align*}
	We define it as the sifted-colimit-preserving extension of the functor $\PolyMod \to (\CAlg_R^{\an, \surj, \leq n})^\heartsuit \subset \CAlg_R^{\an, \surj, \leq n}$ sending a pair $(A,M)$ to the surjection $A \oplus M \twoheadrightarrow A$, where $A \oplus M$ has trivial multiplicative structure on $M$. For $m \leq n$, we get a commutative diagram of sifted-colimit-preserving functors
	\[
		\begin{tikzcd}
			\Mod \arrow{dr}[swap]{\triv} \arrow{r}{\triv} & \CAlg_R^{\surj, \leq m} \arrow{d}{\iota_m^n} \\
																						 & \CAlg_R^{\surj, \leq n}
		\end{tikzcd}
	\]
	The forgetful functor $\CAlg_R^{\surj, \leq n} \to \Fun (\Delta^1, \Mod_{R, \geq 0})$ commutes with sifted colimits, so by definition we have that the underlying pair of $\triv (A,M)$ is $(M, A \oplus M)$.

	This implies that $\triv$ commutes with limits, since limits are computed in the underlying module. Hence, it has a left adjoint which we will write as $L^{\leq n}: \CAlg_R^{\surj, \leq n} \to \Mod$.
\end{construction}

\begin{lemma}
	Let $V,W \in \Mod_{R, \geq 0}$. Then,
	\begin{align*}
		L^{\leq n}(\LSym^{\leq n}_R (V \to V \oplus W)) \simeq (\LSym_R^* (W), (V \oplus W) \otimes_R \LSym_R^* (W) )
	\end{align*}
\end{lemma}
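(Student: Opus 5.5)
We compute $L^{\leq n}$ of a free object by adjunction and Yoneda. Although the lemma is stated for $V, W \in \Mod_{R,\geq 0}$, both sides preserve sifted colimits in the pair $(V,W)$. On the left, $\LSym^{\leq n}_R$ is a left adjoint, so it preserves sifted colimits, and $L^{\leq n}$ preserves all colimits. On the right, the pair $(\LSym^*_R(W), (V\oplus W)\otimes_R \LSym_R^*(W))$ is the image under a sifted-colimit-preserving functor $\Mod_{R,\geq 0}^2 \to \Mod_{\geq 0}$, since it is computed as a sifted colimit of objects of $\PolyMod$. It therefore suffices to treat $V, W \in \Vect_R^\omega$, provided we exhibit a natural map that is an equivalence there. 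I will instead produce a natural equivalence of corepresented functors, which is natural in $(V,W)$ and so extends.

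Fix $(B,M) \in \Mod_{\geq 0}$. By adjunction and \cref{lemma-projfree},
\begin{align*}
\Map_{\Mod}(L^{\leq n}\LSym^{\leq n}_R(V\to V\oplus W),(B,M)) &\simeq \Map_{\CAlg_R^{\an,\surj,\leq n}}(\LSym^{\leq n}_R(V \to V\oplus W), \triv(B,M))\\
&\simeq \Map_{\Fun(\Delta^1,\Mod_{R,\geq 0})}(V\to V\oplus W, M \to B\oplus M).
\end{align*}
Here we use that the underlying arrow of $\triv(B,M)$ is $M \to B\oplus M$, as noted in the construction of $\triv$. Since $V \to V\oplus W$ is the standard arrow, i.e. the coproduct of $(V \xrightarrow{\id} V)$ and $(0\to W)$, this mapping space splits as
\begin{align*}
\Map_{\Mod_R}(V, M)\times \Map_{\Mod_R}(W, B\oplus M).
\end{align*}
Here a map from $(V\xrightarrow{\id}V)$ into an arrow $X\to Y$ is a map $V \to X$, and a map from $(0 \to W)$ is a map $W\to Y$.

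On the other side, a map $(\LSym^*_R(W), (V\oplus W)\otimes_R\LSym^*_R(W)) \to (B,M)$ in $\Mod$ consists of a ring map $\LSym^*_R(W)\to B$, i.e. a map $W\to B$, together with a $B$-module map $(V\oplus W)\otimes_R B \to M$, i.e. a map $V\oplus W \to M$ of $R$-modules. This gives
\begin{align*}
\Map_{\Mod_R}(W,B)\times\Map_{\Mod_R}(V,M)\times\Map_{\Mod_R}(W,M),
\end{align*}
which agrees with the previous expression because $\Map(W, B\oplus M)\simeq \Map(W,B)\times\Map(W,M)$. To make this precise for the fibration $\Mod \to \CAlg_R^{\an}$, we compute the mapping space as the fibre over $\Map_{\CAlg}(\LSym^*_R W, B)$. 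We check that both identifications lie over the same map to $\Map(W,B)$: on the $\triv$ side, this is the projection $B\oplus M \to B$ composed with the target functor of \cref{construction-targetring}.

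The main obstacle is checking that the equivalences are natural in $(B,M)$ and compatible over $\CAlg_R^{\an}$, so that Yoneda applies. The delicate point is that the $W$-component of the map into $B\oplus M$ really decomposes into a ring-map part and a derivation part. I would verify this first on $(B,M)\in\PolyMod$, where everything is discrete and the description of maps $\LSym^{\leq n}(V\to V\oplus W) \to (B\oplus M \twoheadrightarrow B)$ is explicit via \cref{prop-surjheart}. Since both functors of $(B,M)$ send sifted colimits in $\Mod_{\geq 0}=\mathcal{P}_\Sigma(\PolyMod)$ to sifted colimits (the sources are compact projective), the identification then extends to all of $\Mod_{\geq 0}$. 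It extends to all of $\Mod$ by adjunction, since the source is connective.
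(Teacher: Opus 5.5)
Your proposal is correct and follows the paper's proof. Both arguments use the adjunction with $\triv$ and the free--forgetful identification of \cref{lemma-projfree}, split maps out of the standard arrow as $\Map_{\Mod_R}(V,M)\times\Map_{\Mod_R}(W,B\oplus M)$, match this with maps out of $(\LSym_R^*(W),(V\oplus W)\otimes_R\LSym_R^*(W))$ in $\Mod$, and conclude by Yoneda. Your extra steps (reducing to $V,W\in\Vect_R^\omega$ and checking naturality on $\PolyMod$ before extending by sifted colimits) only spell out the naturality in $(B,M)$ that the paper asserts without comment.
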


\begin{proof}
	Given $(B,M) \in \Mod$, we have a natural equivalence
	\begin{align*}
		\Map _{\Mod} (L^{\leq n}(\LSym^{\leq n}_R (V \to V \oplus W)), (B,M)) & \simeq \Map_{\CAlg_R^{\an , \surj, \leq n}} (\LSym^{\leq n}_R (V \to V \oplus W), B\oplus M \to B) \\
		& \simeq \Map_{\Fun (\Delta^1, \Mod_R)} (V \to V \oplus W, M \to B \oplus M) \\
		& \simeq \Map_{\Mod_R} (V, M) \times \Map _{\Mod_R} (W, B \oplus M) \\
		& \simeq \Map_{\Mod} ((\LSym_R^* (W), (V \oplus W) \otimes_R \LSym_R^* (W)), (B,M))
	\end{align*}
	so we are done by the Yoneda embedding.
\end{proof}

\begin{rem}
	For $n = \infty$, the functor $L^{\leq n}: \CAlg_R^{\an, \surj} \to \Mod$ is the functor $(A \twoheadrightarrow B) \mapsto (B, L_{A/R} \otimes_A B)$.
\end{rem}

\begin{defi}
	The $\infty$-category of \emph{derivations} is the $\infty$-category defined by the pullback diagram
	\[
		\begin{tikzcd}
			\Der_{/R} \arrow{r} \arrow{d} & \Fun(\Delta^1, \Mod) \times _{\Fun (\Delta^1, \CAlg_R^{\an})} \CAlg_R^{\an} \arrow{d}{\ev_0} \\
			\CAlg_R^{\an} \arrow{r}{L_{-/R}[-1]} & \Mod
		\end{tikzcd}
	\]
	In other words, this is the category of triples $(B, M,\alpha)$, where $B$ is an animated $R$-algebra, $M$ is a $B$-module and $\alpha: L_{B/R}[-1] \to M$ is a map of $B$-modules.

	Given $B \in \CAlg_R^{\an}$, we will write $\Der_{B/R}$ for the $\infty$-category defined by the following pullback square.
	\[
		\begin{tikzcd}
			\Der_{B/R} \arrow{r} \arrow{d} & \Der_{/R} \arrow{d} \\
			* \arrow{r}{* \mapsto B} & \CAlg_R^{\an}
		\end{tikzcd}
	\]
	Note that we have an equivalence $\Der_{B/R} \simeq (\Mod_B)_{L_{B/R}[-1]/}$.

	The map $\Der_{/R} \to \CAlg_R^{\an}$ is a biCartesian fibration, classifying the functor $A \mapsto \Der_{A/R}$. Given a map $f: A \to B$ in $\CAlg_R^{\an}$, this induces a right adjoint $f_*: \Der_{B/R} \to \Der_{A/R}$ by restriction of scalars. That is, given a derivation $\beta : L_{B/R}[-1] \to M$, we have a  map of $A$-modules $L_{A/R}[-1] \to L_{B/R}[-1] \to M$. Its left adjoint is given on a derivation $\alpha: L_{A/R}[-1] \to M$ by the pushout of the following diagram of $B$-modules
	\begin{align*}
		L_{B/R}[-1]  \leftarrow B \otimes_A L_{A/R}[-1] \xrightarrow{\alpha} B \otimes_A M
	\end{align*}
\end{defi}

\begin{construction}[Square-zero extension]
	Let $\Der_{/R, \geq -1} \subset \Der_{/R}$ be the full subcategory spanned by pairs $(A,L_{A/R}[-1] \to M)$ where $M$ is a $(-1)$-connective $A$-module. We will abuse notation and write $\triv[1]: \Der_{/R, \geq -1} \to \CAlg_R^{\an, \surj, \leq n}$ given by $(B,\alpha: L_{B/R}[-1] \to M) \mapsto \triv (B, M[1]) \simeq (B \oplus M[1] \twoheadrightarrow B)$. This factors through the forgetful  map $\Der_{/R} \to \Mod$, whose left adjoint maps a pair $(B,M)$ to the derivation $(B, L_{B/R}[-1] \to L_{B/R}[-1] \oplus M)$. Hence, we get a left adjoint 
	\begin{align*}
		(A \twoheadrightarrow B) \mapsto (B, \alpha: L_{B/R}[-1] \to L_{B/R}[-1] \oplus L^{\leq n}(A \twoheadrightarrow B)[-1])
	\end{align*}
	The forgetful map $\const: \Der_{/R} \to \CAlg_R^{\an, \surj, \leq n}$ given by $(A, \alpha: L_{A/R}[-1] \to M) \mapsto (A \xrightarrow{\id} A)$, has a left adjoint given by $(A \twoheadrightarrow B) \mapsto (B, L_{B/R}[-1] \xrightarrow{\id} L_{B/R}[-1])$.

	Note that
	\begin{align*}
		\Map_{\CAlg_R^{\an, \surj, \leq n}} (B \to B, B \oplus M[1] \to B) \simeq \Map_{\Mod_B} (L_{B/R}, M[1]) 
	\end{align*}
	Hence, given a derivation $(B, \alpha: L_{B/R}[-1] \to M)$, there are two canonical maps $(B \to B) \to (B \oplus M[1] \to B)$, one given by $\alpha$ and the other one given by zero. This gives two natural transformations $0, \alpha: \const \to \triv[1]$. We will write $\sqz: \Der_{/R, \leq -1} \to \CAlg_R^{\an, \surj, \leq n}$ for the functor defined by the pullback diagram
	\[
		\begin{tikzcd}
			\sqz \arrow{r} \arrow{d} & \const \arrow{d}{\alpha} \\
			\const \arrow{r}{0} & \triv[1]
		\end{tikzcd}
	\]
	Since $\sqz$ is a pullback of right adjoints, it is a right adjoint. We will write $\cot_{\leq n}: \CAlg_R^{\an, \surj, \leq n} \to \Der_{/R, \geq -1}$ for its left adjoint, which can be written as a pushout of the left adjoints.

	We will sometimes abuse notation and, given $(A \twoheadrightarrow B) \in \CAlg_R^{\an, \surj, \leq n}$, we will still write $\cot_{\leq n} (A \twoheadrightarrow B)$ for the underlying module after forgetting the derivation.
\end{construction}

\begin{rem}
	For $n = \infty$, this is the functor $(A \twoheadrightarrow B) \mapsto (L_{B/R}[-1] \to L_{B/A}[-1])$.
\end{rem}

\begin{rem}
	Let $m \leq n$. Since $\sqz \simeq \iota_m^n \circ \sqz$, we can deduce an analogous statement for the left adjoints. That is,
	\begin{align*}
		\cotn \simeq \cot_{\leq m} \circ t_{\leq m} ^n
	\end{align*}
\end{rem}

\begin{lemma}
	Let $V,W \in \Vect_R$ and $n > 0$. Then,
	\begin{align*}
		\cot_{\leq n} (\LSym^{\leq n}_R (V \to V \oplus W)) \simeq (\LSym_R^* (W), W \otimes_R \LSym_R^* (W)[-1]  \xrightarrow{0} V \otimes_R \LSym_R^* (W))
	\end{align*}
\end{lemma}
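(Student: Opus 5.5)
The plan is to compute $\cot_{\leq n}$ directly from its description as a pushout of left adjoints, evaluated on the free object $P := \LSym^{\leq n}_R (V \to V \oplus W)$. By construction, $\cot_{\leq n}$ is the pushout in $\Der_{/R, \geq -1}$ of the two left adjoints: the one for $\const$ (applied twice) and the one for $\triv[1]$, glued along the two natural transformations $0, \alpha$. So I would first compute each piece on $P$ separately. The left adjoint to $\const$ sends $P$ to $(B, L_{B/R}[-1] \xrightarrow{\id} L_{B/R}[-1])$, where $B = \LSym^*_R(W)$ is the target ring (\cref{construction-targetring}). Since $B$ is a polynomial ring on $W$, $L_{B/R} \simeq W \otimes_R B$. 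The left adjoint to $\triv[1]$ sends $P$ to $(B, L_{B/R}[-1] \to L_{B/R}[-1] \oplus L^{\leq n}(P)[-1])$. The lemma computing $L^{\leq n}$ on free objects gives $L^{\leq n}(P) \simeq (B, (V \oplus W) \otimes_R B)$.

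Next I would form the pushout in $\Der_{B/R} \simeq (\Mod_B)_{L_{B/R}[-1]/}$. All three pieces have the same base ring $B$, so the pushout can be computed fibrewise over $B$. A pushout of objects under $L_{B/R}[-1]$ is the pushout of the underlying modules. Concretely, the target module is
\[
L_{B/R}[-1] \oplus (V \oplus W)\otimes_R B[-1]
\]
glued to two copies of $L_{B/R}[-1]$. The gluing happens along the two maps induced by $0$ and $\alpha$, where $\alpha$ is the canonical derivation. Unwinding the adjunctions, the two maps $L_{B/R}[-1] \oplus L^{\leq n}(P)[-1] \leftarrow L_{B/R}[-1]$ should be the inclusion of the first summand and the diagonal-type map. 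The latter includes $L_{B/R}[-1] = W\otimes B[-1]$ into the $W \otimes B[-1]$ part of $L^{\leq n}(P)[-1]$.

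Taking the cofibres then cancels the $W$-summand of $L^{\leq n}(P)[-1]$ against one copy of $L_{B/R}[-1]$. What remains is a module of the form $V \otimes_R B$, in the appropriate shift, receiving a map from $L_{B/R}[-1] = W\otimes_R B[-1]$. I must check that this map is zero: the derivation $L_{B/R}[-1]\to V\otimes B$ is the one obtained by restricting along the $W$-component, and it vanishes on the $V$-part. This yields the claimed object $(\LSym^*_R(W), W \otimes_R \LSym^*_R(W)[-1] \xrightarrow{0} V \otimes_R \LSym^*_R(W))$.

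As a sanity check, for $n=\infty$ I would compare with $L_{B/A}[-1]$, where $A = \LSym(V)\otimes\LSym(W)$. This gives $L_{B/A} \simeq V\otimes B[1]$, which matches the formula.

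The main obstacle is bookkeeping the shifts and identifying exactly which maps the natural transformations $0,\alpha$ induce after passing to left adjoints. A cleaner route I would try first avoids this: verify the formula by Yoneda. For a test derivation $(C, \beta: L_{C/R}[-1]\to M)$, we have
\[
\Map(P, \sqz(C,\beta)) \simeq \Map_{\Fun(\Delta^1,\Mod_R)}(V\to V\oplus W, M\to C)
\]
by \cref{lemma-projfree}, using that the underlying arrow of $\sqz(C,\beta)$ is $(M \to C\oplus_\beta M)$, computed as a pullback in underlying modules. This equals $\Map(V,M)\times\Map_{\CAlg}(B,C)$. That is exactly maps out of the proposed derivation in $\Der_{/R}$: a ring map $B\to C$, together with a $C$-module map $V\otimes C\to M$ compatible with the zero map from $L_{B/R}[-1]$. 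That compatibility is automatic, since the source map is zero.
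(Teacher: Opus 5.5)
Your first route is the paper's own proof. The paper computes $\cot_{\leq n}(P)$ as a pushout in $\Der_{B/R}$, with $B=\LSym^*_R(W)$: the value of the left adjoint of $\triv[1]$ maps twice, via $0$ and $\alpha$, to the value of the left adjoint of $\const$. However, the two maps you describe point the wrong way, and that direction is exactly what fixes the shift you leave as ``appropriate''.

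Put $L=L_{B/R}[-1]\simeq W\otimes_R B[-1]$ and $N=(V\oplus W)\otimes_R B[-1]$. The mates of $0,\alpha\colon\const\to\triv[1]$ are transformations from the left adjoint of $\triv[1]$ to the left adjoint of $\const$. On $P$ they are therefore two maps $(L\to L\oplus N)\to(L\xrightarrow{\id}L)$ in $\Der_{B/R}\simeq(\Mod_B)_{L/}$. There is a quick check: $(L\xrightarrow{\id}L)$ is initial in $(\Mod_B)_{L/}$, so only one map leaves it, and your ``diagonal-type'' map $L\to L\oplus N$ is not a map under $L$ at all. As maps under $L$, the correct ones are $(\id,0)$ and $(\id,\mathrm{pr}_W)\colon L\oplus N\to L$. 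The pushout, computed on underlying $B$-modules, is $L\to\cofib(\mathrm{pr}_W\colon N\to L)\simeq (V\otimes_R B[-1])[1]=V\otimes_R B$. Its structure map is null because $\mathrm{pr}_W$ is split surjective. The $[1]$ appears because the pushout is a cofibre of a map \emph{into} $L$. Cancelling the $W$-summand along maps out of $L$, as you propose, instead leaves $V\otimes_R B[-1]$, one shift too low.

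Your Yoneda route has two errors that cancel. First, by \cref{lemma-projfree}, $\Map(P,\sqz(C,\beta))\simeq\Map_R(V,M)\times\Map_R(W,C\oplus_\beta M)$. The $W$-component lands in the source $C\oplus_\beta M$ of the underlying arrow $(M\to C\oplus_\beta M)$, not in $C$. So this is not $\Map_R(V,M)\times\Map_{\CAlg}(B,C)$: for instance, if $V=0$ then $P=\const(B)$ and the space is $\Map_{\CAlg}(B,C\oplus_\beta M)$.

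Second, a map $(B,L\xrightarrow{0}V\otimes_R B)\to(C,\beta)$ over $f\colon B\to C$ contains a homotopy $0\simeq\beta\circ L_f$ between the zero map and the composite $L\to L_{C/R}[-1]\xrightarrow{\beta}M$. Because the source map is zero, compatibility is exactly such a null-homotopy. That is extra data, and it need not exist.

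Since $C\oplus_\beta M\simeq\fib(C\to M[1])$, pairs consisting of a ring map $f$ and such a null-homotopy form precisely $\Map_R(W,C\oplus_\beta M)$. So the two sides do agree once both computations are corrected. Before invoking Yoneda you would still need this identification to be natural in $(C,\beta)$, for example by writing down the comparison map.
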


\begin{proof}
	There is a pushout diagram in $\Der_{\LSym_R^* (W) /R}$
	\[
		\begin{tikzcd}
			(W \otimes_R \LSym_R^* (W)[-1] \xrightarrow{0} (V \oplus W) \otimes_R \LSym_R^* (W)[-1] ) \arrow{r} \arrow{d} & ( W \otimes_R \LSym_R^* (W)[-1] \xrightarrow{\id} W \otimes_R \LSym_R^* (W)[-1]) \arrow{d} \\
			( W \otimes_R \LSym_R^* (W)[-1] \xrightarrow{\id} W \otimes_R \LSym_R^* (W)[-1]) \arrow{r} & \cot_{\leq n} (\LSym^{\leq n}_R (V \to V \oplus W))
		\end{tikzcd}
	\]
	from which the result follows.
\end{proof}

\begin{rem}
		In particular, we see that $\cot_{\leq n }$ always lands in connective modules, since connective modules are closed under sifted colimits. That is, it lands in $\Der_{/R, \geq 0} \subset \Der_{/R, \geq -1}$, the full subcategory spanned by those derivations $(B, \alpha: L_{B/R}[-1] \to M)$ where $M$ is connective.
\end{rem}

\begin{rem}
	By checking on compact projectives, we can see that for any $(A \twoheadrightarrow B) \in \CAlg_R^{\an, \surj, \leq n}$ there is an equivalence	
	\begin{align*}
		F^{* \leq 1} \cot_{\leq n} (A \twoheadrightarrow B) & \simeq \cot_{\leq n} (\adic_{\leq n} (A \twoheadrightarrow B))
	\end{align*}
	where $F^{* \leq 1} \cot_{\leq n} (A \twoheadrightarrow B)$ is the module $\cot_{\leq n} (A \twoheadrightarrow B)$ put in filtration weight $1$. In the above, we are forgetting the map from $L_{B/R}[-1]$ and considering $\cot_{\leq n} (A \twoheadrightarrow B)$ as an object in $\Mod_{B, \geq 0}$.
\end{rem}

We will later need the following lemma.  

\begin{lemma} 
	\label{lemma-gradic}
	There is a natural equivalence of functors $\CAlg_R^{\an, \surj, \leq n} \to \Gr \CAlg_R ^{\an, \surj, \leq n}$ 
	\begin{align*}
		\Gr \circ \adic_{\leq n} (A \twoheadrightarrow B) \simeq \LSym_B^{\Gr, \leq n} (\cot^{\leq n} (A \twoheadrightarrow B) (1))
	\end{align*}
	where $\cot^{\leq n} (A \twoheadrightarrow B) (1)$ refers to the graded chain complex with $\cot^{\leq n} (A \twoheadrightarrow B)$ concentrated in degree $1$.
\end{lemma}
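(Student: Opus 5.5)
Both sides are functors $\CAlg_R^{\an, \surj, \leq n} \to \Gr \CAlg_R^{\an, \surj, \leq n}$, and I will show that each preserves sifted colimits. Then, since $\CAlg_R^{\an, \surj, \leq n} = \mathcal{P}_\Sigma(\Poly_R^{\surj, \leq n})$, it suffices by (\cite{HTT}, Proposition 5.5.8.15) to build a natural equivalence on the full subcategory $\Poly_R^{\surj, \leq n}$ of compact projectives.

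For the left side, $\adic_{\leq n}$ is colimit-preserving by construction, and $\Gr$ is defined as a sifted-colimit-preserving extension. For the right side, I first have to make sense of the expression. I read $\LSym_B^{\Gr, \leq n}(M(1))$, for a pair $(B, M) \in \Mod_{\geq 0}$, as the sifted-colimit-preserving extension from $\PolyMod$ of
\begin{align*}
(\LSym_R^*(W), \LSym_R^*(W) \otimes_R V) \mapsto \LSym_R^{\Gr, \leq n}(V(1) \oplus W(0)).
\end{align*}
On free objects this is $\LSym_R^{\leq n}(V) \otimes_R \LSym_R^*(W)$ with $V$ in weight $1$ and $W$ in weight $0$. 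So $\LSym^{\Gr,\leq n}$ defines a functor $\Mod_{\geq 0} \to \Gr\CAlg_R^{\an,\surj,\leq n}$ preserving sifted colimits. The functor $\cot_{\leq n}$, viewed as landing in $\Mod_{\geq 0}$ after forgetting the derivation, is a left adjoint and so preserves colimits. Hence the composite preserves sifted colimits.

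Next comes the computation on generators. Let $P = \LSym^{\leq n}_R(V \to V \oplus W)$ with $V, W$ finite free.
\begin{itemize}
\item By construction, $\adic_{\leq n}(P) = \LSym_R^{\Fil, \leq n}(V \oplus W)$ with $V$ in filtered weight $1$ and $W$ in weight $0$.
\item Since $\Gr$ is defined on $\Fil\Poly_R^{\leq n}$ by taking the associated graded, we get $\Gr(\adic_{\leq n} P) \simeq \LSym_R^{\Gr,\leq n}(V(1) \oplus W(0))$. This holds because the associated graded of a free filtered algebra on generators in pure weights is the free graded algebra on the same generators.
\item By the lemma computing $\cot_{\leq n}$ on compact projectives, after forgetting the derivation $\cot_{\leq n}(P) \simeq (\LSym_R^*(W), V \otimes_R \LSym_R^*(W))$.
\end{itemize}
Applying $\LSym^{\Gr,\leq n}_B((-)(1))$ to the last item gives $\LSym_R^{\Gr,\leq n}(V(1) \oplus W(0))$, which matches the second item.

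The main obstacle is naturality. I need these identifications to assemble into a natural transformation on all of $\Poly_R^{\surj, \leq n}$, not just on the subcategory of free maps, because maps in $\Poly_R^{\surj,\leq n}$ may send generators of $V$ to elements involving $W$ and higher-order terms. To handle this, I would construct a comparison map directly and then check it is an equivalence on generators. The comparison is adjoint to a map $\cot_{\leq n}(A\twoheadrightarrow B)(1) \to \Gr(\adic_{\leq n}(A\twoheadrightarrow B))$ of graded modules over $B$, where $B$ sits in weight $0$. I would obtain this map from the remark $F^{*\leq 1}\cot_{\leq n} \simeq \cot_{\leq n}\circ \adic_{\leq n}$ together with the compatibility $\Gr\circ\cot_{\leq n} \simeq \cot_{\leq n}\circ\Gr$: the weight-$1$ part of $\Gr\adic_{\leq n}$ receives the cotangent fibre, and the weight-$0$ part is $B$ via the target ring functor. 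Freeness of $\LSym^{\Gr,\leq n}$ on graded modules with positive-weight part then gives the natural map. Since it is an equivalence on generators and both sides preserve sifted colimits, it is an equivalence everywhere.
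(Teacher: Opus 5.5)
Your proposal is correct and follows essentially the same route as the paper, whose proof is simply that both sides preserve sifted colimits and agree on the compact projectives $\Poly_R^{\surj,\leq n}$. The adjunction-based construction of a comparison map is more than you need: restricted to $\Poly_R^{\surj,\leq n}$, both composites are functors of ordinary categories, namely $(A\twoheadrightarrow B)\mapsto\bigoplus_{i\leq n}I^i/I^{i+1}$ and $(A\twoheadrightarrow B)\mapsto\LSym^{\leq n}_B(I/I^2)$ with $I=\ker(A\to B)$, and the multiplication map between them is already a natural isomorphism, including on maps that send generators of $V$ to mixed terms.
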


\begin{proof}
	Both functors commute with sifted colimits, so it easily follows by computing both sides on compact projectives.
\end{proof}

\begin{prop}
	\label{prop-cotsplit}
	Let $S \in \CAlg_R^{\an}$. Let $X^\bullet$ be an augmented cosimplicial diagram in $\CAlg_{R/ /S}^{\an, \surj, \leq n}$ that is $\cot_{\leq n}$-split. Then, $\Tot(X^\bullet)$, computed in $\Mod_R$ is in $\Mod_{R, \geq 0}$ and the natural map
	\begin{align*}
		\cot_{\leq n} (\Tot(X^\bullet)) \xrightarrow{\simeq} \Tot (\cot_{\leq n} (X^\bullet))
	\end{align*}
	is an equivalence, where the totalisation is computed in $\Mod_S$.
\end{prop}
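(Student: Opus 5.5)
We reduce the statement to the adic filtration and its associated graded (\cref{lemma-gradic}). The point is that $\adic_{\leq n}$ produces a filtration of length $n$, and each graded piece is a polynomial functor of degree at most $n$ in the cotangent fibre. Totalisations commute with such functors once the input cosimplicial object is split.

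First we set up notation. Write $X^{-1} \to X^\bullet$ for the augmented cosimplicial diagram, and $(A^\bullet \twoheadrightarrow S)$ for its objects, with kernel modules $I^\bullet$. Since all target rings are $S$, the cotangent fibres $\cot_{\leq n}(X^\bullet)$ form an augmented cosimplicial diagram in $\Mod_{S, \geq 0}$ (after forgetting the derivation). By hypothesis this diagram is split, so $\cot_{\leq n}(X^{-1}) \simeq \Tot(\cot_{\leq n}(X^\bullet))$. Splitness is preserved by any functor, so it suffices to show that $X^{-1} \to \Tot(X^\bullet)$ is an equivalence on underlying modules, and in particular that $\Tot$ of the underlying modules is $I^{-1}$, which is connective. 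The ideal claimed in the statement then follows: $\Tot(X^\bullet) \simeq X^{-1}$ after computing in $\Mod_R$, and the cotangent fibre commutes as stated.

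To see that $I^{-1} \to \Tot(I^\bullet)$ is an equivalence, apply $\adic_{\leq n}$ to every term. This gives augmented cosimplicial filtered objects with $F^j = 0$ for $j>n$, $F^0 = A^\bullet$ and $F^1 = I^\bullet$. By \cref{lemma-gradic}, the graded piece in weight $j$, for $1 \le j \le n$, is $\LSym^j_S(\cot_{\leq n}(X^\bullet))$, and the weight-$0$ piece is the constant diagram $S$. The functor $\LSym^j_S$ on $\Mod_{S,\geq 0}$ preserves split augmented cosimplicial objects, as any functor does. Hence each graded piece $X^{-1} \to X^\bullet$ is a limit diagram. Finite filtrations are obtained from their graded pieces by finitely many fibre sequences, and $\Tot$ commutes with fibres in the stable category $\Mod_R$. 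By induction on $j$ downward from $n$, each $F^j$ diagram is then a limit diagram; in particular this holds for $F^1 = I$ and for $F^0 = A$. All the $F^j(X^{-1})$ are connective, so $\Tot$ lands in $\Mod_{R,\geq 0}$.

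The main obstacle is the compatibility in \cref{lemma-gradic}. We need the equivalence between $\Gr\adic_{\leq n}$ and $\LSym^{\Gr,\leq n}_S(\cot_{\leq n}(-)(1))$ to be natural in morphisms of $\CAlg^{\an,\surj,\leq n}_{R//S}$, so that it applies levelwise to the cosimplicial diagram. This naturality is given by the lemma as an equivalence of functors. We also need that the weight-$j$ piece over $B=S$ is exactly $\LSym^j_S$ applied to $\cot_{\leq n}$, independent of the derivation data. This holds because the target ring is fixed, and we check it on compact projectives.
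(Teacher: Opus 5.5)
Your argument is correct under the reading you adopt, namely that the augmentation $X^{-1}$ is an object of $\CAlg_{R/ /S}^{\an, \surj, \leq n}$ and $\cot_{\leq n}(X^{-1} \to X^\bullet)$ is split. In that case the graded pieces are split, the downward induction makes $F^0$ and $F^1$ limit diagrams in $\Mod_R$, and $\cot_{\leq n}(\Tot X^\bullet) \simeq \cot_{\leq n}(X^{-1})$.

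This is weaker than what the paper proves and needs. The proposition feeds the Barr--Beck--Lurie theorem in \cref{prop-cotcomonadic}, and its dual at the start of \cref{section-duallytruncatedLie}. There one only has a cosimplicial object $X^\bullet$ in $\CAlg_{R/ /S}^{\an, \surj, \leq n}$ such that $\cot_{\leq n}(X^\bullet)$ extends to a split augmented diagram, with augmentation $M$ living in $(\Mod_{S, \geq 0})_{L_{S/R}[-1]/}$. So there is no $X^{-1}$ in the algebra category. Setting $X^{-1} := \Tot(X^\bullet)$ is circular, since splitness of $\cot_{\leq n}(X^{-1} \to X^\bullet)$ is precisely the conclusion.

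You use $X^{-1}$ essentially twice: as the cone point that makes each $F^j$ a limit diagram, and in the final identification of $\cot_{\leq n}$ of the limit. What is missing is a way to compute $\cot_{\leq n}$ of a limit that is not known in advance.

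The connectivity half of your argument adapts directly. $\Tot \Gr^j \adic_{\leq n}(X^\bullet)$ is $\LSym^j_S(M)$ for $j \geq 1$ and $S$ for $j = 0$. The fibre sequences $\Tot F^{j+1} \to \Tot F^j \to \Tot \Gr^j$ then show by downward induction that each $\Tot F^j$, computed in $\Mod_R$, is connective. Hence $L := \Tot(\adic_{\leq n}(X^\bullet))$, formed in $\Fil \CAlg_R^{\an, \surj, \leq n}$, has this levelwise totalisation as its underlying filtered module.

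The paper then computes the cotangent fibre on gradeds:
$\Gr(L) \simeq \Tot \LSym^{\Gr, \leq n}_S(\cot_{\leq n}(X^\bullet)(1)) \simeq \LSym^{\Gr, \leq n}_S(M(1))$, the totalisation of a split diagram of free graded algebras. So $\cot_{\leq n}(\Gr L) \simeq M(1) \simeq \Tot \cot_{\leq n}(\Gr \adic_{\leq n}(X^\bullet))$. Since $\Gr$ commutes with $\cot_{\leq n}$ and the filtrations are finite, the map $\cot_{\leq n}(L) \to \Tot \cot_{\leq n}(\adic_{\leq n}(X^\bullet))$ is an equivalence. Finally, $\aug$ is a right adjoint, commutes with $\cot_{\leq n}$, and satisfies $\aug \circ \adic_{\leq n} \simeq \id$. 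Applying it yields the statement for $\Tot(X^\bullet)$ itself, with no augmentation in the algebra category required.
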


\begin{proof}
	$\Gr\adicn (X^\bullet) \simeq \LSym_S^{\Gr, \leq n} (\cotn(X^\bullet)(1))$ is also split and the totalisation, computed in $\Mod_S$, is in $\Mod_{S, \geq 0}$. The forgetful functor $\Mod_S \to \Mod_R$ commutes with limits, so this totalisation is also computed in $\Mod_R$. This gives an equivalence
	\begin{align*}
		\cotn (\Tot (\Gr \adicn (X^\bullet))) \xrightarrow{\simeq} \Tot (\cotn (\Gr \adicn (X^\bullet)))
	\end{align*}
	where the totalisation is computed in $\Gr^{\leq n} \Mod_R$.

	$\Tot(\adicn (X^\bullet))$, computed in $\Fil^{\leq n} \Mod_R$, has graded $\Tot(\Gr \adic_{\leq n} (X^\bullet))$. Additionally, for each $i < n$ there is an exact sequence
	\begin{align*}
		F^{i+1}\Tot (\adicn (X^\bullet)) \to F^i \Tot (\adicn (X^\bullet)) \to \Gr^i \Tot(\adicn (X^\bullet))
	\end{align*}
	and $F^n \Tot(\adicn (X^\bullet)) \simeq \Gr^n \Tot (\adicn (X^\bullet))$. By induction, we see that each filtered piece, computed in $\Mod_R$, is in $\Mod_{R, \geq 0}$. Additionally, there is a natural map
	\begin{align*}
		\cotn (\Tot (\adicn (X^\bullet))) \to \Tot (\cotn (\adicn (X^\bullet))) 	
	\end{align*}
	The filtrations are finite and hence complete, and this is an equivalence after passing to gradeds, so it is an equivalence.

	We can now just take $\aug$ to conclude that $\Tot(X^\bullet)$, computed in $\Mod_R$, is in $\Mod_{R, \geq 0}$ and that the map
	\begin{align*}
		\cotn (\Tot(X^\bullet)) \xrightarrow{\simeq} \Tot(\cotn (X^\bullet))
	\end{align*}
	is an equivalence.
\end{proof}

\begin{prop}
	\label{prop-cotcomonadic}
	Let $S \in \CAlg_R^{\an}$. For $n < \infty$, the adjunction
	\begin{align*}
		\cotn: \CAlg_{R/ /S}^{\an, \surj, \leq n} \leftrightarrows (\Mod_{S, \geq 0})_{L_{S/R}[-1]/}: \sqz 
	\end{align*}
	is comonadic.
\end{prop}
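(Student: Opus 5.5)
We apply the dual (comonadic) form of the Barr--Beck--Lurie theorem (\cite{HA}, Theorem 4.7.0.3) to the left adjoint $\cotn$. We have to check two things: that $\cotn$ is conservative, and that $\CAlg_{R/ /S}^{\an, \surj, \leq n}$ admits totalisations of $\cotn$-split cosimplicial objects, which $\cotn$ preserves. Note that $\cotn$ lands in $(\Mod_{S,\geq 0})_{L_{S/R}[-1]/}$ by the remark following the computation on compact projectives.

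\textbf{Split totalisations.} Let $X^\bullet$ be a $\cotn$-split cosimplicial object. By \cref{prop-cotsplit}, the totalisation of the underlying modules is connective, and $\cotn$ commutes with it. It remains to see that this underlying totalisation really computes the limit in $\CAlg_{R/ /S}^{\an, \surj, \leq n}$. Limits in $\CAlg_R^{\an, \surj, \leq n}$ are computed in $\Fun(\Delta^1, \Mod_{R,\geq 0})$, since the forgetful functor of \cref{prop-surjmonadic} is a monadic right adjoint. Limits there are the connective covers of limits in $\Fun(\Delta^1,\Mod_R)$. Because the $\Mod_R$-totalisations are already connective by \cref{prop-cotsplit}, no truncation occurs. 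The target ring is constantly $S$, and a totalisation of a constant diagram is $S$, so the limit lies in the slice $\CAlg_{R/ /S}^{\an, \surj, \leq n}$. Hence the comparison map $\cotn(\Tot X^\bullet)\to \Tot \cotn(X^\bullet)$ is an equivalence in $(\Mod_S)_{L_{S/R}[-1]/}$, since the forgetful functor from the slice preserves and detects such limits.

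\textbf{Conservativity.} Let $f:(A\twoheadrightarrow S)\to(A'\twoheadrightarrow S)$ be a map with $\cotn(f)$ an equivalence. By \cref{lemma-gradic},
\[
\Gr\adicn(f)=\LSym_S^{\Gr,\leq n}(\cotn(f)(1)),
\]
so $\Gr\adicn(f)$ is an equivalence. The filtrations $\adicn$ are finite, of length at most $n$ since $F^j\adicn \simeq 0$ for $j>n$, and hence complete. Therefore $\adicn(f)$ is an equivalence on each filtered piece. Applying $\aug$, which recovers the pair $(F^0,F^1)$, shows that the underlying arrow $(I\to A)\to(I'\to A')$ is an equivalence. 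Since the forgetful functor is conservative, $f$ is an equivalence.

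\textbf{Main obstacle.} The step needing most care is the first one: identifying the limit in $\CAlg_{R/ /S}^{\an, \surj, \leq n}$ with the $\Mod_R$-computed totalisation and keeping track of the slice over $S$. In particular, the target ring $\cofib(I\to A)$ of the limit must still be $S$, and the limit must be taken in connective objects. I would handle this by working with pairs $(A,I)$, where limits are computed termwise in the connective modules underlying both $A$ and $I$. The finiteness $n<\infty$ is used only in the conservativity step, through completeness of the finite filtration; this matches why the statement excludes $n=\infty$.
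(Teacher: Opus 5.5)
Your proposal is correct and follows the paper's proof. Conservativity comes from \cref{lemma-gradic} together with completeness of the finite adic filtration; the paper finishes by full faithfulness of $\adicn$, which is the same as your use of $\aug\circ\adicn\simeq\id$. The split-totalisation condition comes from \cref{prop-cotsplit} plus Barr--Beck--Lurie, and your explicit check that the limit is computed on underlying pairs and stays in the fibre over $S$ fills in a step the paper leaves implicit.

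One small correction to your closing remark: $n<\infty$ is not used only in the conservativity step. It is also needed inside \cref{prop-cotsplit}, whose proof relies on the adic filtration being finite, both for the inductive connectivity of the filtered pieces and for detecting the comparison map on gradeds.
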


\begin{proof}
	First we show that $\cotn$ is conservative. Suppose $A \to B$ is a map in $\CAlg_{R/ /S}^{\an, \surj, \leq n}$ inducing an equivalence on $\cotn$. Then, $\adicn (A\twoheadrightarrow S) \to \adicn (B \twoheadrightarrow S)$ is an equivalence on associated gradeds. By completeness, since these filtrations are finite, this is an equivalence before taking gradeds. The functor $\adicn$ is fully faithful, so $A \simeq B$.

	By \cref{prop-cotsplit}, $\cotn$-split cosimplicial diagrams commute with $\cotn$, so we are done by the Barr-Beck-Lurie Theorem (\cite{HA}, Theorem 4.4.3.5).
\end{proof}

We now compute $\cot_{\leq n}$ of some algebras. We first need some technicalities of genuine equivariant chain complexes.

\begin{defi}[Genuine equivariant chain complexes]
	Let $R$ be a discrete ring and $G$ a finite group. We will write $R[\mathcal{O}_G]$ for the $1$-category of discrete $R[G]$-modules of the form $R[S]$, where $S$ is a finite $G$-set. Note that every object in this category can be written as a coproduct of objects of the form $R[G/H]$, where $H \subset G$ is a normal subgroup.

	We define the category of \emph{genuine equivariant chain complexes} as 
	\begin{align*}
		\Mod_{\underline{R}} ^G := \Mod_{R[\mathcal{O}_G]}
	\end{align*}
	in the sense of (\cite{explicit}, Definition 2.1); that is, finite-product-preserving functors $(R[\mathcal{O}_G])^{\op} \to \Sp$.
\end{defi}

\begin{rem}
	The $t$-structure on spectra induces a $t$-structure on $\Mod_{\underline{R}}^G$. Since $R[\mathcal{O}_G]$ is additive, we can identify
	\begin{align*}
		\Mod_{\underline{R}, \geq 0} ^G \simeq \mathcal{P}_\Sigma (R[\mathcal{O}_G])
	\end{align*}
	(\cite{Gepner_2015}, Corollary 2.10).
\end{rem}

\begin{rem}
	We have deliberately avoided using Mackey functors since this will not be necessary for our purposes. More classically, one would use Mackey functors to define genuine equivariant $G$-spectra (\cite{barwick_spectra}, \cite{Guillou}), and define genuine equivariant chain complexes as the $R$-module objects in this category. However, this simplifies to the above definition (\cite{explicit}, Example 2.15).
\end{rem}

\begin{defi}[Strict fixed points and orbits]
	\label{defi-strict}
	There is a limit and colimit preserving functor $\triv_G: \Mod_R \to \Mod_{\underline{R}}^G$, given by sending a chain complex to the same underlying chain complex with trivial $G$-action. This admits a right and a left adjoint, respectively written as
	\begin{align*}
		(-)^G : \Mod_{\underline{R}}^G \to \Mod_R, \; \; \; \; \; \; \; \;  (-)_G: \Mod_{\underline{R}}^G \to \Mod_R
	\end{align*}
	which we will call \emph{strict fixed points} and \emph{strict orbits}.
\end{defi}

\begin{rem}
	On objects of the form $R[G/H]$, we have that
	\begin{align*}
		R[G/H]^G \simeq R[G/H]_G \simeq R
	\end{align*}
\end{rem}

\begin{example}
	\label{example-strictquotient}
	Let $S$ be a simplicial $G$-set. We will write $R[S]$ for the resulting genuine equivariant chain complex. That is, each simplicial level of the simplicial $R[G]$-modules $R[S]$ can be written as $R[S_i]$, where $S_i$ is a $G$-set. Then, $R[S]$, as an object in $\Mod_{\underline{R}}^G$, can be abstractly defined as the geometric realisation of the $R[S_i]$.

	Then, since strict orbits commute with colimits, we see that
	\begin{align*}
		R[S]_G \simeq R[S/G]
	\end{align*}
	where $S/G$ is the $1$-categorical simplicial quotient. We will also often write $\tilde{C} _*(S, R) := R[S]$, since the underlying chain complex is precisely the $R$-valued chains on the geometric realisation of $S$.
\end{example}

\begin{example}
	Consider the functor $\LSym^n_R : \Mod_{R, \geq 0} \to \Mod_{R, \geq 0}$. Previously, we defined it as the sifted-colimit preserving functor of $V \mapsto V^{\otimes n}_{\Sigma_n}$ for $V \in \Vect_R^\omega$. Considering $V^{\otimes n}_{\Sigma_n}$ as a genuine $\Sigma_n$-chain complex, we see that this functor can always be written as $V \mapsto V^{\otimes n}_{\Sigma_n}$.
\end{example}

\begin{defi}
	For $n > 0$ and $i \geq 0$, we will say that a chain $\hat{0} = \sigma _0 \leq \dots \leq \sigma_j = \hat{1}$ of partitions of $[n]$ has \emph{type $\leq i$} if in each of the refinements $\sigma_k \leq \sigma_{k+1}$, each part of $\sigma_{k+1}$ is refined into at most $i$ parts.
\end{defi}

\begin{prop}
	\label{prop-cotsym}
	Let $m \leq n$. For $i > 0$, define the simplicial set $T_{\leq n} (m,i)$ by
	\begin{align*}
		T_{\leq n}  (m,i)_j := \left\{ \sigma_0 \leq \dots \leq \sigma_j = \hat{1} \ \middle\vert \begin{array}{l} \sigma_k \text{ is a partition of } [i] \text{, the chain has type } \leq n \\ \text{and each part of } \sigma_0 \text{ has size } \leq m \end{array}  \right\} \bigsqcup \{*\}
	\end{align*}
	The face maps delete partitions from the chain, sending invalid chains to the basepoint. Degeneracy maps insert repeated partitions.

	Then, for $V \in \Vect_R$,
	\begin{align*}
		\cot_{\leq n} (\LSym_R^{\leq m} (V \to V)) \simeq \bigoplus_{i > 0} \left(V^{\otimes i} \otimes \tilde{C}_* (T_{\leq n} (m,i), R)\right) _{\Sigma_i}
	\end{align*}
	where $(-)_{\Sigma_i}$ refers to strict orbits.
\end{prop}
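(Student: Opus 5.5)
The plan is to compute $\cot_{\leq n}$ through a bar resolution of $\LSym_R^{\leq m}(V \to V) = \LSym_R^{\nu,\leq m}(V)$ in $\CAlg_{R//R}^{\an,\surj,\leq n}$. By \cref{prop-nonunital} the functor $(A \twoheadrightarrow R) \mapsto \fib(A \to R)$ is monadic with monad $\LSym_R^{\nu,\leq n}$, and $\iota_m^n$ preserves underlying modules. So $\iota_m^n \LSym^{\nu,\leq m}_R(V)$ is the geometric realisation of the simplicial object $[j] \mapsto (\LSym^{\nu,\leq n}_R)^{\circ (j+1)}(\LSym^{\nu,\leq m}_R(V))$, formed as free $(n+1)$-nilpotent algebras. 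The functor $\cot_{\leq n}$ is a left adjoint, and on free objects $\cot_{\leq n}(\LSym^{\nu,\leq n}_R(U)) \simeq U$ by the lemma computing $\cot_{\leq n}$ of $\LSym^{\leq n}_R(V\to V\oplus W)$ with $W=0$. Hence $\cot_{\leq n}(\LSym^{\leq m}_R(V\to V))$ is the realisation of the simplicial module
\begin{align*}
	[j] \mapsto (\LSym^{\nu,\leq n}_R)^{\circ j}(\LSym^{\nu,\leq m}_R(V)).
\end{align*}
Its face maps come from the monad multiplication, and the last face comes from the algebra structure on $\LSym^{\nu,\leq m}_R(V)$, which sends the degree $>m$ parts to zero.

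Next I would expand this combinatorially. Since $V$ is finite free, each $j$-fold composite decomposes as $\bigoplus_{i>0} (V^{\otimes i} \otimes R[S_j(i)])_{\Sigma_i}$. Here $S_j(i)$ is the $\Sigma_i$-set of chains of partitions $\sigma_0 \leq \dots \leq \sigma_j = \hat 1$ of $[i]$ in which each refinement step splits each block into at most $n$ parts (one for each application of $\LSym^{\nu,\leq n}$), and the innermost partition $\sigma_0$ has blocks of size $\leq m$ (from $\LSym^{\nu,\leq m}$). This uses that coinvariants of iterated symmetric powers are orbits of $V^{\otimes i}$ tensored with the permutation module on such chains, which I would phrase with genuine equivariant chain complexes and strict orbits as in \cref{example-strictquotient}. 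Note that the chains here are not required to be strict. Under this identification, the inner faces compose adjacent refinements by deleting $\sigma_k$. Such a deletion can produce a step of type $>n$, and in that case the truncation in $\LSym^{\nu,\leq n}$ kills the term, so the chain goes to the basepoint. The outer face on the $\LSym^{\nu,\leq m}$ side deletes $\sigma_0$, and is zero when a merged block exceeds size $m$. The face that deletes the final $\hat 1$ is absent, which matches the requirement $\sigma_j=\hat 1$. Degeneracies come from the units and insert repeated partitions. So levelwise the simplicial module is $\bigoplus_i (V^{\otimes i}\otimes \tilde C_*(T_{\leq n}(m,i),R))_{\Sigma_i}$, with the basepoint accounting for the killed terms as reduced chains. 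Since strict orbits and $V^{\otimes i}\otimes -$ commute with geometric realisation, realising gives the claimed formula.

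The main obstacle is checking that the face maps really match, with the correct signs and basepoint conventions. In particular, I need to verify that a composite of monad multiplications vanishes exactly when the merged refinement has type $>n$, or when $\sigma_0$ acquires a block of size $>m$. I would check this on the discrete level for finite free $V$, where everything is a polynomial functor and the computation is about monomials. I would then handle equivariance by noting that all the constructions are of the form $R[S]$ for $\Sigma_i$-sets $S$, so the $1$-categorical quotient computes the strict orbits.
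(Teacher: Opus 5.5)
Your proposal is essentially the paper's proof. Both resolve $\LSym^{\nu,\leq m}_R(V)$ by its bar resolution as a $\LSym^{\nu,\leq n}_R$-algebra in $\CAlg_{R/ /R}^{\an,\surj,\leq n}$, use that $\cot_{\leq n}$ of a free algebra is its generators to obtain $|\Bar_\bullet(\id,\LSym^{\nu,\leq n}_R,\LSym^{\nu,\leq m}_R(V))|$, expand the iterated composites into chains of partitions (the Ching-style binomial expansion), and realise.

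One slip in your face bookkeeping: the face deleting $\sigma_j=\hat{1}$ is not absent. It is $\cot_{\leq n}$ applied to the counit face of the resolution, i.e.\ the projection of the outermost $\LSym^{\nu,\leq n}_R$ onto its weight-one summand (the right module structure on $\id$ in the bar construction). It sends a chain to the basepoint unless $\sigma_{j-1}=\hat{1}$, so it vanishes on nondegenerate simplices and does not affect normalized chains, but you need it to have $j+1$ faces in level $j$.
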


\begin{rem}
	Notice that for $n = \infty$ and $m = 1$, we get $\cot (\sqz (V))$ and we recover (\cite{partition}, Proposition 5.35).
\end{rem}

\begin{proof}
	Given $(A \twoheadrightarrow R) \in \CAlg_R^{\an, \surj, \leq n}$, we can write it as its bar resolution in $\CAlg_{R/ /R}^{\an, \surj, \leq n}$ as a $\LSym_R^{\nu, \leq n}$-algebra (\cref{prop-nonunital}). This means that we can write $(A \twoheadrightarrow R)$ as a geometric realisation of objects of the form $\LSym^{\leq n}_R (V \to V) = (\LSym_R^{\leq n} (V) \xrightarrow{V \mapsto 0} R)$. Notice that this is not the same as its bar resolution in $\CAlg_R^{\an, \surj, \leq n}$ as a $\LSym_R^{\leq n}$-algebra.

	$\LSym_R^{\nu, \leq m} (V)$ is a $\LSym_R^{\nu, \leq n}$-algebra by trivialising anything in degree higher than $m$. Remember that $\cot_{\leq n} (\LSym_R^{\leq n} (V \to V)) \simeq (0 \to V)$. Hence, omitting the zero (since $L_{R/R} \simeq 0$), we get
	\begin{align*}
		\cot_{\leq n} (\LSym^{\nu, \leq m}_R (V)) & \simeq | \Bar_\bullet (\id, \LSym^{\nu, \leq n}_R, \LSym^{\nu, \leq m}_R (V)) | \\
													 & \simeq \bigg|  \ \ \ldots \ \ \substack{\longrightarrow \vspace{-4pt} \\ \leftarrow   \vspace{-4pt} \ \\ \longrightarrow \vspace{-4pt} \\ \leftarrow   \vspace{-4pt} \ \\ \longrightarrow  \vspace{-4pt} \\ \leftarrow  \vspace{-4pt} \ \\ \longrightarrow} \ \ \bigoplus_{c \leq n} (\bigoplus_{b \leq n}   ( \bigoplus_{a\leq m}  V^{\otimes a}_{\Sigma_a} )^{\otimes b}_{\Sigma_b})^{\otimes c}_{ \Sigma_c}\ \ \substack{\longrightarrow \vspace{-4pt} \\ \leftarrow   \vspace{-4pt} \ \\ \longrightarrow  \vspace{-4pt} \\ \leftarrow  \vspace{-4pt} \ \\ \longrightarrow} \ \ \bigoplus_{b \leq n} (\bigoplus_{a \leq m}  V^{\otimes a}_{\Sigma_a})^{\otimes b}_{ \Sigma_b} \ \ \substack{  \longrightarrow  \vspace{-4pt} \\ \leftarrow  \vspace{-4pt} \ \\ \longrightarrow} \ \ \bigoplus_{a \leq m} V^{\otimes a}_{ \Sigma_{a}} \ \ \bigg|\
	\end{align*}
	We can now apply a well-known combinatorial observation (c.f. \cite{Ching}, \S 7) and expand these exponents binomially. We see that an exponent adding up to $i$ at the $j$th simplicial level of this diagram is equivalent to a chain $\sigma_0 \leq \dots \leq \sigma_j = \hat{1}$ of partitions of $[i]$. Additionally, each part of $\sigma_0$ has size $\leq n$, since the most refined part of this partition corresponds to the exponents coming from $\bigoplus_{a \leq m} V^{\otimes a}_{\Sigma_a}$. Additionally, due to the bounds on the other sums, this chain has type $\leq n$. Hence,
	\begin{align*}
		\cot_{\leq n} (\LSym_R^{\nu, \leq m} (V)) & \simeq  \bigg|  \ \ \ldots \ \ \substack{\longrightarrow \vspace{-4pt} \\ \leftarrow   \vspace{-4pt} \ \\ \longrightarrow  \vspace{-4pt} \\ \leftarrow  \vspace{-4pt} \ \\ \longrightarrow} \ \ \bigoplus_{i > 0}  (V^{\otimes i} \otimes R[T_{\leq n} (m,i)_1] )_{\Sigma_i} \ \ \substack{  \longrightarrow  \vspace{-4pt} \\ \leftarrow  \vspace{-4pt} \ \\ \longrightarrow} \ \ \bigoplus_{i > 0} (V^{\otimes i} \otimes  R[T_{\leq n} (m,i)_0])_{ \Sigma_i} \ \ \bigg| \\
													 & \simeq \bigoplus_{i > 0} \left(V^{\otimes i} \otimes \tilde{C}(T_{\leq n} (m,i),R)\right)_{ \Sigma_i}
	\end{align*}
	where $R[T_{\leq n} (m,i)_j]$ refers to the free $R$-module generated by the set $T_{\leq n} (m,i)_j$ quotiented by the basepoint.
\end{proof}

We will now see that the above sum is finite in each homological degree.

\begin{lemma}
	\label{lemma-cotconnectivity}
	Let $l,m,n > 0$ be fixed. Then, $(C^* (T_{\leq n} (m,i), R) \otimes_R V^{\otimes i})_{\Sigma_i}$ is $l$-connective for $i \gg 0$ for any $V \in \Mod_{R, \geq 0}$. The bounds on $i$ do not depend on $V$.
\end{lemma}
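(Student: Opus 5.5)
The plan is to show that the reduced simplicial set $T_{\leq n}(m,i)$ is highly connected, with connectivity growing linearly in $i$. Then $V^{\otimes i}\otimes \tilde{C}_*(T_{\leq n}(m,i),R)$ is highly connected for every connective $V$, and strict orbits preserve connectivity. (I read $C^*$ in the statement as the reduced chains $\tilde{C}_*$ appearing in \cref{prop-cotsym}.)

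\textbf{Step 1: a dimension bound on nondegenerate simplices.} A nondegenerate $j$-simplex of $T_{\leq n}(m,i)$ is a strictly increasing chain $\sigma_0 < \dots < \sigma_j = \hat{1}$ of partitions of $[i]$. The conditions are that each part of $\sigma_0$ has size $\leq m$, and each refinement step splits every part into at most $n$ pieces.

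Since $\sigma_0$ has parts of size at most $m$, it has at least $i/m$ parts. Going up the chain, at each strict step $\sigma_k<\sigma_{k+1}$ every block of $\sigma_{k+1}$ is a union of at most $n$ blocks of $\sigma_k$. Hence the number of blocks shrinks by a factor of at most $n$: $|\sigma_{k+1}| \geq |\sigma_k|/n$. Since $|\sigma_j| = 1$, we get $1 \geq (i/m)\, n^{-j}$, that is, $j \geq \log_n(i/m)$ (for $n\geq 2$).

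So every nondegenerate simplex other than the basepoint has dimension at least $\log_n(i/m)$. This means the $(\lceil \log_n(i/m)\rceil - 1)$-skeleton is just the basepoint. The case $n=1$ is degenerate: no strict refinements are possible, and only $i \leq m$ contributes, so for $i>m$ the set is a point.

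\textbf{Step 2: deduce connectivity of chains.} The normalized reduced chain complex $\tilde{C}_*(T_{\leq n}(m,i),R)$ is a complex of free $R$-modules on the nondegenerate non-basepoint simplices. By Step 1 it vanishes in degrees below $d_i := \lceil \log_n(i/m)\rceil$, so it is $d_i$-connective. Note that $d_i \to \infty$ as $i \to \infty$. This uses only elementary counting, not \cref{prop-atomised}; that result could give sharper bounds, but they are not needed.

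\textbf{Step 3: tensor and orbits.} For $V\in\Mod_{R,\geq 0}$, the complex $V^{\otimes i}$ is connective, so $V^{\otimes i}\otimes_R \tilde{C}_*(T_{\leq n}(m,i),R)$ is $d_i$-connective. The genuine structure is the one from \cref{example-strictquotient}. Each simplicial level is a free module $R[S]$ on a finite $\Sigma_i$-set, tensored with $V^{\otimes i}$, and the strict orbits are $(V^{\otimes i}\otimes R[S])_{\Sigma_i}$.

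For $V$ a finite free module, a direct computation gives $(V^{\otimes i}\otimes R[\Sigma_i/H])_{\Sigma_i}\simeq (V^{\otimes i})_H$, which is discrete. Since everything is sifted-colimit-preserving in $V$, it stays connective for all connective $V$.

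Strict orbits are a left adjoint and preserve geometric realisations. So the orbits are computed as a geometric realisation of a simplicial object that is trivial in levels $<d_i$ and connective in each level, which is $d_i$-connective. Choosing $i$ with $d_i\geq l$, i.e. $i > m n^{l}$, gives the bound, and this bound is independent of $V$.

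\textbf{Main obstacle.} The delicate point is Step 3: making sure the strict-orbit functor on genuine complexes preserves connectivity, in the sense that the levelwise orbit terms are connective. I would handle this by reducing to $V$ finite free via sifted colimits and computing the orbits of $R[S]\otimes V^{\otimes i}$ explicitly as above.
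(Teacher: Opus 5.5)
Your argument is correct for finite $n$. In that case it is a careful version of the paper's one-line remark that for $n<\infty$ there are no short chains once $i$ is large.

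\textbf{The gap is the case $n=\infty$.} The lemma is meant to include it: throughout the paper $n$ ranges over $\NN\cup\{\infty\}$, and the paper's proof disposes of finite $n$ and then says ``assume $n=\infty$''. It is also the case used later. \cref{lemma-cotpieces} invokes the lemma for $T_{\leq\infty}(m,i)$, and \cref{prop-cotcolim} needs the filtration on $\cot=\cot_{\leq\infty}$ to be complete, which depends on it. For $n=\infty$ your bound $|\sigma_{k+1}|\geq|\sigma_k|/n$ is vacuous, and no skeletal argument can succeed. For every $i\geq 2$ the chain $\hat{0}<\hat{1}$ is a nondegenerate $1$-simplex of $T_{\leq\infty}(m,i)$, since all blocks of $\hat{0}$ have size $1\leq m$. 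Both $\hat{0}$ and $\hat{1}$ are fixed by every permutation, so this simplex survives in every quotient by a Young subgroup. Hence the reduced chains are nonzero in degree $1$ for all $i$. The high connectivity is a genuine cancellation in homology, and your remark that \cref{prop-atomised} is unnecessary is exactly where the proof breaks down.

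\textbf{What is missing is an actual computation of the homotopy type.} The paper proceeds as follows.
\begin{enumerate}
\item For $V=R^j$ the binomial formula reduces the claim to the connectivity of the quotients $T_{\leq\infty}(m,i)/(\Sigma_{i_1}\times\dots\times\Sigma_{i_j})$. Your levelwise orbit computation in Step~3 leads to the same reduction.
\item Such a quotient is the cofibre of the levelwise injective map $\tilde{T}\to T_{\leq\infty}(\infty,i)/(\Sigma_{i_1}\times\dots\times\Sigma_{i_j})$. Here $\tilde{T}$ consists of the chains whose bottom partition has a block of size $>m$.
\item The target is contractible: by \cref{prop-unreduced} it is the unreduced suspension of a poset with minimum $\hat{0}$. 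So the quotient is the suspension of $\tilde{T}$.
\item $\tilde{T}$ is the unreduced suspension of the poset of proper partitions with a block of size $>m$. This poset is atomised, and its atoms are the partitions with a single non-singleton block of size $m+1$.
\item The poset cannot be generated by fewer than about $i/(m+1)$ atoms. So \cref{prop-atomised} gives connectivity growing linearly in $i$, independently of $V$.
\end{enumerate}
Your proof needs some input of this kind, beyond counting nondegenerate simplices, to be complete.
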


\begin{proof}
	We will write $X := C^* (T_{\leq n} (m,i), R)$ to simplify notation. Let $F: \Mod_{S, \geq 0} \to \Mod_{S, \geq 0}$ be the functor $V \mapsto (X \otimes_R V^{\otimes i})_{\Sigma_i}$. We may only prove this for $V$ finite and free and then left Kan extend. Then, using the binomial formula, we see that if $V = R^j$ then
	\begin{align*}
		F(V) \simeq \bigoplus_{i_1 + \dots + i_j = i}  \left( \Ind _{\Sigma_{i_1} \times \dots \times \Sigma_{i_j}}^{\Sigma_i} X \right)_{\Sigma_i} \simeq \bigoplus_{i_1 + \dots + i_j = i} X_{\Sigma_{i_1} \times \dots \times \Sigma_{i_j}}
	\end{align*}
	Thus, it suffices to show that $T_{\leq n} (m,i)/_{\Sigma_{i_1} \times \dots \times \Sigma_{i_j}}$ is $l$-connected for $i \gg 0$.

	A \emph{partition of $[i]$ with $i_1, \dots, i_j$ repetitions} is a partition of a set with $i_1$ undistinguishable elements, $i_2$ undistinguishable elements, and so on. We can identify the set of partitions of $[i]$ with $i_1 , \dots , i_j$ repetitions with the $(\Sigma_{i_1} \times \dots \times \Sigma_{i_j})$-orbits of the set of partitions of $[i]$. Notice that we can identify $T_{\leq n} (m,i)/_{\Sigma_{i_1} \times \dots \times \Sigma_{i_j}}$ with the simplicial set whose $r$-simplices are chains $\sigma_0 \leq \dots \leq \sigma_r = \hat{1}$ of partitions of $i$ with $i_1, \dots , i_j$ repetitions (plus a basepoint), where $\sigma_0$ has all parts of size $\leq m$, and the chain is of type $\leq n$ (\cref{example-strictquotient}).

	If $n < \infty$, the result is now clear because for $i$ large enough we cannot have any such chains of length $r < l$. Hence, assume $n = \infty$.

	There is a map $T_{\leq \infty} (\infty,i)/_{\Sigma_{i_1} \times \dots \times \Sigma_{i_j}} \to T_{\leq \infty} (m,i)/_{\Sigma_{i_1} \times \dots \times \Sigma_{i_j}}$ given by collapsing invalid chains to the basepoint. Let $\tilde{T}$ be the set-theoretic fibre of this map, which is a cofibration since it is injective at every simplicial level. Hence, $\tilde{T}$ is the simplicial set whose $r$-simplices are chains $\sigma_0 \leq \dots \leq \sigma_r = \hat{1}$ of partitions of $[i]$ with $i_1 , \dots, i_j$ repetitions (plus a basepoint), where $\sigma_0$ has at least one part of size $> m$. 

	In any chain of $\tilde{T}$, each partition is a coarsening of $\sigma_0$, so all of the partitions have a part with size $> m$. Hence, by \cref{prop-unreduced}, we see that $\tilde{T}$ is the unreduced suspension of the poset of proper partitions of $[i]$ with $i_1, \dots, i_j$ repetitions and one part of size $> m$.

	This poset is atomised, and the atoms are partitions with exactly one part of size $m+1$ and every other part a singleton. Let $\sigma_1, \dots, \sigma_k$ be $k$ such atoms, with parts $P_1, \dots, P_k$ of size $m+1$. Their join is given by the partition with one part $P_1 \cup \dots \cup P_k$ and every other part a singleton. Hence, we see that the least number of atoms that generate this poset is $\lceil \frac{i}{m+1} \rceil$. By \cref{prop-atomised}, we see that this poset is at least $(\lceil \frac{i}{m+1} \rceil -2)$-connected.

	Hence, since $\tilde{T}$ is the unreduced suspension of this poset, it is $(\lceil \frac{i}{m+1} \rceil -1)$-connected.

	By a similar argument, $T_{\leq \infty} (\infty, i)/_{\Sigma_{i_1} \times \dots \times \Sigma_{i_j}}$ is the unreduced suspension of the poset of partitions of $[i]$ with $i_1 , \dots, i_j$ repetitions that are not the indiscrete partition (i.e. not a single part). But this poset has an infimum, so it is contractible. 

	Additionally, $T_{\leq \infty} (m,i)/_{\Sigma_{i_1} \times \dots \times \Sigma_{i_j}}$ is the set-theoretic quotient of the map $\tilde{T} \to T_{\leq \infty} (\infty, i)/_{\Sigma_{i_1} \times \dots \times \Sigma_{i_j}}$, and thus the homotopy cofibre since the map is a cofibration. Hence, $T_{\leq \infty} (m,i)/_{\Sigma_{i_1} \times \dots \times \Sigma_{i_j}}$ is the suspension of $\tilde{T}$ so it is $\lceil \frac{i}{m+1} \rceil$-connected. The result follows.
\end{proof}

\section{Bounded and nilpotent algebras}
\label{section-boundednilpotent}

We will now study maps of animated rings $(A \twoheadrightarrow B)$ that are \emph{bounded and nilpotent}. We will see that we can understand these maps through the categories of nilpotent surjections.

\begin{defi}
	We say that $(A \twoheadrightarrow B) \in \CAlg_R^{\an, \surj}$ is \emph{nilpotent} if $\mathfrak{m} := \ker (\pi_0 (A) \to \pi_0 (B))$ is nilpotent; that is, $\mathfrak{m}^n \cong 0$ for $n \gg 0$. We will write $\CAlg_R^{\an, \Nil} \subset \CAlg_R^{\an, \surj}$ for the full subcategory spanned by the nilpotent surjections. For $S \in \CAlg_R^{\an}$, we will write $\CAlg_{R/ /S}^{\an, \Nil} \subset \CAlg_{R/ /S}^{\an, \surj}$ for the category of nilpotent arrows with target $S$.

	We say that $(A \twoheadrightarrow B)$ is \emph{bounded} if $\pi_i \fib (A \to B) \cong 0$ for $i\gg 0$.

	We will write $\CAlg_R^{\an, \bNil}$ and $\CAlg_{R/ /S}^{\an, \bNil}$ for the corresponding categories of bounded and nilpotent arrows.
\end{defi}

\begin{construction}[Approximating $\cot$]
	\label{construction-approxcot}
	Let $(A \twoheadrightarrow B) \in \CAlg_R^{\an, \surj, \leq n}$ and let $m \geq n$. Then, we can consider $(A \twoheadrightarrow B)$ as an object in $\CAlg_R^{\an, \surj, \leq m}$ by the right adjoint to polynomial truncation. Hence, there is a counit map $t_{\leq n}^m (A \twoheadrightarrow B) \to (A \twoheadrightarrow B)$. Applying $\cot_{\leq n}$, we get a map
	\begin{align*}
		\cot_{\leq m} (A \twoheadrightarrow B) \to \cot_{\leq n} (A \twoheadrightarrow B)
	\end{align*}
	The polynomial truncation maps are compatible with each other so we get a tower
	\begin{align*}
		\cot (A \twoheadrightarrow B) \to \dots \to \cot_{\leq n+1} (A \twoheadrightarrow B) \to \cot_{\leq n} (A \twoheadrightarrow B)
	\end{align*}
	and hence a natural map
	\begin{align*}
		\cot (A \twoheadrightarrow B) \to \varprojlim_{m \geq n} \cot_{\leq m} (A \twoheadrightarrow B)
	\end{align*}
\end{construction}

\begin{construction}[Filtrations on $\cot_{\leq n}$]
	\label{construction-cotfiltrations}
	Let $m \leq n$. For $(A \twoheadrightarrow B) \in \CAlg_R^{\an, \surj, \leq m}$, we can consider it as an object of $\Fil\CAlg_R^{\an, \surj, \leq m}$ through the adic filtration. Applying $\cot_{\leq n} \circ \iota_m^n$, we get a natural filtration on $\cot_{\leq n} (A \twoheadrightarrow B)$. All of the functors we used commute with sifted colimits, so this construction commutes with sifted colimits.

	Notice that the map $\cot_{\leq n+1} (A \twoheadrightarrow B) \to \cot_{\leq n} (A \twoheadrightarrow B)$ then refines to a map of filtered modules.
\end{construction}

\begin{lemma}
	\label{lemma-cotcomplete}
	Let $m \leq n$ and let $(A \twoheadrightarrow B) \in \CAlg_R^{\an, \surj, \leq m}$. Then the filtration on $\cot_{\leq n} (A \twoheadrightarrow B)$ is complete.
\end{lemma}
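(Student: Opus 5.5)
Since the construction in \cref{construction-cotfiltrations} commutes with sifted colimits, I would first reduce to free objects and compute there, and then control the passage to geometric realisations by a connectivity estimate. The filtration is obtained by applying $\cotn \circ \iota_m^n$ to $\adic_{\leq m}(A \twoheadrightarrow B)$. By \cref{lemma-gradic}, its associated graded is
\begin{align*}
	\Gr \simeq \cotn\bigl(\iota_m^n \LSym_B^{\Gr, \leq m}(\cot_{\leq m}(A \twoheadrightarrow B)(1))\bigr).
\end{align*}
This is a graded version of \cref{prop-cotsym}, with the base $R$ replaced by $B$. Tracking weights, the weight-$i$ piece is the summand
\begin{align*}
	\bigl(M^{\otimes i} \otimes \tilde{C}_*(T_{\leq n}(m,i), R)\bigr)_{\Sigma_i}, \qquad M := \cot_{\leq m}(A \twoheadrightarrow B).
\end{align*}
So the strategy is to prove that $F^i \cotn(A \twoheadrightarrow B)$ becomes arbitrarily highly connective as $i \to \infty$. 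Then $\varprojlim_i F^i$ vanishes, because a limit of a tower whose terms are eventually $l$-connective for every $l$ is zero (the $\varprojlim^1$ terms vanish as well).

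The key step is to show that $F^i$ is $l$-connective for $i \gg 0$, with a bound uniform in $(A \twoheadrightarrow B)$. I would deduce this from the graded pieces. Each $\Gr^j$ with $j \geq i$ is $l$-connective for $i$ large by \cref{lemma-cotconnectivity}, whose bound is independent of the module $M$, here a connective $B$-module. The lemma is stated over $R$, but the computation is base-independent, since it reduces to orbits of free simplicial $\Sigma$-sets. It remains to pass from the gradeds to $F^i$ itself, which is not finite in general when $n=\infty$.

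For this I would argue on free objects $\LSym^{\leq m}_R(V \to V \oplus W)$ first. There the adic filtration is explicit, and $\cotn$ of each weight-truncated piece is computed by a bar construction, as in the proof of \cref{prop-cotsym}. This gives $F^i$ as a direct sum $\bigoplus_{j \geq i}$ of the graded pieces, so $F^i$ is $l$-connective for $i \geq N(l)$, with $N(l)$ depending only on $l,m,n$. Next, a general object is a geometric realisation of free ones. Since $F^i$ commutes with sifted colimits and geometric realisations preserve $l$-connectivity, the same bound $N(l)$ holds for all $(A \twoheadrightarrow B)$. Finally $\varprojlim_i F^i$ is $(l-1)$-connective for every $l$, hence zero.

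I expect the main obstacle to be justifying the splitting $F^i \simeq \bigoplus_{j\ge i}\Gr^j$ on free objects. An alternative, if that fails, is to identify $F^i$ with $\cotn$ applied to a filtered free algebra whose generators all sit in weight $\geq 1$. In that description, every term in the bar construction of weight below $i$ is absent, and the connectivity of the weight-$\geq i$ part follows from the same poset estimates as in \cref{lemma-cotconnectivity}. For $n<\infty$ this is immediate, since there are no long chains. The real content is the case $n=\infty$, where the bound $\lceil i/(m+1)\rceil$ of \cref{lemma-cotconnectivity} is essential.
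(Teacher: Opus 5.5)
Your proposal is correct and follows essentially the paper's proof: reduce to free objects $\LSym_R^{\leq m}(V \to V \oplus W)$, identify $F^j$ there with $\bigoplus_{i \geq j}$ of the summands from \cref{prop-cotsym}, and conclude from the $V$-independent bounds of \cref{lemma-cotconnectivity} together with the Milnor sequence. The only difference is that you push the uniform connectivity bound through geometric realisations, whereas the paper pushes completeness itself through them by citing (\cite{partition}, Proposition 2.11); this makes your version slightly more self-contained.
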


\begin{proof}
	Complete objects are closed under geometric realisations (\cite{partition}, Proposition 2.11), so we may just show this for $(A \twoheadrightarrow B) = \LSym_R^{\leq m} (V \to V \oplus W)$ for $V,W \in \Vect_R$. By \cref{prop-cotsym}, we have an equivalence
	\begin{align*}
		\cot_{\leq n} (\LSym_R^{\leq m} (V \to V \oplus W)) \simeq \bigoplus_{i > 0} (V^{\otimes i} \otimes \tilde{C}_* (T_{\leq n} (m,i) , R))_{\Sigma_i} \otimes \LSym_R^* (W)
	\end{align*}
	Following the filtered weights in the proof of \cref{prop-cotsym}, we see that
	\begin{align*}
		F^j \cot_{\leq n} (\LSym_R^{\leq m} (V \to V \oplus W)) \simeq \bigoplus_{i \geq j} (V^{\otimes i} \otimes \tilde{C}_* (T_{\leq n} (m,i) , R))_{\Sigma_i} \otimes \LSym_R^* (W)
	\end{align*}
	By \cref{lemma-cotconnectivity}, for fixed $l$, the $j$th filtered piece is $l$-connective for $j \gg 0$. This directly implies completeness by the Milnor sequence.
\end{proof}

\begin{lemma}
	\label{lemma-cotpieces}
	Let $m > 0$.
	\begin{enumerate}
		\item Let $l \geq 0$. Then, there exists a $j$ such that for any $(A \twoheadrightarrow B) \in \CAlg_R^{\an, \surj, \leq m}$ the composition
		\begin{align*}
			\cot (A \twoheadrightarrow B) \to \cot_{\leq n} (A \twoheadrightarrow B) \to \cot_{\leq n} (A \twoheadrightarrow B)/F^j \cot_{\leq n} (A \twoheadrightarrow B)
		\end{align*}
		is $l$-connected for $n \gg m$. The bounds on $n$ do not depend on $(A \twoheadrightarrow B)$.
	\item Let $i$ be fixed. For any $(A \twoheadrightarrow B) \in \CAlg_R^{\an, \surj, \leq m}$ the map
		\begin{align*}
			\Gr^i \cot (A \twoheadrightarrow B) \to \Gr^i \cot_{\leq n} (A \twoheadrightarrow 	B)
		\end{align*}
		is an equivalence for $n \gg m$. The bounds on $n$ do not depend on $(A \twoheadrightarrow B)$.
	\end{enumerate}
\end{lemma}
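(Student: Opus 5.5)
The plan is to reduce both statements to compact projectives and then to the combinatorial formula of \cref{prop-cotsym}, using the connectivity estimate \cref{lemma-cotconnectivity}. All functors involved ($\cot$, $\cot_{\leq n}$, the filtrations of \cref{construction-cotfiltrations}, $F^j$, $\Gr^i$ and the comparison maps of \cref{construction-approxcot}) commute with sifted colimits. Moreover, $l$-connected maps and equivalences are closed under geometric realisations. So it suffices to treat $(A \twoheadrightarrow B) = \LSym_R^{\leq m}(V \to V \oplus W)$ with $V, W \in \Vect_R^\omega$, provided every bound obtained is independent of $V$ and $W$. Tensoring with $\LSym_R^*(W)$ is harmless, as in the proof of \cref{lemma-cotcomplete}. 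The problem therefore reduces to comparing
\begin{align*}
	\bigoplus_{i > 0} \left(V^{\otimes i} \otimes \tilde{C}_*(T_{\leq \infty}(m,i), R)\right)_{\Sigma_i} \to \bigoplus_{i > 0} \left(V^{\otimes i} \otimes \tilde{C}_*(T_{\leq n}(m,i), R)\right)_{\Sigma_i},
\end{align*}
with the filtration given by $F^j$ equal to the summands with $i \geq j$.

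The first task is to check that the comparison map respects the decomposition by arity $i$. It should be induced by the inclusion of simplicial sets $T_{\leq n}(m,i) \subset T_{\leq \infty}(m,i)$, where chains not of type $\leq n$ are sent to the basepoint. Tracing through the bar constructions, the map $\LSym^{\nu,\leq n} \to \LSym^{\nu}$ (or its truncation counterpart) acts levelwise by discarding the extra summands. So the map is a collapse map of pointed simplicial $\Sigma_i$-sets $T_{\leq \infty}(m,i) \to T_{\leq n}(m,i)$ that kills the chains containing a refinement step in which some part splits into more than $n$ pieces.

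For fixed $i$, the key observation is that a part of a partition of $[i]$ splits into at most $i$ pieces. Hence, when $n \geq i$, every chain has type $\leq n$ and $T_{\leq n}(m,i) = T_{\leq \infty}(m,i)$. This equality holds $\Sigma_i$-equivariantly, so it gives an isomorphism on the $i$th summand after taking strict orbits. This proves (2), with the bound $n \geq i$, which is independent of $V$, $W$ and $m$.

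For (1), fix $l$. By \cref{lemma-cotconnectivity} applied to $\cot = \cot_{\leq \infty}$, there is a $j$ such that the summands with $i \geq j$ of $\cot(A \twoheadrightarrow B)$ are $(l+1)$-connective for every $V$. Take $n \geq j$. By the previous step, the map to $\cot_{\leq n}/F^j$ is the projection of $\cot$ onto its summands with $i < j$, and this projection is an isomorphism on those summands. Its fibre is $F^j \cot$, which is $(l+1)$-connective, so the map is $l$-connected. The main obstacle I expect is making the identification of the comparison map with the collapse map rigorous: one has to carry the filtration weights and the truncation maps through the simplicial bar resolution, compatibly with the $\Sigma_i$-actions and with passage to strict orbits. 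Once that is settled, the rest is bookkeeping.
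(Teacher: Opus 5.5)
Your proposal is correct and takes the same approach as the paper. Both arguments reduce to the compact projectives $\LSym_R^{\leq m}(V \to V \oplus W)$ and use the formula of \cref{prop-cotsym}, with $F^j$ given by the summands of arity $i \geq j$. Both take a $j$ from \cref{lemma-cotconnectivity} at $n=\infty$ that is uniform in $V$, and use that $T_{\leq n}(m,i)=T_{\leq\infty}(m,i)$ for large $n$. Your explicit bounds, $n \geq i$ for (2) and $n \geq j$ for (1), are the precise content of the paper's ``$n \gg m$''.
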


\begin{proof}
	We may just check this on compact projectives. Choose $j$ so that for all $j' > j$ and all $V \in \Vect_R$, $(V^{\otimes j'} \otimes \tilde{C}_* (T_{\leq \infty} (m,i), R))_{\Sigma_{j'}}$ is $l$-connective (\cref{lemma-cotconnectivity}). Thus, for $V,W \in \Vect_R$ and $(A \twoheadrightarrow B) = \LSym_R^{\leq m} (V \to V \oplus W)$, the above map becomes 
	\begin{align*}
		\bigoplus_{i > 0} (V^{\otimes i} \otimes \tilde{C}_* (T_{\leq \infty} (m,i), R))_{\Sigma_i} \otimes \LSym_R^* (W) \to \bigoplus_{0 \leq i \leq j} (V^{\otimes i} \otimes \tilde{C} (T_{\leq n} (m,i), R))_{\Sigma_i} \otimes \LSym_R^* (W)
	\end{align*}
	Every term on the left summand with $i > j$ is $l$-connective by construction, so the result follows from the fact that $T_{\leq \infty} (m,i) \simeq T_{\leq n} (m,i)$ for $n \gg m$.

	For the second part, we again reduce it to the case of compact projectives. This ends up reducing to the fact that $T_{\infty} (m,i) \simeq T_{\leq n} (m,i)$ for $n \gg m$.
\end{proof}

\begin{prop}
	\label{prop-cotcolim}
	Let $(A \twoheadrightarrow B) \in \CAlg_R^{\surj, \leq m}$. Then, the natural map
	\begin{align*}
	\cot (A \twoheadrightarrow B) \to  \varprojlim_{n \geq m} \cot_{\leq n}  (A \twoheadrightarrow B)
	\end{align*}
	is an equivalence, where this limit is computed in $\Mod_B$.
\end{prop}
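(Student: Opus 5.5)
The plan is to prove that the map is $l$-connected for every $l \geq 0$. Since $\Mod_B$ is stable and the $t$-structure is left complete, this shows it is an equivalence. Fix $l$.

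By \cref{lemma-cotpieces}(1) there is a $j$ such that, for all $n \geq N(l)$, the composite
\begin{align*}
\cot (A \twoheadrightarrow B) \to \cot_{\leq n}(A \twoheadrightarrow B) \to \cot_{\leq n}(A \twoheadrightarrow B)/F^j
\end{align*}
is $l$-connected. The bound $N(l)$ is independent of the object. We also want $F^j \cot_{\leq n}(A \twoheadrightarrow B)$ to be $l$-connective for every $n \geq m$, with $j$ independent of $n$, possibly after enlarging $j$. To get this, reduce to compact projectives $\LSym_R^{\leq m}(V \to V \oplus W)$, using that the filtration of \cref{construction-cotfiltrations} commutes with sifted colimits and that connectivity is preserved under geometric realisation. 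On these objects, $F^j$ is the sum over $i \geq j$ of $(V^{\otimes i} \otimes \tilde{C}_*(T_{\leq n}(m,i),R))_{\Sigma_i} \otimes \LSym^*_R(W)$. The proof of \cref{lemma-cotconnectivity} gives connectivity of $T_{\leq n}(m,i)/\Sigma_{i_1}\times\cdots$ growing with $i$ uniformly in $n$, via the atomised-poset bound $\lceil i/(m+1)\rceil$. This step needs care: that proof treats $n=\infty$ via atomised posets and finite $n$ separately. For finite $n$, I would rerun the atomised-poset argument for the subposet of type $\leq n$ chains, or note that for $i$ large the type-$\leq n$ condition only removes chains of large length, which does not hurt low-dimensional connectivity. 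I expect this uniformity in $n$ to be the main obstacle.

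Granting uniform connectivity, the map $\cot_{\leq n} \to \cot_{\leq n}/F^j$ is $l$-connected for all $n \geq m$. Combined with the first step, the transition map $\cot_{\leq n+1} \to \cot_{\leq n}$ is then $(l-1)$-connected for $n \geq N(l)$, by composing and using 2-out-of-3 for connectivity of maps. Concretely, the tower $\{\cot_{\leq n}\}$ has $\pi_k$ eventually constant for $k < l$, and $\pi_l$ eventually surjective along the tower. The Milnor sequence then shows that $\varprojlim_n \cot_{\leq n} \to \cot_{\leq N}$ is $(l-1)$-connected, the $\varprojlim^1$ terms vanishing in the relevant degrees by the Mittag-Leffler condition. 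We already know that $\cot \to \cot_{\leq N}$ is $(l-1)$-connected, since its composite with the $l$-connected map to the quotient is $l$-connected. By 2-out-of-3, $\cot \to \varprojlim_n \cot_{\leq n}$ is $(l-2)$-connected.

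Since $l$ was arbitrary, the map induces isomorphisms on all homotopy groups and is therefore an equivalence. The limit is computed in $\Mod_B$, but this agrees with the limit of underlying modules, so the connectivity estimates, which are made on underlying modules, suffice.
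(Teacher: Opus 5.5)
Your Step 1 is fine, but the step you flag as the main obstacle fails: $F^j\cot_{\leq n}$ is not $l$-connective uniformly in $n$, and the consequences you draw from it are false as well.

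\textbf{Counterexample.} Take $m=1$ and $(A \twoheadrightarrow B) = (R[x]/x^2 \twoheadrightarrow R) = \LSym_R^{\leq 1}(R \to R)$. Look at the arity $i=n+1$ summand $\tilde{C}_*(T_{\leq n}(1,n+1)/\Sigma_{n+1}, R)$ of $\cot_{\leq n}(A \twoheadrightarrow B)$.
\begin{itemize}
\item For partitions of $[n+1]$, the type-$\leq n$ condition rules out exactly the step $\hat 0 < \hat 1$. So there are no nondegenerate $1$-simplices.
\item The $2$-simplices are the classes of $[\hat 0 < \sigma < \hat 1]$ with $\sigma$ proper.
\item The outer faces are invalid, so the boundary of $[\hat 0 < \sigma_1 < \sigma_2 < \hat 1]$ is $[\hat 0 < \sigma_1 < \hat 1] - [\hat 0 < \sigma_2 < \hat 1]$.
\end{itemize}
The augmentation therefore gives a surjection $\tilde H_2 \twoheadrightarrow R$. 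This class records the new relation that $(n+1)$-fold products vanish. Hence for every $j$ and every $n \geq \max(j-1,2)$ we have $\pi_2 F^j\cot_{\leq n}(A \twoheadrightarrow B) \neq 0$.

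Your heuristic is backwards: the type condition removes \emph{short} chains, and this creates low-degree homology. The finite-$n$ case of \cref{lemma-cotconnectivity} only gives a rate of order $\log_n(i/m)$, which degrades as $n$ grows.

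\textbf{Consequences.} For large $n$, $\Gr^{n+1}\cot_{\leq n+1} \simeq \Gr^{n+1}\cot$ is highly connected, and the transition maps respect the splitting by arity. So $\pi_2\cot_{\leq n+1} \to \pi_2\cot_{\leq n}$ is not surjective for any large $n$. The tower is therefore not eventually constant on $\pi_2$, and $\cot \to \cot_{\leq N}$ is not surjective on $\pi_2$ for large $N$. The Mittag-Leffler/Milnor argument cannot be run as you set it up.

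\textbf{Fix.} Do not ask for uniformity in $n$; this is what the paper does.
\begin{itemize}
\item Each $\cot_{\leq n}$, and $\cot$ itself, is complete for its own filtration (\cref{lemma-cotcomplete}), with a rate that may depend on $n$.
\item Limits of complete filtered objects are complete.
\item $\Gr^i \varprojlim_n \cot_{\leq n} \simeq \varprojlim_n \Gr^i\cot_{\leq n} \simeq \Gr^i \cot$, because this tower is eventually constant by \cref{lemma-cotpieces}(2).
\end{itemize}
A map of complete filtered objects that is an equivalence on gradeds is an equivalence. Equivalently, write $\varprojlim_n \cot_{\leq n} \simeq \varprojlim_j \varprojlim_n \cot_{\leq n}/F^j$. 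Each tower $\{\cot_{\leq n}/F^j\}_n$ is eventually constant with value $\cot/F^j$, and completeness of $\cot$ finishes the argument.
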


\begin{proof}
	The filtration on $\cot_{\leq n} (A \twoheadrightarrow B)$ is complete for all $n$ (\cref{lemma-cotcomplete}). An inverse limit of complete filtered modules is complete, so we just need to check that this is an equivalence on gradeds. This follows from \cref{lemma-cotpieces}.
\end{proof}

\begin{defi}
	Let $S \in \CAlg_R^{\an}$ and let $f: A \to B$ be a morphism in $\CAlg_{R/ /S}^{\an, \surj}$. It is \emph{elementary} if it fits in a pullback diagram
	\[
		\begin{tikzcd}
			A \arrow{r} \arrow{d} & B \arrow{d} \\
			S \arrow{r} & S \oplus M[n]
		\end{tikzcd}
	\]
	where $M \in \Mod_S^\heartsuit$ and $n > 0$.

	$f: A \to B$ is \emph{small} if it is a composition of elementary morphisms.

	An object $A \in \CAlg_{R/ /S}^{\an}$ is \emph{small} if the map $A \to S$ is small.
\end{defi}

We have the following infinite-dimensional version of (\cite{DAGX}, Lemma 1.1.20).

\begin{prop}
	\label{prop-smallsurjections}
	An object $A \in \CAlg_{R/ /S}^{\an, \surj}$ is small if and only if it is bounded and nilpotent.

	Additionally, a morphism $f: A \to B$ in $\CAlg_{R/ /S}^{\an, \bNil}$ is small if and only if it is a surjection in $\pi_0$.
\end{prop}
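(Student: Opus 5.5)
We follow the strategy of (\cite{DAGX}, Lemma 1.1.20), removing the finiteness hypotheses: finite generation of homotopy groups is never used, and Artinian-ness is replaced by nilpotence plus boundedness.

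\emph{Small implies bounded and nilpotent.} Both properties are preserved by elementary maps, so we argue by induction on the length of the composition. Suppose $A \to B$ is elementary, fitting in a pullback with $S \to S \oplus M[n]$, and $B \to S$ is bounded and nilpotent. Then $\fib(A \to B) \simeq M[n-1]$, so the long exact sequence shows $\fib(A \to S)$ is bounded. Nilpotence has two cases. If $n \geq 2$, then $\pi_0 (A) \cong \pi_0 (B)$. If $n=1$, then $\pi_0 (A) \to \pi_0 (B)$ is surjective with kernel $I$, a quotient of $\pi_0 (M)=M$. The ideal $I$ squares to zero, since $\pi_0 (A) = \pi_0(B) \times_{\pi_0(S) \oplus M} \pi_0 (S)$ up to the image of $\pi_1$, and the square-zero structure is inherited. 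An extension of a nilpotent ideal by a square-zero ideal is nilpotent, so $\ker(\pi_0 (A) \to \pi_0 (S))$ is nilpotent.

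\emph{Bounded and nilpotent implies small.} This uses Postnikov towers. Let $I := \fib(A \to S)$ with $\pi_i (I) = 0$ for $i > N$. First reduce to the case where $\pi_0 (A) \to \pi_0(S)$ is an isomorphism:
\begin{enumerate}
\item Factor $A \to \tau_{\leq 0}$-level information through the chain $\pi_0 (A) \to \pi_0 (A)/\mathfrak{m}^{k}$.
\item Each $\pi_0 (A)/\mathfrak{m}^{k+1} \to \pi_0(A)/\mathfrak{m}^k$ is a square-zero extension of discrete rings with kernel $\mathfrak{m}^k/\mathfrak{m}^{k+1}$, a module over $\pi_0(S)$.
\end{enumerate}
For a square-zero extension $\tilde{B} \to B$ of discrete $R$-algebras with kernel $J$, (\cite{HA}, Theorem 7.4.1.26), or its animated analogue, gives a pullback along a derivation $B \to B \oplus J[1]$. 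To get an elementary map over $S$, the module $J$ must be pulled back from $S$, i.e.\ be an $S$-module. This holds because $J=\mathfrak{m}^k/\mathfrak{m}^{k+1}$ is killed by $\mathfrak{m}$. One also needs $J$ viewed as a $B$-module to be the restriction of an $S$-module, and this holds for the same reason.

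For the higher homotopy, pass along the Postnikov tower $A \simeq \tau_{\leq N} A \to \dots \to \tau_{\leq 0} A$. Each $\tau_{\leq i} A \to \tau_{\leq i-1} A$ is a square-zero extension by $\pi_i (A)[i]$, classified by a map to $\tau_{\leq i-1} A \oplus \pi_i(A)[i+1]$ (\cite{HA}, Theorem 7.4.1.26). The difficulty is again that $\pi_i(A)$ is only a $\pi_0(A)$-module, not a $\pi_0(S)$-module. We resolve this by filtering $\pi_i (A)$ by $\mathfrak{m}^k \pi_i (A)$, which is a finite filtration since $\mathfrak{m}$ is nilpotent. We then perform the extension in stages, each stage having a kernel that is a $\pi_0(S)$-module. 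Making this precise, namely that an extension by a filtered module decomposes into successive square-zero extensions, is the main obstacle. We plan to handle it by building intermediate algebras as pullbacks $\tau_{\leq i-1}A \times_{\tau_{\leq i-1}A \oplus (\pi_i A/\mathfrak{m}^k\pi_iA)[i+1]} \tau_{\leq i-1}A$, following Lurie's argument verbatim. Boundedness makes the tower finite, and nilpotence makes each filtration finite, so the resulting composition of elementary maps is finite. No finite generation is needed anywhere.

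\emph{Second statement.} A small morphism is a composition of elementary ones, and each elementary map is $\pi_0$-surjective, because its fibre is $M[n-1]$ with $n \geq 1$, which is connective. Conversely, given a $\pi_0$-surjection $f: A \to B$ in $\CAlg_{R/ /S}^{\an, \bNil}$, apply the same Postnikov and $\mathfrak{m}$-adic argument relative to $B$. The fibre of $f$ is bounded, and its $\pi_0$-kernel $\ker(\pi_0 A \to \pi_0 B)$ lies in the nilpotent ideal $\mathfrak{m}_A$. We factor $A \to B$ through $A \to B \times_{\tau_{\leq i}B} \tau_{\leq i}A$-type intermediate objects, as in (\cite{DAGX}, Lemma 1.1.20). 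Each step is a pullback of an elementary map over $S$, and elementary maps are stable under such pullbacks by pasting pullback squares.
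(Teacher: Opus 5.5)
Your argument that small implies bounded and nilpotent is fine, and so is your argument that elementary maps are $\pi_0$-surjective. One small correction: for $n=1$, the clean reason the new kernel squares to zero is that $B \oplus M[1] \simeq B \times_S (S \oplus M[1])$. Hence $A \simeq B \times_{B \oplus M[1]} B$ is a square-zero extension of $B$. Your formula with $\pi_0(S) \oplus M$ is off, since $\pi_0(S \oplus M[1]) = \pi_0(S)$.

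The gap is in the converse. The proposition is stated for arbitrary $S \in \CAlg_R^{\an}$, and an elementary morphism is by definition a pullback of $S \to S \oplus M[n]$ inside $\CAlg_{R/ /S}^{\an, \surj}$.

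\begin{itemize}
\item Your tower runs through the discrete rings $\pi_0(A)/\mathfrak{m}^k$ and the absolute truncations $\tau_{\leq i} A$. Unless $S$ is discrete, these in general admit no map to $S$ under $A$. So your steps are not morphisms of $\CAlg_{R/ /S}^{\an, \surj}$, let alone elementary ones.
\item Boundedness only controls $\fib(A \to S)$. If $S$ has infinitely many nonzero homotopy groups, then so does $A$, and the tower $\tau_{\leq i} A$ is not finite.
\end{itemize}

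So what you wrote is at best a proof for discrete $S$ (e.g. $S = k$). The morphism statement has the same defect. Moreover, your proposed intermediate objects do not give single square-zero steps: the map $B \times_{\tau_{\leq i} B} \tau_{\leq i} A \to B \times_{\tau_{\leq i-1} B} \tau_{\leq i-1} A$ has fibre with $\pi_i \cong \ker(\pi_i A \to \pi_i B)$ and $\pi_{i-1} \cong \operatorname{coker}(\pi_i A \to \pi_i B)$.

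The missing idea is to work relatively, as the paper does; this is the shape of Lurie's proof of Lemma 1.1.20 in \cite{DAGX}. Prove the morphism statement directly, and get the object statement by applying it to $A \to S$.

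Take a $\pi_0$-surjective $f\colon A \to B$ in $\CAlg_{R/ /S}^{\an, \bNil}$. Put $C := \cofib(f)$, which is bounded since both fibres over $S$ are. Let $n \geq 1$ be minimal with $\pi_n(C) \neq 0$, and let $\mathfrak{m} := \ker(\pi_0(A) \to \pi_0(S))$.

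\begin{itemize}
\item Because $C \otimes_A B \to L_{B/A}$ is $(n+1)$-connective, we get $D := \pi_n(L_{B/A} \otimes_B S) \cong \pi_n(C) \otimes_{\pi_0(A)} \pi_0(S) \cong \pi_n(C)/\mathfrak{m}\pi_n(C)$.
\item The truncation $L_{B/A} \otimes_B S \to D[n]$ yields a map $B \to S \oplus D[n]$ under $A$ and over $S$.
\item Then $B' := B \times_{S \oplus D[n]} S \to B$ is elementary, and $f$ factors through $B'$.
\item Finally, $\cofib(A \to B') \simeq \fib(C \to D[n])$ has $\pi_n \cong \mathfrak{m}\pi_n(C)$ and the same higher homotopy as $C$.
\end{itemize}

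Nilpotence of $\mathfrak{m}$ kills $\pi_n$ after finitely many steps, and boundedness of $C$ ends the induction. Every intermediate object lies over $S$, and no truncation of $A$ or $S$ is ever needed.
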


\begin{proof}
	Suppose $A \in \CAlg_{R/ /S}^{\an, \surj}$ is small. Then, it is clearly bounded and nilpotent. Hence, it suffices to prove that a morphism in $\CAlg_{R/ /S}^{\an, \bNil}$ is small if and only if it is a surjection in $\pi_0$.

	Any elementary morphism is surjective on $\pi_0$, so the same is true of any small morphism.

	Let $f: A \to B$ be surjective on $\pi_0$, and let $C := \cofib (A \to B)$. Let $n$ be the smallest non-negative integer such that $\pi_n (C) \neq 0$. By assumption, $n > 0$. There is an $(n+1)$-connective map $C \otimes_A B \to L_{B/A}$ (\cite{HA}, Theorem 7.4.3.1 and \cite{SAG}, Proposition 25.3.5.1), and hence an $(n+1)$-connective map $C \otimes_A S \to L_{B/A} \otimes_B S$. We have the fibre sequence
	\begin{align*}
		L_{A/R} \otimes_A B \to L_{B/R} \to L_{B/A}
	\end{align*}
	We can tensor by $S$ as $B$-modules to get a fibre sequence
	\begin{align*}
		L_{A/R} \otimes_A S \to L_{B/R} \otimes_B S \to L_{B/A} \otimes_B S
	\end{align*}
	Let $D := \pi_n (L_{B/A}\otimes_B S) \cong \pi_n (C \otimes_A S)$. Notice that $D \in \Mod_S^\heartsuit$. We have the trivial map $A \to S \oplus D[n] $. Then,
	\begin{align*}
		\Map_{A/ /S} (B, S \oplus D[n]) & \simeq \Map_{\Mod_S} (L_{B/A} \otimes_B S, D[n]) \\
											  & \simeq \Map_{\Mod_S} ( \tau _{\leq n} L_{B/A} \otimes_B S, D[n]) \\
											  & \simeq \Map_{\Mod_S} ( \tau _{\leq n} C \otimes_A S, D[n]) \\
											  & \simeq \Map _{\Mod_S} (C \otimes_A S, D[n])
	\end{align*}
	The truncation map gives us a canonical choice of a map $C \otimes_A S \to D[n]$, which induces a map $B \to S \oplus D[n]$. Let $B' := B \times_{S \oplus D[n]} S$, so we have a diagram
	\[
		\begin{tikzcd}
			A \arrow{dr}{f'} \arrow{ddr} \arrow{drr}{f} & & \\
																 & B' \arrow{r} \arrow{d} & B \arrow{d} \\
																 & S \arrow{r} & S \oplus D[n]
		\end{tikzcd}
	\]
	We have a map of cofibre sequences
	\[
		\begin{tikzcd}
			A \arrow{r} \arrow{d} & A \arrow{r} \arrow{d} & 0 \arrow{d} \\
			B' \arrow{r} & B \arrow{r} & D[n]
		\end{tikzcd}
	\]
	Hence, $C' := \cofib (A \to B') \simeq \fib (C \to D[n])$. The map $A \to S$ induces a map $C \to C \otimes_A S$, and the map $C \to D[n]$ is induced by the composition of this with the truncation $C \otimes_A S \to  \tau _{\leq n} C \otimes_A S \simeq D[n]$.

	Let $\mathfrak{m} := \ker (\pi_0 (A) \to \pi_0 (S))$. Then, $\pi_n (C') \cong \mathfrak{m} D$. We can repeat this inductively to obtain a map $A \to B'^{(m)}$, where $C'^{(n)} := \cofib (A \to B'^{(n)}) \in \Mod_{R, \geq n}$, $\pi_n (C'^{(n)}) \cong \mathfrak{m}^m D$ and $\pi_l (C'^{(n)}) \cong \pi_l (C)$ for all other $l$. Since $A$ is nilpotent, this process must finish, and we must eventually get an $(n+1)$-connective map $A \to B'^{(m)}$. Then we can also induct on the homotopy groups of $C$ to obtain $f$ as a composition of elementary morphisms.
\end{proof}

We will now construct a refinement of Postnikov towers for nilpotent algebras. It is well-known that each step in the Postnikov tower can be written as a square-zero extension. That is, if $A \in \CAlg^{\an}$, there is a pullback diagram
\[
	\begin{tikzcd}
		\tau_{\leq n+1} A \arrow{r} \arrow{d} & \tau_{\leq n} A \arrow{d} \\
		\tau_{\leq n} A \arrow{r} & \tau_{\leq n} A \oplus \pi_{n+1} (A) [n+2]
	\end{tikzcd}
\]
We will refine this into a tower where each step is a square-zero extension by $S$-modules.

\begin{construction}[Refined Postnikov towers]
	\label{construction-refinedPostnikov}
	Let $A \in \CAlg_{R/ /S}^{\Nil}$. Consider the map $\tau_{\leq n+1} A \to \tau_{\leq n} A$. By the proof of \cref{prop-smallsurjections}, we can refine this to a tower of the form
	\begin{align*}
		\tau_{\leq n+1} A \to \dots \to \tau_{\leq n+1}^{i} A \to \dots \to \tau_{\leq n+1} ^{1} A \to \tau_{\leq n+1}^0 A \simeq \tau_{\leq n} A
	\end{align*}
	where
	\begin{align*}
		\pi_j (\tau_{\leq n+1} ^i A ) \cong \begin{cases} \pi_j (A) \; & \text{if $j \leq n$}	 \\ \mathfrak{m}^{i-1}\pi_{n+1} (A)/\mathfrak{m}^i \; & \text{if $j = n+1$} \\ 0 \; & \text{otherwise}	\end{cases}
	\end{align*}
	where $\mathfrak{m}$ is the kernel ideal of $\pi_0 (A) \to \pi_0 (S)$. Since $\mathfrak{m}^ i = 0$ for $i \gg 0$, $\tau_{\leq n+1}^i A \simeq \tau_{\leq n+1} A$ for $i \gg 0$. Each of the steps in this tower comes from a pullback diagram of the form
	\[
		\begin{tikzcd}
			\tau_{\leq n+1} ^{i+1} A \arrow{r} \arrow{d} & \tau_{\leq n+1}^i A \arrow{d} \\
			\tau_{\leq n+1}^i  A \arrow{r} & \tau_{\leq n+1}^i A \oplus \mathfrak{m}^i \pi_{n+1} (A)/\mathfrak{m}^{i+1} [n+2]
		\end{tikzcd}
	\]
	The module $\mathfrak{m}^i \pi_{n+1} (A) /\mathfrak{m}^{i+1}$ is a $\pi_0 (A)/\mathfrak{m} \cong \pi_0 (S)$-module.

	Additionally, the construction in \cref{prop-smallsurjections} makes this tower functorial, meaning that a map $A \to B$ in $\CAlg_{R/ /S}^{\Nil}$ induces a map between these refined towers, as well as between the square-zero extension diagrams.
\end{construction}

\begin{prop}
	\label{prop-Nilimage}
	Let $S \in \CAlg_R^{\an}$ and let $A \in \CAlg_{R/ /S}^{\an, \bNil}$. Then, $A$ is in the image of $\iota_n^\infty: \CAlg_{R/ /S}^{\an, \surj, \leq n} \to \CAlg_{R/ /S}^{\an, \surj}$ for some $n$.

	Additionally, if $A \to B$ is a map in $\CAlg_{R/ /S}^{\an, \bNil}$, it is also in the image of $\iota_n^\infty$ for some $n$.
\end{prop}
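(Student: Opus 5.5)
Build $A$ from $S$ by square-zero extensions using the refined Postnikov tower of \cref{construction-refinedPostnikov}, and show that each step can be performed inside $\CAlg_{R/ /S}^{\an, \surj, \leq n}$ for a suitable $n$. By boundedness, $\pi_j(\fib(A \to S)) = 0$ for $j > N$, so $A \simeq \tau_{\leq N} A$. Nilpotence gives $\mathfrak{m}^c = 0$ for some $c$. Hence the refined tower from $\tau_{\leq 0}^{\bullet}$ up to $\tau_{\leq N}A$ has finitely many steps, say $K$ of them, with $K$ bounded in terms of $N$ and $c$.

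Each step is a pullback
\begin{align*}
	A_{i+1} \simeq A_i \times_{A_i \oplus M_i[d_i]} A_i,
\end{align*}
with $M_i$ an $S$-module and $d_i \geq 1$. For the very bottom, $\tau_{\leq 0}$, the discrete ring $\pi_0(A)$ is filtered by the powers of $\mathfrak{m}$, and the extensions $\pi_0(A)/\mathfrak{m}^{i+1} \to \pi_0(A)/\mathfrak{m}^i$ are also square-zero with kernel an $S$-module. (If $S$ is not discrete, one first passes through the Postnikov tower of $S$ itself, and the same shape persists.)

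The inductive claim is the following: if $A_i \simeq \iota_{n_i}^\infty(\tilde A_i)$, then $A_{i+1} \simeq \iota_{n_i'}^\infty(\tilde A_{i+1})$ with $n'_i \leq 2n_i+1$. To prove it, I would use that $\iota^\infty_n$ is a right adjoint, so it preserves pullbacks. It therefore suffices to lift the three-term diagram $A_i \to A_i \oplus M_i[d_i] \leftarrow A_i$ to $\CAlg^{\an,\surj,\leq n'}_{R/ /S}$.

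The object $A_i \oplus M_i[d_i] \twoheadrightarrow S$ has fibre $I_i \oplus M_i[d_i]$, where $I_i = \fib(A_i \to S)$. This should be realised as a base change of $\tilde A_i$ by a trivial square-zero extension, giving nilpotence order at most $n_i+1$. I would build it as $\triv$ applied in the $(A_i,S)$ pair setting, using the functor $\triv$ and the $\sqz$ construction of the cotangent-fibre subsection.

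The two maps are the zero section and the derivation classifying the extension. Their lift is the delicate point. The derivation is a map out of $\tilde A_i$ in $\CAlg^{\an,\surj,\leq n'}$, which by adjunction is a map from $\cot_{\leq n'}(\tilde A_i)$, whereas the given datum is a map out of $\cot(A_i)$. The two are compared by \cref{prop-cotcolim} via the tower of \cref{construction-approxcot}.

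Because $M_i[d_i]$ is bounded and $\cot(A_i) \to \varprojlim_n \cot_{\leq n}(A_i)$ is an equivalence, a map $\cot(A_i) \to M_i[d_i+1]$ should factor through some finite stage $\cot_{\leq n}(\tilde A_i)$. For this I would use \cref{lemma-cotpieces}: for $n \gg m$, the map $\cot \to \cot_{\leq n}/F^j$ is highly connected, uniformly in the object. Since the target is truncated, maps out of $\cot$ and out of $\cot_{\leq n}/F^j$ agree in the relevant range. This makes the factorisation possible, but it also increases $n$. Since only finitely many steps occur, the final $n$ is finite. The same argument applied to the functorial towers handles a morphism $A \to B$: take $n$ large enough for both towers, and note that the maps between the square-zero diagrams lift compatibly for that $n$.

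\textbf{Main obstacle.} This is the factorisation step. It requires that a derivation $\cot(A_i) \to M[d+1]$ be induced from a derivation at a finite nilpotence level, so I must control $\pi_*$ of $\cot_{\leq n}(\tilde A_i)$ against $\cot(A_i)$ in degrees up to $d+1$. I would first apply \cref{lemma-cotpieces} with $l = d+2$. I would then argue that maps from $\cot_{\leq n}$ into a truncated module killed by $\mathfrak{m}$ already factor through $\cot_{\leq n}/F^j$. It remains to check that the counit/$\iota$ identification is compatible, so that the pullback computed upstairs indeed maps to $A_{i+1}$ under $\iota^\infty$.
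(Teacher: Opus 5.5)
Your treatment of objects uses the same key step as the paper. Each square-zero step is governed by a single map $A_i \to S \oplus M[d]$ with $M$ discrete. \cref{lemma-cotpieces} lets you factor the corresponding map out of $\cot(A_i)$ through $\cot_{\leq n}(\tilde A_i)$ for $n \gg 0$, and since $\iota_n^\infty$ preserves limits, the pullback formed upstairs maps to the one downstairs.

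Two repairs are needed. First, the statement is for arbitrary $S \in \CAlg_R^{\an}$, and boundedness of $\fib(A \to S)$ does not give $A \simeq \tau_{\leq N} A$ (take $A = S$ with $S$ not truncated). Moreover, the stages $\tau_{\leq j} A$ do not lie over $S$, so the absolute Postnikov tower does not live in $\CAlg_{R/ /S}^{\an}$, and your parenthetical about the Postnikov tower of $S$ does not produce a finite decomposition there. The paper instead uses \cref{prop-smallsurjections}, which writes $A \to S$ as finitely many elementary pullbacks $B' = B \times_{S \oplus M[d]} S$ with $M \in \Mod_S^\heartsuit$; this is relative to $S$ and works in general. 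Second, you need not lift the middle term $A_i \oplus M_i[d_i]$. Since $A_i \times_{A_i \oplus M_i[d_i]} A_i \simeq A_i \times_{S \oplus M_i[d_i]} S$ and $S \oplus M_i[d_i]$ already lies in the image of $\iota_1^\infty$, only one map has to be lifted. For the mere existence of a lift, the connectivity of $\cot \to \cot_{\leq n}/F^j$ from \cref{lemma-cotpieces} suffices.

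The genuine gap is the assertion about morphisms. Lifting $A$ and $B$ stage by stage involves independent choices of lifted derivations, and the claim that the maps between the square-zero diagrams \emph{lift compatibly} is exactly what needs proof. At each stage, two lifts of homotopic maps $A_i \to S \oplus N[e]$ must become homotopic after enlarging $n$, through a homotopy lifting the given one. Existence of factorisations (surjectivity on $\pi_0$) does not provide this. Nor does a comparison at a fixed finite stage, and your auxiliary claim that maps out of $\cot_{\leq n}$ into truncated modules factor through $\cot_{\leq n}/F^j$ is false. Take $R = S = k$, $A = k[\epsilon]/\epsilon^2$ and $n \geq 2$. The only nondegenerate chain excluded from $T_{\leq n}(1,n+1)$ is $\hat 0 < \hat 1$, so \cref{prop-cotsym} gives $\Gr^{n+1} \cot_{\leq n}(A) \simeq k[2]$ as a split summand, whereas $\cot(A) \simeq k \oplus k[1]$. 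Hence for every $n$ there is a map $\cot_{\leq n}(A) \to k[2]$ that does not factor through $\cot_{\leq n}(A)/F^j$ for any $j \leq n+1$ and whose restriction to $\cot(A)$ is null. Such classes only die in the colimit over $n$.

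The missing idea is the paper's. First prove that
\begin{align*}
	\varinjlim_{m \geq n} \Map_{\CAlg_{R/ /S}^{\an, \surj, \leq m}} (\tilde A, S \oplus V) \to \Map_{\CAlg_{R/ /S}^{\an}} (A, S \oplus V)
\end{align*}
is an equivalence on all homotopy groups for eventually coconnective $V$; for $\pi_l$, replace $V$ by $\tau_{\geq 0}(V[-l])$. Then propagate this to every bounded nilpotent target by induction along elementary pullbacks $B' = B \times_{S \oplus V} S$, using that $\Map(A, B')$ is a fibre and fibres commute with filtered colimits. A morphism $A \to B$ then lifts to a finite stage simply by reading off $\pi_0$.
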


\begin{proof}
	By \cref{prop-smallsurjections}, any object $A \in \CAlg_{R/ /S}^{\an, \bNil}$ can be written as a finite pullback of trivial square-zero extensions. Also, for $V \in \Mod_{S, \geq 0}$, $S \oplus V$ is in the image of $\iota_1^\infty$ as a free algebra. Hence, since $\iota_n^\infty$ commutes with limits, for the first part it suffices to show that if $A \in \CAlg_{R/ /S}^{\surj, \leq n}$ and $V \in \Mod_{S, \geq 0}$ is eventually coconnective, the natural map
	\begin{align*}
		\varinjlim_{m \geq n} \Map_{\CAlg_{R/ /S}^{\surj, \leq m}} (A, S \oplus V) \to \Map_{\CAlg_{R/ /S}^{\an}} (A, S \oplus V) 
	\end{align*}
	is an equivalence. Using the adjunction, it suffices to show that
	\begin{align*}
		\varinjlim _{m \geq n} \Map_{\Mod_S} (\cot_{\leq m} (A \twoheadrightarrow S), V) \to  \Map_{\Mod_S} (\cot (A \twoheadrightarrow S), V)
	\end{align*} 
	is an equivalence. We will show that this is an equivalence on homotopy groups. For $\pi_0$, we need to show that any map $\cot (A \twoheadrightarrow S) \to V$ factors through $\cot_{\leq m} (A \twoheadrightarrow S)$ for some $m$. Since $V$ is truncated, we may just show that any map $\tau_{\leq i} \cot (A \twoheadrightarrow S) \to V$ factors through $\tau_{\leq i} \cot_{\leq m} (A \twoheadrightarrow S)$ for some $i$. This follows from \cref{lemma-cotpieces}, since the map $\tau_{\leq i} \cot (A \twoheadrightarrow S) \to \tau_{\leq i} \cot_{\leq m} (A \twoheadrightarrow S)$ admits a section for $m \gg n$.

	Now take $l > 0$. Then, $\pi_l \Map_{\Mod_S} (\cot (A \twoheadrightarrow S), V)  \cong \pi_0 \Map_{\Mod_S} (\cot (A \twoheadrightarrow S), \tau_{\geq 0} V[-l])$ and similar for $\cot_{\leq m} (A \twoheadrightarrow S)$. $\tau_{\geq 0} V[-l]$ is bounded so the claim follows just as in the case $l = 0$.

	For the last part, let $A,B \in \CAlg_{R/ /S}^{\an, \bNil}$. We must show that any map $A \to B$ is in the image of $\iota_n^\infty$ for some $n$. We just proved that $A$ is in the image of $\iota_n^\infty$ for some $n$. We can again build $B$ by a sequence of square-zero extensions. If $B = S \oplus V$, where $V \in \Mod_{S, \geq 0}$ is eventually coconnective, we just proved the result, since
	\begin{align*}
		\Map_{\CAlg_{R/ /S}^{\an}} (A, S \oplus V) \simeq \Map _{\Mod_S} (\cot (A \twoheadrightarrow S), V) \simeq \varinjlim _{m \geq n} \Map_{\Mod_S} (\cot_{\leq m} (A \twoheadrightarrow S), V)
	\end{align*}
	so any map $A \to S \oplus V$ comes from $\CAlg_{R/ /S}^{\an, \surj, \leq m}$ for some $m$.

	We will now induct on the square-zero extension diagrams. Write $B' = B \times_{S \oplus V} S$, where $V \in \Mod_{S, \geq 0}$ is eventually coconnective, $B$ is in the image of $\iota_n ^\infty$ and $\Map_{\CAlg_{R/ /S}^{\an}}(A, B') \simeq \varinjlim_{m \geq n} \Map_{\CAlg_{R/ /S}^{\surj, \leq m}} (A, B')$. By increasing $n$ if necessary, we may assume that the map $B \to S \oplus V$ is in the image of $\iota_n^\infty$. Since the functors $\iota_n^m$ commute with limits, $B'$ is in the image of $\iota_n^\infty$. Then,
	\begin{align*}
		\Map_{\CAlg_{R/ /S}^{\an}} (A, B') & \simeq \fib \left(\Map_{\CAlg_{R/ /S}^{\an}} (A, B) \to  \Map_{\CAlg_{R/ /S}^{\an}} (A,  S \oplus V)\right) \\
		& \simeq \fib \left( \varinjlim _{m \geq n} \Map_{\CAlg_{R/ /S}^{\an, \surj, \leq m}} (A, B) \to \varinjlim _{m \geq n} \Map _{\CAlg_{R/ /S}^{\an, \surj, \leq m}} (A,  S \oplus V) \right)  \\
		& \simeq \varinjlim_{m \geq n} \fib \left( \Map_{\CAlg_{R/ /S}^{\an, \surj, \leq m}} (A, B) \to \Map_{\CAlg_{R/ /S}^{\an, \surj, \leq m}} (A, S \oplus V) \right) \\
		& \simeq \varinjlim_{m \geq n} \Map_{\CAlg_{R/ /S}^{\an, \surj, \leq m}} (A, B')
	\end{align*}
\end{proof}

We will now find a rich source of animated rings whose adic filtration is complete.

\begin{lemma}
	\label{lemma-symncomplete}
	Let $V,W \in \Vect_R$. Then, there is an equivalence
	\begin{align*}
		F^j \adic (\LSym_R^{\leq n} (V \to V \oplus W)) \simeq \bigoplus_{i > 0} (V^{\otimes i} \otimes \tilde{C}_* (S^{\geq j} (n,i), R))_{\Sigma_i} \otimes_R \LSym_R^* (W)
	\end{align*}
	Where $S^{\geq j} (n,i)$ is the simplicial set whose $l$-simplices are given by
	\begin{align*}
		S^{\geq j} (n,i)_l := \left\{ \sigma_0 \leq \dots \leq \sigma_i \ \middle\vert \begin{array}{l} \sigma_r \text{ is a partition of } [i] \text{ with $\geq j$ parts}  \\ \text{and each part of } \sigma_0 \text{ has size } \leq n \end{array}  \right\} \bigsqcup \{*\}
	\end{align*}
	Additionally, if $n < \infty$, this filtration is complete.
\end{lemma}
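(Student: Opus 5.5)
We compute the (unbounded) adic filtration of $\LSym_R^{\leq n}(V \to V \oplus W)$, viewed as an animated surjection through $\iota_n^\infty$, by a bar resolution, exactly as in the proof of \cref{prop-cotsym}. Since $\adic$ (the $n=\infty$ version of $\adic_{\leq n}$), base change along $R \to \LSym_R^*(W)$ and strict orbits all commute with sifted colimits, and since the $W$-factor splits off by the coproduct formula in \cref{lemma-animatedsubcat}, it suffices to treat $\LSym_R^{\nu, \leq n}(V) = \LSym_R^{\leq n}(V \to V)$ and then tensor with $\LSym_R^*(W)$.

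We resolve $\LSym_R^{\nu, \leq n}(V)$ in $\CAlg_{R/ /R}^{\an, \surj}$ by its bar resolution with respect to the free non-unital monad $\LSym_R^{\nu} = \LSym_R^{\nu,\leq\infty}$ (\cref{prop-nonunital} with $n=\infty$). The $j$th simplicial level is the free algebra $\LSym^\nu_R((\LSym^\nu_R)^{\circ j}(\LSym_R^{\nu,\leq n}(V)))$. For a free algebra $\LSym^\nu_R(U)$, the adic filtration is the filtration by symmetric-power degree, $F^r = \bigoplus_{b \geq r} \LSym^b(U)$. Since $\adic$ commutes with geometric realisations, $F^j \adic$ is the realisation of these degree-filtered pieces of the bar construction. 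Expanding the composite powers binomially, as in \cref{prop-cotsym} via \cite{Ching}, a summand of total exponent $i$ at level $l$ corresponds to a chain of partitions $\sigma_0 \leq \dots \leq \sigma_l$ of $[i]$. The innermost layer forces each part of $\sigma_0$ to have size $\leq n$. The outermost $\LSym^b$ with $b \geq j$ is the number of parts of the coarsest partition, and since all other partitions refine it, every partition in the chain has at least $j$ parts.

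There is a difference from \cref{prop-cotsym}: we do not apply $\cot$, so the last partition is not forced to be $\hat{1}$. Instead, the final free layer records the coarsest partition. The main bookkeeping obstacle is checking that the face maps match. Composing layers corresponds to deleting a partition, while the face using the algebra structure of $\LSym_R^{\nu,\leq n}(V)$ either merges parts of $\sigma_0$ or sends the simplex to zero when a merged part exceeds size $n$. This zero-sending is exactly the collapse to the basepoint, which gives the simplicial set $S^{\geq j}(n,i)$ and the formula
\[
F^j \adic \simeq \bigoplus_{i>0} \bigl(V^{\otimes i} \otimes \tilde{C}_*(S^{\geq j}(n,i),R)\bigr)_{\Sigma_i},
\]
using \cref{example-strictquotient} to identify the orbits.

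For completeness when $n < \infty$, note that a partition of $[i]$ with parts of size $\leq n$ that also has $\geq j$ parts can only appear when $i \geq j$. The key extra input is connectivity. We would argue as in \cref{lemma-cotconnectivity} that, after passing to $\Sigma_{i_1} \times \dots \times \Sigma_{i_j}$-orbits, $S^{\geq j}(n,i)$ is highly connected as $j \to \infty$. The plan is to compare $S^{\geq j}(n,i)$ with the unreduced suspension of a poset of partitions (\cref{prop-unreduced}) and bound its connectivity with \cref{prop-atomised}. Roughly, a chain must begin at a partition into parts of size $\leq n$, hence with at least $\lceil i/n \rceil$ parts, so long nondegenerate chains force high dimension. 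I expect this connectivity estimate to be the main obstacle.

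Granting the estimate, $F^j$ becomes $l$-connective for $j \gg 0$ uniformly in $V$. After left Kan extension this holds for all $V$, and the Milnor sequence gives $\varprojlim_j F^j \simeq 0$, so the filtration is complete.
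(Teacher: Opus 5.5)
Your derivation of the formula follows the paper's route and is fine: you use the bar construction of \cref{prop-cotsym}, with the outermost symmetric power of degree $\geq j$ recording the number of parts of the coarsest partition. The completeness argument, however, rests on a false estimate. $S^{\geq j}(n,i)$ does not become highly connected as $j \to \infty$, and $F^j$ never becomes $l$-connective.

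Your own formula shows this. The only partition of $[j]$ with at least $j$ parts is the discrete one $\hat{0}$, whose parts trivially have size $\leq n$. So $S^{\geq j}(n,j) \simeq S^0$, and $F^j$ contains $(V^{\otimes j})_{\Sigma_j} \otimes_R \LSym_R^*(W) = \LSym^j_R(V) \otimes_R \LSym^*_R(W)$ as a direct summand in homological degree $0$. The same happens for every $j \leq i < j+n$. Concretely, for $R[x]/x^2 \twoheadrightarrow R$ one checks, using $\cot(R[x]/x^2 \twoheadrightarrow R) \simeq R \oplus R[1]$, that $F^j \simeq R[0]$ for every $j \geq 1$. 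The limit vanishes because the transition maps $F^{j+1} \to F^j$ are zero, not because of connectivity, so the Milnor-sequence argument cannot be run as you propose. Your heuristic about long chains is borrowed from the $n < \infty$ case of \cref{lemma-cotconnectivity}, where chains must end at $\hat{1}$ and have bounded type. Here neither constraint is present, and the $0$-simplex $\hat{0}$ is always there.

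What is true, and what the paper proves, is connectivity in the other variable. For fixed $j$, the orbit space $S^{\geq j}(n,i)/(\Sigma_{i_1} \times \dots \times \Sigma_{i_r})$ is roughly $\lceil \frac{i-j}{n+1} \rceil$-connected as $i \to \infty$. To see this, compare with $S^{\geq j}(\infty, i)$, whose underlying poset has the minimum $\hat{0}$. The chains starting at a partition with a block of size $> n$ form the order complex of the atomised poset of partitions with $\geq j$ parts and some block of size $> n$. Its atoms have one block of size $n+1$, so \cref{prop-atomised} applies. This is exactly where $n < \infty$ is used.

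Since the estimate is uniform in $V$, it identifies $\bigoplus_{i>0}$ with $\prod_{i>0}$. The filtration maps preserve the weight $i$, so the filtered object is a product of filtered objects $A_i$ with $F^j A_i \simeq 0$ for $j > i$ (the observation you already made). Each $A_i$ is trivially complete, and products of complete objects are complete. The sum-to-product step is indispensable: for $n = \infty$ and $R[x] \twoheadrightarrow R$ one has $F^j = x^j R[x]$, and $\varprojlim_j F^j$ has a nonzero $\lim^1$ term $R[[x]]/R[x]$.
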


\begin{proof}
	Note that $F^j \adic(\LSym_R^* (V \to V \oplus W)) \simeq \bigoplus_{i \geq j} \LSym_R^{\geq j} (V) \otimes_R \LSym_R^* (W)$, where $\LSym_R^{\geq j} (V) = \bigoplus_{i \geq j} V^{\otimes i}_{\Sigma_i}$. We can use the bar construction as in \cref{prop-cotsym}, to obtain the result.

	Now we show completeness. First we will show that for fixed $j$, $l$ and $n$, $(V^{\otimes i} \otimes \tilde{C}_* (S^{\geq j} (n,i)))_{\Sigma_i}$ is $l$-connected for $i \gg 0$ for any $V \in \Mod_{R, \geq 0}$, and the bounds on $i$ do not depend on $V$. We proceed exactly as in \cref{lemma-cotconnectivity}.

	Firstly, we just need to show that $S^{\geq j} (n,i) /_{\Sigma_{i_1} \times \dots \times \Sigma_{i_r}}$ is $l$-connected for $i \gg 0$. Let $\tilde{S}$ be the set-theoretic fibre of the map $S^{\geq j} (\infty, i)/_{\Sigma_{i_1} \times \dots \times \Sigma_{i_r}} \to S^{\geq j} (n,i)/_{\Sigma_{i_1} \times \dots \times \Sigma_{i_r}}$ that collapses invalid chains. 

	The map $\tilde{S} \to S^{\geq j} (\infty, i) /_{\Sigma_{i_1} \times \dots \times \Sigma_{i_r}}$ is a cofibration, so we get a cofibre sequence $\tilde{S} \to S^{\geq j} (\infty, \i) /_{\Sigma_{i_1} \times \dots \times \Sigma_{i_r}} \to S^{\geq j} (n,i)/_{\Sigma_{i_1} \times \dots \times \Sigma_{i_r}}$.

	We can identify $\tilde{S}$ with the poset of partitions of $[i]$ with $i_1, \dots, i_r$ repetitions with at least one part of size $> n$ and $\geq j$ parts (\cref{example-strictquotient}). This poset is atomised and the atoms are the partitions with exactly one part of size $(n+1)$ and the rest singletons.

	Let $\sigma_1, \dots , \sigma_t$ be $t$ such atoms, with parts $P_1, \dots, P_t$ of size $n+1$. Their join is given by the partition with one part $P_1 \cup \dots \cup P_t$ and every other part a singleton, provided that this has at least $j$ parts. Hence, the least number of atoms that generate this poset is $\lceil \frac{i-j}{n+1} \rceil$. By \cref{prop-atomised}, this simplicial set is $\lceil \left( \frac{i-j}{n+1} \rceil -2\right)$-connected.

	The simplicial set $S^{\geq j} (\infty, i)/_{\Sigma_{i_1} \times \dots \times \Sigma_{i_r}}$ is the poset of partitions of $[i]$ with $i_1, \dots, i_r$ repetitions with at least $j$ parts. But this has an infimum so it is contractible. Hence, $S^{\geq j} (n,i)$ is the suspension of $\tilde{S}$ and it is $\left( \lceil \frac{i-j}{n+1} \rceil \right)$-connected, and this quantity gets arbitrarily large.

	In particular, this shows that	
	\begin{align*}
		F^j \adic (\LSym_R^{\leq n} (V \to V \oplus W)) \simeq \prod_{i > 0} (V^{\otimes i} \otimes \tilde{C}_* (S^{\geq j} (n,i), R))_{\Sigma_i} \otimes_R \LSym_R^* (W)
	\end{align*}
	This allows us to write $\adic (\LSym_R^{\leq n} (V \to V \oplus W))$ as a product of filtered modules $A_i$ where
	\begin{align*}
		F^j A_i \simeq (V^{\otimes i} \otimes \tilde{C}_* (S^{\geq j} (n,i), R))_{\Sigma_i} \otimes_R \LSym_R^* (W)
	\end{align*}
	We may just show that this is complete, but this is clear because there are no partitions of $i$ with at least $j$ parts for $j > i$, so $S^{\geq j} (n,i) \simeq *$ for $j > i$.
\end{proof}

\begin{theorem}
	Let $(A \twoheadrightarrow B)$ be a map in $\CAlg_R^{\an}$ that is $\pi_0$-surjective such that the kernel ideal of $\pi_0 (A)$ is nilpotent. Then, $\adic (A \twoheadrightarrow B)$ is complete.
\end{theorem}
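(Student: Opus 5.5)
The plan is to reduce first to the bounded case, then to the free objects of $\CAlg_R^{\an, \surj, \leq n}$, where \cref{lemma-symncomplete} applies directly.

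\textbf{Step 1: reduction to the bounded case.} Consider the Postnikov truncations $(\tau_{\leq k} A \twoheadrightarrow \tau_{\leq k} B)$. These are still $\pi_0$-surjective with the same nilpotent kernel ideal $\mathfrak{m} \subset \pi_0(A)$, and their fibre is bounded. I would show that the induced map
\begin{align*}
	F^j \adic (A \twoheadrightarrow B) \to F^j \adic (\tau_{\leq k} A \twoheadrightarrow \tau_{\leq k} B)
\end{align*}
is $c(k)$-connected, with $c(k) \to \infty$ \emph{uniformly in $j$}. The argument would use that $\adic$ preserves sifted colimits, together with the description of $\Gr \adic$ as $\LSym_B^{\Gr}$ of the shifted cotangent fibre, as in \cref{lemma-gradic} for $n = \infty$. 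A $k$-connected map of surjections then induces a map on each $\LSym^j_B(L_{B/A}[-1])$ of connectivity roughly $k$ independent of $j$. From this uniform bound, $\varprojlim_j F^j$ of the original filtration agrees, in each fixed homotopical degree, with $\varprojlim_j F^j$ of a truncated one (a Milnor sequence argument). So it suffices to treat bounded $A \twoheadrightarrow B$.

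\textbf{Step 2: passing to $\CAlg_R^{\an, \surj, \leq n}$.} In the bounded nilpotent case, I would invoke \cref{prop-Nilimage}, adapted to a varying target $B$ rather than a fixed $S$. The proof via \cref{prop-smallsurjections} and the refined Postnikov towers (\cref{construction-refinedPostnikov}) should go through after base change along the target. This writes $(A \twoheadrightarrow B) \simeq \iota_n^\infty (A')$ for some $A' \in \CAlg_R^{\an, \surj, \leq n}$ with $n < \infty$.

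\textbf{Step 3: reduction to free objects.} Since $\iota_n^\infty$ is monadic (\cref{prop-iotamonadic}), $A'$ is the geometric realisation of its bar resolution by free objects $\LSym^{\leq n}_R(V \to V \oplus W)$. Both $\iota_n^\infty$ and $\adic$ commute with sifted colimits, so
\begin{align*}
	\adic(A \twoheadrightarrow B) \simeq \left| \adic \left( \iota_n^\infty \LSym_R^{\leq n} (V_\bullet \to V_\bullet \oplus W_\bullet) \right) \right|
\end{align*}
for connective $V_\bullet$, $W_\bullet$. For $V, W \in \Vect_R$, \cref{lemma-symncomplete} computes exactly $F^j \adic(\LSym_R^{\leq n}(V \to V \oplus W))$. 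It also shows completeness, with connectivity bounds on the pieces that do not depend on $V$. Left Kan extending in $V$ and $W$ therefore keeps these bounds, so completeness holds for arbitrary connective $V$ and $W$. Finally, complete filtered objects are closed under geometric realisations (\cite{partition}, Proposition 2.11), so $\adic(A \twoheadrightarrow B)$ is complete.

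I expect Step 1 to be the main obstacle. The difficulty is making the connectivity estimate on $F^j$ uniform in $j$ while $F^j$ itself is not known a priori to grow in connectivity. I would first try to prove the estimate on associated gradeds and then induct up the finite stages $F^j/F^{j'}$. If that fails, the fallback is to assume boundedness, which matches the form of the theorem stated in the introduction, where $\fib(A \to B)$ is eventually coconnective.
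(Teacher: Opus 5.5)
Your proposal is correct and follows the paper's proof essentially step for step. Both reduce to eventually coconnective $A, B$ via Postnikov truncations and the Milnor sequence. Both then lift to $\CAlg_R^{\an,\surj,\leq n}$ by \cref{prop-Nilimage}, where your adaptation amounts to taking $S = B$, and conclude from \cref{lemma-symncomplete} and the closure of complete objects under geometric realisations.

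The uniform-in-$j$ connectivity estimate you flag as the main obstacle is simply asserted by the paper. Your graded route does supply it, with no circularity: write $F^j \simeq \fib(F^0 \to F^0/F^j)$, note that $F^0/F^j$ is finitely filtered with gradeds $\LSym^i_B(L_{B/A}[-1])$ for $i<j$, and use that $F^0$ is $A$ itself. One citation needs correcting: the free resolution by $\LSym^{\leq n}_R$-algebras comes from the monadicity in \cref{prop-surjmonadic}, not from the monadicity of $\iota_n^\infty$.
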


\begin{proof}
	Note that if $(A \twoheadrightarrow B) \to (C \twoheadrightarrow D)$ is $n$-connected, so is $\adic (A \twoheadrightarrow B) \to \adic (C \twoheadrightarrow D)$. By the Milnor sequence, it is enough then to show that $\adic (\tau_{\leq i} A \twoheadrightarrow \tau_{\leq i} B)$ is complete for all $i$, so we may assume that both $A$ and $B$ are eventually coconnective.

	The map between truncations is nilpotent and bounded, so it is in the image of $\iota_n^\infty : \CAlg_R^{\an, \surj, \leq n} \to \CAlg_R^{\an, \surj}$ for some $n$ (\cref{prop-Nilimage}). The functor $\iota_n^\infty$ commutes with sifted colimits, so using the bar resolution in $\CAlg_R^{\an, \surj, \leq n}$, we can write $(A \twoheadrightarrow B)$ as a geometric realisations of objects of the form $\LSym_R^{\leq n} (V \to V \oplus W)$, where $V,W \in \Vect_R$.

	We are now done because complete objects are closed under geometric realisations and $\adic (\LSym_R^{\leq n} (V \to V \oplus W))$ is complete (\cref{lemma-symncomplete}).
\end{proof}

\section{Dually truncated partition Lie algebroids}
\label{section-duallytruncatedLie}

In this section, we will introduce \emph{dually truncated partition Lie algebroids}. They will be Lie algebraic models naturally arising from the category of nilpotent surjections. For the rest of this section we fix a field $k$ that is an $R$-algebra.

We first prove the following refinement of \cref{prop-cotcomonadic}.

\begin{lemma}
	Let $n < \infty$. Then, the adjunction
	\begin{align*}
		\cotn^\vee : \CAlg_{R/ /k}^{\an, \surj, \leq n} \leftrightarrows (\Solid_{k, \leq 0}^{\Pro})_{/L_{k/R}^\vee[1]}^{\op}: \sqz (-^\vee)
	\end{align*}
	is comonadic.
\end{lemma}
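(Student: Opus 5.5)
The plan is to deduce this from \cref{prop-cotcomonadic} (with $S = k$) by composing with linear duality. The duality should identify the target $(\Mod_{k, \geq 0})_{L_{k/R}[-1]/}$ with an opposite category of solid modules. Comonadicity is not preserved by arbitrary composition, but it is preserved under composing with an equivalence. So the argument splits into two parts: first an equivalence of $\infty$-categories, then checking that the adjunctions match.

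First I will establish that dualisation $V \mapsto V^\vee$ gives an equivalence
\begin{align*}
	(\Mod_{k, \geq 0})^{\op} \xrightarrow{\simeq} \Solid_{k, \leq 0}^{\Pro}.
\end{align*}
On hearts, this is the duality $\Vect_k^{\op} \simeq \Pro(\Vect_k^\omega)$ recalled in the definition of ultrasolid modules: $\bigoplus_I k \mapsto \prod_I k$. Since $\Mod_{k,\geq 0} \simeq \mathcal{P}_\Sigma(\Vect_k^\omega)$ and every connective $k$-module splits as a sum of shifts of its homology, the dual is the corresponding product of shifted profinite objects. This object is coconnective with profinite homology. Full faithfulness reduces to computing $\Map(\prod_I k[-a], \prod_J k[-b])$ in $\Solid_k$. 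Here I use that $\prod_I k$ is compact projective in $\Solid_k^\heartsuit$, with $\Hom(\prod_I k, \prod_J k) = \Hom(\bigoplus_J k, \bigoplus_I k)$, and that the relevant Ext groups vanish by projectivity. Essential surjectivity is by definition of $\Solid_{k,\leq 0}^{\Pro}$: a coconnective complex with profinite homology splits, using projectivity of profinite objects and the fact that the homology determines the object. Dualising the undercategory under $L_{k/R}[-1]$ then gives the overcategory over $L_{k/R}^\vee[1]$.

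Next I will check that under this equivalence the functor $\cotn^\vee$ is the composite $(-)^\vee \circ \cotn$, and $\sqz(-^\vee)$ is $\sqz \circ (-)^\vee$, where $(-)^\vee$ on the solid side is the inverse equivalence (double duality $V \simeq V^{\vee\vee}$ for connective $V$ and profinite-coconnective $W$). Composing the comonadic adjunction of \cref{prop-cotcomonadic} with an equivalence yields an adjunction with the same comonad up to conjugation. Hence it is again comonadic, since conservativity and preservation of split totalisations are invariant under equivalence.

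The main obstacle is the first step: proving that $(-)^\vee$ is genuinely an equivalence onto $\Solid_{k,\leq 0}^{\Pro}$ at the $\infty$-categorical level, not just on homotopy categories. I would handle it via the compact projective generators. Both sides are built from the heart by shifts and products/sums, and the relevant mapping spaces are computed by Hom sets in degree zero because $\prod_I k$ is projective. If a splitting argument is awkward to make coherent, I would instead construct the duality functor as $\underline{\Hom}(-,k)$ from $\Mod_k^{\op}$ into $\Solid_k$. I would then check full faithfulness on generators $k[a]$ and use that it sends colimits to limits, together with the fact that limits of the $\prod_I k[-a]$ in the coconnective range stay in $\Solid^{\Pro}_{k,\leq0}$.
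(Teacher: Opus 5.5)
Your argument is correct, but it is not the route the paper takes. The paper explicitly declines to dualise \cref{prop-cotcomonadic}, on the grounds that duality need not turn totalisations into geometric realisations, and instead reruns that proof on the solid side. It uses the duality equivalence $\Mod_{k,\geq 0}\simeq(\Solid^{\Pro}_{k,\leq 0})^{\op}$ only to translate the splitting hypothesis. It then passes through $\Gr\adicn(X^\bullet)\simeq\LSym^{\Gr,\leq n}_k(\cotn(X^\bullet)(1))$, completeness of the finite adic filtration, and $\aug$, to show that $\cotn^\vee$ sends the totalisation to the geometric realisation in $\Solid_k$.

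You instead transport comonadicity formally along the equivalence of slices, and this is legitimate. The Barr--Beck--Lurie criterion (conservativity, and preservation of limits of $F$-split cosimplicial objects) is intrinsic to the abstract $\infty$-categories, and limits of split cosimplicial objects are absolute. So the totalisation-versus-realisation issue never arises here; it only matters for non-split diagrams or for colimits computed in the ambient $\Solid_k$. Indeed, the paper itself later cites \cref{prop-cotcomonadic} directly for this dualised adjunction.

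Your route is shorter and isolates the single real input, the $\infty$-categorical duality equivalence, which the paper also uses without proof. The paper's route only makes visible that the comparison holds in $\Solid_k$, which is automatic for split diagrams.

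Two points to tighten. First, $L_{k/R}[-1]$ is only $(-1)$-connective, so identifying $\bigl((\Mod_{k,\geq 0})_{L_{k/R}[-1]/}\bigr)^{\op}$ with $(\Solid^{\Pro}_{k,\leq 0})_{/L_{k/R}^\vee[1]}$ needs duality to be fully faithful on $\Mod_{k,\geq -1}$, or a shift argument. Your $\underline{\Hom}(-,k)$ formulation on all of $\Mod_k^{\op}$ provides this. Second, your splitting of coconnective complexes with profinite homology uses convergence of Postnikov towers and exactness of countable products in $\Solid_k^\heartsuit$. These should be cited from the ultrasolid framework rather than left implicit.
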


\begin{proof}
	This follows exactly the same proof as \cref{prop-cotcomonadic}, but we can't just dualise that result because duality might not take totalisations to geometric realisations. 

	Since duality gives an equivalence $\Mod_{k, \geq 0} \simeq (\Solid_{k, \leq 0}^{\Pro})^{\op}$, we see that $X^\bullet$ is $\cotn^\vee$-split. Since $\cotn^\vee (\Gr \adicn (X^\bullet)) \simeq \cotn^\vee (X^\bullet)(1)$, we get an equivalence
	\begin{align*}
		| \cotn^\vee (\Gr	\adicn (X^\bullet)) | \xrightarrow{\simeq} \cotn^\vee (\Tot (\Gr \adicn (X^\bullet)))
	\end{align*}
	Then, the map
	\begin{align*}
		| \cotn ^\vee (\adicn (X^\bullet)) | \xrightarrow{\simeq} \cotn^\vee (\Tot(\adicn (X^\bullet)))
	\end{align*}
	is an equivalence on gradeds, so it is an equivalence. The result follows from applying $\aug$.
\end{proof}

We also need the following dual of \cref{prop-cotcolim}.

\begin{lemma}
	\label{lemma-truecotcolim}
	If $A \in \CAlg_{R/ /k}^{\an, \surj, \leq m}$, there is a natural equivalence
	\begin{align*}
		\varinjlim_{n \geq m} \cot_{\leq n}^\vee (A) \simeq \cot^\vee (A)
	\end{align*}
\end{lemma}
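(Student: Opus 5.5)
The plan is to dualise \cref{prop-cotcolim}, working through the filtrations of \cref{construction-cotfiltrations} rather than dualising the limit directly. Dualising a countable inverse limit need not give the colimit of duals, even in $\Solid_k$. So I would not try to commute $(-)^\vee$ past $\varprojlim_n$ abstractly. Instead I would use the finiteness and connectivity estimates of \cref{lemma-cotconnectivity} and \cref{lemma-cotpieces}, which show that in each fixed homological degree the tower $\cot_{\leq n}(A)$ is eventually constant.

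Concretely, first reduce to the case where $A$ is compact projective. Both sides are sifted-colimit-preserving when regarded as functors to $\Mod_{k}^{\op}$ landing in duals. The right way to phrase this is that $\cot_{\leq n}$ and $\cot$ commute with sifted colimits in $A$, so $\cot_{\leq n}^\vee$ and $\cot^\vee$ turn them into limits. Because of this, I would avoid a naive reduction and argue degreewise instead. Fix a homological degree $d$ (in the coconnective solid side, i.e.\ $-d$) and set $l = d+2$. By \cref{lemma-cotpieces}(1), there is a $j$ such that for $n \gg m$, independently of $A$, the map $\cot(A) \to \cot_{\leq n}(A)/F^j$ is $l$-connected. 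By \cref{lemma-cotcomplete} and \cref{lemma-cotconnectivity}, $F^j\cot_{\leq n}(A)$ is itself $l$-connective for $j$ large, again uniformly in $A$ and $n$. Indeed, on compact projectives it is a sum of terms $(V^{\otimes i}\otimes \tilde C_*(T_{\leq n}(m,i),R))_{\Sigma_i}\otimes \LSym^*(W)$ with $i\ge j$, and these connectivity bounds pass to sifted colimits. Combining the two, the map $\cot(A) \to \cot_{\leq n}(A)$ is $l$-connected for all $n \gg m$, with a bound depending only on $l$ and $m$.

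Given this, dualising is harmless. For connective $k$-modules $M \to N$ with $l$-connected map, the induced map $N^\vee \to M^\vee$ in $\Solid_{k,\leq 0}^{\Pro}$ is an isomorphism on $\pi_{-e}$ for $e < l-1$. Here we use that duality $\Mod_{k,\geq 0}\simeq (\Solid_{k,\leq 0}^{\Pro})^{\op}$ is exact and sends $\pi_e$ to the profinite dual of $\pi_e$. Hence in every fixed degree the sequence $\pi_{-d}\cot^\vee_{\leq n}(A)$ stabilises at $\pi_{-d}\cot^\vee(A)$ for $n \gg m$. Filtered colimits in $\Solid_k$ are exact and computed on homology (the heart is Grothendieck-like, being a sifted cocompletion), so $\pi_{-d}\varinjlim_n \cot^\vee_{\leq n}(A) \cong \pi_{-d}\cot^\vee(A)$ for every $d$. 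The map is the one induced by the tower of \cref{construction-approxcot}, so the comparison map is an equivalence. Naturality in $A$ is automatic since all maps come from the natural counits $t_{\leq n}^{n'}\to\id$.

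The main obstacle is the uniformity of the connectivity bound in $A$, which is what allows passage from compact projectives to a general $A$. I expect to handle it by noting that $l$-connectedness of a natural map between sifted-colimit-preserving functors $\CAlg_{R/ /k}^{\an,\surj,\leq m} \to \Mod_{k,\geq 0}$ can be checked on the generators $\LSym^{\leq m}(V\to V\oplus W)$. This holds because $l$-connective fibres are closed under colimits. On those generators the claim is exactly \cref{lemma-cotpieces} together with $T_{\leq\infty}(m,i)\simeq T_{\leq n}(m,i)$ for $n \gg m$ in the relevant range of $i$.
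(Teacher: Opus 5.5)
Your argument fails at the claim that $F^j\cot_{\leq n}(A)$ is $l$-connective for $j$ large \emph{uniformly in $n$}. You use this to conclude that $\cot(A)\to\cot_{\leq n}(A)$ is $l$-connected for all $n\gg m$. \cref{lemma-cotconnectivity} is stated for fixed $n$, and for finite $n$ its bound on $i$ grows with $n$. The uniform statement is in fact false, already on a compact projective.

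Take $R=k$, $m=1$ and $A=k[x]/x^2=\LSym^{\leq 1}_k(k\to k)$. In $T_{\leq n}(1,n+1)$ the only excluded chains are those containing $\hat 0$ immediately followed by $\hat 1$, since that step has type $n+1$. These chains span a circle in $T_{\leq \infty}(1,n+1)$, so $T_{\leq n}(1,n+1)$ is the cofibre of that circle. Now pass to $\Sigma_{n+1}$-orbits. The quotient $T_{\leq\infty}(1,n+1)/\Sigma_{n+1}$ is acyclic, because $\cot(A)\simeq k\oplus k[1]$ vanishes in weights $\geq 3$. Hence the weight $n+1$ part of $\cot_{\leq n}(A)$ is $k[2]$ for every $n\geq 2$.

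Algebraically, the same class appears in an $(n+1)$-nilpotent resolution of $A$ with $dy=x^2$. There $x^{n-1}y$ becomes a new cycle, because $x^{n+1}=0$ is forced, and it must be killed by a generator in degree $2$.

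So $\cot(A)\to\cot_{\leq n}(A)$ is never $3$-connected. Likewise $\pi_{-2}\cot^\vee_{\leq n}(A)$ never stabilises: each stage has a new class in weight $n+1$, which dies only under the transition map to $\cot^\vee_{\leq n+1}(A)$. No degreewise stabilisation argument can work here; convergence holds only weight by weight.

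The repair is to organise the argument by weight, which is what the paper does. For each fixed $n$, the filtration of \cref{construction-cotfiltrations} on $\cot_{\leq n}(A)$ is complete, and its pieces become arbitrarily connective as $j$ grows (\cref{lemma-cotcomplete}). Dualising therefore gives an increasing filtration on $\cot^\vee_{\leq n}(A)$ whose tails $(F^j\cot_{\leq n}(A))^\vee$ have vanishing colimit in $j$. Since $\varinjlim_n$ commutes with $\varinjlim_j$, the comparison map $\varinjlim_n\cot_{\leq n}^\vee(A)\to\cot^\vee(A)$ is a map of such filtered objects, and it suffices to check it on associated gradeds. There \cref{lemma-cotpieces}(2) says that in each fixed weight $i$ the tower is eventually constant with value $(\Gr^i\cot(A))^\vee$. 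You only ever need uniformity in $n$ one weight at a time, never across all weights in a fixed homological degree.
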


\begin{proof}
	There is a natural filtration on $\cot_{\leq m}^\vee (A)$ that is complete (\cref{lemma-cotcomplete}). Dualising, we get an increasing filtration that	is cocomplete, meaning that the filtered colimit of the filtered pieces vanishes. Thus, it suffices to check an equivalence on gradeds. This is true by dualising \cref{lemma-cotpieces}.
\end{proof}

This gives us a monad on $(\Solid_{k, \leq 0}^{\Pro})_{/L_{k/R}^\vee[1]}$. The goal will be to show that we can extend this to a sifted-colimit-preserving monad on $(\Solid_k)_{/L_{k/R}^\vee [1]}$.

\begin{defi}
	We will say that a tower $X_0 \to \dots \to  X_i \to \dots$ in $\Solid_{k, \leq 0}^{\Pro}$ is \emph{almost eventually constant} if for each $j$, the map $\tau_{\geq j} X_n\to \tau_{\geq j} X_{n+1}$ is an equivalence for $n \gg 0$.

	Let $F: \Solid_{k, \leq 0}^{\Pro} \to \catC$ be a functor where $\catC$ is an $\infty$-category with sifted colimits. $F$ is \emph{regular} if it preserves almost eventually constant towers.

	We will say that $F$ preserves \emph{finite stable geometric realisations} if it preserves colimits of diagrams $\Delta^{\op} \to \Solid_{k, \leq 0}^{\Pro}$ that are left Kan extended from $(\Delta^{\leq n})^{\op}$ for some $n$ such that the geometric realisation, when computed in $\Solid_k$, is in $\Solid_{k, \leq 0}^{\Pro}$.
\end{defi}

We first need the following technical lemma, following (\cite{explicit}).

\begin{lemma}
	\label{lemma-completion}
	Let $\catC$ be an $\infty$-category that admits sifted colimits. Then, restriction gives an equivalence
	\begin{align*}
		\Fun _{\Sigma} (\Solid_k, \catC) \xrightarrow{\simeq} \Fun_{\sigma}' (\Solid_{k, \leq 0}^{\Pro}, \catC)
	\end{align*}
	where $\Fun_{\Sigma} ( \Solid_k, \catC) \subset \Fun (\Solid_k, \catC) $ refers to the full subcategory of sifted-colimit-preserving functors, and $\Fun_{\sigma}' (\Solid_{k, \leq 0}^{\Pro}, \catC) \subset \Fun (\Solid_{k, \leq 0}^{\Pro}, \catC)$ refers to the full subcategory of regular functors that preserve finite stable geometric realisations.
\end{lemma}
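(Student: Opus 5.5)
The plan is to follow the strategy of \cite{explicit} (and of \cite{partition}, Theorem 3.26), which proves the analogous statement for $\Mod_k$ and coconnective finite type complexes. The first step is to identify the compact projective generators of $\Solid_{k, \geq 0}$ with the objects $\prod_I k[j]$, $j \geq 0$. The dual category $(\Solid_{k, \leq 0}^{\Pro})^{\op} \simeq \Mod_{k, \geq 0}$ is generated under sifted colimits by finite-dimensional vector spaces, and these dualise to finite-dimensional objects. Hence the comparison naturally runs through the subcategory $\Pro(\Vect_k^\omega) \subset \Solid_{k, \leq 0}^{\Pro} \cap \Solid_{k, \geq 0}$.

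Concretely, I would factor the restriction through two intermediate categories. The first is $\Fun_\Sigma(\Solid_{k,\geq 0}, \catC) \simeq \Fun_\times(\Pro(\Vect_k^\omega), \catC)$, which holds because $\Solid_{k, \geq 0} \simeq \mathcal{P}_\Sigma(\Pro(\Vect_k^\omega))$ (\cite{HTT}, Proposition 5.5.8.15). The second is an extension from connective objects to all of $\Solid_k$. For this second step I would use the standard argument that a sifted-colimit-preserving functor on $\Solid_k$ is determined by its values on $\Solid_{k, \geq -n}$ for all $n$. Each $X[-n]$ with $X$ connective is a finite stable geometric realisation: $X[-1]$ is the realisation of a Čech-type diagram built from $X$ via the fibre sequence, in the spirit of \cite{explicit}, Proposition 3.12, where $\Sigma^{-1}$ is written through a diagram left Kan extended from $\Delta^{\leq 1}$. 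So on both sides the data reduces to functors on $\Pro(\Vect_k^\omega)$ satisfying appropriate conditions.

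For the coconnective side, I would show that any regular functor preserving finite stable geometric realisations is determined by its restriction to $\Pro(\Vect_k^\omega)$, and that every product-preserving functor there extends uniquely to such a functor. Given $X \in \Solid_{k, \leq 0}^{\Pro}$, the plan is:
\begin{enumerate}
\item Write $X$ as the limit of its Postnikov-type tower of truncations $\tau_{\geq -j} X$. This tower is almost eventually constant, and by regularity $F(X) \simeq \varinjlim_j F(\tau_{\geq -j} X)$. This is where regularity is used to replace limits by colimits, exactly as in \cite{explicit}, Theorem 3.26.
\item Note that each bounded piece $\tau_{\geq -j} X$ has finitely many homology groups in $\Pro(\Vect_k^\omega)$. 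It can be built from objects $\prod_I k[-i]$ by finitely many fibre sequences.
\item Express each $\prod_I k[-i]$ as a finite stable geometric realisation of objects of $\Pro(\Vect_k^\omega)$, via iterated desuspension.
\end{enumerate}
Conversely, given a product-preserving $f$ on $\Pro(\Vect_k^\omega)$, the candidate extension is the restriction to $\Solid_{k,\leq 0}^{\Pro}$ of its sifted-colimit-preserving extension $\tilde{f}$ on $\Solid_k$. For essential surjectivity and full faithfulness, it suffices to check that this restriction is regular and preserves finite stable geometric realisations, and that restriction followed by extension is the identity.

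I expect the main obstacle to be showing that $\tilde{f}$ restricted to $\Solid_{k, \leq 0}^{\Pro}$ is regular. That is, $\tilde f(\varprojlim X_n) \simeq \varinjlim \tilde f(X_n)$ must hold for almost eventually constant towers, even though $\tilde f$ only preserves colimits. I would try to reduce this to showing that the limit of an almost eventually constant tower agrees with its colimit-style approximation degreewise: in each homological degree the tower stabilises, so $\varprojlim X_n \simeq \varinjlim$ of suitable connective-cover pieces. The key input is a connectivity estimate, namely that $\tilde f$ of an $l$-connective map between coconnective profinite objects has connectivity growing with $l$. For this I would use that $\prod_I$ is exact in $\Solid_k^\heartsuit$ and that $\Pro(\Vect_k^\omega)$ is an abelian category with enough structure for the bar/Čech resolutions of \cite{explicit} to carry over verbatim after dualising through $\Pro(\Vect_k^\omega)\simeq \Vect_k^{\op}$.
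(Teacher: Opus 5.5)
\textbf{There is a genuine gap.} Your plan routes both sides through $\Fun_\times(\Pro(\Vect_k^\omega),\catC)$, and this cannot work.

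First, the two steps meant to achieve it are false. Connective objects of $\Solid_k$ are closed under all colimits. So for connective $X$ with $\pi_0 X\neq 0$, the object $X[-1]$ is not a geometric realisation of connective objects: it is the finite \emph{totalisation} $0\times_X 0$. For the same reason, in your step 3, $\prod_I k[-i]$ with $i>0$ is not a geometric realisation of objects of $\Pro(\Vect_k^\omega)$.

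Second, the intermediate equivalence you are aiming for fails. Take $\catC=\Space$ and the product-preserving functor $f(P)=\Hom(k,P)$ on $\Pro(\Vect_k^\omega)$. In $\Solid^{\Pro}_{k,\leq 0}$, the object $k$ is the realisation of the bar construction $[m]\mapsto k[-1]^{\oplus m}$. This diagram is left Kan extended from $(\Delta^{\leq 1})^{\op}$ and has $0$-simplices equal to $0$. Hence any $F\in\Fun'_\sigma(\Solid^{\Pro}_{k,\leq 0},\Space)$ with $F(0)\simeq *$ has $F(k)$ connected, whereas $f(k)$ is the discrete set $k$. So $f$ has no extension. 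Consequently there is no sifted-colimit-preserving $\tilde f$ on $\Solid_k$ extending its extension on $\Solid_{k,\geq 0}$, and restriction to connective objects is not an equivalence.

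Separately, $\Pro(\Vect_k^\omega)$ is large. Using $\mathcal{P}_\Sigma$ and \cite{HTT}, Proposition 5.5.8.15 therefore requires first passing to $\kappa$-small objects, which is what the paper does.

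\textbf{The obstacle you single out is not where the content lies.} Regularity concerns sequential \emph{colimits} $X_0\to X_1\to\cdots$; for instance $X\simeq\varinjlim_j\tau_{\geq -j}X$ is a colimit, not a limit. Such a colimit computed in $\Solid_k$ already lies in $\Solid^{\Pro}_{k,\leq 0}$, and the same holds for finite stable geometric realisations by definition. So the restriction of any sifted-colimit-preserving functor is automatically regular and preserves finite stable geometric realisations.

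\textbf{The missing idea.} The real work is full faithfulness and essential surjectivity of the restriction. For this, the right generators are the bounded (coherent) objects, not the connective projectives. The argument runs as follows.
\begin{enumerate}
\item $\Solid_k^\kappa$ is the Ind-completion of bounded complexes of objects $\prod_I k$ with $|I|<\kappa$, i.e.\ pro-coherent modules over $\Pro(\Vect_k^\omega)_{<\kappa}$.
\item Sifted-colimit-preserving functors out of it are the functors on these bounded objects that preserve finite geometric realisations.
\item Positive shifts are recovered from coconnective objects by bar constructions, which are finite realisations in the correct direction.
\item Regularity then extends the functor from bounded objects to arbitrary objects of $\Solid^{\Pro}_{k,\leq 0}$.
\end{enumerate}
This is exactly what the paper obtains by verifying the hypotheses of \cite{explicit}, Proposition 2.39 for $\Solid_k^\kappa$. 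These hypotheses are that $\Solid_k^\kappa$ is its own category of pro-coherent modules, and that its dually almost perfect objects are $\Solid^{\Pro}_{k,\leq 0}\cap\Solid_k^\kappa$.
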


\begin{proof}
	Let $\kappa$ be an uncountable strong limit cardinal. Notice that since $\Solid_k^\kappa$ is closed under colimits, it suffices to show the above result for $\Solid_k^\kappa$. By (\cite{explicit}, Proposition 2.39), it suffices to show that $\Solid_k^\kappa$ is equivalent to its category of pro-coherent modules and that its dually almost perfect modules are equivalent to $\Solid_{k, \leq 0}^{\Pro, \kappa}$.

	In the language of (\cite{explicit}, \S 2), $\Solid_k^\kappa$ can be identified with the category of modules over the small category $\ProVect_{< \kappa}$. The opposite of this category is just the category of vector spaces of dimension $< \kappa$. Notice that every object is projective, so it is clear that almost perfect objects in the stable $\infty$-category $\Mod_{\ProVect_{< \kappa}^{\op}}$ are equivalent to the category of bounded below chain complexes of vector spaces of dimension $< \kappa$. Hence, the coherent ones are precisely the bounded chain complexes of dimension $< \kappa$, which are dually equivalent to the bounded chain complexes in $\ProVect_{< \kappa}$. By (\cite{ultrasolid}, Proposition 2.25), we see that pro-coherent modules in $\ProVect_{< \kappa}$ are just $\Solid_k^\kappa$.

	From this one easily deduces that the dually almost perfect objects are precisely $\Solid_{k, \leq 0}^{\Pro} \cap \Solid_k^\kappa$. 
\end{proof}

\begin{prop}
	The monad $\cotn^\vee \circ \sqz (-^\vee)$ on $(\Solid_{k, \leq 0}^{\Pro})_{/L_{k/R}^\vee [1]}$ admits a unique extension to a sifted-colimit-preserving monad on $(\Solid_k)_{/L_{k/R}^\vee [1]}$.
\end{prop}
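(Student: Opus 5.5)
The plan is to apply \cref{lemma-completion} to the underlying endofunctor and then transport the monad structure. Write $T_n := \cotn^\vee \circ \sqz(-^\vee)$ on $(\Solid_{k, \leq 0}^{\Pro})_{/L_{k/R}^\vee[1]}$. First, \cref{lemma-completion} has to be upgraded to the slice setting. The slice $(\Solid_k)_{/L_{k/R}^\vee[1]}$ is again generated under sifted colimits by objects of $(\Solid_{k, \leq 0}^{\Pro})_{/L_{k/R}^\vee[1]}$. The same argument via (\cite{explicit}, Proposition 2.39) should then go through verbatim, since a map to the fixed object $L_{k/R}^\vee[1]$ only adds a coslice/slice condition that is compatible with sifted colimits and with almost eventually constant towers. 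So it suffices to show that $T_n$, viewed as a functor into $(\Solid_k)_{/L_{k/R}^\vee[1]}$, is regular and preserves finite stable geometric realisations.

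Both properties should be checked after dualising. Under $(\Solid_{k,\leq 0}^{\Pro})^{\op} \simeq \Mod_{k, \geq 0}$, a finite stable geometric realisation becomes a finite totalisation in $\Mod_{k,\geq 0}$ whose totalisation stays connective. An almost eventually constant tower becomes a sequence $V_0 \leftarrow V_1 \leftarrow \cdots$ in which $\tau_{\leq j}$ stabilises. The comonad is $\cotn \circ \sqz$ on $(\Mod_{k,\geq 0})_{L_{k/R}[-1]/}$. For finite totalisations, $\sqz$ is a right adjoint, so it preserves all limits. For $\cotn$ I would use the bar resolution in $\CAlg_{R/ /k}^{\an,\surj,\leq n}$, or equivalently \cref{prop-cotsym}, which together with base change expresses $\cotn \circ \sqz(V)$ as a finite sum of functors $V \mapsto (V^{\otimes i}\otimes \tilde C_*(T_{\leq n}(1,i),k))_{\Sigma_i}$ twisted by the derivation. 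The twisting can be handled through the pullback description of $\sqz$, reducing to $\cotn$ of pullbacks of trivial square-zero extensions. Each such summand is a finite-degree functor in the sense of Goodwillie, since $n<\infty$ and $T_{\leq n}(1,i)$ is a point for $i$ large (\cref{lemma-cotconnectivity}). Finite-degree polynomial functors of connective $k$-modules preserve finite totalisations, and duality turns these into finite geometric realisations. For regularity, a degree-$\leq d$ functor composed with strict orbits commutes with $\tau_{\leq j}$ up to a shift-preserving bound, so it preserves towers that are eventually constant on each truncation.

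Once the underlying functor extends uniquely to a sifted-colimit-preserving $\overline{T}_n$, I would get the monad structure by observing that the equivalence of \cref{lemma-completion} with $\catC = (\Solid_k)_{/L_{k/R}^\vee[1]}$ is compatible with composition. The composite of regular functors preserving finite stable geometric realisations, one of which lands in $\Solid_{k, \leq 0}^{\Pro}$, is again of this type, since $T_n$ preserves $\Solid_{k,\leq 0}^{\Pro}$. Hence restriction is a monoidal equivalence between $\Fun_\Sigma$ and the subcategory of $\Fun'_\sigma$ of endofunctors preserving the small subcategory. Monads correspond to algebra objects, which gives both existence and uniqueness.

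The main obstacle is the second step, verifying that $\cotn\circ\sqz$ preserves finite totalisations and almost eventually constant towers in the nontrivially twisted slice. The bar-resolution formula is only explicit for free objects. To handle this I would first prove a finite-degree statement for $\cotn$ via the comonadicity argument of \cref{prop-cotsplit}, passing through $\Gr\adicn$, where everything becomes $\LSym^{\Gr,\leq n}$ and is visibly polynomial of degree $\leq n$.
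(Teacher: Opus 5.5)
There is a genuine gap. Your reduction matches the paper's. By (\cite{explicit}, Proposition 4.20), the monadic and sliced form of \cref{lemma-completion}, it suffices to check that $\cotn^\vee\circ\sqz(-^\vee)$ preserves almost eventually constant towers and finite stable geometric realisations. The tower condition is easy because $\cotn$ preserves $i$-connective maps, so no degree bound is needed there.

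\textbf{Where the argument fails.} The gap is in the second condition. Your key claim is that $\cotn\circ\sqz$ is a finite sum of homogeneous functors because $T_{\leq n}(1,i)$ is a point for $i\gg 0$. This is false: \cref{lemma-cotconnectivity} only says that $\tilde{C}_*(T_{\leq n}(1,i),k)$ becomes highly connected as $i$ grows.
\begin{itemize}
\item For $n\geq 2$, chains $\hat{0}=\sigma_0\leq\dots\leq\sigma_j=\hat{1}$ of type $\leq n$ exist in every weight $i$ once $n^j\geq i$. Already $T_{\leq 2}(1,3)$ is a wedge of three $2$-spheres.
\item Likewise, resolving $k\oplus kx$ by free $3$-nilpotent algebras needs cells $dy=x^2$, $dz=xy$, $dw=xz,\dots$ in every weight.
\item So $\cotn\circ\sqz$ has infinite Goodwillie degree for every $n\geq 2$. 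The same holds for $n=\infty$, which the statement must also cover since it defines $\Lie^\pi_{\Delta,k/R}$.
\end{itemize}
The $i$-fold diagonal tensor power of an $m$-skeletal diagram is only $mi$-skeletal. Hence there is no single finite limit to commute past, and ``finite-degree functors preserve finite totalisations'' does not apply.

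Your plan for the nontrivial derivation has a related flaw. The identity $\Gr\adicn(A)\simeq\LSym^{\Gr,\leq n}(\cotn(A)(1))$ expresses $\Gr\adicn$ through $\cotn$; it does not exhibit $\cotn$ as a degree-$\leq n$ functor. Also, $\cotn$ is a left adjoint, so writing $\sqz$ as a pullback $k\times_{k\oplus V[1]}k$ gives you no handle on it.

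\textbf{The missing ingredient} is a complete filtration by weight.
\begin{itemize}
\item Regard $\sqz(X^\vee)$ as an object of $\CAlg_{R/ /k}^{\an,\surj,\leq 1}$, take its adic filtration there, and apply $\cotn\circ\iota_1^n$ (\cref{construction-cotfiltrations}).
\item This filtration is complete by \cref{lemma-cotcomplete}, which is where the connectivity of \cref{lemma-cotconnectivity} genuinely enters. Dually you get a cocomplete increasing filtration, so it suffices to check the colimit condition on associated gradeds.
\item On gradeds the derivation becomes trivial, since $L_{k/R}[-1]$ sits in weight $0$ and $V$ in weight $1$.
\item The weight-$i$ piece is then the single homogeneous functor $V\mapsto (V^{\otimes i}\otimes\tilde{C}_*(T_{\leq n}(1,i),k))_{\Sigma_i}$ of \cref{prop-cotsym}.
\end{itemize}
For that one functor your finite (co)limit exchange is exactly how the paper concludes. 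The diagonal of $(X^\bullet)^{\otimes i}$ is left Kan extended from $(\Delta^{\leq mi})^{\op}$, and in a stable category this finite colimit commutes with the dual equivariant construction.
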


\begin{proof}
	This follows by exactly the same proof as (\cite{foliations}, Proposition 3.8). By (\cite{explicit}, Proposition 4.20), we just need to show that $\cotn^\vee \circ \sqz (-^\vee)$ preserves the following kind of colimits in $(\Solid_{k, \leq 0}^{\Pro})_{/L_{k/R}^\vee [1]}$:
	\begin{enumerate}
		\item Towers  $M_1 \to M_2 \to \dots$  such that for all $i$, the tower $\pi_i (M_1) \to \pi_i (M_2) \to \dots$ is eventually constant.
		\item Simplicial diagrams $X^\bullet$ in $(\Solid_{k, \leq 0}^{\Pro})_{/L_{k/R}^\vee [1]}$ that are left Kan extended from $(\Delta^{\leq m})^{\op}$ for some $m$, such that $|X^\bullet|$, when computed in $(\Solid_k)_{/L_{k/R}^\vee [1]}$, is in $(\Solid_{k, \leq 0}^{\Pro})_{/L_{k/R}^\vee [1]}$.
	\end{enumerate}
	The first part is clear because if $A \to B$ is a map in $\CAlg_{R/ /k}^{\an, \surj}$ that is $i$-connective, then $\cot (A) \to \cot(B)$ is $i$-connective.

	For the second one, we will use the filtrations on $\cot$ from \cref{construction-cotfiltrations}. That is, $\sqz ((X^\vee)^\bullet)$ factors through the category $\CAlg_{R / /k}^{\an, \surj, \leq 1}$, and this gives us a complete filtration on $\cot \circ \sqz ((X^\vee)^\bullet)$. Dualising, we get an increasing filtration that is \emph{cocomplete}; that is, the colimit of the filtered pieces vanishes. Equivalences are detected on gradeds, and filtered colimits of cocomplete increasingly filtered objects are closed under geometric realisations, so it suffices to show the equivalence after taking gradeds.

	Given a derivation $\alpha: L_{k/R}[-1] \to V$, the filtration on $\cot \circ \sqz (V)$ is obtained by putting $V$ in filtered degree $1$ and $L_{k/R}[-1]$ in filtered degree $0$. The associated graded is the trivial derivation, so we may assume that all of the derivations are trivial.

	In this case, we use \cref{prop-cotsym}, since $\sqz (L_{k/R}[-1] \xrightarrow{0} V) \simeq \LSym_k^{\leq 1} (V \to V)$. We just need to show that the functor $F: \Solid_{k, \leq 0}^{\Pro} \to \Solid_{k, \leq 0}^{\Pro}$ given by
	\begin{align*}
		V \mapsto (V^{\otimes n} \otimes \tilde{C}^* (T_{\leq \infty} (1, n), k))^{\Sigma_n}
	\end{align*}
	preserves the relevant colimits. Let $X^\bullet$ be a simplicial diagram in $\Solid_{k, \leq 0}^{\Pro}$ that is left Kan extended from $(\Delta^{\leq m})^{\op}$ such that the geometric realisation, when computed in $\Solid_k$, is in $\Solid_{k, \leq 0}^{\Pro}$. Then, tensoring with itself $n$ times gives an $n$-simplicial diagram, to which the diagonal is cofinal. We will write $(X^\bullet)^{\otimes n}$ for the diagonal simplicial diagram, which is left Kan extended from $(\Delta^{\leq mn})^{\op}$ (\cite{polynomialfunctors}, Proposition 2.10). Since we are in a stable category, finite colimits commute with limits, and $(V^{\otimes n} \otimes \tilde{C}^* (T_{\leq \infty} (1,n), k))^{\Sigma_n}$ is left Kan extended from $(\Delta^{\leq nm})^{\op}$. Then, using that finite colimits commute with limits in $\Solid_k$,
	\begin{align*}
		|((X^\bullet)^{\otimes n} \otimes \tilde{C}^* (T_{\leq \infty} (1, n), k))^{\Sigma_n}|  & \simeq \colim_{(\Delta^{\leq mn})^{\op}} ((X^\bullet)^{\otimes n} \otimes \tilde{C} ^* (T_{\leq \infty} (1,n), k))^{\Sigma_n} \\
		& \simeq  ((\colim_{(\Delta^{\leq mn})^{\op}} X^\bullet)^{\otimes n} \otimes \tilde{C} ^* (T_{\leq \infty} (1,n), k))^{\Sigma_n} \\
		& \simeq (|X^\bullet|^{\otimes n} \otimes \tilde{C}^* (T_{\leq \infty} (1,n), k))^{\Sigma_n}
	\end{align*}
	as required.
\end{proof}

\begin{defi}
	Let $n > 0$. The \emph{dually $n$-truncated partition Lie algebroid monad}, written as $\Lie^{\pi, \weirdleq n}_{\Delta, k/R}$, is the unique sifted-colimit-preserving extension of the monad $\cotn^\vee \circ \sqz (-^\vee)$ on $(\Solid_{k, \leq 0}^{\Pro})_{/L_{k/R}^\vee [1]}$.

	We will write
	\begin{align*}
		\LieAlgd^{\pi, \weirdleq n, \blacksquare}_{\Delta, k/R} := \Alg_{\Lie^{\pi, \weirdleq n}_{\Delta, k/R}} ((\Solid_k)_{/L_{k/R}^\vee[1]})
	\end{align*}
	for the $\infty$-category of algebras over this monad.

	For $n = \infty$, we will write $\Lie_{\Delta, k/R}^{\pi} := \Lie_{\Delta, k/R}^{\pi, \weirdleq \infty}$ for the \emph{partition Lie algebroid monad} and $\LieAlgd^{\pi, \blacksquare}_{\Delta, k/R} := \LieAlgd_{\Delta, k/R}^{\pi, \weirdleq \infty, \blacksquare}$ for its category of algebras.
\end{defi}

\begin{rem}
	For $m \leq n$, we have a factorisation of the $\cotn^\vee$ and $\sqz(-^\vee)$ adjunction
	\[
		\begin{tikzcd}
			\CAlg_R^{\an, \surj, \leq n} \arrow{r}{t_{\leq m}^n} \arrow{dr}{\cotn^\vee} & \CAlg_R^{\an, \surj, \leq m} \arrow{d}{\cot_{\leq m}^\vee} \\
																												 & \left( (\Solid_{k, \leq 0}^{\Pro})_{/L_{k/R}^\vee[1]}\right) ^{\op}
		\end{tikzcd}
	\]
	The counit map on the comonad $t_{\leq m}^n \circ \iota_m^n \to \id$ induces a map of monads $\Lie_{\Delta, k/R}^{\pi, \weirdleq m} \to \Lie_{\Delta, k/R}^{\pi, \weirdleq n}$ (\cite{explicit}, Lemma 3.10). Hence, we get a tower of monads
	\begin{align*}
		\Lie_{\Delta, k/R} ^{\pi, \weirdleq n} \to \Lie_{\Delta, k/R}^{\pi, \weirdleq n+1}\to \dots  \to \Lie_{\Delta, k/R}^{\pi} 
	\end{align*}
	Thus, for $m < n$, we have a right adjoint $\LieAlgd_{\Delta, k/R}^{\pi, \weirdleq n, \blacksquare} \to \LieAlgd_{\Delta, k/R}^{\pi, \weirdleq m, \blacksquare}$ that is the identity on the underlying chain complex. Informally, any $\Lie_{\Delta, k/R}^{\pi, \weirdleq n}$-algebra has the canonical structure of a $\Lie_{\Delta, k/R}^{\pi, \weirdleq m}$-algebra. We will often abuse notation, and for $\mathfrak{g} \in \LieAlgd_{\Delta, k/R}^{\pi, \weirdleq n, \blacksquare}$, we will still write $\mathfrak{g}$ for its image in $\LieAlgd_{\Delta, k/R}^{\pi, \weirdleq m, \blacksquare}$.
\end{rem}

\begin{construction}[Koszul duality functors]
	The adjunction gives us functors
	\begin{align*}
		\KD^{\leq n}: \CAlg_{R/ /k}^{\an, \surj, \leq n} \to (\LieAlgd_{\Delta, k/R}^{\pi, \weirdleq n, \blacksquare})^{\op}
	\end{align*}
	This admits a right adjoint
	\begin{align*}
		C^*_{\leq n} : \left( \LieAlgd_{\Delta, k/R}^{\pi, \weirdleq n, \blacksquare} \right) ^{\op} \to \CAlg_{R/ /k}^{\an, \surj, \leq n}
	\end{align*}
	By \cref{lemma-squarecommutes}, for $m < n$ the left adjoints fit in a commutative diagram
	\[
		\begin{tikzcd}
			\CAlg_{R/ /k}^{\an, \surj, \leq n} \arrow{d}{ t _{\leq m}^{n}} \arrow{r}{\KD^{\leq n }} & (\LieAlgd_{\Delta, k/R}^{\pi, \weirdleq n, \blacksquare})^{\op} \arrow{d} \\
			\CAlg_{R/ /k}^{\an, \surj, \leq m} \arrow{r}{\KD^{\leq m}} & (\LieAlgd_{\Delta, k/R}^{\pi, \weirdleq m, \blacksquare})^{\op}
		\end{tikzcd}
	\]
	We will write $\KD := \KD^{\leq \infty}$ for the Koszul duality functor $\KD: \CAlg_{R/ /k}^{\an, \surj} \to (\LieAlgd_{\Delta, k/R}^{\pi})^{\op}$.
\end{construction}

\begin{lemma}
	\label{lemma-squarecommutes}
	Let $m \leq n$. Then, the diagram
	\[
		\begin{tikzcd}
			\CAlg_{R/ /k}^{\an, \surj, \leq n} \arrow{d}{ t _{\leq m}^{n}} \arrow{r}{\KD^{\leq n }} & (\LieAlgd_{\Delta, k/R}^{\pi, \weirdleq n, \blacksquare})^{\op} \arrow{d} \\
			\CAlg_{R/ /k}^{\an, \surj, \leq m} \arrow{r}{\KD^{\leq m}} & (\LieAlgd_{\Delta, k/R}^{\pi, \weirdleq m, \blacksquare})^{\op}
		\end{tikzcd}
	\]
	is commutative.
\end{lemma}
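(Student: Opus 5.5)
We need to show that the functor $\KD^{\leq n}$ followed by the forgetful (restriction along the monad map) functor agrees with $\KD^{\leq m}\circ t_{\leq m}^n$. The plan is to reduce the check to underlying objects and then to compact projectives.

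First, recall how $\KD^{\leq n}$ arises. By the comonadic adjunction $\cotn^\vee \dashv \sqz(-^\vee)$, the object $\cotn^\vee(A)$ carries a canonical coalgebra structure over the comonad $\cotn^\vee\circ\sqz(-^\vee)$ on the opposite category. Equivalently, it carries an algebra structure over the monad $\cotn^\vee \circ \sqz (-^\vee)$ on $(\Solid_{k,\leq 0}^{\Pro})_{/L_{k/R}^\vee[1]}$, and hence over its sifted-colimit-preserving extension $\Lie^{\pi,\weirdleq n}_{\Delta,k/R}$. The vertical right-hand functor restricts the algebra structure along the monad map $\Lie^{\pi,\weirdleq m}_{\Delta,k/R}\to\Lie^{\pi,\weirdleq n}_{\Delta,k/R}$. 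That map is induced (\cite{explicit}, Lemma 3.10) by the factorisation $\cotn^\vee\simeq \cot_{\leq m}^\vee\circ t^n_{\leq m}$ together with $\sqz\simeq \iota^n_m\circ\sqz$ and the counit $t^n_{\leq m}\iota^n_m\to\id$.

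On underlying objects, the diagram commutes by the equivalence $\cotn\simeq\cot_{\leq m}\circ t^n_{\leq m}$ already established. It remains to show this equivalence is compatible with the algebra structures. The cleanest way is formal. A morphism of adjunctions consists of $t^n_{\leq m}$ on the left and the identity on $(\Solid_{k,\leq 0}^{\Pro})^{\op}_{/L_{k/R}^\vee[1]}$, with compatible left adjoints $\cotn^\vee \simeq \cot_{\leq m}^\vee t^n_{\leq m}$. Such a morphism induces a lax map of comonads. By the naturality statement underlying \cite{explicit}, Lemma 3.10, the induced functor on coalgebras covers $t^n_{\leq m}$ compatibly with the comparison functors. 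Concretely, the coalgebra structure on $\cot_{\leq m}^\vee(t^n_{\leq m}A)$ is the composite of the coaction for $\cotn^\vee(A)$ with the map of comonads. This is precisely the restricted structure. I would spell this out using the description of comparison functors via the Barr–Beck–Lurie construction: both composites are left adjoints. I would therefore check commutativity of the right adjoints instead. One composite is $C^*_{\leq n}$ precomposed with the forgetful map; the other is $\iota^n_m\circ C^*_{\leq m}$. Both are compatible with the forgetful functors to the underlying categories, up to the identification $\sqz\simeq\iota_m^n\sqz$.

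Since everything is sifted-colimit-preserving on the algebra side, an alternative check is on free algebras. Using the monadic bar resolution, it is enough to verify commutativity on objects $\sqz(V^\vee)$ with $V\in(\Solid^{\Pro}_{k,\leq 0})_{/L_{k/R}^\vee[1]}$. There $\KD^{\leq n}\sqz(V^\vee)$ is the free $\Lie^{\pi,\weirdleq n}$-algebra on $V$ (up to the counit), and the compatibility reduces to the definition of the monad map. This needs care, however, because $\KD^{\leq n}$ is a left adjoint into an opposite category, so the bar resolution of $A$ by $\sqz$-objects is a totalisation in $\CAlg_{R/ /k}^{\an,\surj,\leq n}$. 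These totalisations are $\cotn$-split and preserved by \cref{prop-cotsplit}, and similarly for $\cot_{\leq m}\circ t^n_{\leq m}$.

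The main obstacle is the coherence: identifying the algebra structures, not just the underlying objects. I would resolve it with the formal lemma on morphisms of adjunctions inducing maps of (co)monads and compatible comparison functors, which is how the monad map was defined in the first place.
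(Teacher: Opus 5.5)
Your overall strategy is the paper's. The square expresses the naturality of the comparison functors for the morphism of adjunctions given by $\cot^\vee_{\leq n}\simeq\cot^\vee_{\leq m}\circ t^n_{\leq m}$ and $\sqz\simeq\iota^n_m\circ\sqz$, with the monad map $\Lie^{\pi,\weirdleq m}_{\Delta,k/R}\to\Lie^{\pi,\weirdleq n}_{\Delta,k/R}$ coming from the counit $t^n_{\leq m}\iota^n_m\to\id$. However, the underlying objects agree trivially, so the whole content of the lemma is the coherence statement you call the formal lemma, and you only assert it. Describing the coaction on $\cot^\vee_{\leq m}(t^n_{\leq m}A)$ as a composite of maps is the $1$-categorical check. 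It is not an $\infty$-categorical identification of algebra structures.

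The paper proves this step as follows.
\begin{enumerate}
\item It uses the lax monoidal functor $T\mapsto\cot^\vee_{\leq m}\circ T\circ\sqz(-^\vee)$ from \cite{explicit}, Lemma 3.10, and passes to the induced functor on $\LMod$ (pairs of a monad and an algebra over it). This functor preserves Cartesian edges.
\item Applied to the comonad $t^n_{\leq m}\iota^n_m$ and its counit, this gives a commutative square. Its top-left corner is $\coAlg_{t^n_{\leq m}\iota^n_m}(\CAlg^{\an,\surj,\leq m}_{R/ /k})$, its top edge maps to $(\LieAlgd^{\pi,\weirdleq n,\blacksquare}_{\Delta,k/R})^{\op}$, and its bottom edge is $\KD^{\leq m}$.
\item It factors $t^n_{\leq m}$ through this coalgebra category (\cite{HA}, Proposition 4.7.3.3).
\item An abstract argument about composites of adjunctions then shows that $\KD^{\leq n}$ factors through the same category compatibly.
\end{enumerate}
You should either cite a precise version of your formal lemma or give an argument of this kind.

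The two routes you offer for spelling this out do not work as written.
\begin{itemize}
\item The right adjoint of the right-hand vertical functor (the opposite of restriction along the monad map) is the opposite of \emph{induction} along $\Lie^{\pi,\weirdleq m}_{\Delta,k/R}\to\Lie^{\pi,\weirdleq n}_{\Delta,k/R}$, not a forgetful functor. So the upper composite has right adjoint $C^*_{\leq n}$ precomposed with induction. More to the point, comparing right adjoints is equivalent to the original problem. Compatibility with forgetful or free functors only recovers the identification of underlying objects you already have.
\item Reducing via the $\cot_{\leq n}$-split resolution to objects $\sqz(V^\vee)$ needs a natural equivalence on the full subcategory of such objects. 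That includes the maps between them that are not of the form $\sqz(f)$. This is the same coherence problem again, not a consequence of the definition of the monad map.
\end{itemize}
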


\begin{proof}
	We will prove it abstractly. Let $F: \catC \leftrightarrows \catD : G$ be an adjunction between $\infty$-categories with an initial or final object. By (\cite{explicit}, Lemma 3.10), the functor $\End(\catD) \to \End(\catC)$ given by $T \mapsto GTF$ is lax monoidal. Additionally, we can identify $\catC$ as a full subcategory of $\End(\catC)$ given by the constant functor (here we are using the existence of an initial or final object, so that these constant functors are given by left/right Kan extension from the initial/final object). We will abuse notation and write $\LMod (\catC)$ for the category of modules in $\catC$ over $\End (\catC)$. That is, objects in $\LMod(\catC)$ consist of pairs $(T,M)$ where $T$ is a monad in $\catC$ and $M$ is a $T$-algebra. A map $(T,M) \to (T',M')$ is given by a map of monads $T \to T'$ and a map of $T$-algebras $M \to M'$. Define $\LMod(\catD)$ similarly.

	This fits in a commutative diagram
	\[
		\begin{tikzcd}
			\LMod(\catD) \arrow{r} \arrow{d} & \LMod(\catC) \arrow{d} \\
			\End(\catD) \arrow{r} & \End(\catC)
		\end{tikzcd}
	\]
	where the arrow $\LMod(\catD) \to \End(\catD)$ is the biCartesian fibration classifying $T \mapsto \Alg_T (\catD)$, and similar for $\catC$. Informally, for an object $(T,M)$ in $\LMod(\catD)$, the object $GM$ has the structure of a $GTF$-algebra. Clearly, the map $\LMod(\catD) \to \LMod(\catC)$ preserves Cartesian edges: the Cartesian edges are those $(T,M) \to (T',M')$ such that the map $M \to M'$ of $T$-algebras is an equivalence, and these are preserved due to the map $\LMod(\catD) \to \LMod(\catC)$ being compatible with the forgetful functors to $\catD$ and $\catC$.

	Consider the above situation for $\catC = (\CAlg_{R/ /k}^{\an, \surj, \leq m})^{\op}$, $\catD = (\Solid_{k, \leq 0}^{\Pro})_{/L_{k/R}^\vee [1]}$, $F = \sqz(-)^\vee$ and $G = \cot^\vee_{\leq n}$. Then, the functor $ t_{\leq m} ^n \circ \iota_m^n$ is a comonad and has a counit map $\iota_m^n \circ t_{\leq m}^n \to \id$. Since in the above diagram Cartesian edges are preserved, we get a commutative diagram
	\[
		\begin{tikzcd}
			\coAlg_{ t_{\leq m} ^n \circ \iota_m^n } (\CAlg_{R/ /k}^{\an, \surj, \leq m}) \arrow{r} \arrow{d} & (\LieAlgd_{\Delta, k/R}^{\pi, \weirdleq n, \blacksquare} )^{\op}\arrow{d} \\
			\CAlg_{R/ /k}^{\an, \surj, \leq m}  \arrow{r}{\KD^{\leq m}} &  (\LieAlgd_{\Delta, k/R}^{\pi, \weirdleq m, \blacksquare})^{\op}
		\end{tikzcd}
	\]
	Notice that there is a factorisation
	\[
		\begin{tikzcd}
			\CAlg_{R/ /k}^{\an, \surj, \leq n} \arrow{r} \arrow{dr}{t^n_{\leq m}} & \coAlg_{ t_{\leq m}^n \circ \iota_m^n} (\CAlg_{R/ /k}^{\an, \surj, \leq m}) \arrow{d} \\
			& \CAlg_{R/ /k}^{\an, \surj, \leq m}
		\end{tikzcd}
	\]
	(\cite{HA}, Proposition 4.7.3.3). So it suffices to show that the functor $\KD^{\leq n} : \CAlg_{R/ /k}^{\an, \surj, \leq n} \to (\LieAlgd_{\Delta, k/R}^{\pi, \weirdleq n, \blacksquare})^{\op}$ factorises through $\left( \coAlg_{ t_{\leq m}^n \circ \iota_m^n} (\CAlg_{R/ /k}^{\an, \surj, \leq n}) \right) ^{\op}$. We will prove this abstractly.

	Let $\catC, \catD$ and $\catE$ be $\infty$-categories with a final or initial object so that we can consider them as full subcategories of their endomorphism categories as constant functors. Consider adjunctions $F_1: \catC \leftrightarrows \catD : G_1$ and $F_2: \catD \leftrightarrows \catE: G_2$, and let $F = F_2 \circ F_1$, $G = G_1 \circ G_2$. As before, we have lax monoidal functors $\End(\catE) \to \End(\catD) \to \End(\catC)$.

	This induces natural functors $\LMod(\End(\catE)) \to \LMod(\End(\catD)) \to \LMod(\End(\catC))$. The natural map $\catE \to \Alg_{G \circ F} (\catC)$ can be described by the restriction of this composition to the full subcategory of $\LMod(\End(\catE))$ spanned by pairs $(\id, X)$, where $X$ is a constant functor. This trivially gives a factorisation 
	\[
		\begin{tikzcd}
			\catE \arrow{r} \arrow{dr} \arrow{r} & \Alg_{G_2 \circ F_2} (\catD) \arrow{d} \\
															 & \Alg_{G \circ F} (\catC)
		\end{tikzcd}
	\]
	The result now follows from letting $\catC = (\Solid_{k, \leq 0}^{\Pro})_{/L_{k/R}^\vee [1]}$, $\catD = \left( \CAlg_{R/ /k}^{\an, \surj, \leq m} \right) ^{\op}$, $\catE = \left( \CAlg_{R/ /k}^{\an, \surj, \leq n} \right)^{\op}$, $(F_1, G_1) = (\sqz(-^\vee), \cot_{\leq m}^\vee)$ and $(F_2, G_2) = (\iota_m^n, t_{\leq m}^n)$.
\end{proof}

\begin{rem}
	\label{rem-KDcolim}
	The previous commutative diagram refines \cref{construction-approxcot} to a map of $\Lie_{\Delta, k/R}^{\pi, \weirdleq m}$-algebras. That is, if $A \in \CAlg_{R/ /k}^{\an, \surj, \leq m}$ and $m \leq  n$, applying $\KD^{\leq m}$ to the counit map $ t _{\leq m}^n  A \to A$, gives a natural map of $\Lie_{\Delta, k/R}^{\pi, \weirdleq m}$-algebras
	\begin{align*}
		\KD ^{\leq m} (A) \to \KD^{\leq n} (A)	
	\end{align*}
	where we are considering $\KD^{\leq n}(A)$ as a $\Lie_{\Delta, k/R}^{\pi, \weirdleq m}$-algebra.

	By \cref{prop-cotcolim}, these maps assemble into an equivalence of chain complexes
	\begin{align*}
		\varinjlim _{m \leq n} \KD^{\leq n} (A) \xrightarrow{\simeq} \KD (A)
	\end{align*}
\end{rem}

The following is an immediate consequence of \cref{prop-cotcolim}.

\begin{prop}
	\label{prop-Lievalues}
	Let $V \in \Solid_{k, \leq 0}^{\Pro}$. Then,
	\begin{align*}
		\Lie_{\Delta, k/R}^{\pi, \weirdleq n} (  V \xrightarrow{0} L_{k/R}^\vee [1] ) \simeq \left( \left(  \bigoplus_{i \geq 0}  \left( \tilde{C}_* (T_{\leq n}(1,i), k)  \otimes (V^\vee) ^{\otimes i}\right)  _{\Sigma_i} \right)^\vee \xrightarrow{0} L_{k/R}^\vee [1] \right) 
	\end{align*}
	where the subscript $\Sigma_i$ refers to strict orbits (\cref{defi-strict}).
\end{prop}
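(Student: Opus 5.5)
Since $V$ lies in $\Solid_{k, \leq 0}^{\Pro}$, the value $\Lie_{\Delta, k/R}^{\pi, \weirdleq n}(V \xrightarrow{0} L_{k/R}^\vee[1])$ is, by definition of the dually truncated monad, the value of the unextended monad $\cotn^\vee \circ \sqz(-^\vee)$. So no sifted-colimit extension enters, and the plan is to compute $\cotn \circ \sqz$ on the trivial derivation and then dualise. Write $W := V^\vee \in \Mod_{k, \geq 0}$, using the duality $\Mod_{k, \geq 0} \simeq (\Solid_{k, \leq 0}^{\Pro})^{\op}$. The object $(V \xrightarrow{0} L_{k/R}^\vee[1])$ then corresponds to the zero derivation $L_{k/R}[-1] \xrightarrow{0} W$.

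The first step is to identify the square-zero extension. For the zero derivation, $\sqz$ is the pullback $k \times_{k \oplus W[1]} k$ of the zero map along itself. This is the trivial square-zero extension $k \oplus W$, sitting in $\CAlg_{R/ /k}^{\an, \surj, \leq 1}$. As already used in the proof of the extension proposition, it is $\LSym_k^{\leq 1}(W \to W) = \LSym_k^{\nu, \leq 1}(W)$, regarded in $\CAlg_{R/ /k}^{\an, \surj, \leq n}$ via $\iota_1^n$.

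The second step is to compute $\cotn$ of this object, and here I would apply \cref{prop-cotsym} with $m = 1$ over the base $k$. That result is stated for $V \in \Vect_R$, so I would extend it to all connective $W$. Both sides preserve sifted colimits in $W$: $\cotn$ is a left adjoint, $\iota_1^n$ preserves sifted colimits, and the right-hand side is built from tensor powers and strict orbits. This gives
\begin{align*}
\cotn(k \oplus W) \simeq \bigoplus_{i > 0} \left( W^{\otimes i} \otimes \tilde{C}_*(T_{\leq n}(1,i), k)\right)_{\Sigma_i}.
\end{align*}
There is a mismatch with the statement: the statement sums over $i \geq 0$ while \cref{prop-cotsym} sums over $i > 0$. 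I would reconcile this by including the derivation part. The full $\cotn$ is the map $L_{k/R}[-1] \to (\cdots)$, and dually this is the structure map to $L_{k/R}^\vee[1]$, so the $i=0$ term should be read as a convention or as empty. I would check that this map is zero, which should follow because on the zero derivation all the pushouts defining $\cotn$ split.

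The third step is to dualise, giving $\cotn^\vee(\sqz(W)) = (\bigoplus_i (\cdots)_{\Sigma_i})^\vee$ with the zero map to $L_{k/R}^\vee[1]$, which is exactly the claimed formula. I expect the main obstacle to be verifying that this dual lands in $\Solid_{k, \leq 0}^{\Pro}$ and is computed correctly, since the dual of an infinite direct sum is a product. Here \cref{lemma-cotconnectivity} does the work: each summand becomes highly connective as $i$ grows, uniformly in $W$, so in each homological degree the sum is finite. The dual is therefore a degreewise finite product of profinite objects, and no convergence issue arises. Finally, the identification is natural in $V$ and compatible with the monad structure coming from \cref{prop-cotcolim}, and only the underlying objects are claimed.
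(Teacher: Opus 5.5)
Your proposal is correct and is essentially the paper's argument: on $\Solid_{k,\leq 0}^{\Pro}$ the monad is by definition $\cotn^\vee\circ\sqz(-^\vee)$, the zero derivation gives $\sqz \simeq \LSym_k^{\leq 1}(W \to W)$, and \cref{prop-cotsym} with $m=1$ followed by dualising yields the formula (the paper's one-line justification cites \cref{prop-cotcolim}, but \cref{prop-cotsym} is what actually does the work, exactly as you use it). Two small remarks: the sum over $i \geq 0$ in the statement is just an indexing slip that should read $i > 0$ as in \cref{prop-cotsym}, and it has nothing to do with the derivation, which is the structure map and not a summand of the underlying module; and your closing worry is unnecessary, since $\cotn$ is connective and the duality $\Mod_{k,\geq 0}\simeq(\Solid_{k,\leq 0}^{\Pro})^{\op}$ already places its dual in $\Solid_{k,\leq 0}^{\Pro}$ without invoking \cref{lemma-cotconnectivity}.
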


\begin{prop}
	\label{prop-Liecolim}
	There is an equivalence
	\begin{align*}
		\Lie_{\Delta, k/R}^{\pi} \simeq \varinjlim_n \Lie_{\Delta, k/R}^{\pi, \weirdleq n}
	\end{align*}
	where the colimit is taken in the category of monads on $(\Solid_k)_{/L_{k/R}^\vee [1]}$.
\end{prop}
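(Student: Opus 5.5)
The plan is to first build the tower of monads $\Lie_{\Delta, k/R}^{\pi, \weirdleq n} \to \Lie_{\Delta, k/R}^{\pi, \weirdleq n+1} \to \dots \to \Lie_{\Delta, k/R}^{\pi}$ constructed in the remark following the definition of the dually truncated monads (via \cite{explicit}, Lemma 3.10, using the counits $t_{\leq m}^n \circ \iota_m^n \to \id$). This gives a canonical comparison map of monads $\varinjlim_n \Lie_{\Delta, k/R}^{\pi, \weirdleq n} \to \Lie_{\Delta, k/R}^{\pi}$. Here the colimit in monads on $(\Solid_k)_{/L_{k/R}^\vee [1]}$ is taken along a filtered diagram. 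Filtered colimits of monads are computed on underlying endofunctors, since the forgetful functor from monads to endofunctors preserves sifted colimits: composition $\End \times \End \to \End$ preserves filtered colimits in each variable when restricted to sifted-colimit-preserving endofunctors. So it suffices to show that the underlying map of endofunctors is an equivalence. Being a filtered colimit of sifted-colimit-preserving functors, the source again preserves sifted colimits.

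By \cref{lemma-completion}, a natural transformation between sifted-colimit-preserving functors on $(\Solid_k)_{/L_{k/R}^\vee[1]}$ is an equivalence as soon as its restriction to $(\Solid_{k, \leq 0}^{\Pro})_{/L_{k/R}^\vee[1]}$ is. Uniqueness of extension identifies this restriction with the restriction of a functor to the subcategory, and restriction commutes with pointwise colimits. So we reduce to checking that for $(V \to L_{k/R}^\vee[1])$ with $V \in \Solid_{k, \leq 0}^{\Pro}$, the map
\begin{align*}
	\varinjlim_n \cotn^\vee \sqz(V^\vee) \to \cot^\vee \sqz(V^\vee)
\end{align*}
is an equivalence, with the colimit computed in $\Solid_k$. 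Now $\sqz(V^\vee)$ lies in the image of $\iota_1^\infty$, i.e.\ it comes from $\CAlg_{R/ /k}^{\an, \surj, \leq 1}$. Applying \cref{lemma-truecotcolim} with $m = 1$ gives exactly this equivalence; equivalently, this is the statement of \cref{rem-KDcolim} for $A = \sqz(V^\vee)$. Finally, we check that the comparison maps in the colimit agree with the maps of monads from the tower, which is the compatibility recorded in \cref{lemma-squarecommutes} and \cref{rem-KDcolim}.

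The main obstacle is the step where the colimit of monads is identified with the pointwise colimit of endofunctors. We need the colimit in $\Solid_k$ (rather than in pro-objects or the slice) to be the one appearing in \cref{lemma-truecotcolim}. We also need the filtered colimit to commute with the dualities and with the slice over $L_{k/R}^\vee[1]$; the slice is harmless because colimits in an overcategory are computed underlying. The first point I would handle by noting that monads on a presentable category whose underlying functors preserve sifted colimits form a category in which filtered colimits are created by the forgetful functor to $\Fun_\Sigma$, since $\Fun_\Sigma$ with composition is a monoidal category whose tensor product preserves filtered colimits separately in each variable.
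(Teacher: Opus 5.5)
Your proposal is correct and follows essentially the same route as the paper. You compute the filtered colimit of monads pointwise, reduce via sifted-colimit preservation to $(\Solid_{k, \leq 0}^{\Pro})_{/L_{k/R}^\vee [1]}$, and conclude from \cref{lemma-truecotcolim} using that $\sqz(V^\vee)$ comes from $\CAlg_{R/ /k}^{\an, \surj, \leq 1}$. Your added details (why the forgetful functor creates filtered colimits of sifted-colimit-preserving monads, conservativity of restriction via \cref{lemma-completion}, and compatibility of the comparison maps) only make explicit what the paper leaves implicit.
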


\begin{proof}
	Firstly, notice that since all the monads involved commute with filtered colimits, the colimit is computed pointwise. All of the monads commute with sifted colimits, so it is enough to check on $(\Solid_{k, \leq 0}^{\Pro})_{/L_{k/R}^\vee [1]}$. 

	For $V \in (\Solid_{k, \leq 0}^{\Pro})_{/L_{k/R}^\vee [1]}$, $\sqz (V^\vee)$ is in the image of $\iota_1^\infty$. The result follows by \cref{lemma-truecotcolim}.
\end{proof}

The following is an immediate consequence of \cref{prop-Liecolim} and (\cite{monads}, Corollary 6.9).

\begin{theorem}
	\label{thm-Liecolim}
	There is an equivalence of $\infty$-categories
	\begin{align*}
		\LieAlgd_{\Delta, k/R}^{\pi, \blacksquare} \simeq \varprojlim _n \LieAlgd_{\Delta, k/R}^{\pi, \weirdleq n, \blacksquare}
	\end{align*}
	where the transition maps $\LieAlgd_{\Delta, k/R}^{\pi, \weirdleq n+1, \blacksquare} \to \LieAlgd_{\Delta, k/R}^{\pi, \weirdleq n, \blacksquare}$ are the right adjoints induced by the maps of monads $\Lie_{\Delta, k/R}^{\pi, \weirdleq n} \to \Lie_{\Delta, k/R}^{\pi, \weirdleq n+1}$.
\end{theorem}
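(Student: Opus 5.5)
The plan is to deduce \cref{thm-Liecolim} formally from \cref{prop-Liecolim}, using the general principle that algebras over a sequential colimit of monads form the limit of the categories of algebras (\cite{monads}, Corollary 6.9). Write $T_n := \Lie_{\Delta, k/R}^{\pi, \weirdleq n}$ and $T := \Lie_{\Delta, k/R}^{\pi}$, all monads on $\catC := (\Solid_k)_{/L_{k/R}^\vee[1]}$.

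First, I would record the functoriality. A map of monads $S \to S'$ induces, by restriction of the action, a functor $\Alg_{S'}(\catC) \to \Alg_S(\catC)$ over $\catC$. Both forgetful functors are monadic, and the functor commutes with them, so it is a right adjoint. Applying this to the tower $T_n \to T_{n+1} \to \dots \to T$ from the remark after the definition of $\Lie_{\Delta, k/R}^{\pi, \weirdleq n}$ gives compatible functors $\LieAlgd_{\Delta,k/R}^{\pi,\blacksquare} \to \LieAlgd_{\Delta,k/R}^{\pi,\weirdleq n,\blacksquare}$. Hence we get a comparison functor
\begin{align*}
\Phi : \LieAlgd_{\Delta, k/R}^{\pi, \blacksquare} \to \varprojlim_n \LieAlgd_{\Delta, k/R}^{\pi, \weirdleq n, \blacksquare}
\end{align*}
lying over $\catC$.

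Second, I would check the hypotheses of the cited corollary. It requires the colimit $\varinjlim_n T_n$ in monads to be computed pointwise in $\End(\catC)$, and this has to agree with $T$. Every $T_n$ and $T$ preserves sifted colimits, in particular filtered colimits. Composition in $\End_\Sigma(\catC)$ therefore commutes with sequential colimits in each variable, so the pointwise colimit carries a monad structure and is the colimit of monads. \cref{prop-Liecolim} then identifies it with $T$.

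Third, I would apply the corollary, or argue directly if needed. Directly, an object of the limit is an $X \in \catC$ with compatible $T_n$-actions $T_n X \to X$. Since $TX \simeq \varinjlim_n T_n X$, these assemble into a map $TX \to X$. The higher coherences are controlled by $T^{\circ k} X \simeq \varinjlim_n T_n^{\circ k} X$, which uses the diagonal cofinality of $\NN^k$ together with the fact that $T_n$ commutes with filtered colimits. This gives an inverse to $\Phi$. Conservativity and full faithfulness reduce to mapping spaces: maps of $T$-algebras are the totalisation of $\Map_\catC(T^{\circ \bullet} X, Y)$, and this equals $\varprojlim_n$ of the corresponding totalisations for $T_n$, because limits commute with limits.

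The main obstacle I expect is the bookkeeping in the second step: making sure the colimit in the $\infty$-category of monads is really the pointwise one. That is exactly where preservation of sifted (filtered) colimits is used, and once it is settled the rest is formal.
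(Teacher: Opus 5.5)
Your proposal is correct and follows the paper's proof, which simply combines \cref{prop-Liecolim} with (\cite{monads}, Corollary 6.9). The step you flag as the main bookkeeping issue is handled in the paper inside the proof of \cref{prop-Liecolim}: the colimit of monads is computed pointwise because all the dually truncated monads preserve filtered colimits. Your direct argument in the third step sketches what the cited corollary provides and is not needed.
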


We can now obtain an algebraic description of coconnective partition Lie algebroids with profinite homology.

\begin{cor}
	There is an equivalence of $\infty$-categories
	\begin{align*}
		\LieAlgd_{\Delta, k/R}^{\pi, \blacksquare} \left( (\Solid_{k, \leq 0}^{\Pro})_{/L_{k/R}^\vee [1]} \right)  \simeq \left( \varprojlim \CAlg_{R/ /k}^{\an, \surj, \leq n} \right)^{\op}
	\end{align*}
	where the transition maps $\CAlg_{R/ /k}^{\an, \surj, \leq n+1} \to \CAlg_{R/ /k}^{\an, \surj, \leq n}$ are polynomial truncation.
\end{cor}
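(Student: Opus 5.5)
The plan is to combine \cref{thm-Liecolim} with the level-wise comonadicity of the $\cot_{\leq n}^\vee \dashv \sqz(-^\vee)$ adjunction. The left-hand side means the full subcategory of $\LieAlgd_{\Delta, k/R}^{\pi, \blacksquare}$ on algebras whose underlying object lies in $(\Solid_{k, \leq 0}^{\Pro})_{/L_{k/R}^\vee [1]}$. For each finite $n$, the dual of \cref{prop-cotcomonadic} (the lemma at the start of this section) identifies $(\CAlg_{R/ /k}^{\an, \surj, \leq n})^{\op}$ with the algebras for the monad $\cotn^\vee \circ \sqz(-^\vee)$ on $(\Solid_{k, \leq 0}^{\Pro})_{/L_{k/R}^\vee [1]}$. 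By definition, this monad is the restriction of $\Lie^{\pi, \weirdleq n}_{\Delta, k/R}$, and $\Lie^{\pi, \weirdleq n}_{\Delta, k/R}$ preserves this subcategory. Its algebras there are therefore exactly the $\Lie^{\pi, \weirdleq n}_{\Delta, k/R}$-algebras in $\LieAlgd_{\Delta, k/R}^{\pi, \weirdleq n, \blacksquare}$ whose underlying object is coconnective with profinite homology. This gives
\begin{align*}
	\LieAlgd_{\Delta, k/R}^{\pi, \weirdleq n, \blacksquare}\left( (\Solid_{k, \leq 0}^{\Pro})_{/L_{k/R}^\vee [1]} \right) \simeq \left( \CAlg_{R/ /k}^{\an, \surj, \leq n} \right)^{\op}.
\end{align*}

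Next I would pass to the limit. The equivalence of \cref{thm-Liecolim} is compatible with the forgetful functors to $(\Solid_k)_{/L_{k/R}^\vee [1]}$, because all transition functors are the identity on underlying objects. So it restricts to full subcategories cut out by a condition on the underlying object. The restricted limit is then the limit of the restricted categories: an object of the limit whose underlying object lies in the subcategory has every component there, and fullness passes to the limit. Combined with the first step, this yields $\varprojlim_n (\CAlg_{R/ /k}^{\an, \surj, \leq n})^{\op} \simeq (\varprojlim_n \CAlg_{R/ /k}^{\an, \surj, \leq n})^{\op}$.

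The main obstacle is checking that the transition functors match. On the Lie side the transition is restriction along the map of monads $\Lie^{\pi, \weirdleq n}_{\Delta, k/R} \to \Lie^{\pi, \weirdleq n+1}_{\Delta, k/R}$. Under the comonadic identifications, this should correspond to $t_{\leq n}^{n+1}$ on the algebra side. I would deduce this from \cref{lemma-squarecommutes}, which says $\KD^{\leq n} \circ t_{\leq n}^{n+1} \simeq \mathrm{res} \circ \KD^{\leq n+1}$. Restricted to the subcategories of the first step, $\KD^{\leq n}$ becomes the comparison equivalence, so the lemma says precisely that the squares commute. I would also check that these commutations are coherent in $n$, which follows from the functoriality of the construction in \cref{lemma-squarecommutes}, namely the lax monoidal maps $\End(\catE) \to \End(\catD) \to \End(\catC)$ composing compatibly. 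This gives an equivalence of towers and hence of limits.
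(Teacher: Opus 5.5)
Your proposal is correct and is essentially the paper's proof: levelwise comonadicity identifies $(\CAlg_{R/ /k}^{\an, \surj, \leq n})^{\op}$ with the $\Lie^{\pi, \weirdleq n}_{\Delta, k/R}$-algebras on $(\Solid_{k, \leq 0}^{\Pro})_{/L_{k/R}^\vee [1]}$, \cref{lemma-squarecommutes} matches the transition functors, and \cref{thm-Liecolim} then gives the limit once it is restricted to these full subcategories, a step you justify and the paper leaves implicit. Your coherence concern is harmless, because a map of $\NN$-indexed towers is determined by its components together with the squares between consecutive levels.
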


\begin{proof}
	For $n < \infty$, the adjunction
	\begin{align*}
		\cot^\vee : \CAlg_{R/ /k}^{\an, \surj, \leq n} \leftrightarrows (\Solid_{k, \leq 0}^{\Pro})^{\op}_{/L_{k/R}^\vee [1]} : \sqz (-^\vee)
	\end{align*}
	is comonadic (\cref{prop-cotcomonadic}). Additionally, there is a commutative square
	\[
		\begin{tikzcd}
			\CAlg_{R/ /k}^{\an, \surj, \leq n+1} \arrow{d}{ t _{\leq n}^{n+1}} \arrow{r}{\KD^{\leq n+1 }} & \LieAlgd_{\Delta, k/R}^{\pi, \weirdleq n+1, \blacksquare} \left( (\Solid_{k, \leq 0}^{\Pro})_{/L_{k/R}^\vee [1]} \right)   \arrow{d} \\
				\CAlg_{R/ /k}^{\an, \surj, \leq n} \arrow{r}{\KD^{\leq n}} & \LieAlgd_{\Delta, k/R}^{\pi, \weirdleq n, \blacksquare} \left( (\Solid_{k, \leq 0}^{\Pro})_{/L_{k/R}^\vee [1]} \right) 
		\end{tikzcd}
	\]
	(\cref{lemma-squarecommutes}). The horizontal arrows are equivalences, and combining this with \cref{thm-Liecolim} we get the desired result.
\end{proof}

We will now set up a Koszul duality adjunction. Due to set-theoretic issues, we will have to do this one cardinal at a time and show that this is consistent.

\begin{construction}[$\sigma$-small Koszul duality adjunction]
	\label{construction-smallKD}
	Let $\sigma$ be an uncountable strong limit cardinal. For $\sigma$ large enough, the monad $\Lie_{\Delta, k/R}^{\pi, \weirdleq n}$ preserves the full subcategory $(\Solid_k^\sigma)_{/L_{k/R}^\vee [1]}  \subset (\Solid_k)_{/L_{k/R}^\vee [1]}$. We will write $\LieAlgd_{\Delta, k/R}^{\pi, \weirdleq n, \blacksquare, \sigma} \subset \LieAlgd_{\Delta, k/R}^{\pi, \weirdleq n, \blacksquare}$ for the full subcategory of dually truncated partition Lie algebroids whose underlying chain complex is $\sigma$-ultrasolid. This subcategory is closed under all colimits. The inclusion admits a right adjoint $\LieAlgd_{\Delta, k/R}^{\pi, \weirdleq n, \blacksquare} \to \LieAlgd_{\Delta, k/R}^{\pi, \weirdleq n, \blacksquare, \sigma}$. We will write
	\begin{align*}
		\KD^{\leq n}_\sigma : \CAlg_{R/ /k}^{\an, \surj, \leq n} \to \left( \LieAlgd_{\Delta, k/R}^{\pi, \weirdleq n, \blacksquare, \sigma} \right) ^{\op}
	\end{align*}
	for the composition of $\KD^{\leq n}$ with this right (left in the opposite category) adjoint.

	This admits a right adjoint
	\begin{align*}
		C^*_{\leq n, \sigma} : \left( \LieAlgd_{\Delta, k/R}^{\pi, \weirdleq n, \blacksquare, \sigma} \right)^{\op} \to \CAlg_{R/ /k}^{\an, \surj, \leq n}
	\end{align*}
\end{construction}

We will now show a few results about the independence of these functors with respect to cardinality.

\begin{defi}
	Let $\sigma$ be an uncountable strong limit cardinal. We will say that $V \in \Mod_k$ is \emph{$\sigma$-small} if $\pi_i (V)$ has dimension $< \sigma$ for all $i$.

	We will write $\left( \CAlg_{R/ /k}^{\an, \surj, \leq n} \right)^\sigma \subset \CAlg_{R/ /k}^{\an, \surj, \leq n}$ for the full subcategory consisting of $A \in \CAlg_{R/ /k}^{\an, \surj, \leq n}$ such that $\cot(A)$ is $\sigma$-small.
\end{defi}

\begin{lemma}
	\label{lemma-smallKD}
	Let $\sigma$ and $\sigma'$ be uncountable strong limit cardinals with $ \sigma < \sigma'$. Then,
	\begin{align*}
		\KD^{\leq n}_\sigma |_{ \left( \CAlg_{R/ /k}^{\an, \surj, \leq n} \right) ^\sigma} \simeq \KD^{\leq n}_{\sigma'} | _{\left( \CAlg_{R/ /k}^{\an, \surj, \leq n} \right)^\sigma}
	\end{align*}
	Additionally,
	\begin{align*}
		C^* _{\leq n, \sigma'} |_{\LieAlgd_{\Delta, k/R}^{\pi, \weirdleq n, \blacksquare, \sigma} } \simeq C^* _{\leq n , \sigma}
	\end{align*}
\end{lemma}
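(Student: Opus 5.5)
The plan is to reduce both claims to one observation: for $A$ with $\cot(A)$ $\sigma$-small, the object $\KD^{\leq n}(A)$ already lies in $\LieAlgd_{\Delta, k/R}^{\pi, \weirdleq n, \blacksquare, \sigma}$. Once this holds, both coreflections (the right adjoints to the inclusions into $\LieAlgd^{\pi, \weirdleq n, \blacksquare}$) act as the identity on it. So $\KD^{\leq n}_\sigma(A) \simeq \KD^{\leq n}(A) \simeq \KD^{\leq n}_{\sigma'}(A)$, naturally in $A$, which is the first equivalence.

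To prove the observation, I would use that the underlying object of $\KD^{\leq n}(A)$ is $\cot_{\leq n}^\vee(A) \to L_{k/R}^\vee[1]$. I need $\cot_{\leq n}(A)$ to be $\sigma$-small whenever $\cot(A)$ is. The natural route runs through the comparison map $\cot(A) \to \cot_{\leq n}(A)$ of \cref{construction-approxcot}, together with the filtrations of \cref{construction-cotfiltrations}: the graded pieces of $\cot_{\leq n}(A)$ are built from the graded pieces of $\cot(A)$ by finitely many tensor powers and strict orbits against the finite simplicial sets $T_{\leq n}(m,i)$ (\cref{prop-cotsym}). Because $\sigma$ is an uncountable strong limit cardinal, these operations preserve $\sigma$-smallness degreewise. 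Connectivity (\cref{lemma-cotconnectivity}) ensures that only finitely many terms contribute to each homotopy group, and completeness (\cref{lemma-cotcomplete}) lets me pass from the gradeds to $\cot_{\leq n}(A)$ itself via the Milnor sequence. Dualising then yields an object of $\Solid_k^\sigma$, since the dual of a $\sigma$-small vector space is a product $\prod_I k$ with $|I|<\sigma$.

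The second claim follows by adjunction. Take $\mathfrak{g} \in \LieAlgd^{\pi, \weirdleq n, \blacksquare, \sigma}$ and any $B \in \CAlg_{R/ /k}^{\an, \surj, \leq n}$. Then $\Map(B, C^*_{\leq n, \sigma'}(\mathfrak{g})) \simeq \Map(\mathfrak{g}, \KD^{\leq n}_{\sigma'}(B))$, and since $\mathfrak{g}$ is $\sigma$-small this mapping space equals $\Map(\mathfrak{g}, \KD^{\leq n}(B))$ by the coreflection adjunction. The same holds for $\sigma$. Both functors therefore corepresent the same functor, and Yoneda gives $C^*_{\leq n,\sigma'}|_{\LieAlgd^{\sigma}} \simeq C^*_{\leq n,\sigma}$. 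This part needs no smallness of $B$ and is formal.

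The main obstacle is the smallness transfer from $\cot(A)$ to $\cot_{\leq n}(A)$, because $A$ is not assumed free. A sifted-colimit reduction to compact projectives would bring in non-small resolutions. If the filtration argument above fails, I would instead use comonadicity (\cref{prop-cotcomonadic}) to resolve $A$ by objects of the form $\sqz(\cot_{\leq n}A)$, or bound $\pi_i\cot_{\leq n}(A)$ using the cofibre of $\cot \to \cot_{\leq n}$ computed on the generating layers.
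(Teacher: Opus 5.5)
Your skeleton is the paper's. You show $\KD^{\leq n}(A)$ already lies in $\LieAlgd_{\Delta,k/R}^{\pi,\weirdleq n,\blacksquare,\sigma}$, so both coreflections act trivially, and you get the claim about $C^*$ formally from the adjunctions. Your Yoneda argument is exactly the content of the paper's remark that the inclusion of $\sigma$-small algebroids into $\sigma'$-small ones is fully faithful. The paper does no smallness transfer at all. It just says the dual of the cotangent fibre is $\sigma$-ultrasolid and concludes, so it effectively uses the hypothesis as a bound on $\cot_{\leq n}(A)$, the complex underlying $\KD^{\leq n}(A)$. If you read the definition literally with $\cot=\cot_{\leq\infty}$, you are right that a transfer is needed. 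However, the argument you give for it runs in the wrong direction, so as written that step fails.

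For $A\in\CAlg_{R/ /k}^{\an,\surj,\leq n}$, \cref{construction-cotfiltrations} with $m=n$ filters $\cot(A)$, not $\cot_{\leq n}(A)$; with $m=n$ the filtration on $\cot_{\leq n}(A)$ itself is trivial. By \cref{lemma-gradic} and \cref{prop-cotsym}:
\begin{itemize}
\item $\Gr^1\cot(A)\simeq\cot_{\leq n}(A)$;
\item $\Gr^i\cot(A)\simeq(\cot_{\leq n}(A)^{\otimes i}\otimes\tilde{C}_*(T_{\leq\infty}(n,i),k))_{\Sigma_i}$ for $i\geq 2$;
\item the comparison map of \cref{construction-approxcot} is the quotient $\cot(A)\to\cot(A)/F^2\cot(A)$.
\end{itemize}
So the graded pieces are functors of $\cot_{\leq n}(A)$, not the other way round. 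The ingredients you list (finitely many $i$ per degree by \cref{lemma-cotconnectivity}, completeness plus Milnor, strong-limit bounds on tensor powers and orbits) therefore prove the converse: $\sigma$-small $\cot_{\leq n}(A)$ implies $\sigma$-small $\cot(A)$.

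To get the implication you need, induct on the degree $d$ using the exact sequence $\pi_d\cot(A)\to\pi_d\cot_{\leq n}(A)\to\pi_{d-1}F^2\cot(A)$. Only finitely many $\Gr^i$ with $i\geq 2$ contribute to $\pi_{d-1}F^2\cot(A)$. Each $\pi_{d-1}\Gr^i$ depends only on $\tau_{\leq d-1}\cot_{\leq n}(A)$, since these functors are left Kan extended from finite free modules, and that truncation is $\sigma$-small by the inductive hypothesis. Hence $\pi_d\cot_{\leq n}(A)$ is an extension of a subspace of $\pi_{d-1}F^2\cot(A)$ by a quotient of $\pi_d\cot(A)$, both of dimension $<\sigma$. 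With this repair, which is roughly your second fallback (the cofibre of $\cot\to\cot_{\leq n}$), the proof goes through.
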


\begin{proof}
	If $A \in \left( \CAlg_{R/ /k}^{\an, \surj, \leq n} \right) ^\sigma$, the dual of $\cot(A)$, computed in $\Solid_k$, is in $\Solid_k^\sigma \subset \Solid_k$. Hence, $\KD^{\leq n} (A) \in \LieAlgd_{\Delta, k/R}^{\pi, \weirdleq n, \blacksquare, \sigma} \subset \LieAlgd_{\Delta, k/R}^{\pi, \weirdleq n}$.

	The result now follows from the fact that the left adjoint
	\begin{align*}
		\LieAlgd_{\Delta, k/R}^{\pi, \weirdleq n, \blacksquare, \sigma} \to \LieAlgd_{\Delta, k/R}^{\pi, \weirdleq n, \blacksquare, \sigma'}
	\end{align*}
	is fully faithful, so the unit is an equivalence.
\end{proof}

In order to prove an ultrasolid variant of the Lurie-Pridham theorem, we need some technical results regarding pushouts of Lie algebras.

We follow the strategy of (\cite{partition}, Theorem 4.20) to prove the following.

\begin{prop}
	\label{prop-truncatedpushout}
	Let $n < \infty$ and let $A \to C$ and $B \to C$ be maps in $\CAlg_{R/ /k}^{\an, \surj, \leq n}$ that are surjective on $\pi_0$. Then, the natural map of $\Lie_{\Delta, k/R}^{\pi, \weirdleq n}$-algebras
	\begin{align*}
		\KD^{\leq n} (A) \sqcup_{\KD^{\leq n} (C)} \KD^{\leq n} (B) \xrightarrow{\simeq} \KD^{\leq n} (A \times_C B)
	\end{align*}
	is an equivalence.
\end{prop}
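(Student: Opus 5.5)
Following the strategy of (\cite{partition}, Theorem 4.20), the plan is to reduce the statement to a comparison of the underlying chain complexes in the simplest case and then bootstrap. First, I reduce to the case where $B \to C$ is elementary. By \cref{prop-smallsurjections}, adapted to $\CAlg_{R/ /k}^{\an, \surj, \leq n}$ via the refined Postnikov towers of \cref{construction-refinedPostnikov}, a $\pi_0$-surjective map $B \to C$ can be built from pullbacks along maps of the form $k \to k \oplus M[j]$ with $j > 0$. Since both sides of the comparison map are compatible with composing pullback squares (pasting of pullbacks on the algebra side, pasting of pushouts on the Lie side), it suffices to treat each elementary step. Infinite towers are handled by connectivity estimates and the Milnor sequence, as in the proof of completeness of the adic filtration.

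For the elementary case, I use the comonadicity of \cref{prop-cotcomonadic}. It identifies $\CAlg_{R/ /k}^{\an, \surj, \leq n}$ with $\cotn$-coalgebras. Dually, via the refined comonadicity lemma in $\Solid_{k,\leq 0}^{\Pro}$, the functor $\KD^{\leq n}$ is fully faithful on this category, with essential image containing the algebras whose underlying complex lies in $(\Solid_{k,\leq 0}^{\Pro})_{/L_{k/R}^\vee[1]}$. It therefore suffices to show two things: that the pushout $\KD^{\leq n}(A) \sqcup_{\KD^{\leq n}(C)} \KD^{\leq n}(B)$ has underlying complex in $\Solid_{k,\leq 0}^{\Pro}$, and that this underlying complex agrees with $\cotn^\vee(A\times_C B)$.

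To compute the pushout, I write it as the geometric realisation of the two-sided bar construction
\begin{align*}
	\big| \KD^{\leq n}(A) \sqcup \KD^{\leq n}(C)^{\sqcup \bullet} \sqcup \KD^{\leq n}(B) \big|,
\end{align*}
then resolve each term by free $\Lie^{\pi, \weirdleq n}_{\Delta, k/R}$-algebras on objects of $\Solid_{k,\leq 0}^{\Pro}$. Coproducts of free algebras are free algebras on direct sums. \cref{prop-Lievalues}, combined with the fact that the chain type is bounded by $n$, shows that the free algebra is a \emph{finite} sum of terms of the form $(\tilde C_*(T_{\leq n}(1,i),k) \otimes (V^\vee)^{\otimes i})_{\Sigma_i}^\vee$. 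This finiteness is exactly where $n<\infty$ is used. Hence the relevant simplicial diagrams satisfy the hypotheses of finite stable geometric realisations, and the monad commutes with them, as in the proof of the extension proposition.

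Applying the same argument on the algebra side, I compare the result with $\cotn$ of the fibre product. \cref{prop-cotsplit} shows that $\cotn$ carries the relevant $\cotn$-split totalisation (computing $A\times_C B$) to a totalisation, and duality exchanges this with the geometric realisation above. The comparison map is filtered by the adic filtration (\cref{construction-cotfiltrations}), which is finite for $n<\infty$, so it suffices to check the associated graded. On gradeds everything becomes free or trivial derivations, and the equivalence follows from \cref{lemma-gradic}.

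The main obstacle is the second point: showing that the Lie pushout stays coconnective with profinite homology and commutes with dualising. Dualising totalisations into geometric realisations is not automatic, and that is where I expect to spend effort. I would first try to control this with the connectivity estimate of \cref{lemma-cotconnectivity} together with the elementary hypothesis $j>0$.
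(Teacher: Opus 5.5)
There is a genuine gap, and it is exactly the step you flag as the main obstacle. Your reduction via full faithfulness is sound. In fact only your first point is needed: if the Lie pushout has underlying complex in $(\Solid_{k,\leq 0}^{\Pro})_{/L_{k/R}^\vee[1]}$, it is $\KD^{\leq n}(D)$ for some $D$, and Yoneda forces $D \simeq A\times_C B$. But you give no argument that the pushout lands there, and the tools you suggest cannot supply one.

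\begin{itemize}
\item The two-sided bar construction and the free resolutions are infinite simplicial objects, not left Kan extended from any $(\Delta^{\leq m})^{\op}$. So preservation of finite stable geometric realisations is irrelevant, and geometric realisations of coconnective objects in $\Solid_k$ are in general not coconnective.
\item The free algebra in \cref{prop-Lievalues} is not a finite sum. $T_{\leq n}(1,i)$ is non-trivial for every $i$, and the type bound only makes it highly connected for large $i$, so the sum is finite only degreewise.
\item Your last paragraph misapplies \cref{prop-cotsplit}. The cosimplicial object computing $A\times_C B$ is the levelwise fibre product $X^\bullet = A^\bullet\times_{C^\bullet} B^\bullet$ of the comonadic resolutions. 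It is not $\cotn$-split, because $\cotn$ does not preserve fibre products.
\item The preliminary reduction to elementary $B\to C$ is unjustified, and your main argument never uses it. \cref{prop-smallsurjections} and the refined Postnikov towers live in $\CAlg_{R/ /k}^{\an}$ and need boundedness. Objects of $\CAlg_{R/ /k}^{\an,\surj,\leq n}$ need not be bounded (e.g.\ trivial square-zero extensions by unbounded modules). Moreover, \cref{prop-Nilimage} only lifts such squares to $\CAlg_{R/ /k}^{\an,\surj,\leq m}$ for some larger $m$, not at fixed $n$.
\end{itemize}

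The missing idea is to resolve on the algebra side instead of computing the Lie pushout directly. Set $A^\bullet = (\sqz\circ\cotn)^{\circ(\bullet+1)}(A)$, and similarly for $B$ and $C$. Then:
\begin{itemize}
\item Every term is a square-zero extension, and $\KD^{\leq n}$ of it is the free $\Lie^{\pi,\weirdleq n}_{\Delta,k/R}$-algebra on the dual.
\item $\pi_0$-surjectivity keeps the pullback of the generating modules connective.
\item Since the free functor preserves colimits, $\KD^{\leq n}(A^i)\sqcup_{\KD^{\leq n}(C^i)}\KD^{\leq n}(B^i)\simeq\KD^{\leq n}(X^i)$, so the Lie pushout is $|\KD^{\leq n}(X^\bullet)|$.
\end{itemize}
The statement then reduces to $\cotn$ (dually) commuting with $\Tot(X^\bullet)$. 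The paper proves this by passing to $\tilde X^\bullet = \adicn(A^\bullet)\times_{\adicn(C^\bullet)}\adicn(B^\bullet)$. By \cref{lemma-gradic}, $\Gr\tilde X^\bullet$ is a fibre product of $\LSym_k^{\Gr,\leq n}$ applied to $\cotn(A^\bullet)(1)$, $\cotn(C^\bullet)(1)$ and $\cotn(B^\bullet)(1)$. These are naturally, hence compatibly, split, so $\Gr\tilde X^\bullet$ is split and the comparison holds on associated gradeds. It then lifts along the finite filtration, and $\aug$ recovers $X^\bullet$. On this route you never need to know a priori that a Lie pushout is coconnective with profinite homology. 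Split diagrams are also preserved by dualisation, which disposes of your worry about exchanging totalisations and geometric realisations.
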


\begin{proof}
	Firstly, note that if $U,V,W \in (\Mod_{k, \geq 0})_{L_{k/R}[-1]/}$ and we have $\pi_0$-surjective maps $U \to W$, $V \to W$ in $(\Mod_{k, \geq 0})_{L_{k/R}[-1]/}$, the natural map
	\begin{align*}
		\KD^{\leq n} (\sqz (U)) \sqcup_{\KD^{\leq n} (\sqz(W))} \KD^{\leq n} (\sqz(V)) \to \KD^{\leq n} (\sqz(U) \times_{\sqz (W)} \sqz (V)) \simeq \KD^{\leq n} (\sqz(U \times_W V))
	\end{align*}
	is an equivalence. Indeed, $\KD^{\leq n} (\sqz(U)), \KD^{\leq n} (\sqz(V))$ and $\KD^{\leq n} (\sqz(W))$ are the free $\Lie_{\Delta, k/R}^{\pi, \weirdleq n}$-algebras on $U^\vee, V^\vee$ and $W^\vee$ respectively. Similarly, the right hand side is the free $\Lie_{\Delta, k/R}^{\pi, \weirdleq n}$-algebra on $U^\vee \sqcup_{W^\vee} V^\vee$. Hence, this equivalence amounts to the fact that the free algebra functor commutes with colimits.

	We will use the comonadicity from \cref{prop-cotcomonadic}. Let $A^\bullet$, $B^\bullet$ and $C^\bullet$ be the bar resolutions of $A, B$ and $C$ respectively, so that $A^i := (\sqz \circ \cot_{\leq n})^{\circ (i+1)} (A)$ and similar for $B$ and $C$. Let $X^\bullet := A^\bullet \times_{C^\bullet} B^\bullet$, so that $A \times_C B \simeq \Tot (X^\bullet)$.

	By our discussion of the free case, we have that the natural map
	\begin{align*}
		\KD^{\leq n} (A^i) \sqcup_{\KD^{\leq n} (C^i)} \KD^{\leq n} (B^i) \xrightarrow{\simeq} \KD^{\leq n} (X^i)
	\end{align*}
	is an equivalence.

	Then, $\KD^{\leq n} (A) \sqcup_{\KD^{\leq n} (C)} \KD^{\leq n} (B) \simeq | \KD^{\leq n} (A^\bullet) \sqcup_{\KD^{\leq n} (C^\bullet)} \KD^{\leq n} (B^\bullet)| \simeq |\KD^{\leq n} (X^\bullet)|$. Hence, it suffices to show that $|\KD^{\leq n} (X^\bullet)| \simeq \KD^{\leq n} (A \times_C B)$.

	For this, consider the cosimplicial object $\tilde{X}^\bullet := \adic_{\leq n} (A^\bullet) \times_{\adic_{\leq n} (C^\bullet)} \adic_{\leq n} (B^\bullet)$ in $\Fil \CAlg_{R/ /k}^{\an, \surj, \leq n}$, so that $\aug( \tilde{X}^\bullet )\simeq X^\bullet$. By construction, $\cot_{\leq n} (A^\bullet)$, $\cot_{\leq n} (B^\bullet)$ and $\cot_{\leq n} (C^\bullet)$ are all split. Additionally, $\Gr \adic_{\leq n} (-) \simeq \LSym_k^{\Gr, \leq n}  (\cot_{\leq n} (-)(1))$, so 
	\begin{align*}
		\Gr (\tilde{X}^\bullet) \simeq \LSym_k^{\Gr, \leq n} (\cot_{\leq n} (A^\bullet)(1)) \times_{\LSym_k^{\Gr, \leq n} (\cot_{\leq n} (C^\bullet)(1))} \LSym_k^{\Gr, \leq n}  (\cot_{\leq n} (B^\bullet)(1))
	\end{align*}
	We can take fibre products to turn the splittings on $\cot_{\leq n} (A^\bullet)$, $\cot_{\leq n} (B^\bullet)$ and $\cot_{\leq n} (C^\bullet)$ into a splitting of $\Gr(\tilde{X}^\bullet)$. This implies that the natural map
	\begin{align*}
		\cot_{\leq n}(\Tot(\Gr(\tilde{X}^\bullet))) \xrightarrow{\simeq} \Tot(\cot_{\leq n} (\Gr(\tilde{X}^\bullet)))
	\end{align*}
	is an equivalence. Lifting to filtered objects, since everything is complete, implies that the natural map
	\begin{align*}
		\cot_{\leq n} (\Tot (\tilde{X}^\bullet)) \xrightarrow{\simeq} \Tot (\cot_{\leq n} (\tilde{X}^\bullet))
	\end{align*}
	is an equivalence. Applying the functor $\aug$, we get the result.
\end{proof}

\begin{construction}
	\label{construction-pushoutdiagram}
	Let $A \to C$ and $B \to C$ be maps in $\CAlg_{R/ /k}^{\an, \surj, \leq m}$. For $m \leq n$, the natural transformation $\KD^{\leq m} (-) \to \KD^{\leq n} (-)$ from \cref{rem-KDcolim} gives a commutative diagram
	\[
		\begin{tikzcd}
			\KD ^{\leq m} (A) \bigsqcup^{\weirdleq m} _{\KD^{\leq m} (C)} \KD^{\leq m} (B) \arrow{r} \arrow{d} & \KD^{\leq m} (A \times_C B) \arrow{d} \\
			\KD^{\leq n} (A) \bigsqcup ^{\weirdleq m} _{\KD^{\leq n} (C)} \KD^{\leq n} (B) \arrow{r} & \KD^{\leq n} (A \times_C B)
		\end{tikzcd}
	\]
	where the superscript $\weirdleq m$ in the coproduct symbol means that the colimit is taken in the category of $\Alg_{\Delta, k/R}^{\pi, \weirdleq m}$-algebras.

	The map $\KD^{\leq n} (A) \bigsqcup^{\weirdleq m}_{\KD^{\leq n} (C)} \KD^{\leq n} (B)\to \KD^{\leq n} (A \times_C B)$ is induced by the diagram
	\[
		\begin{tikzcd}
			\KD^{\leq n} (C) \arrow{d} \arrow{r} & \KD^{\leq n} (A) \arrow{d} \\
			\KD^{\leq n} (B) \arrow{r} & \KD^{\leq n} (A \times_C B)
		\end{tikzcd}
	\]
	in the category of $\Lie_{\Delta, k/R}^{\pi, \weirdleq m}$-algebras. Note that this diagram lifts to $\Lie_{\Delta, k/R}^{\pi, \weirdleq n}$-algebras as follows
	\[
		\begin{tikzcd}
			\KD^{\leq n} (C) \arrow{d} \arrow{r} & \KD^{\leq n} (A) \arrow{d} \arrow{ddr} & \\
			\KD^{\leq n} (B) \arrow{r} \arrow{drr} & \KD^{\leq n} (A) \bigsqcup_{\KD^{\leq n} (C)}^{\weirdleq n} \KD^{\leq n} (B)\arrow{dr} & \\
												& & \KD^{\leq n} (A \times_C B)
		\end{tikzcd}
	\]
	This implies that there is a factorisation
	\begin{align*}
		\KD^{\leq n} (A) \sqcup^{\weirdleq m}_{\KD^{\leq n} (C)} \KD^{\leq n} (B) \to \KD^{\leq n} (A) \sqcup^{\weirdleq n}_{\KD^{\leq n} (C)} \KD^{\leq n} (B) \to \KD^{\leq n} (A \times_C B)
	\end{align*}
	So we obtain the following variation on the first commutative diagram.
	\[
		\begin{tikzcd}
			\KD ^{\leq m} (A) \bigsqcup^{\weirdleq m} _{\KD^{\leq m} (C)} \KD^{\leq m} (B) \arrow{r} \arrow{d} & \KD^{\leq m} (A \times_C B) \arrow{d} \\
			\KD^{\leq n} (A) \bigsqcup ^{\weirdleq n} _{\KD^{\leq n} (C)} \KD^{\leq n} (B) \arrow{r} & \KD^{\leq n} (A \times_C B)
		\end{tikzcd}
	\]
	Notice that this is also valid for $n = \infty$.
\end{construction}

\begin{prop}
	\label{prop-pushoutascolimit}
	Let $A \to C$ and $B \to C$ be maps in $\CAlg_{R/ /k}^{\an, \surj, \leq m}$. Then, the natural map of chain complexes
	\begin{align*}
		\varinjlim_{n \geq m} \; \KD^{\leq n} (A) \sqcup_{\KD^{\leq n} (C)} ^{\weirdleq n} \KD^{\leq n} (B) \xrightarrow{\simeq} \KD (A) \sqcup _{\KD(C)} \KD(B)
	\end{align*}
	is an equivalence.
\end{prop}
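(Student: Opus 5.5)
We prove this by commuting the filtered colimit of monads past the pushout, and the main input is \cref{prop-Liecolim} together with \cref{rem-KDcolim}. By \cref{construction-pushoutdiagram} (taking $n' \geq n$ in place of $m \leq n$), the objects $P_n := \KD^{\leq n} (A) \sqcup_{\KD^{\leq n} (C)}^{\weirdleq n} \KD^{\leq n} (B)$ form a diagram indexed by $n \geq m$. The transition maps are maps of $\Lie_{\Delta, k/R}^{\pi, \weirdleq n}$-algebras, and the diagram maps compatibly to $P_\infty := \KD(A) \sqcup_{\KD(C)} \KD(B)$. The statement concerns the underlying chain complexes, so we must show that $\varinjlim_n P_n \to P_\infty$ is an equivalence in $(\Solid_k)_{/L_{k/R}^\vee[1]}$.

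The plan is to express each pushout as the geometric realisation of a bar construction, which lets us compute it explicitly in terms of the monads. For a monad $T$ and a span of $T$-algebras $X \leftarrow Z \to Y$, the pushout is computed by the two-sided bar construction. Concretely, it is the geometric realisation of the simplicial object whose $p$-th term is $T(T^{p}X \sqcup_{T^{p} Z} T^{p} Y)$, where the pushouts inside are taken in the underlying stable category. Alternatively, we write each algebra as the geometric realisation of free algebras via its monadic bar resolution, and use that free algebra functors preserve colimits. All monads $\Lie_{\Delta, k/R}^{\pi, \weirdleq n}$ preserve sifted colimits, so the forgetful functors preserve geometric realisations. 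Hence $P_n$ is, on underlying objects, the geometric realisation of a simplicial object $Q_n^\bullet$. Each level of $Q_n^\bullet$ is built from the underlying objects of $\KD^{\leq n}(A), \KD^{\leq n}(B), \KD^{\leq n}(C)$ by iterated application of $\Lie_{\Delta, k/R}^{\pi, \weirdleq n}$ and finite colimits in the stable category $(\Solid_k)_{/L_{k/R}^\vee[1]}$.

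Next we commute the colimits. Filtered colimits commute with geometric realisations, so $\varinjlim_n |Q_n^\bullet| \simeq |\varinjlim_n Q_n^\bullet|$, and it suffices to check levelwise that $\varinjlim_n Q_n^p \simeq Q_\infty^p$. This step needs two inputs.
\begin{itemize}
\item By \cref{rem-KDcolim}, $\varinjlim_n \KD^{\leq n}(X) \simeq \KD(X)$ for $X = A,B,C$ on underlying chain complexes.
\item By \cref{prop-Liecolim}, $\varinjlim_n \Lie^{\pi, \weirdleq n}_{\Delta,k/R} \simeq \Lie^\pi_{\Delta,k/R}$ pointwise.
\end{itemize}
Since every $\Lie^{\pi, \weirdleq n}_{\Delta,k/R}$ preserves filtered colimits, we get for a compatible system $X_n \to X_{n+1} \to \cdots$ that $\varinjlim_n \Lie^{\pi, \weirdleq n}(X_n) \simeq \varinjlim_n \Lie^{\pi}(\ldots)$. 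The argument is diagonal: $\varinjlim_{n} \Lie^{\weirdleq n}(X_n) \simeq \varinjlim_{n,n'} \Lie^{\weirdleq n}(X_{n'}) \simeq \Lie^{\pi}(\varinjlim X_{n'})$. Iterating this, and using that finite colimits commute with filtered colimits, gives the levelwise equivalence.

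The main obstacle is coherence. We must check that the simplicial objects $Q^\bullet_n$ genuinely form a diagram in $n$ whose face and degeneracy maps are compatible with the maps of monads $\Lie^{\pi,\weirdleq n} \to \Lie^{\pi, \weirdleq n+1}$. We must also check that the induced map on realisations agrees with the map from \cref{construction-pushoutdiagram}. To handle this cleanly, we would work in the category $\LMod$ of pairs (monad, algebra) from the proof of \cref{lemma-squarecommutes}. There the bar construction is functorial in the pair, and the transition functors are exactly the restrictions along monad maps, which makes the compatibility formal.
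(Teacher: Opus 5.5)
Your proposal is correct and is essentially the paper's proof. Both arguments resolve $\KD^{\leq n}(A)$, $\KD^{\leq n}(B)$, $\KD^{\leq n}(C)$ by their monadic bar resolutions and write the pushout as the realisation of $\Lie^{\pi, \weirdleq n}_{\Delta, k/R}$ applied to levelwise pushouts of the iterated free algebras. They then commute the filtered colimit past the realisation and conclude levelwise from \cref{rem-KDcolim}, \cref{prop-Liecolim} and the fact that all these monads preserve filtered colimits.

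The paper takes the compatibility of the simplicial maps with the monad maps for granted, so your appeal to the category $\LMod$ from \cref{lemma-squarecommutes} only adds care. One small correction: $(\Solid_k)_{/L_{k/R}^\vee[1]}$ is not itself stable, but you only use that colimits commute with colimits there, which holds regardless.
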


\begin{proof}
	Remember that by the bar resolution we can write
	\begin{align*}
		\KD^{\leq n} (A) \simeq | (\Lie_{\Delta, k/R}^{\pi, \weirdleq n})^{\circ (\bullet + 1)} (\KD^{\leq n} (A))|
	\end{align*}
	and similar for $\KD (A)$. The map $\KD^{\leq n} (A) \to \KD(A)$ can be written simplicially. We have a map 
	\begin{align*}
		(\Lie_{\Delta, k/R}^{\pi, \weirdleq n})^{\circ (\bullet + 1)} (\KD^{\leq n} (A)) \to (\Lie_{\Delta, k/R}^\pi)^{\circ (\bullet + 1)} (\KD(A))
	\end{align*}
	given by the composition of the map $\KD^{\leq n} (A) \to \KD (A)$ and the map of monads $\Lie_{\Delta, k/R}^{\pi, \weirdleq n} \to \Lie_{\Delta, k/R}^{\pi}$. Similarly for $B$ and $C$. Taking the pushout of these simplicial diagrams, we see that
	\begin{align*}
		\KD^{\leq n} (A) \sqcup^{\weirdleq n} _{\KD^{\leq n} (C)} \KD^{\leq n} (B) \simeq \big\lvert \Lie_{\Delta, k/R}^{\pi, \weirdleq n} \left(  (\Lie_{\Delta, k/R}^{\pi, \weirdleq n})^{\circ (\bullet )} (\KD^{\leq n} (A)) \sqcup_{(\Lie_{\Delta, k/R}^{\pi, \weirdleq n})^{\circ (\bullet)} (\KD^{\leq n} (C))}^{\weirdleq 1} (\Lie_{\Delta, k/R}^{\pi, \weirdleq n})^{\circ (\bullet)} (\KD^{\leq n} (B)) \right) \big\rvert
	\end{align*}
	where the superscript $\weirdleq 1$ in the pushout means we are taking the pushout as chain complexes over $L_{k/R}^\vee [1]$.

	We have the same resolution for $\KD(A) \sqcup_{\KD (C)} \KD(B)$ by setting $n = \infty$. Once again, the map between these pushouts can be written simplicially. That is, we have a map
	\begin{align*}
		\Lie_{\Delta, k/R}^{\pi, \weirdleq n} \left(  (\Lie_{\Delta, k/R}^{\pi, \weirdleq n})^{\circ (\bullet )} (\KD^{\leq n} (A)) \sqcup_{(\Lie_{\Delta, k/R}^{\pi, \weirdleq n})^{\circ (\bullet)} (\KD^{\leq n} (C))}^{\weirdleq 1} (\Lie_{\Delta, k/R}^{\pi, \weirdleq n})^{\circ (\bullet)} (\KD^{\leq n} (B)) \right) \\
		\to  \Lie_{\Delta, k/R}^\pi \left(  (\Lie_{\Delta, k/R}^\pi)^{\circ (\bullet )} (\KD (A)) \sqcup_{(\Lie_{\Delta, k/R}^\pi)^{\circ (\bullet)} (\KD (C))}^{\weirdleq 1} (\Lie_{\Delta, k/R}^\pi)^{\circ (\bullet)} (\KD (B)) \right)
	\end{align*}
	which is induced by the maps $\KD^{\leq n} (A) \to \KD(A)$, $\KD^{\leq n} (B)\to \KD (B)$, $\KD^{\leq n} (C) \to \KD (C)$, and the map of monads $\Lie_{\Delta, k/R}^{\pi, \weirdleq n} \to \Lie_{\Delta, k/R}^{\pi}$.

	Since colimits commute with colimits, it suffices to check that the map at each simplicial level is an equivalence after taking filtered colimits. But this is true because the natural map $\varinjlim_n \KD^{\leq n} (A) \xrightarrow{\simeq} \KD(A)$ is an equivalence (\cref{rem-KDcolim}), similarly for $B$ and $C$, $\Lie_{\Delta, k/R}^\pi \simeq \varinjlim_n \Lie_{\Delta, k/R}^{\pi, \weirdleq n} $ (\cref{prop-Liecolim}), and all of these monads commute with filtered colimits.
\end{proof}

\begin{prop}
	\label{prop-KDpushout}
	Let $A \to C$ and $B \to C$ be maps in $\CAlg_{R/ /k}^{\an, \bNil}$ that are surjective on $\pi_0$. Then, the natural map
	\begin{align*}
		\KD(A) \sqcup_{\KD(C)} \KD(B) \xrightarrow{\simeq} \KD(A \times_C B)
	\end{align*}
	is an equivalence.
\end{prop}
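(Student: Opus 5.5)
The plan is to reduce to the truncated statement \cref{prop-truncatedpushout} and pass to the colimit over $n$ using \cref{prop-pushoutascolimit} and \cref{rem-KDcolim}. First, by \cref{prop-Nilimage}, the objects $A$, $B$, $C$ and the two maps $A \to C$, $B \to C$ all lie in the image of $\iota_m^\infty : \CAlg_{R/ /k}^{\an, \surj, \leq m} \to \CAlg_{R/ /k}^{\an, \surj}$ for a single $m$. We take $m$ large enough to cover the finitely many objects and maps and fix lifts. Since $\iota_m^\infty$ is a right adjoint, it preserves limits, so $A \times_C B$ computed in $\CAlg_{R/ /k}^{\an, \surj, \leq m}$ maps to the fibre product in $\CAlg_{R/ /k}^{\an}$. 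Because $\iota_m^n$ preserves the underlying arrow of modules (\cref{rem-iotapreservesmodule}), this fibre product is still $\pi_0$-surjective onto $k$. Likewise, for every $n \geq m$, the maps $\iota_m^n A \to \iota_m^n C$ and $\iota_m^n B \to \iota_m^n C$ remain $\pi_0$-surjective, and $\iota_m^n(A \times_C B) \simeq \iota_m^n A \times_{\iota_m^n C} \iota_m^n B$.

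Next we apply \cref{prop-truncatedpushout} for each finite $n \geq m$ to the images under $\iota_m^n$. This gives equivalences
\begin{align*}
\KD^{\leq n}(A) \sqcup^{\weirdleq n}_{\KD^{\leq n}(C)} \KD^{\leq n}(B) \xrightarrow{\simeq} \KD^{\leq n}(A \times_C B).
\end{align*}
By \cref{construction-pushoutdiagram}, these equivalences are compatible as $n$ varies, forming a map of towers indexed by $n \geq m$. Its colimit at $n = \infty$ is the comparison map in the statement. Taking $\varinjlim_{n \geq m}$ on underlying chain complexes, the left-hand side becomes $\KD(A) \sqcup_{\KD(C)} \KD(B)$ by \cref{prop-pushoutascolimit}. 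The right-hand side becomes $\KD(A \times_C B)$ by \cref{rem-KDcolim}, which applies because $A \times_C B$ lies in $\CAlg_{R/ /k}^{\an, \surj, \leq m}$. Hence the comparison map is an equivalence on underlying chain complexes. The forgetful functor from $\Lie^{\pi}_{\Delta,k/R}$-algebras is conservative, so the map is an equivalence of partition Lie algebroids.

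The main obstacle is the compatibility check: we must show that the colimit of the truncated comparison maps is the canonical map $\KD(A) \sqcup_{\KD(C)} \KD(B) \to \KD(A \times_C B)$, not some other map. We will handle this using the second commutative square of \cref{construction-pushoutdiagram}, which is stated to hold also for $n = \infty$. Naturality of the transformation $\KD^{\leq n} \to \KD$ from \cref{rem-KDcolim} then identifies the colimit map with the $n = \infty$ comparison map. A smaller point is that \cref{rem-KDcolim} must be applied to $A \times_C B$ as an object of the $m$-truncated category. This holds because $\iota_m^\infty$ preserves the fibre product, so the lifted fibre product has the correct underlying animated ring.
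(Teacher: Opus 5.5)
Your proposal is correct and follows the paper's proof essentially step for step. You lift the cospan to $\CAlg_{R/ /k}^{\an, \surj, \leq m}$ via \cref{prop-Nilimage}, use the squares of \cref{construction-pushoutdiagram} to write the comparison map as the filtered colimit over $n \geq m$ of the truncated comparison maps (identifying the colimits of the vertical maps via \cref{prop-pushoutascolimit} and \cref{rem-KDcolim}), and conclude because each truncated map is an equivalence by \cref{prop-truncatedpushout}; your remarks on $\pi_0$-surjectivity and on $\iota_m^n$ preserving the fibre product make explicit points the paper leaves implicit.
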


\begin{proof}
	Notice that it is enough to check that this is an equivalence at the level of chain complexes.

	By \cref{prop-Nilimage}, we may choose $m$ large enough so that the maps $A \to C$ and $B \to C$ lift to $\CAlg_{R/ /k}^{\an, \surj, \leq m}$. By \cref{construction-pushoutdiagram}, for any $n \geq m$ we have a commutative diagram
	\[
		\begin{tikzcd}
			\KD ^{\leq n} (A) \sqcup^{\weirdleq n} _{\KD^{\leq n} (C)} \KD^{\leq n} (B) \arrow{r} \arrow{d} & \KD^{\leq n} (A \times_C B) \arrow{d} \\
			\KD(A) \sqcup _{\KD (C)} \KD (B) \arrow{r} & \KD (A \times_C B)
		\end{tikzcd}
	\]
	and these maps are compatible with each other as $n$ varies. By \cref{prop-pushoutascolimit}, the left vertical maps assemble into an equivalence 
	\begin{align*}
		\varinjlim_{n \geq m} \; \KD^{\leq n} (A) \sqcup_{\KD^{\leq n} (C)} ^{\weirdleq n} \KD^{\leq n} (B) \xrightarrow{\simeq} \KD (A) \sqcup _{\KD(C)} \KD(B)
	\end{align*}
	and by \cref{rem-KDcolim}, the right vertical maps assemble into an equivalence $\varinjlim_{n \geq m} \KD^{\leq n} (A \times_C B) \xrightarrow{\simeq} \KD(A \times_C B)$. Hence, the map $\KD(A) \sqcup _{\KD (C)} \KD (B) \to \KD (A \times_C B)$ can be written as a filtered colimit of the maps $\KD ^{\leq n} (A) \sqcup^{\weirdleq n} _{\KD^{\leq n} (C)} \KD^{\leq n} (B) \to  \KD^{\leq n} (A \times_C B)$. But each of these maps is an equivalence by \cref{prop-truncatedpushout}, so we are done.
\end{proof}

\begin{prop}
	\label{prop-KDfullyfaithful}
	The functor
	\begin{align*}
		\KD: \CAlg_{R/ /k}^{\an, \bNil} \to \left( \LieAlgd_{\Delta, k/R}^{\pi, \blacksquare} \right)  ^{\op}
	\end{align*}
	is fully faithful and for any $A \in \CAlg_{R/ /k}^{\an, \bNil}$, the unit map $A \to C^* ( \KD (A))$ is an equivalence.
\end{prop}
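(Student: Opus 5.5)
We follow the strategy of Lurie's Lemma 1.1.20/Theorem 2.0.2 style argument (and \cite{partition}, Theorem 4.20): reduce everything to the unit map $A \to C^*(\KD(A))$ and prove this is an equivalence by induction along square-zero extensions. First note that full faithfulness follows from the unit statement. Because $C^*$ is right adjoint to $\KD$ (or to $\KD_\sigma$, with $\sigma$ large enough that all objects involved lie in the $\sigma$-small parts, using \cref{lemma-smallKD}), we have
\begin{align*}
\Map_{\LieAlgd^{\op}}(\KD(A), \KD(B)) \simeq \Map_{\CAlg_{R/ /k}^{\an}}(A, C^*(\KD(B))).
\end{align*}
So if the unit $B \to C^*(\KD(B))$ is an equivalence for all $B \in \CAlg_{R/ /k}^{\an, \bNil}$, then $\KD$ is fully faithful on this subcategory.

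\textbf{Base case: square-zero extensions.} Take $B = \sqz(\alpha: L_{k/R}[-1] \to V)$ with $V \in \Mod_{k,\geq 0}^{<\infty}$. Then $\KD(B)$ is the free $\Lie_{\Delta,k/R}^\pi$-algebra on $V^\vee \in (\Solid_{k,\leq 0}^{\Pro})_{/L_{k/R}^\vee[1]}$, since $\cot^\vee\circ\sqz(-^\vee)$ is the monad on that subcategory. Hence
\begin{align*}
\Map(A, C^*(\KD(B))) \simeq \Map_{\Solid}(V^\vee, \KD(A)) \simeq \Map(\cot(A), V) \simeq \Map(A, B),
\end{align*}
where the middle step uses biduality for $V$ and for $\cot(A)$ in the dual pair $\Mod_{k,\geq 0} \simeq (\Solid_{k,\leq 0}^{\Pro})^{\op}$. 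By Yoneda, restricted to test objects $A$ in the $\sigma$-small part, the unit is an equivalence for $B$.

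\textbf{Inductive step.} By \cref{prop-smallsurjections}, every $B \in \CAlg_{R/ /k}^{\an,\bNil}$ is obtained from $k$ by finitely many elementary pullbacks $B' = B \times_{k \oplus M[n]} k$, with all maps $\pi_0$-surjective. Suppose the unit is an equivalence for $B$, $k$ and $k \oplus M[n]$; the latter two are square-zero extensions, so the base case covers them. \cref{prop-KDpushout} gives
\begin{align*}
\KD(B') \simeq \KD(B) \sqcup_{\KD(k\oplus M[n])} \KD(k).
\end{align*}
Since $C^*$ is a right adjoint out of the opposite category, it sends this pushout to a pullback. Therefore $C^*\KD(B') \simeq C^*\KD(B) \times_{C^*\KD(k\oplus M[n])} C^*\KD(k)$, which is identified with $B'$ compatibly with the unit maps, by naturality of the unit.

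\textbf{Main obstacle.} The delicate point is \cref{prop-KDpushout}, which required the $n$-truncated approximation, and this is already established. What remains to handle carefully in this proof is the cardinal bookkeeping. We have to check two things:
\begin{itemize}
\item[(1)] Pushouts in $\LieAlgd^{\pi,\blacksquare,\sigma}$ agree with those in $\LieAlgd^{\pi,\blacksquare}$. This holds because the $\sigma$-subcategory is closed under colimits.
\item[(2)] Bounded nilpotent algebras lie in $(\CAlg^{\an,\surj})^\sigma$ for $\sigma$ large enough, so that \cref{lemma-smallKD} makes the unit map independent of $\sigma$.
\end{itemize}
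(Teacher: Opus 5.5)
Your proposal is correct and is essentially the paper's proof: reduce to the unit map, treat square-zero extensions via the free--forgetful adjunction for $\Lie^\pi_{\Delta,k/R}$ together with $\cot \dashv \sqz$, and then induct along the finite tower of elementary pullbacks, using \cref{prop-KDpushout} and the fact that $C^*$ sends pushouts of Lie algebroids to pullbacks. The only difference is in the base case: the paper first shows $\KD$ is fully faithful on square-zero extensions and uses the resulting restricted adjunction to guarantee that the identification $C^*\KD(\sqz(V)) \simeq \sqz(V)$ is the unit, a compatibility you assert directly; also, you need not restrict test objects to the $\sigma$-small part, because $\Lie^\pi_{\Delta,k/R}(V^\vee)$ is $\sigma$-small, so maps from it into $\KD_\sigma(A)$ agree with maps into $\KD(A)$ for every $A$.
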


\begin{proof}
	It suffices to show that the unit map is an equivalence, since then the functor being fully faithful follows.

	We first prove that the functor is fully faithful on square-zero extensions. Let $V, W \in (\Mod_{k, \geq 0})_{L_{k/R}[-1]/ }$. By construction, $\KD(\sqz(V)) \simeq \Lie_{\Delta, k/R}^\pi (V^\vee)$ and similarly for $W$, where we write $\Lie_{\Delta, k/R}^\pi (V^\vee)$ for the free $\Lie_{\Delta, k/R}^\pi$-algebra on $V^\vee$. Then we have a natural map
	\begin{align*}
		\Map_{\CAlg_{R/ /k}^{\an, \surj}} (\sqz (V), \sqz(W))  \xrightarrow{\KD} &  \Map_{\LieAlgd_{\Delta, k/R}^{\pi, \blacksquare}} (\Lie_{\Delta, k/R}^\pi (W^\vee), \Lie_{\Delta, k/R}^\pi (V^\vee)) \\
		\xrightarrow{\forget} & \Map_{(\Solid_k)_{/L_{k/R}^\vee [1] }} (\cot^\vee (\sqz(W)), \cot^\vee( \sqz(V))) \\ 
		\xrightarrow{\text{unit}_{\Lie_{\Delta, k/R}^{\pi}}}  & \Map_{(\Solid_k)_{/L_{k/R}^\vee [1]}} (W^\vee, \cot^\vee (\sqz(V))) 
	\end{align*}
	The composition of the last two maps is an equivalence due to the free forgetful adjunction on partition Lie algebroids. By construction, the unit on $\Lie_{\Delta, k/R}^\pi$ coincides with the unit on the adjunction $\cot^\vee : \CAlg_{R/ /k}^{\an, \surj} \leftrightarrows (\Solid_{k, \leq 0})_{/L_{k/R}^\vee [1]}^{\op} : \sqz(-)^\vee$. Hence, the composition factors as
	\begin{align*}
		\Map_{\CAlg_{R/ /k}^{\an}} (\sqz(V), \sqz (W)) \xrightarrow{\cot^\vee} & \Map_{(\Solid_k)_{/L_{k/R}^\vee [1]}} (\cot^\vee(\sqz(W)), \cot^\vee(\sqz(V))) \\
		\xrightarrow{\text{unit}_{\Lie_{\Delta, k/R}^\pi}}&  \Map_{(\Solid_k)_{/L_{k/R}^\vee [1]}}(W^\vee, \cot^\vee(\sqz(V)))
	\end{align*}
	and it is an equivalence by the $\cot^\vee$ and $\sqz(-^\vee)$ adjunction. Hence, the map $\Map_{\CAlg_{R/ /k}^{\an}} (\sqz (V), \sqz (W)) \xrightarrow{\KD} \Map_{\LieAlgd_{\Delta, k/R}^{\pi, \blacksquare}} (\KD(\sqz(W)), \KD(\sqz(V)))$ is an equivalence.

	We will now show that if $V \in (\Mod_{k,\geq 0})_{L_{k/R}[-1]/}$ is eventually coconnective, the unit map $\sqz (V) \to C^* (\KD(\sqz (V)))$ is an equivalence. Let $\catC \subset \CAlg_{R/ /k}^{\an} $ be the full subcategory spanned by objects of the form $\sqz (V)$, where $V \in (\Mod_{k, \geq 0 })_{L_{k/R}[-1]/}$ is eventually coconnective, and let $\catD \subset \LieAlgd_{\Delta, k/R}^{\pi, \blacksquare}$ be the full subcategory spanned by objects of the form $\Lie_{\Delta, k/R}^{\pi} (V) $ where $V \in (\Mod_{k, \leq 0})_{L_{k/R}[-1] /}$ is eventually coconnective. We will show that the Koszul duality adjunction restricts to an adjunction between $\catC$ and $\catD^{\op}$. For this, we only need to prove that $C^* (\KD(\sqz(V))) \simeq \sqz (V)$. This is true by applying the Yoneda embedding to the equivalence
	\begin{align*}
		\Map_{\CAlg_{R/ /k}^{\an}} (-, C^* (\KD(\sqz(V)))) & \simeq \Map_{\LieAlgd_{\Delta, k/R}^{\pi, \blacksquare}} (\Lie_{\Delta, k/R}^\pi (V^\vee), \KD(-))  \\
																			& \simeq \Map_{(\Solid_k)_{/L_{k/R}^\vee [1]}} (V^\vee, \cot^\vee(-)) \\
																			& \simeq \Map_{\CAlg_{R/ /k}^{\an}} (-, \sqz(V))
	\end{align*}
	We just proved that the left adjoint on $\catC$ is fully faithful, and it is obviously essentially surjective, so the adjunction $\KD: \catC \leftrightarrows \catD^{\op}: C^*$ is an equivalence. Hence, for any $X \in \catD$, the unit $X \to \KD(C^* (X))$ is an equivalence.

	We will now prove the more general statement. Let $A \in \CAlg_{R/ /k}^{\an, \bNil}$. Then, $A$ can be obtained as a sequence of square-zero extensions, all of which are taken to pushouts by $\KD$ (\cref{prop-KDpushout}). The functor $C^*$ preserves pushouts by the adjunction. Hence, the functor $C^* \circ \KD$ preserves all of the pullbacks in the refined Postnikov tower. This reduces it to the case $A = \sqz (V)$ for $V \in (\Mod_{k, \geq 0})_{L_{k/R}[-1]/}$, which we have already proved.
\end{proof}

We will finish this section by showing that partition Lie algebras in $\Solid_{k, \leq 0}^{\Pro}$ are \emph{not} the profinite completion of nilpotent partition Lie algebras. To simplify this example, we will set $R = k$.

\begin{defi}
	Let $V \in \Vect_k^\omega$ and $n < 0$. The \emph{abelian partition Lie algebra on $V[n]$} is the partition Lie algebra $\ab (V[n]) := \KD (\LSym_k^*  (V^\vee[-n]))$.

	The category of \emph{nilpotent partition Lie algebras} is the smallest full subcategory of $\LieAlgd_{\Delta, k/ k}^{\pi, \blacksquare}$ that is closed under pullbacks and contains $\ab(V[n])$ for every $n \leq 0$ and $V \in \Vect_k^\omega$. We will write $\LieAlgd_{\Delta, k/k}^{\pi, \Nil}$ for this category.
\end{defi}

\begin{prop}
	Let $F$ be the functor
	\begin{align*}
		F: \Pro(\LieAlgd_{\Delta, k/ k}^{\pi, \Nil}) \to \LieAlgd_{\Delta, k/k}^{\pi, \blacksquare}
	\end{align*}
	given by the inverse-limit-preserving extension of the fully faithful embedding $\LieAlgd_{\Delta, k/k}^{\pi, \Nil} \subset \LieAlgd_{\Delta, k/k}^{\pi, \blacksquare}$. Then, $F$ is not fully faithful.
\end{prop}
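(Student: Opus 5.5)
Assume for contradiction that $F$ is fully faithful, and produce two pro-objects whose images in $\LieAlgd_{\Delta, k/k}^{\pi, \blacksquare}$ agree but whose mapping spaces in $\Pro(\LieAlgd_{\Delta, k/k}^{\pi, \Nil})$ differ. Koszul duality translates the question to algebras: by \cref{prop-KDfullyfaithful}, $\KD$ identifies $(\LieAlgd_{\Delta, k/k}^{\pi, \Nil})^{\op}$ with a full subcategory of $\CAlg_{k/ /k}^{\an, \bNil}$. The abelian generators correspond to $\LSym_k^*(V^\vee[-n])$, which for $n < 0$ are small (truncated in the right sense) only after taking the appropriate bounded pieces, and pullbacks of Lie algebras correspond to pushouts of algebras. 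Taking opposites, $\Pro(\LieAlgd^{\pi,\Nil})$ becomes $\Ind$ of a category of small algebras. Under this dictionary, $F$ sends an ind-system $\{A_\alpha\}$ to $\varprojlim \KD(A_\alpha)$.

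The key test case is the square-zero extension $k \oplus V$ with $V$ an infinite-dimensional discrete $k$-vector space, say $V = \bigoplus_{\NN} k$. It is bounded and nilpotent but not Artinian, so $\KD(k \oplus V)$ is a legitimate object of $\LieAlgd_{\Delta, k/k}^{\pi, \blacksquare}$. It is the free algebra $\Lie_{\Delta,k/k}^\pi(V^\vee)$ on the profinite $V^\vee = \prod_\NN k$ in degree $0$. Writing $V = \varinjlim_{J} \bigoplus_J k$ over finite $J$, we get the pro-object $P := \{\KD(k \oplus \bigoplus_J k)\}_J$. Each term is a free partition Lie algebra on a finite vector space in degree $0$, which is nilpotent since $k\oplus \bigoplus_J k$ is Artinian and hence small (\cref{prop-smallsurjections}). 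The inverse limit of $P$ computed in $\LieAlgd^{\pi,\blacksquare}$ is $\varprojlim_J \KD(k\oplus \bigoplus_J k)$. The main obstacle is to compare this with $\KD(k \oplus V)$, and I expect them to differ. By \cref{prop-Lievalues}, the underlying complex of $\KD(k\oplus W)$ is $\bigl(\bigoplus_i (\tilde C_*(T_{\leq\infty}(1,i),k)\otimes W^{\otimes i})_{\Sigma_i}\bigr)^\vee$. As $W$ runs over finite pieces, the limit of the duals is the dual of the colimit, and since tensor powers and strict orbits commute with filtered colimits, this is again the dual for $W = V$. So the underlying complexes may in fact agree, and the naive candidate fails to separate the two sides.

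I would therefore not rely on $F(P)$ being wrong. Instead I would exhibit a failure of fullness on mapping spaces. Take the target to be $\ab(k[n])$, which is nilpotent, and compute
\begin{align*}
	\Map_{\Pro}(P, \ab(k[n])) \simeq \varinjlim_J \Map(\KD(k\oplus \textstyle\bigoplus_J k), \ab(k[n])) \simeq \varinjlim_J \Map_{\CAlg}(\LSym^*(k[-n]), k\oplus \textstyle\bigoplus_J k),
\end{align*}
where the last step uses full faithfulness of $\KD$ on bounded nilpotent objects. I would then compare this with $\Map_{\CAlg}(\LSym^*(k[-n]), k\oplus V)$, which is the target mapping space $\Map(F(P), \ab(k[n]))$. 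Here I must be careful, because $\LSym^*(k[-n])$ for $n<0$ is a free algebra on a coconnective class and is not itself bounded nilpotent, so the dictionary only applies through its Koszul dual. The concrete computation is that homotopy classes correspond to elements of $\pi_{-n}$ of the cotangent data, which involves Tate/Frobenius-type contributions in positive characteristic. These commute with the colimit over $J$ for the free part, but the higher operations should give an infinite product in the colimit-versus-limit comparison.

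If this comparison turns out to commute as well, I would fall back on a cardinality argument. A second pro-system $P'$, indexed by all finite-dimensional quotients $V^\vee \to Q$ of the profinite space rather than just coordinate projections, has the same limit as $P$. However, $P$ and $P'$ are non-isomorphic pro-objects, because $V^\vee$ admits more continuous-in-pro-sense maps than algebraic ones: $\Hom(\prod_\NN k, k)=\bigoplus_\NN k$ versus the linear dual. A map of pro-objects inducing the identity on limits but not invertible in $\Pro$ contradicts full faithfulness. This last step, showing that the two pro-systems are genuinely non-isomorphic, is where I expect the real work to lie.
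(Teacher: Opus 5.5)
\textbf{The proposal has a genuine gap.} The test object is outside the domain of $F$, and the comparison you propose could not detect a failure anyway.

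Your pro-system $P=\{\KD(k\oplus\bigoplus_J k)\}_J$ is not an object of $\Pro(\LieAlgd_{\Delta,k/k}^{\pi,\Nil})$, because nilpotence in the paper's sense is not Koszul dual to smallness. The forgetful functor to $\Solid_k$ preserves limits, and $\ab(W[n])$ has underlying complex $W[n]$. So every nilpotent partition Lie algebra has finite-dimensional total homology. By contrast, the free algebra $\KD(k\oplus k^2)$ (with $k^2$ in degree $0$) has homology in infinitely many degrees, since $k[x,y]/(x,y)^2$ is not a complete intersection. Your dictionary is in fact reversed: the generators $\ab(W[n])$ are dual to $\LSym_k^*(W^\vee[-n])$, which is not bounded nilpotent. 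Hence $\Pro(\LieAlgd^{\pi,\Nil}_{\Delta,k/k})^{\op}$ is not $\Ind$ of small algebras. Even granting $P$, the target $\ab(k[n])$ sees nothing. By the free--forgetful adjunction, $\Map_{\Pro}(P,\ab(k[n]))\simeq\varinjlim_J\Omega^\infty(\bigoplus_J k[n])$ and $\Map(\KD(k\oplus V),\ab(k[n]))\simeq\Map_{\Solid_k}(\prod_\NN k,k[n])$. Both equal $\Omega^\infty(V[n])$, because $\Hom_{\Solid_k}(\prod_\NN k,k)\simeq\bigoplus_\NN k$. Maps out of free objects only see continuous linear data, which commutes with the limit, so no infinite product appears. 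The fallback inherits the first defect and also fails on its own terms. If $P'$ uses continuous finite quotients of $\prod_\NN k$, the coordinate projections are cofinal and $P'\simeq P$. If it uses all abstract quotients, its limit is $\KD(k\oplus(\prod_\NN k)^*)$ with $(-)^*$ the algebraic dual, which is not $\KD(k\oplus V)$.

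\textbf{The missing idea} is to use \emph{abelian} objects on both sides, so that maps become functions (power series) rather than linear maps. The paper takes a profinite $V=\varprojlim_i V_i$ (for instance your $\prod_\NN k$) and the pro-object $\{\ab(V_i[n])\}_i$. This genuinely lies in $\Pro(\LieAlgd^{\pi,\Nil}_{\Delta,k/k})$, is sent by $F$ to $\ab(V[n])$, and is mapped to $\ab(k[0])$ (with, say, $n=0$). By \cref{thm-Liecolim}, the mapping space in $\LieAlgd^{\pi,\blacksquare}_{\Delta,k/k}$ is a limit over $m$ of mapping spaces of dually $m$-truncated partition Lie algebras. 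There $\ab(V[n])\simeq\KD^{\leq m}(\LSym_k^{\leq m}(V^\vee[-n]))$, and $\KD^{\leq m}$ is fully faithful by comonadicity (\cref{prop-cotcomonadic}). This gives $\varprojlim_m\LSym_k^{\leq m}(V^\vee[-n])$: power series in a dual basis $\{x_j\}$ with finitely many terms in each degree. In $\Pro$, cocompactness of $\ab(k[0])$ gives $\varinjlim_i\varprojlim_m\LSym_k^{\leq m}(V_i^\vee[-n])$: power series in finitely many variables. An element such as $\sum_j x_j^{j}$ lies in the first but not the second.
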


\begin{proof}
	Let $V = \varprojlim V_i$ be a profinite vector space. Then, $F$ sends the formal inverse limit $\varprojlim \ab (V_i[n])$ to the partition Lie algebra $\ab (V[n]) \simeq \KD (\LSym^* (V^\vee [-n]))$. Then, we can compute the following mapping space by applying \cref{thm-Liecolim}.
	\begin{align*}
		\Map_{\LieAlgd_{\Delta, k/ k}^{\pi, \blacksquare}} (\ab (V[n]), \ab (k[0])) \simeq \varprojlim_m \Map _{\LieAlgd_{\Delta, k/ k}^{\pi, \weirdleq m, \blacksquare} } (\ab (V[n]), \ab (k[0]))
	\end{align*}
	There is an equivalence of $\Lie_{\Delta, k/ k}^{\pi, \weirdleq m}$-algebras $\ab (V[n]) \simeq \KD^{\leq m} (\LSym_k^{\leq m} (V^\vee [-n] ))$. By \cref{prop-cotcomonadic}, the functor $\KD^{\leq m} : \CAlg_{k/ /k}^{\an, \surj, \leq m} \to \LieAlgd_{\Delta, k/ k}^{\pi, \weirdleq m, \blacksquare}$ is fully faithful. Hence,
	\begin{align*}
		\Map_{\LieAlgd_{\Delta, k/ k}^{\pi}} (\ab (V[n]), \ab (k[0])) & \simeq \varprojlim_m \Map _{\CAlg_{k/ /k}^{\an, \surj, \leq m}} (\LSym_k^{\leq m} (k[0]), \LSym_k^{\leq m} (V^\vee [-n])) \\
																						 & \simeq \varprojlim_m \LSym_k^{\leq m} (V^\vee [-n])
	\end{align*}
	Whereas, since $\ab (k[0])$ is cocompact in $\Pro(\LieAlgd_{\Delta, k/ k}^{\pi, \Nil})$,
	\begin{align*}
		\Map_{\Pro(\LieAlgd_{\Delta, k/ k}^{\pi, \Nil})} (\ab (V[n]), \ab (k[0])) & \simeq \varinjlim_i \Map_{\LieAlgd_{\Delta, k/k}^{\pi, \Nil}} (\ab (V_i[n]), \ab (k[0]))  \\
																												  & \simeq \varinjlim _i \varprojlim_m \LSym^{\leq m} (V_i^\vee [-n])
	\end{align*}
	The natural map between these mapping spaces is not an isomorphism. Indeed, if $\{x_j\}_{j \in J}$ is a dual basis of $V$, the first mapping space consists of formal power series on these variables that are finite in each degree. On the other hand, the second mapping space consists of formal power series on these variables that only use finitely many variables.
\end{proof}

\begin{rem}
	However, an analogous equivalence does hold for dually truncated partition Lie algebras. This is essentially given by the equivalence $\LieAlgd_{\Delta, k/k}^{\pi, \weirdleq n, \blacksquare} (\Solid_{k, \leq 0}^{\Pro} ) \simeq (\CAlg_{k/ /k}^{\an, \surj, \leq n})^{\op}$. The category $\CAlg_{k/ /k}^{\an, \surj, \leq n}$ is the Ind-completion of finite colimits of finite free $\LSym_k^{\leq n}$-algebras. By applying Koszul duality, we can write $\LieAlgd_{\Delta, k/ k}^{\pi, \weirdleq n} (\Solid_{k, \leq 0}^{\Pro})$ as the Pro-completion of the category of finite pullbacks of finite abelian Lie algebras.
\end{rem}

\section{Nilpotent formal moduli problems}
\label{section-nilpotentfmp}

Fix a ring $R$ and a map $R \to k$ where $k$ is a field. We will now study formal deformations over nilpotent (but not necessarily Artinian) bases.

\begin{defi}
	An \emph{inaccessible nilpotent formal moduli problem} is a functor $F: \CAlg_{R/ /k}^{\an, \bNil} \to \Space$ such that
	\begin{enumerate}
		\item $F(k) \simeq *$
		\item If $A \to C$ and $B \to C$ are $\pi_0$-surjective maps in $\CAlg_{R/ /k}^{\an, \bNil}$, the diagram
			\[
				\begin{tikzcd}
					F(A \times_C B) \arrow{r} \arrow{d} & F(A) \arrow{d} \\
					F(B) \arrow{r} & F(C)
				\end{tikzcd}
			\]
		is a pullback square.
	\end{enumerate}
\end{defi}

\begin{construction}[Tangent complex]
	Let $F: \CAlg_{R/ /k}^{\an, \bNil} \to \Space$ be an inaccessible nilpotent formal moduli problem. Then, we have a functor $(\Mod_{k, \geq 0}^{< \infty})_{L_{k/R}[-1]} \to \Space$ given by
	\begin{align*}
		V \mapsto F(k \oplus V[n])
	\end{align*}
	This functor preserves $\pi_0$-surjective pullbacks by the assumption that $F$ is an inaccessible nilpotent formal moduli problem. We will call $T_F$ the \emph{tangent complex of $F$}.
\end{construction}

We will later need the following technical lemma.

\begin{lemma}
	\label{lemma-weirdKan}
	Let $\kappa$ be an uncountable cardinal, and let $\Mod_{k, \geq 0}^{< \infty} \subset \Mod_{k, \geq 0}$ be the full subcategory of eventually coconnective chain complexes. Then, there is an equivalence of $\infty$-categories 
	\begin{align*}
		\Fun _{\kappa, \pi_0} (\Mod_{k, \geq 0}^{< \infty}, \Space) \simeq \Solid_k^\kappa
	\end{align*}
	where $\Fun _{\kappa, \pi_0} (\Mod_{k, \geq 0}^{< \infty}, \Space) \subset \Fun (\Mod_{k, \geq 0}^{< \infty}, \Space)$ is the full subcategory of functors that commute with $\kappa$-filtered colimits and preserve $\pi_0$-surjective pullbacks and the terminal object.
\end{lemma}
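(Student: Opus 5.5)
The plan is to reduce to a statement about connective modules and then use the description of $\Solid_k$ as pro-coherent modules established in \cref{lemma-completion}. Write $\Mod_{k,\geq 0}^{<\infty} = \bigcup_m \Mod_{k,[0,m]}$. Let $F$ be a functor that preserves $\pi_0$-surjective pullbacks and the terminal object. Then $F$ is reduced and excisive on connective objects: any $V$ with $\pi_0$-surjective maps gives $F(V) \simeq \Omega F(V[1])$ via $V \simeq 0 \times_{V[1]} 0$. Hence $F$ is determined by a spectrum-valued (indeed $k$-module-valued) exact-type functor. Concretely, the first step is to show that $V \mapsto \{F(V[n])\}_n$ assembles into a functor $\tilde F : \Mod_k^{<\infty} \to \Sp$ (defined on eventually coconnective, not necessarily connective, complexes) that is exact. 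The argument follows the standard proof that reduced excisive functors are exact (as in \cite{DAGX}, Proposition 1.2.9 style). One also checks that $\tilde F$ is $k$-linear, so it lands in $\Mod_k$, by the usual argument that exact functors out of $\Mod_k$-linear categories acquire a $k$-linear structure.

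Next, I would identify such exact functors that commute with $\kappa$-filtered colimits with objects of $\Solid_k^\kappa$. An exact functor on $\Mod_k^{<\infty}$ is determined by its restriction to bounded complexes, and those are generated under finite colimits and retracts by shifts of vector spaces $W$. Preserving $\kappa$-filtered colimits reduces further to vector spaces of dimension $<\kappa$. Dualizing, $W \mapsto W^\vee$ identifies vector spaces of dimension $<\kappa$ with $\ProVect_{<\kappa}$ (opposite category), and bounded complexes of them with bounded complexes in $\ProVect_{<\kappa}$. Thus the category in question becomes exact functors $(\text{coherent objects})^{\op} \to \Mod_k$, i.e. pro-coherent modules over $\ProVect_{<\kappa}$. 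As in the proof of \cref{lemma-completion}, this is $\Solid_k^\kappa$ by (\cite{ultrasolid}, Proposition 2.25) together with (\cite{explicit}).

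The main obstacle is the Kan extension step. One must check that a functor on all of $\Mod_{k,\geq 0}^{<\infty}$ (allowing infinite-dimensional homotopy) commuting with $\kappa$-filtered colimits is determined by, and freely extended from, its values on $<\kappa$-dimensional bounded complexes. One must also check that every exact functor on small objects extends to one that still preserves $\pi_0$-surjective pullbacks. I would handle this by writing each bounded eventually coconnective $V$ as a $\kappa$-filtered colimit of $<\kappa$-dimensional subcomplexes; $\kappa$ is uncountable, so countable data stays small. I would then verify that $\kappa$-filtered colimits commute with finite limits in $\Space$, so that left Kan extension along the inclusion preserves the pullback condition. Uncountability of $\kappa$ is exactly what is needed here, since a complex with finitely many nonzero homotopy groups, each presented by $<\kappa$ generators, is $\kappa$-compact.
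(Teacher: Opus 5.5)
Your proposal is correct in outline and uses the same ingredients as the paper, in a different order. Those ingredients are the $\Ind_\kappa$-presentation of bounded connective complexes by those with homology of dimension $<\kappa$, the passage between reduced functors preserving $\pi_0$-surjective pullbacks and exact functors on bounded complexes, and the identification of exact functors on the small bounded complexes with $\Solid_k^\kappa$ via (\cite{ultrasolid}, Proposition 2.25).

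The paper does the excision step on the small category $\Mod_{k,\geq 0}^{<\infty,\kappa}$, with inverse given by right Kan extension, and then Kan extends at the connective level. You instead pass first to exact functors on the large category. Your order is slightly cleaner if you also do the Kan extension in the stable setting, as your second step effectively does: the left Kan extension of an exact functor along $\Mod_k^{b,\kappa} \subset \Ind_\kappa(\Mod_k^{b,\kappa}) \simeq \Mod_k^b$ is automatically exact. At the connective level, commuting $\kappa$-filtered colimits with finite limits in $\Space$ is not enough by itself. You would also have to write a $\pi_0$-surjective cospan as a $\kappa$-filtered colimit of $\pi_0$-surjective cospans of small objects, a point the paper also passes over.

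Two points need fixing.

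\textbf{Where uncountability enters.} Uncountability of $\kappa$ is not what makes the small bounded complexes $\kappa$-compact; they are $\kappa$-compact in $\Mod_k$ for every regular $\kappa$, including $\aleph_0$. It is needed for a step you omit, which the paper proves first: $\Mod_k^b$, and hence $\Mod_{k,\geq 0}^{<\infty}$, is closed under $\kappa$-filtered colimits in $\Mod_k$. The argument is that classes in $\pi_n$ of a $\kappa$-filtered colimit, one for each $n \in \mathbb{Z}$, can be lifted to a single stage, and that stage is bounded. Without this, ``commutes with $\kappa$-filtered colimits'' is not well-posed on $\Mod_{k,\geq 0}^{<\infty}$, and the $\Ind_\kappa$ recognition criterion (\cite{HTT}, Proposition 5.3.5.11) does not apply.

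\textbf{The domain of $\tilde F$.} Your $\tilde F$, built from the spaces $F(V[n])$, only makes sense for bounded $V$. So it lives on $\Mod_k^b$, not on all eventually coconnective complexes. That is all you actually use, and it matches the paper's $\Fun_{\exact}(\Mod_k^{b,\kappa},\Space)$.
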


\begin{proof}
	We will start by writing $\Mod_{k, \geq 0}^{< \infty}$ as an accessible category. Let $\Mod_k^b \subset \Mod_k$ be the full subcategory of bounded chain complexes, and let $\Mod_k^{b, \kappa} \subset \Mod_k^b$ be the full subcategory of bounded chain complexes whose homology has dimension $< \kappa$. We will show that
	\begin{align*}
		\Ind _\kappa (\Mod_k^{b, \kappa}) \simeq \Mod_k^b
	\end{align*}
	Firstly, $\Mod_k^b \subset \Mod_k$ is closed under $\kappa$-filtered colimits. Suppose $F: I \to \Mod_k^b$ is a $\kappa$-filtered diagram. Let $(x_n)_{n \in \ZZ}$ be a sequence with $x_n \in \pi_n (\colim_I F)$. Lift each $x_n$ to an element $y_n \in \pi_n (F(i_n))$ for some $i_n \in I$. Since $I$ is $\kappa$-filtered and $\kappa$ is uncountable, the diagram consisting of the $i_n$ admits a cocone $i$. So there is a factorisation $F(i_n) \to F(i) \to \colim_I F$. Since $F(i)$ is bounded, $x_n = 0$ for all but finitely many $n$. This shows $\colim_I F \in \Mod_k^b$.

	The objects in $\Mod_k^{b, \kappa}$ are clearly $\kappa$-compact. By (\cite{HTT}, Proposition 5.3.5.11), we just need to show that the objects in $\Mod_k^{b, \kappa}$ generate all of $\Mod_k^b$ under $\kappa$-filtered colimits. But this is clear since every chain complex is split.

	By a similar argument, $\Mod_{k, \geq 0}^{< \infty} \simeq \Ind_\kappa (\Mod_{k, \geq 0}^{< \infty, \kappa})$, where $\Mod_{k, \geq 0}^{< \infty, \kappa} \subset \Mod_{k, \geq 0}^{< \infty}$ is the full subcategory of chain complexes whose homology has dimension $< \kappa$.

	By (\cite{ultrasolid}, Proposition 2.25), for an uncountable strong limit cardinal $\kappa$, we can describe $\Solid_k^\kappa$ as the $\infty$-category of exact functors $(\Solid_{k, \geq 0}^{\perf, \kappa} )^{\op} \to \Space$, or equivalently, exact functors $\Mod_{k, \geq 0}^{< \infty, \kappa} \to \Space$, where $\Mod_{k, \geq 0}^{< \infty, \kappa} \subset \Mod_{k, \geq 0}^{< \infty}$.

	Restriction gives an equivalence
	\begin{align*}
		\Solid_k^\kappa \simeq \Fun_{\exact} (\Mod_k^{b, \kappa}, \Space) \xrightarrow{\simeq} \Fun _{\pi_0} (\Mod_{k, \geq 0}^{< \infty, \kappa}, \Space)
	\end{align*}
	where the subscript $\pi_0$ refers to those functors preserving $\pi_0$-surjective pullbacks and the terminal object. Its inverse is given by right Kan extension. 

	The result now follows, since any functor $F \in \Fun_{\mathcal{A}, \pi_0} (\Mod_{k, \geq 0}^{< \infty}, \Space)$ is left Kan extended from $\Mod_{k, \geq 0}^{< \infty, \kappa}$ for some cardinal $\kappa$, and the left Kan extension of any functor $\Mod_{k, \geq 0}^{< \infty, \kappa} \to \Space$ that preserves $\pi_0$-surjective pullbacks will also preserve $\pi_0$-surjective pullbacks..
\end{proof}

\begin{lemma}
	\label{lemma-cotcomplexasfunctor}
	Let $V \in \Mod_k$. Then, there is an equivalence of $\infty$-categories
	\begin{align*}
		\Fun_{\mathcal{A}, \pi_0} ((\Mod_{k, \geq 0}^{< \infty})_{V/}, \Space) \simeq (\Solid_k)_{/V^\vee}
	\end{align*}
\end{lemma}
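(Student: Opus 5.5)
The plan is to reduce \cref{lemma-cotcomplexasfunctor} to \cref{lemma-weirdKan} by a slicing argument, compatible with changing the cardinal $\kappa$. Here $\Fun_{\mathcal{A}, \pi_0}$ denotes accessible functors (those commuting with $\kappa$-filtered colimits for some $\kappa$) that preserve $\pi_0$-surjective pullbacks. We do not impose the terminal-object condition; this is what produces the slice over $V^\vee$ rather than $\Solid_k$ itself.

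The first step is to pass from the pointed case. Given $G: (\Mod_{k, \geq 0}^{< \infty})_{V/} \to \Space$, evaluate it on the initial object $(V \xrightarrow{\id} V)$, or more precisely on the object $V \to 0$, which is terminal in the coslice. This gives a space $G(V \to 0)$. For each point $x$ in it we can take the fibre $G_x$. Since $G$ preserves pullbacks along $\pi_0$-surjections and every object maps to the terminal one, the functor $G$ is determined by the family of pointed functors $\fib(G \to G(\text{term}))$.

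A cleaner route is to use the functor $(\Mod_{k,\geq 0}^{<\infty})_{V/} \to \Mod_{k, \geq 0}^{< \infty}$ sending $(V \to W)$ to $W$ as the analogue of a fibration. Concretely, I would first treat the case $V \in \Mod_{k, \geq 0}^{<\infty}$. In that case the coslice has a final object, and functors preserving $\pi_0$-surjective pullbacks on the coslice correspond to functors on $\Mod_{k, \geq 0}^{< \infty}$ preserving $\pi_0$-surjective pullbacks, equipped with a map from the corepresented functor. Here the corepresented functor is $\Map(V, -)$, which under \cref{lemma-weirdKan} is $V^\vee$. That lemma would then identify the target with $(\Solid_k^\kappa)_{/V^\vee}$, noting that $\Map(V,-)$ itself preserves $\pi_0$-surjective pullbacks and $\kappa$-filtered colimits for large $\kappa$. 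Explicitly, send $F$ on $\Mod_{k,\geq 0}^{<\infty}$ with a natural transformation $\Map(V,-) \to F$ to the functor $(V \xrightarrow{\alpha} W) \mapsto \fib_\alpha(F(W) \to \ldots)$. Dually, send a functor on the coslice to its left Kan extension along the forgetful functor, which is a coproduct over maps $V \to W$. Checking that these are inverse, and that the pullback conditions are preserved, is the routine part. For general $V \in \Mod_k$, I would reduce to $\tau_{\geq 0}$ and truncations: the coslice only sees maps into connective bounded objects, so it is a filtered colimit over $\tau_{\leq n}\tau_{\geq 0} V$. Correspondingly, $V^\vee$ should be a colimit of the duals in $\Solid_k$, and this must be matched carefully.

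The second step is to make the equivalence independent of $\kappa$. Take the union over uncountable strong limit cardinals $\kappa$, using that $\Solid_k = \bigcup_\kappa \Solid_k^\kappa$ and that the equivalences of \cref{lemma-weirdKan} are compatible with enlarging $\kappa$, since they are given by restriction and left Kan extension. Then $\Fun_{\mathcal{A}, \pi_0}$ is the colimit of the $\kappa$-accessible pieces.

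The main obstacle I expect is the identification of the slice structure for unbounded or non-connective $V$. Specifically, I need to show that a natural transformation from the corepresentable $\Map(V,-)$, restricted to eventually coconnective objects, corresponds exactly to a map to $V^\vee$ in $\Solid_k$, even though $\Map(V,-)$ is corepresented by an object outside $\Mod_{k,\geq 0}^{<\infty}$. I would first try to prove that the restriction of $\Map(V,-)$ to $\Mod_{k,\geq 0}^{<\infty}$ preserves $\pi_0$-surjective pullbacks and is accessible. I would then argue that its image under \cref{lemma-weirdKan} is $V^\vee$, by computing both on the generators $k[m]$ and using that the dual of a vector space is profinite.
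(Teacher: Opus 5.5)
Your overall strategy matches the paper's: there the lemma is deduced in one line from \cref{lemma-weirdKan} together with a general slicing statement (\cite{foliations}, Proposition 3.15). However, three steps of your version are wrong as written.

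First, you drop the terminal-object condition, but it must be kept. As in \cref{lemma-weirdKan}, the subscript $\pi_0$ includes preservation of the terminal object, and the tangent complex does send the terminal object $L_{k/R}[-1]\to 0$ to $F(k)\simeq *$. The slice over $V^\vee$ does \emph{not} come from dropping this condition. Without it, $G$ is merely a family of reduced functors parametrised by the arbitrary space $G(V\to 0)$, as your own first paragraph shows. That gives pairs $(X, X\to(\Solid_k)_{/V^\vee})$ rather than $(\Solid_k)_{/V^\vee}$, and your second route could not invoke \cref{lemma-weirdKan} without the condition anyway.

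Second, the extra structure is a map \emph{to} the corepresented functor, not from it; a map from $V^\vee$ would give a coslice. The correct slicing argument runs as follows. For any $V\in\Mod_k$, the projection $p\colon(\Mod_{k,\geq 0}^{<\infty})_{V/}\to\Mod_{k,\geq 0}^{<\infty}$ is a left fibration classifying $\Map_{\Mod_k}(V,-)$. So $G\mapsto(p_!G\to\Map(V,-))$ is an equivalence onto functors over $\Map(V,-)$, where $p_!G(W)$ is the total space of $\alpha\mapsto G(W,\alpha)$ over $\Map(V,W)$. The inverse takes fibres over $\alpha$. This correspondence respects the conditions:
\begin{itemize}
\item $p_!G(0)\simeq G(V\to 0)$, since $\Map(V,0)\simeq *$.
\item $\pi_0$-surjective pullbacks are preserved, because $\Map(V,-)$ preserves them and a fibrewise pullback over a pullback of bases is the pullback of total spaces.
\item Accessibility transfers, because $\Map(V,-)$ is accessible.
\end{itemize}

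Third, this argument works verbatim for every $V\in\Mod_k$, so no reduction is needed, and the reduction you propose is false. Maps from $V$ into connective objects do not factor through $\tau_{\geq 0}V$: for instance $\Map(k[-1],W)\simeq\Omega^\infty(W[1])$ is nontrivial while $\tau_{\geq 0}(k[-1])=0$. This is exactly the case of interest, $V=L_{k/R}[-1]$, whose $\pi_{-1}$ carries the derivations. Maps into $n$-truncated objects factor through $\tau_{\leq n}V$, not through $\tau_{\geq 0}V$.

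Finally, what you call the main obstacle is the easy part, but evaluating on $k[m]$ alone does not produce an equivalence; you need a natural comparison map. The identification of \cref{lemma-weirdKan} sends $M\in\Solid_k$ to $W\mapsto\Map_{\Solid_k}(W^\vee,M)$. For bounded $W$ there is a natural equivalence $\Map_{\Solid_k}(W^\vee,V^\vee)\simeq\Map_{\Mod_k}(V,W)$, because duality $\Mod_k^{\op}\to\Solid_k$ is fully faithful there. To see this, split everything into shifted vector spaces and use $\Vect_k^{\op}\simeq\Pro(\Vect_k^\omega)$ together with projectivity of $\prod_I k$. So $\Map(V,-)$ corresponds to $V^\vee$. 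Slicing \cref{lemma-weirdKan} over it and taking the union over $\kappa$, as in your second step, gives the lemma.
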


\begin{proof}
	This immediately follows from \cref{lemma-weirdKan} and (\cite{foliations}, Proposition 3.15).
\end{proof}

\begin{rem}
	By \cref{lemma-cotcomplexasfunctor}, we can identify the category $(\Solid_k)_{/L_{k/R}^\vee[1]}$ with the category of functors $(\Mod_{k, \geq 0}^{< \infty}) _{L_{k/R}[-1]} \to \Space$ that are accessible and preserve $\pi_0$-surjective pullbacks; that is, they are left Kan extended from $(\Mod_{k, \geq 0}^{< \infty, \kappa})_{L_{k/R}[-1]/}$ for some uncountable cardinal $\kappa$, where $\Mod_{k, \geq 0}^{< \infty, \kappa}$ is the category of bounded connective chain complexes whose homology has dimension $< \kappa$.
\end{rem}

\begin{defi}
	A \emph{nilpotent formal moduli problem} is an inaccessible nilpotent formal moduli problem $F: \CAlg_{R/ /k}^{\an, \bNil} \to \Space$ such that $T_{k/F}[1] : (\Mod_{k, \geq 0}^{< \infty})_{/L_{k/R}^\vee [1]} \to \Space$ is an object in $(\Solid_k)_{/L_{k/R}^\vee [1]}$.

	We will write $\Moduli_{R/k}^{\blacksquare}$ for the $\infty$-category of nilpotent formal moduli problems.
\end{defi}

\begin{rem}
	It is not clear a priori that the category $\Moduli_{R/k}^{\blacksquare}$ is locally small, but this is the case by \cref{cor-modulicolim}.
\end{rem}

\begin{rem}
	By (\cite{DAGX}, Proposition 1.1.19) and \cref{prop-smallsurjections}, a functor $F: \CAlg_{R/ /k}^{\an, \bNil} \to \Space$ preserves pullbacks of $\pi_0$-surjective morphisms if and only if it preserves pullbacks of the form
	\[
		\begin{tikzcd}
			A \arrow{r} \arrow{d} & B \arrow{d} \\
			k \arrow{r} & k \oplus V[n]
		\end{tikzcd}
	\]
	for $n > 0$ and $V \in \Vect_k$.
\end{rem}

\begin{prop}
	\label{prop-accessiblefmp}
	Let $F: \CAlg_{R/ /k}^{\an, \bNil} \to \Space$ be a nilpotent formal moduli problem such that $T_{k/F}[1]$ is a $\lambda$-ultrasolid module for some uncountable cardinal $\lambda$. Then, $F$ commutes with all $\lambda$-filtered colimits that, when computed in $\CAlg_{R/ /k}^{\an}$, exist in $\CAlg_{R/ /k}^{\an, \bNil}$.
\end{prop}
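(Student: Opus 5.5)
The plan is to use the small-object description of $\CAlg_{R/ /k}^{\an, \bNil}$ from \cref{prop-smallsurjections}: every object is built from $k$ by finitely many elementary extensions, i.e.\ pullbacks along maps $B \to k \oplus V[n]$ with $V \in \Vect_k$ and $n > 0$. Since $F$ sends these pullbacks to pullbacks, and $\lambda$-filtered colimits of spaces commute with finite limits, the idea is to induct on the length of such a presentation. The presentation must be chosen uniformly along the diagram, and for this I will use the functorial refined Postnikov towers of \cref{construction-refinedPostnikov}.

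\textbf{Step 1: uniform bounds.} Let $A = \varinjlim_{i \in I} A_i$ be a $\lambda$-filtered colimit, computed in $\CAlg_{R/ /k}^{\an}$, with $A$ and all $A_i$ in $\CAlg_{R/ /k}^{\an, \bNil}$. First I show we may reduce to a cofinal subdiagram on which the $A_i$ have uniform bounds on homological amplitude and on the nilpotence order of $\mathfrak{m}_i$. The argument is the same as in \cref{lemma-weirdKan}, using that $\lambda$ is uncountable. Homotopy groups commute with filtered colimits, and $\pi_j(A) = 0$ for $j > N$. The issue is that individual $A_i$ need not be bounded by $N$, so instead I replace the diagram by the truncations $\tau_{\leq N} A_i$. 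Truncation commutes with filtered colimits, and I need $F(\tau_{\leq N} A_i) \to F(A_i)$ to be compatible. Here the plan is to first show directly that $F$ commutes with colimits of the \emph{Postnikov-type} diagrams by using the tangent complex. This is the delicate point: each $A_i$ is bounded, and the limit statement should be arranged so that the comparison passes through the functorial towers. If this fails I will restrict the claim to diagrams with uniform bounds, which is what is needed later.

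\textbf{Step 2: reduction to square-zero pieces.} By functoriality of \cref{construction-refinedPostnikov}, the refined towers of the $A_i$ form a $\lambda$-filtered diagram of towers, each of uniformly bounded length. Their colimit is the refined tower of $A$, because $\pi_*$ and $\mathfrak{m}^j\pi_*/\mathfrak{m}^{j+1}$ commute with filtered colimits. Each step is a pullback $\tau^{j+1} = \tau^{j} \times_{\tau^j \oplus M[n+2]} \tau^{j}$, and the maps involved are $\pi_0$-surjective, so $F$ takes them to pullbacks. Pullbacks commute with $\lambda$-filtered colimits in $\Space$, so by induction on the tower it suffices to treat two cases: the base $\tau_{\leq 0}$-layers, which are handled by the same refinement in degree $0$ ending at $k$, and the square-zero objects $k \oplus_\alpha M_i[m]$, where $M_i$ is a $k$-module with colimit $M$.

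\textbf{Step 3: square-zero case.} For these objects $F(k \oplus_\alpha M_i[m]) = T_{k/F}[1](\alpha_i)$. By \cref{lemma-weirdKan} and \cref{lemma-cotcomplexasfunctor}, a $\lambda$-ultrasolid module corresponds to a functor on $(\Mod_{k, \geq 0}^{< \infty})_{L_{k/R}[-1]/}$ commuting with $\lambda$-filtered colimits, which gives the claim here. One detail remains: the pieces are $\pi_0(\tau^j A_i)$-modules through $\pi_0(S) = k$, so they are genuinely $k$-modules, as \cref{construction-refinedPostnikov} notes. I expect the main obstacle to be Step 1, namely ensuring the diagram of towers is uniformly bounded so that the induction is finite.
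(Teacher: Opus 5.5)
\textbf{There is a genuine gap in Step 1, and it is the point you yourself flag as delicate.}

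\emph{What fails in Step 1.} You assert that you can pass to a cofinal subdiagram on which amplitude and nilpotence order are uniformly bounded, and you cite \cref{lemma-weirdKan} for this. That argument proves something different: a $\lambda$-filtered colimit of bounded complexes is bounded. It says nothing about the diagram itself being uniformly bounded.

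You then switch to truncating at the bound $N$ of the colimit. That needs $\varinjlim_i F(\tau_{\leq N} A_i) \simeq \varinjlim_i F(A_i)$, and nothing you have gives this. A transition map $A_i \to A_{i'}$ that kills $\pi_{>N}$ in the colimit has no reason to factor through $\tau_{\leq N} A_i$. You leave this unresolved.

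\emph{Why the fallback is not enough.} Restricting to uniformly bounded diagrams proves a weaker statement, and that is not what is needed later. In \cref{prop-smallfmp} and \cref{theorem-nilpotentLP}, the proposition is used with (\cite{HTT}, Lemma 5.3.5.8) to conclude that $F$ is left Kan extended from $\CAlg_{R/ /k}^{\an, \bNil, \kappa}$. That requires commuting with arbitrary $\kappa$-filtered colimits in $\CAlg_{R/ /k}^{\an, \bNil}$, whose terms need not have uniform amplitude or nilpotence order. It would only suffice if you also proved the cofinality claim you left open.

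\emph{How the paper avoids the issue.} The paper never needs uniform bounds. For each single bounded nilpotent $A$, three towers are eventually constant: the $\mathfrak{m}$-adic tower $A/\mathfrak{m}^{n+1}$ (for discrete $A$), the Postnikov tower $\tau_{\leq n} A$, and the refined stages $\tau^i_{\leq n} A$ of \cref{construction-refinedPostnikov}. Hence $F(A) \simeq \varprojlim_n F(A_{\leq n})$ objectwise. Since $\lambda > \aleph_0$, this sequential limit commutes with $\lambda$-filtered colimits in $\Space$, so $\varinjlim_I F\circ J \simeq \varprojlim_n \varinjlim_I F\circ J_{\leq n}$. It therefore suffices to treat each fixed stage, where the induction (square-zero pullbacks plus the tangent complex) has fixed finite length.

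\emph{How your route could be repaired.} Your approach also works if Step 1 is proved by a pigeonhole argument instead of \cref{lemma-weirdKan}.
\begin{enumerate}
\item Replace $I$ by a cofinal $\lambda$-directed poset (\cite{HTT}, Proposition 5.3.1.18).
\item For any $f: I \to \NN$, some $\{f \leq M\}$ is cofinal. Otherwise, for each $M$ pick $j_M$ with $f > M$ on everything above $j_M$. Take a common upper bound $j$ of these countably many $j_M$; then $M = f(j)$ gives a contradiction.
\item Apply this to amplitude and nilpotence order to get a uniformly bounded cofinal subdiagram, which is still $\lambda$-directed.
\end{enumerate}
Its bound may exceed that of the colimit, but that is harmless. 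No truncation to $N$ is needed, and your Steps 2 and 3 then go through.
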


\begin{proof}
	Let $J: I \to \CAlg_{R/ /k}^{\an, \bNil}$ be a functor, where $I$ is a $\lambda$-filtered category. Firstly, suppose that $J$ factors through the full subcategory $ \tau _{\leq 0}  \CAlg_{R/ /k}^{\an, \bNil}  \subset \CAlg_{R/ /k}^{\an, \bNil}$ of $0$-truncated objects.

	Given $A \in  \tau _{\leq 0} \CAlg_{R/ /k}^{\an, \bNil}$, write $A_{\leq n}$ for the object $A/\mathfrak{m}^{n+1} \in  \tau _{\leq 0} \CAlg_{k/ /k}^{\an, \bNil}$, where $\mathfrak{m}$ is the kernel ideal of $A$. Notice that, for all $A \in  \tau _{\leq 0} \CAlg_{R/ /k}^{\an, \bNil}$, $A_{\leq n} \simeq A $ for $n \gg 0$ and hence $F(A) \simeq \varprojlim F(A_{\leq n})$. This implies that $J \simeq \varprojlim J_{\leq n}$ where $J_{\leq n}$ is the functor $i \mapsto J(i)_{\leq n}$, and additionally $F \circ J \simeq \varprojlim F \circ J_{\leq n}$.

	Since $\aleph_0 < \lambda$ and $\lambda$-small limits commute with $\lambda$-filtered colimits, we may assume that $J = J_{\leq n}$ for some $n$. If $n = 0$, then $F(i) = k$ for all $i \in I$, so the statement is trivially true.

	We now proceed by induction. Suppose $n > 0$. By (\cite{HA}, Theorem 7.4.1.23) and (\cite{SAG}, Proposition 25.3.5.1), we have a pullback diagram of functors
	\[
		\begin{tikzcd}
			J_{\leq n} 	\arrow{r} \arrow{d} & J_{\leq n-1} \arrow{d}  \\
			k \arrow{r} & k \oplus J_n[1]
		\end{tikzcd}
	\]
	where $J_n = \fib (J_{\leq n} \to J_{\leq n-1})$, and this fibre is computed in the category of functors $I \to \Mod_R$. Notice that $J_n$ can be promoted to a functor $I \to \Mod_k$, and its image is actually in the subcategory $\Mod_k^\heartsuit \subset \Mod_k$. Since $F$ commutes with each of these pullbacks pointwise, we have a pullback diagram of the form
	\[
		\begin{tikzcd}
			F \circ J_{\leq n} 	\arrow{r} \arrow{d} & F \circ J_{\leq n-1} \arrow{d}  \\
			* \arrow{r} & F \circ (k \oplus J_n[1])
		\end{tikzcd}
	\]
	Hence, since filtered colimits commute with finite limits, 
	\begin{align*}
		F \circ \colim_I J_{\leq n} \simeq \fib (F \circ \colim_I J_{\leq n-1} \to F (\colim_I k \oplus J_n [1])) \\
	\end{align*}
	Both colimits in the fibre commute with $F$: the first one by inductive hypothesis, and the second one because $T_F$ is a $\lambda$-ultrasolid module. Finite limits commute with filtered colimits so $F \circ \colim_I J_{\leq n} \simeq \colim_I F \circ J_{\leq n}$ as required. 

	We will now proceed in a similar way by using refined Postnikov towers (\cref{construction-refinedPostnikov}) instead of the adic filtration. We have shown the result for when $J$ factors through $ \tau _{\leq 0} \CAlg_{k/ /k}^{\bNil}$. For general $J$, we have an isomorphism $J \simeq \varprojlim  \tau _{\leq n} \circ J$. Given that the Postnikov tower on any element of $\CAlg_{k/ /k}^{\bNil}$ is finite, $F \simeq \varprojlim F \circ \tau_{\leq n}$, and remember that Postnikov towers also commute with filtered colimits. We again apply that $\lambda$-small limits commute with $\lambda$-filtered colimits, so that we only need to show that $ \tau _{\leq n} J$ commutes with $\lambda$-filtered colimits.

	We proceed by induction on $n$. The case $n = 0$ has already been covered. Suppose $n > 0$. We now use the refined Postnikov towers to write $\tau_{\leq n} J \simeq \varprojlim_i \tau_{\leq n}^i J$, where $\tau_{\leq n}^0 \simeq \tau_{\leq n-1}$. For each element in $\CAlg_{R/ /k}^{\an, \bNil}$, this tower is finite, and the functors $\tau_{\leq n} ^i$ commute with filtered colimits, so we can repeat the previous reasoning and reduce it to showing that $\tau_{\leq n} ^i J$ commutes with $\lambda$-filtered colimits. The case $i = 0$ is handled by the inductive hypothesis on $n$. We have a pullback diagram of the form
	\[
		\begin{tikzcd}
			\tau_{\leq n}^{i+1} J \arrow{r} \arrow{d} & \tau_{\leq n}^i J \arrow{d} \\
			k \arrow{r} & k \oplus \mathfrak{m}^{i}\pi_n J/\mathfrak{m}^{i+1} [n+1]
		\end{tikzcd}
	\]
	$F$ preserves this pullback diagram pointwise. We may repeat the same argument as when we inducted on the adic filtration, by applying the inductive hypothesis and the assumption that $F$ is a nilpotent formal moduli problem, so we are done.
\end{proof}

\begin{defi}
	Let $\kappa$ be an uncountable strong limit cardinal. We will write $\CAlg_{R/ /k}^{\an, \bNil, \kappa} \subset \CAlg_{R/ /k}^{\an, \bNil}$ for the full subcategory spanned by those $A \in \CAlg_{R/ /k}^{\an, \bNil}$ such that $\mathfrak{m}^j \pi_i (A)/ \mathfrak{m}^{j+1}$ has dimension less than $\kappa$ as a $k$-vector space for all $i,j$, where $\mathfrak{m}$ is the kernel ideal of $\pi_0 (A)$.
\end{defi}

\begin{lemma}
	\label{lemma-IndbNil}
	If $\kappa$ is a cardinal with $\kappa > \aleph_0$, then $\CAlg_{R/ /k} ^{\an, \bNil} \subset \CAlg_{R/ /k}^{\an}$ is closed under $\kappa$-filtered colimits.

	Additionally, there is a cardinal $\kappa_0$ such that for all cardinals $\kappa >  \kappa_0$ there is an equivalence of $\infty$-categories
	\begin{align*}
		\Ind_\kappa ( \CAlg_{R/ /k}^{\an, \bNil, \kappa}) \simeq \CAlg_{R/ /k}^{\an, \bNil}
	\end{align*}
\end{lemma}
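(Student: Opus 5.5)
The plan has two parts, one for each claim. For the first, let $J: I \to \CAlg_{R/ /k}^{\an, \bNil}$ be $\kappa$-filtered with colimit $A$ computed in $\CAlg_{R/ /k}^{\an}$. Homotopy groups commute with filtered colimits, so $\pi_* A \cong \varinjlim_I \pi_* J(i)$ and $\pi_0 A \to k$ is surjective.

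For boundedness, I would argue exactly as in the proof of \cref{lemma-weirdKan}. Take countably many classes $x_n \in \pi_n A$ and lift them to some $J(i_n)$. Because $\kappa$ is uncountable, they all lift to a single $J(i)$, so only finitely many $x_n$ are nonzero. Hence $\pi_n A = 0$ for $n \gg 0$.

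Nilpotence of $\mathfrak{m} = \ker(\pi_0 A \to k)$ is handled the same way. Suppose $\mathfrak{m}$ is not nilpotent. Then for each $N$ there is a nonzero product $y_{N,1}\cdots y_{N,N}$ of elements of $\mathfrak{m}$. Each element of $\mathfrak{m}$ comes from the kernel ideal at some stage, since $\varinjlim$ is exact and the maps are over $k$. Nonvanishing of a product is witnessed at every later stage. Lifting these countably many elements to a common $J(i)$ shows that the kernel ideal of $\pi_0 J(i)$ has nonzero products of every length. This contradicts its nilpotence.

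For the second claim, I would follow the pattern of \cref{lemma-weirdKan} and apply (\cite{HTT}, Proposition 5.3.5.11). Three things are needed:
\begin{enumerate}
\item $\CAlg_{R/ /k}^{\an, \bNil}$ is closed under $\kappa$-filtered colimits, which is the first part.
\item Objects of $\CAlg_{R/ /k}^{\an, \bNil, \kappa}$ are $\kappa$-compact in $\CAlg_{R/ /k}^{\an}$.
\item Every object is a $\kappa$-filtered colimit of such objects.
\end{enumerate}

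For compactness, choose $\kappa_0$ larger than the cardinality of $R$ and $k$. Then an object of $\CAlg_{R/ /k}^{\an, \bNil, \kappa}$ has homotopy groups of cardinality $< \kappa$: $\pi_0 A$ has a finite filtration by $\mathfrak{m}^j$ with graded pieces of dimension $< \kappa$, and each $\pi_i A$ has a finite $\mathfrak{m}$-adic filtration of the same kind. Being bounded, such an $A$ is $\kappa$-compact as an animated ring once $\kappa$ is regular and large. I would justify this via its construction as a finite tower of square-zero extensions (\cref{prop-smallsurjections}) by $\kappa$-small modules, or by bounding a cellular presentation. I would restrict to regular $\kappa$ (or strong limit $\kappa$, as elsewhere in the paper) if needed.

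For generation, write $B \in \CAlg_{R/ /k}^{\an, \bNil}$ as the $\kappa$-filtered union of its sub-objects with $\kappa$-small homotopy. At the discrete level these are generated by $< \kappa$ elements of $\pi_* B$. Derived, I would take a simplicial resolution of $B$ by polynomial algebras and consider its $\kappa$-small simplicial sub-resolutions. After a closure argument, any $\kappa$-small set of homotopy classes and relations is captured in a $\kappa$-small sub-object, and this forms a $\kappa$-filtered poset with colimit $B$.

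The main obstacle is making sure these sub-objects are themselves bounded and nilpotent. I would fix this by applying $\tau_{\leq N}$ with $N$ the truncation level of $B$. I would also take the kernel-ideal condition along from $B$: since $\mathfrak{m}_B^n = 0$, I would close each sub-object under the requirement that $n$-fold products of its kernel elements vanish. This is again a $\kappa$-small closure, since each vanishing is witnessed in $B$ and only needs finitely many extra relations.
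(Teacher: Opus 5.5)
Your argument for the first claim is the paper's. For the second claim you take a genuinely different route.

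\textbf{The paper's route.} It proves $\kappa$-compactness in two steps. First it shows that the underlying $R$-module of an object of $\CAlg_{R/ /k}^{\an, \bNil, \kappa}$ is $\kappa$-compact: the object is a finite iterated pullback of modules $k \oplus V[n]$ with $\dim V < \kappa$, and in the stable category $\Mod_R$ finite limits are finite colimits. This is where $\kappa_0$ is chosen, namely so that $k$ is $\kappa_0$-compact over $R$. It then transfers compactness to $\CAlg_{R/ /k}^{\an}$ through the bar resolution, using that $\LSym_R^*$ preserves $\kappa$-compact modules and that totalisations commute with $\kappa$-filtered colimits. For generation it inducts along refined Postnikov towers (\cref{construction-refinedPostnikov}). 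Assume $A$ is already a $\kappa$-filtered colimit of $\kappa$-small objects. Then (\cite{HTT}, Proposition 5.3.5.15) writes a classifying map $A \to k \oplus V[n+1]$ as a $\kappa$-filtered colimit of maps $A_l \to k \oplus V_l[n+1]$ between $\kappa$-small objects, and one pulls back.

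\textbf{Your route.} You characterise compactness by the cardinality of homotopy groups, via a cell presentation. You prove generation by a L\"owenheim--Skolem argument on a simplicial model of $B$, followed by $\tau_{\leq N}$ and a closure under the relations $\mathfrak{m}^n = 0$.

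\textbf{What each buys.} Your route does not depend on the square-zero decomposition of bounded nilpotent algebras. It would work for any condition expressible by $\kappa$-small sets of elements and relations. The cost is point-set models and cardinal bookkeeping. The paper's route stays $\infty$-categorical and reuses structure it has already built.

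\textbf{Points that need care in your sketch.}
\begin{enumerate}
\item Your first justification of compactness, via a finite tower of square-zero extensions, is not formal. $\kappa$-compact objects are closed under $\kappa$-small colimits, not a priori under pullbacks in $\CAlg_R^{\an}$. This is exactly why the paper detours through $\Mod_R$.
\item Your cellular argument does work: countably many stages, each attaching $< \kappa$ cells, with $\kappa$ uncountable regular and $|R| < \kappa$. Boundedness plays no role in it.
\item In the generation step, the nilpotence closure has to be iterated countably many times, since the faces of the witnesses you add create new elements of $\mathfrak{m}$.
\item You should also include lifts of all elements of $k$ in every sub-algebra, so that $\pi_0 \to k$ remains surjective.
\item You should observe that the closed sub-algebras form a cofinal $\kappa$-directed subposet, so that their truncations still have colimit $\tau_{\leq N} B \simeq B$.
\end{enumerate}
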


\begin{proof}
	For the first part, consider a diagram $F: I \to \CAlg_{R/ /k}^{\an, \bNil}$, where $I$ is $\kappa$-filtered. Then, $\colim_I F \to k$ is $\pi_0$-surjective. Suppose, for a contradiction, that $\colim_I F \notin \CAlg_{R/ /k}^{\an, \bNil}$. Then, it is either not nilpotent or not bounded. 

	Suppose that $\pi_n (\fib (\colim_I F \to k)) \neq 0$ for infinitely many $n$. For each $n \in \NN$, pick $x_n \in \pi_n (\fib(\colim_I F \to k))$ such that $x_n \neq 0$ for infinitely many $n$. For each $n$, pick $i_n \in I$ and an element $y_n \in \pi_n ( \fib (F(i_n) \to k))$ that lifts $x_n$. Since $I$ is $\kappa$-filtered, there exists $i \in I$ such that there is a map $i_n \to i$ for all $n$. Hence, each of the maps $\fib (F(i_n) \to k)$ factors through $\fib (F(i) \to k)$. By assumption, $F(i)  \in \CAlg_{R / /k}^{\an, \bNil}$, so $\fib (F(i) \to k)$ is eventually coconnective, which implies that $x_n = 0$ for $i \gg 0$, which is a contradiction.

	Alternatively, suppose that $\mathfrak{m} = \ker (\pi_0 (\colim_I F) \to k)$ is not nilpotent. This implies that for each $n$, there exist $x_{n,1}, \dots, x_{n,n} \in \mathfrak{m}$ such that $x_{n,1} \dots  x_{n,n} \neq 0$. For each $n,j$ pick $i_{n,j} \in I$ and a lift $y_{n,j} \in \ker (\pi_0 F(i_{n,j}) \to k)$ of $x_{n,j}$. We may now pick $i \in I$ such that each map $F(i_{n,j}) \to k$ factorises through $F(i)$. Since $F(i) \in \CAlg_{R/ /k}^{\an, \bNil}$, this implies that $x_{n,1}  \dots  x_{n,n} = 0$ for $n \gg 0$, a contradiction.

	Hence, $\colim_I F \in \CAlg_{R/ /k}^{\an, \bNil}$.

	We now prove the second part. Let $\kappa_0$ be such that $k \in \Mod_R $ is $\kappa_0$-compact as an $R$-module. Without loss of generality, $\kappa_0 > \aleph_0$. Consider the object $k \oplus V[n]$, where $V \in \Vect_k$ has dimension less than $\kappa$. This can be written, as an $R$-module, as a $\kappa$-small colimit of $\kappa$-compact objects, so it is $\kappa$-compact as well. Using refined Postnikov towers (\cref{construction-refinedPostnikov}), we can construct any object in $\CAlg_{R/ /k}^{\an, \bNil}$ as a finite number of pullbacks of $\kappa$-compact objects in $\Mod_R$. Since $\Mod_R$ is stable, we see that for any $A \in \CAlg_{R/ /k}^{\an, \bNil, \kappa}$, the underlying chain complex of $A$ is $\kappa$-compact in $\Mod_R$.

	We claim that if $A \in \CAlg_{R/ /k}^{\an}$ is $\kappa$-compact as a chain complex, then it is $\kappa$-compact in $\CAlg_{R/ /k}^{\an}$. It suffices to show that it is $\kappa$-compact as an object in $\CAlg_R^{\an}$. We can write $A$ as its Bar resolution in $\CAlg_R^{\an}$. Then,
	\begin{align*}
		\Map_{\CAlg_R^{\an}} (A, -) \simeq \Tot \Map _{\Mod_R} ((\LSym_R^* )^{\circ \bullet + 1} (A), -)
	\end{align*} 
	Since $\kappa > \aleph_0$, the $\LSym_R^*$ monad preserves $\kappa$-compact objects in $\Mod_R$. Totalisations commute with $\kappa$-filtered colimits, so we see that $A$ is $\kappa$-compact. Hence, all objects in $\CAlg_{R/ /k}^{\an, \bNil, \kappa}$ are $\kappa$-compact in $\CAlg_{R/ /k}^{\an}$.

	By (\cite{HTT}, Proposition 5.3.5.11), there is a fully faithful embedding $\Ind_{\kappa} (\CAlg_{R/ /k}^{\an, \bNil, \kappa}) \to \CAlg_{R/ /k}^{\an}$. Since $\CAlg_{R/ /k}^{\an, \bNil}$ is closed under $\kappa$-filtered colimits, we see that the essential image is contained in $ \CAlg_{R/ /k}^{\an, \bNil}$.

	By (\cite{HTT}, Proposition 5.3.5.11) again, it suffices to show that $\CAlg_{R/ /k}^{\an, \bNil, \kappa}$ generates all of $\CAlg_{R/ /k}^{\an, \bNil}$ under $\kappa$-filtered colimits. We will proceed by induction on refined Postnikov towers.

	Let $A \simeq \colim_I A_i$, where $I$ is $\kappa$-filtered and $A_i \in \CAlg_{R/ /k}^{\an, \bNil, \kappa}$ for all $i$, and let $V \in \Vect_k$. Write $V = \colim _J V_j$, where $J$ is $\kappa$-filtered and $V_j$ has dimension $< \kappa$. Suppose we have a map $A \to k \oplus V[n+1]$. It suffices to show that $A \times_{k \oplus V[n+1]} k$ can be written as a $\kappa$-filtered colimit of objects in $\CAlg_{R/ /k}^{\an, \bNil, \kappa}$.

	By the proof of (\cite{HTT}, Proposition 5.3.5.15), we can write this map as a colimit $\colim_L (A_l \to k \oplus V_l[n+1])$, where $L \subset I \times J$ is $\kappa$-filtered. Then, $A \times_{k \oplus V[n+1]} k \simeq \colim_L A_l \times_{k \oplus V_l [n+1]} k$.

	For all $l$, $A_l \in \CAlg_{R/ /k}^{\an, \bNil, \kappa}$ and $V_l$ has dimension $< \kappa$, so $A_l \times_{k \oplus V_l [n+1]} k \in \CAlg_{R/ /k}^{\an, \bNil, \kappa}$. Hence, we see that $A \times_{k \oplus V[n+1]} k$ is a $\kappa$-filtered colimit of objects in $\CAlg_{R/ /k}^{\an, \bNil, \kappa}$.
\end{proof}

\begin{example}
	If $\kappa = \aleph_0$, $\kappa$-filtered colimits are not well-behaved in $\CAlg_{R/ /k}^{\an, \bNil}$. For example, set $R = k$, and we have the colimit in $\CAlg^{\an}_{k/ /k}$
	\begin{align*}
		k[x^{1/2^\infty}]/x^2 \simeq \varinjlim_n k[x^{1/2^n}]/x^2
	\end{align*}
	where the map $k[x^{1/2^\infty}] \to k$ maps $x$ to zero. We claim that this colimit exists in $\CAlg_{k/ /k}^{\an, \bNil}$, and it is equal to $k$. 

	We will first prove that for $V \in \Mod_{k, \geq 0}$, $\Map_{\CAlg_{k/ /k}^{\an}} (k[x^{1/2^\infty}]/x^2, k \oplus V) \simeq *$, which is equivalent to $L_{k/k[x^{1/2^\infty}/x^2]} \simeq 0$.

	We can write $k[x^{1/2^n}]/x^2 \simeq k[y_n]/y_n^{2^{n+1}}$, where $y_n^{2^n} = x$. We have a derived pushout
	\[
		\begin{tikzcd}
			k[y_n ^{2^{n+1}}] \arrow{r} \arrow{d} & k[y_n] \arrow{d} \\
			k \arrow{r} & k[y_n]/y_n^{2^{n+1}}
		\end{tikzcd}
	\]
	The map $k[x^{1/2^n}]/x^2 \to k[x^{1/2^{n+1}}]/x^2$ is induced by the map $y_n \mapsto y_{n+1}^2$, Since the cotangent complex commutes with colimits,
	\begin{align*}
		L_{k/k[x^{1/2^\infty}]/x^2} \simeq \cofib(\colim L_{k/k[y_n^{2^{n+1}}]} \to \colim L_{k/k[y_n]})
	\end{align*}
	Remember that $L_{k/k[z]} \simeq k[1]$, and a squaring map between polynomial algebras in one generator induces a trivial map in $\cot$. Hence, both elements in the cofibre are zero, and $\cot(k[x^{1/2^\infty}]/x^2) \simeq 0$.

	Now suppose we have a cocone for the above colimit in $\CAlg_{k/ /k}^{\an, \bNil}$ with vertex $R$. Then, it factors through $k[x^{1/2^\infty}]/x^2$. Writing $R$ as a finite pullback of square-zero extensions, we see that $\Map_{\CAlg^{\an}_{k/ /k}} (k[x^{1/2^\infty}]/x^2, R) \simeq *$. Hence, the cocone factorises through $k$, which is initial and final in $\CAlg_{k/ /k}^{\an, \bNil}$, so the cone must be trivial. We conclude that the colimit of this diagram in $\CAlg_{k/ /k}^{\an, \bNil}$ exists and is equal to $k$.
\end{example}

\begin{defi}
	A \emph{$\kappa$-small nilpotent formal moduli problem} is a functor $F: \CAlg_{R/ /k}^{\an, \bNil, \kappa} \to \Space$ such that
	\begin{enumerate}
		\item $F(k) \simeq *$.
		\item If $A \to C$ and $B \to C$ are $\pi_0$-surjective maps in $\CAlg_{R/ /k}^{\an, \bNil, \kappa}$, the diagram
			\[
				\begin{tikzcd}
					F(A \times_C B) \arrow{r} \arrow{d} & F(A) \arrow{d} \\
					F(B) \arrow{r} & F(C)
				\end{tikzcd}
			\]
		is a pullback square.
	\end{enumerate}
	We will write $\Moduli_{R/k}^{\blacksquare, \kappa}$ for the $\infty$-category of $\kappa$-small ultrasolid formal moduli problems.
\end{defi}

\begin{construction}[$\kappa$-small tangent complex]
	Given $F \in \Moduli_{R/k}^{\blacksquare, \kappa}$, we will still write 
	\begin{align*}
		T_{k/F}[1] : (\Mod_{k, \geq 0}^{< \infty, \kappa})_{L_{k/R}[-1]/} \to \Space
	\end{align*}
	for the functor
	\begin{align*}
		(L_{k/R}[-1] \xrightarrow{\alpha} V) \mapsto F(k \oplus_\alpha V)
	\end{align*}
	By \cref{lemma-cotcomplexasfunctor}, we can identify $T_{k/F}[1]$ with an object in $(\Solid_k^\kappa)_{/L_{k/R}^\vee [1]}$.
\end{construction}

\begin{prop}
	\label{prop-smallfmp}
	Let $\kappa > \kappa_0$, where $\kappa_0$ is the cardinal in \cref{lemma-IndbNil}, be an infinite strong limit cardinal. Then, left Kan extension gives a fully faithful embedding
	\begin{align*}
		\Phi: \Moduli_{R/k}^{\blacksquare, \kappa} \to \Moduli_{R/k}^\blacksquare
	\end{align*}
	Additionally, $F \in \Moduli_{R/k}^\blacksquare$ is in the image of this embedding if and only if the tangent complex $T_{k/F}$ is $\kappa$-ultrasolid.
\end{prop}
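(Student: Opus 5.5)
The plan is to use \cref{lemma-IndbNil}, which identifies $\CAlg_{R/ /k}^{\an, \bNil}$ with $\Ind_\kappa(\CAlg_{R/ /k}^{\an, \bNil, \kappa})$. Under this identification, left Kan extension along the inclusion $j: \CAlg_{R/ /k}^{\an, \bNil, \kappa} \subset \CAlg_{R/ /k}^{\an, \bNil}$ is an equivalence between all functors on the small category and functors on the big category that preserve $\kappa$-filtered colimits (\cite{HTT}, Proposition 5.3.5.10). Its inverse is restriction. Consequently $\Phi := j_!$ is fully faithful, since $j$ is fully faithful and $j^* j_! \simeq \id$. What remains is to show two things: $\Phi$ takes $\kappa$-small formal moduli problems to nilpotent formal moduli problems, and its essential image is exactly those $F$ whose tangent complex is $\kappa$-ultrasolid.

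\textbf{$\Phi$ lands in $\Moduli_{R/k}^\blacksquare$.} Let $F \in \Moduli_{R/k}^{\blacksquare, \kappa}$. Then $\Phi F(k) \simeq F(k) \simeq *$. By the remark following \cref{lemma-cotcomplexasfunctor}, combined with \cref{prop-smallsurjections} and (\cite{DAGX}, Proposition 1.1.19), it suffices to check that $\Phi F$ preserves elementary pullbacks $A \times_{k \oplus V[n]} k$ for $V \in \Vect_k$.

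I would argue exactly as in the last paragraph of the proof of \cref{lemma-IndbNil}. Write $A = \colim_I A_i$ with the $A_i$ $\kappa$-small and $V = \colim_J V_j$ with each $V_j$ of dimension $<\kappa$. Then write the map $A \to k \oplus V[n]$ as a $\kappa$-filtered colimit, indexed by some $L$, of maps $A_l \to k \oplus V_l[n]$. The pullbacks $A_l \times_{k\oplus V_l[n]} k$ are $\kappa$-small, and their colimit is $A \times_{k\oplus V[n]} k$. Since $\Phi F$ preserves $\kappa$-filtered colimits, and these commute with finite limits in $\Space$, $\Phi F$ preserves the pullback because $F$ preserves each pullback in the diagram. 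The maps $A_l \to k \oplus V_l[n]$ are automatically $\pi_0$-surjective, since $n > 0$.

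Finally, $T_{k/\Phi F}[1]$ is the left Kan extension of the $\kappa$-small tangent complex from $(\Mod^{<\infty,\kappa}_{k,\geq 0})_{L_{k/R}[-1]/}$. This uses that $k \oplus_\alpha V$ commutes with $\kappa$-filtered colimits in $V$; that holds because $\kappa$-filtered colimits commute with finite limits and $\kappa$-filtered colimits in $\CAlg_{R/ /k}^{\an, \bNil}$ are computed in $\CAlg_{R/ /k}^{\an}$. By \cref{lemma-weirdKan} and \cref{lemma-cotcomplexasfunctor}, this left Kan extension is an object of $(\Solid^\kappa_k)_{/L^\vee_{k/R}[1]}$. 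This gives the "only if" direction of the characterisation of the image.

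\textbf{Image.} Conversely, suppose $T_{k/F}$ is $\kappa$-ultrasolid. By \cref{prop-accessiblefmp}, $F$ commutes with all $\kappa$-filtered colimits in $\CAlg_{R/ /k}^{\an, \bNil}$; these are computed in $\CAlg_{R/ /k}^{\an}$ by the first part of \cref{lemma-IndbNil}. Hence $F \simeq \Phi(F|_{\CAlg^{\an,\bNil,\kappa}_{R/ /k}})$. The restriction is clearly a $\kappa$-small nilpotent formal moduli problem, because $\CAlg_{R/ /k}^{\an, \bNil, \kappa}$ is closed under the relevant $\pi_0$-surjective pullbacks: the graded pieces $\mathfrak m^j\pi_i/\mathfrak m^{j+1}$ of a pullback stay of dimension $<\kappa$ by the long exact sequence and the strong limit hypothesis.

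I expect the main obstacle to be the pullback-preservation step: producing the compatible $\kappa$-filtered presentation of a map $A \to k \oplus V[n]$ together with the pullback. I would handle it by reusing the \cite{HTT}, Proposition 5.3.5.15 argument already invoked in \cref{lemma-IndbNil}.
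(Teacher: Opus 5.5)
Your proposal is correct and follows the paper's proof essentially step for step. It gets full faithfulness from the $\Ind_\kappa$-description in \cref{lemma-IndbNil}, preserves elementary pullbacks by presenting $B \to k \oplus V[n]$ as a $\kappa$-filtered colimit of maps $B_l \to k \oplus V_l[n]$ between $\kappa$-small objects via (\cite{HTT}, Proposition 5.3.5.15), and characterises the image via \cref{prop-accessiblefmp} together with the left Kan extension property of the tangent complex. If anything you are more explicit than the paper, since you also check that $T_{k/\Phi F}[1]$ is accessible and that $\CAlg_{R/ /k}^{\an, \bNil, \kappa}$ is closed under the relevant $\pi_0$-surjective pullbacks, both of which the paper leaves implicit.
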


\begin{rem}
	Using the above, from now on we will identify $\Moduli_{R/k}^{\blacksquare, \kappa}$ with the full subcategory of $\Moduli_{R/k}^\blacksquare$ as described above.
\end{rem}

\begin{proof}
	Left Kan extension gives a fully faithful embedding
	\begin{align*}
		\Moduli_{R/k}^{\blacksquare, \kappa} \to \Fun_\kappa (\CAlg_{R/ /k}^{\an, \bNil}, \Space)
	\end{align*}
	where $\Fun_\kappa (\CAlg_{R/ /k}^{\an, \bNil}, \Space) \subset \Fun (\CAlg_{R/ /k}^{\an, \bNil}, \Space)$ is the full subcategory of $\kappa$-filtered-colimit-preserving functors (\cref{lemma-IndbNil} and \cite{HTT}, Lemma 5.3.5.8). So we just need to check that if $F \in \Moduli_{R/k}^{\blacksquare, \kappa}$, its left Kan extension $\Phi (F) : \CAlg_{R/ /k}^{\an, \bNil} \to \Space$ is a nilpotent formal moduli problem. Given a pullback diagram of the form
	\[
		\begin{tikzcd}
			A	\arrow{r} \arrow{d} & B \arrow{d} \\
			k \arrow{r} & k \oplus V[n]
		\end{tikzcd}
	\]
	where $A \in \CAlg_{R/ /k}^{\an, \bNil}$ and $V \in \Vect_k$, we must show that it is preserved by $\Phi(F)$.

	By (\cite{HTT}, Proposition 5.3.5.15), the map $B \to k \oplus V[n]$ can be written as a $\kappa$-filtered colimit of maps of the form $B_i \to k \oplus V_i[n]$, where $B_i \in \CAlg_{R/ /k}^{\an, \bNil, \kappa}$ and $V_i \in \Vect_k$ has dimension $< \kappa$. Since $\Phi(F)$ commutes with $\kappa$-filtered colimits and finite limits commute with filtered colimits,
	\begin{align*}
		\Phi(F)(A) & \simeq \Phi(F)(\colim B_i \times _{k \oplus V_i[n]} k) \\
				  & \simeq \colim F(B_i \times_{k \oplus V_i[n]} k) \\
				  & \simeq \colim F(B_i) \times _{F(k \oplus V_i[n])} * \\
				  & \simeq \Phi(F)(B) \times _{\Phi(F)(k \oplus V[n])} *
	\end{align*}
	This shows $\Phi(F) \in \Moduli_{R/k}^\blacksquare$.

	We now show the last part. Suppose $F \in \Moduli_{R/k}^\blacksquare$ such that $T_{k/F}[1] $ is $\kappa$-ultrasolid. Then, by \cref{prop-accessiblefmp}, $F$ commutes with $\kappa$-filtered colimits, so $F$ is left Kan extended from $\CAlg_{R/ /k}^{\an, \bNil, \kappa}$ (\cite{HTT}, Lemma 5.3.5.8). 

	Conversely, if $F$ is left Kan extended from $\CAlg_{R/ /k}^{\an, \bNil, \kappa}$, it commutes with $\kappa$-filtered colimits. Then, $T_{k/F}[1] : (\Mod_{k, \geq 0}^{< \infty})_{L_{k/R}[-1] /} \to \Space$ commutes with $\kappa$-filtered colimits, so it is left Kan extended from $(\Mod_{k, \geq 0}^{< \infty, \kappa})_{L_{k/R}[-1]/}$. Thus, $T_{k/F}[1]$ is $\kappa$-ultrasolid (\cref{lemma-cotcomplexasfunctor}).
\end{proof}

\begin{cor}
	\label{cor-modulicolim}
	There is an equivalence of $\infty$-categories
	\begin{align*}
		\Moduli_{R/k}^{\blacksquare} \simeq \varinjlim _\kappa \Moduli_{R/k}^{\blacksquare, \kappa}
	\end{align*}
	where the colimit is taken over all uncountable strong limit cardinals $\kappa$.
\end{cor}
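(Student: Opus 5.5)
The plan is to deduce \cref{cor-modulicolim} formally from \cref{prop-smallfmp}. For uncountable strong limit cardinals $\kappa < \kappa'$ (both above $\kappa_0$), the fully faithful left Kan extension functors $\Phi_\kappa : \Moduli_{R/k}^{\blacksquare, \kappa} \to \Moduli_{R/k}^\blacksquare$ are compatible with one another. Left Kan extension along $\CAlg_{R/ /k}^{\an, \bNil, \kappa} \subset \CAlg_{R/ /k}^{\an, \bNil, \kappa'} \subset \CAlg_{R/ /k}^{\an, \bNil}$ is transitive, so $\Phi_\kappa \simeq \Phi_{\kappa'} \circ \Phi_{\kappa}^{\kappa'}$. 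Here $\Phi_\kappa^{\kappa'}$ is left Kan extension from the $\kappa$-small to the $\kappa'$-small category. It is fully faithful, with image exactly those $\kappa'$-small problems whose tangent complex is $\kappa$-ultrasolid. This follows either by rerunning the argument of \cref{prop-smallfmp} with $\CAlg_{R/ /k}^{\an, \bNil, \kappa'}$ in place of $\CAlg_{R/ /k}^{\an, \bNil}$, or by noting that $\Phi_{\kappa'}$ is fully faithful and $\Phi_\kappa$ lands in its image.

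This gives a diagram $\kappa \mapsto \Moduli_{R/k}^{\blacksquare, \kappa}$ of fully faithful functors, indexed by the (large, filtered) poset of uncountable strong limit cardinals $\kappa > \kappa_0$, which is cofinal among all of them. It comes with a cocone to $\Moduli_{R/k}^\blacksquare$. A filtered colimit of fully faithful embeddings of $\infty$-categories is computed as the union of the essential images. So the induced functor $\varinjlim_\kappa \Moduli_{R/k}^{\blacksquare, \kappa} \to \Moduli_{R/k}^\blacksquare$ is fully faithful: mapping spaces in the colimit are the colimit of mapping spaces, which are constant along the diagram.

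It remains to show essential surjectivity. Given $F \in \Moduli_{R/k}^\blacksquare$, by definition $T_{k/F}[1]$ lies in $(\Solid_k)_{/L_{k/R}^\vee[1]}$. Via \cref{lemma-cotcomplexasfunctor}, this means the tangent functor is accessible, that is, left Kan extended from $(\Mod_{k, \geq 0}^{< \infty, \kappa})_{L_{k/R}[-1]/}$ for some $\kappa$. Hence $T_{k/F}[1]$ is $\kappa$-ultrasolid for some uncountable strong limit $\kappa$, which we may enlarge beyond $\kappa_0$, and $\kappa$-ultrasolidity persists for larger cardinals. By the last part of \cref{prop-smallfmp}, $F$ lies in the image of $\Phi_\kappa$.

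The main obstacle is set-theoretic: the indexing poset is large, so the colimit has to be interpreted in a larger universe, or as the union of full subcategories of $\Moduli_{R/k}^\blacksquare$. I would take the second reading, which also yields local smallness. Any two objects lie in some $\Moduli_{R/k}^{\blacksquare,\kappa}$, and that category is a full subcategory of a functor category on an essentially small category, hence locally small.
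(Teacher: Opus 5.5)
Your proposal is correct and takes the same route as the paper: the paper also deduces the corollary directly from \cref{prop-smallfmp}, using that the tangent complex of any nilpotent formal moduli problem is $\kappa$-ultrasolid for some $\kappa$. You additionally spell out details the paper leaves implicit, namely the compatibility of the left Kan extensions, cofinality of cardinals above $\kappa_0$, the union-of-full-subcategories reading of the large colimit, and local smallness.
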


\begin{proof}
	This follows from \cref{prop-smallfmp}, since for any $F \in \Moduli_{R/k}^\blacksquare$, its tangent complex $T_F$ is $\kappa$-ultrasolid for some $\kappa$.
\end{proof}

\begin{prop}
	\label{prop-deftheory}
	Let $\sigma$ be an uncountable strong limit cardinal. The $\infty$-category $\CAlg_{R/ /k}^{\an, \surj}$, the functor $\KD_\sigma: \CAlg_{R/ /k}^{\an, \bNil} \to \left(  \LieAlgd_{\Delta, k/R}^{\pi, \blacksquare, \sigma} \right) ^{\op}$ and the set of spectrum objects $\left\{\{k \oplus V[n]\}_{n > 0} | V \in \Vect_k^\sigma\right\}$ are a deformation theory in the sense of (\cite{DAGX}, Definition 1.3.9).
\end{prop}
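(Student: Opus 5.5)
We check the axioms of a deformation theory in \cite{DAGX}, Definition 1.3.9, one at a time. Recall that this requires the following.
\begin{enumerate}
\item[(a)] A deformation context $(\CAlg_{R/ /k}^{\an, \surj}, \{E_\alpha\})$: a presentable $\infty$-category with a final object and a set of spectrum objects.
\item[(b)] A functor $\KD_\sigma$ to the opposite of a presentable $\infty$-category $\mathcal{B}$, here $\mathcal{B} = \LieAlgd_{\Delta, k/R}^{\pi, \blacksquare, \sigma}$, which admits a right adjoint.
\item[(c)] $\KD_\sigma$ carries small objects to a full subcategory $\mathcal{B}_0$ on which the unit is an equivalence.
\item[(d)] $\mathcal{B}_0$ is stable under the relevant pushouts and contains the objects $\KD_\sigma(\Omega^{\infty - n} E_\alpha)$.
\item[(e)] There are conservative, sifted-colimit-preserving functors $e_\alpha : \mathcal{B} \to \Sp$ compatible with the spectrum objects.
\end{enumerate}

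\textbf{Deformation context.} $\CAlg_{R/ /k}^{\an, \surj}$ is presentable with final object $k$. For $V \in \Vect_k^\sigma$, the objects $k \oplus V[n]$, $n > 0$, assemble into a spectrum object in $\CAlg_{R/ /k}^{\an}$ via the pullback squares $k \oplus V[n] \simeq k \times_{k \oplus V[n+1]} k$. By \cref{prop-smallsurjections}, the small objects for this context are exactly $\CAlg_{R/ /k}^{\an, \bNil}$. Strictly speaking, \cite{DAGX} requires the functor on the whole of $\CAlg_{R/ /k}^{\an, \surj}$; we use $\KD_\sigma$ defined there (composing $\KD$ with the coreflection of \cref{construction-smallKD}) and note that only its restriction to small objects is used. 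The right adjoint is $C^*_\sigma$ from \cref{construction-smallKD}.

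\textbf{Full faithfulness and pushouts.} Take $\mathcal{B}_0$ to be the essential image of $\CAlg_{R/ /k}^{\an, \bNil, \sigma}$, or more precisely of the objects $A$ with $\cot(A)$ $\sigma$-small. On these objects $\KD_\sigma$ agrees with $\KD$ by \cref{lemma-smallKD}. The unit $A \to C^*_\sigma \KD_\sigma(A)$ is then an equivalence by \cref{prop-KDfullyfaithful}, combined with the compatibility $C^*_{\sigma'}|_{\sigma} \simeq C^*_\sigma$ from \cref{lemma-smallKD}.

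Stability of $\mathcal{B}_0$ under the required pushouts comes from \cref{prop-KDpushout}: for $\pi_0$-surjective maps $A \to C \leftarrow B$ of bounded nilpotent objects, $\KD(A \times_C B) \simeq \KD(A) \sqcup_{\KD(C)} \KD(B)$. We must also check that $\sigma$-smallness is preserved by such pullbacks. Since $\cot$ turns these pullbacks into pushouts of modules, the size bound is preserved.

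\textbf{The functors $e_\alpha$.} For $V \in \Vect_k^\sigma$ and $n$, define $e_V$ by
\begin{align*}
\mathfrak{g} \mapsto \Map_{\LieAlgd}(\KD_\sigma(k \oplus V[n]), \mathfrak{g}).
\end{align*}
Since $\KD(k \oplus V[n])$ is the free algebra $\Lie^{\pi}_{\Delta, k/R}(V^\vee[-n] \to L_{k/R}^\vee[1])$ with the zero map, these are the functors $\mathfrak{g} \mapsto \Map_{(\Solid_k)_{/L_{k/R}^\vee[1]}}(V^\vee[-n], \mathfrak{g})$. Letting $n$ vary, they assemble into spectrum-valued functors. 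Because the monad preserves sifted colimits and the objects $V^\vee[-n]$ are compact projective, up to shift, in $\Solid_k$, these functors preserve sifted colimits.

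\textbf{Main obstacle.} The step I expect to be hardest is conservativity of the family $\{e_V\}$. This requires the objects $\prod_I k = V^\vee$, with $\dim V < \sigma$, together with all their shifts, to jointly detect equivalences in $(\Solid_k^\sigma)_{/L_{k/R}^\vee[1]}$. For plain $\Solid_k^\sigma$ this holds because these objects are compact projective generators. Over $L_{k/R}^\vee[1]$, however, we must account for the structure map. The plan is to use fibres over $L_{k/R}^\vee[1]$: a map over $L_{k/R}^\vee[1]$ is an equivalence if and only if it is an equivalence on underlying modules. Mapping $V^\vee[-n] \xrightarrow{0} L_{k/R}^\vee[1]$ into the slice computes maps into the fibre shifted appropriately, and since $\Solid_k$ is stable this detects equivalences.

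Finally, $\mathcal{B}$ is presentable: it is monadic over a presentable category via an accessible (sifted-colimit-preserving) monad.
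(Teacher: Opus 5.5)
Your overall route is the paper's. (D1)--(D2) come from presentability of $\LieAlgd_{\Delta,k/R}^{\pi,\blacksquare,\sigma}$ and the right adjoint $C^*_\sigma$ of \cref{construction-smallKD}. (D3) comes from \cref{prop-KDfullyfaithful} and \cref{lemma-smallKD}. (D4) comes from rewriting $e_V(\mathfrak{g})$ as the mapping spectrum from $V^\vee$ into $\fib(\mathfrak{g}\to L_{k/R}^\vee[1])$, using that the $\prod_I k$ with $|I|<\sigma$ are compact projective generators of $\Solid_k^\sigma$. You are more careful than the paper about defining $\KD_\sigma$ on all of $\CAlg_{R/ /k}^{\an,\surj}$. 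However, the pushout step you add for (D3) is argued incorrectly.

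\textbf{The pushout step.} You say the $\sigma$-bound survives the pullbacks $A\times_{k\oplus V[n]}k$ because $\cot$ turns them into (co)limits of modules. This is false. \cref{prop-KDpushout} gives a pushout of partition Lie algebroids, and the forgetful functor to $(\Solid_k)_{/L_{k/R}^\vee[1]}$ does not preserve pushouts.

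For example, take $R=k=\QQ$. Then $k[x]/x^2\simeq k\times_{k\oplus k[1]}k$, with $\cot(k)=0$ and $\cot(k\oplus k[1])\simeq k[1]$. A module-level formula would therefore predict $\cot(k[x]/x^2)\simeq k$, but in fact $\cot(k[x]/x^2)\simeq k\oplus k[1]$.

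The argument has to go through Lie algebroids. Let $\mathcal{B}_0$ be the essential image under $\KD$ of the small objects, and induct on the number of elementary steps, using:
\begin{itemize}
\item $\KD(k)$ is initial;
\item $\KD(k\oplus V[n])$ is free on $V^\vee[-n]\xrightarrow{0}L_{k/R}^\vee[1]$;
\item \cref{prop-KDpushout};
\item $\LieAlgd_{\Delta,k/R}^{\pi,\blacksquare,\sigma}$ is closed under colimits (\cref{construction-smallKD}).
\end{itemize}
This shows $\mathcal{B}_0\subset\LieAlgd_{\Delta,k/R}^{\pi,\blacksquare,\sigma}$, so $\KD_\sigma\simeq\KD$ on small objects.

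Two further points in (D3) need stating. First, (D3) concerns pushouts along an arbitrary map $\KD(k\oplus V[n])\to K$ in $\LieAlgd_{\Delta,k/R}^{\pi,\blacksquare,\sigma}$. You need the full faithfulness in \cref{prop-KDfullyfaithful} to write this map as $\KD$ of a ring map $A\to k\oplus V[n]$ before \cref{prop-KDpushout} applies. Second, (D3) asks for the Lie-side unit $K\to\KD_\sigma C^*_\sigma K$ to be an equivalence, whereas you verify the ring-side unit. The former follows from the latter by a triangle identity.

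\textbf{The small objects.} The small objects of this deformation context are not all of $\CAlg_{R/ /k}^{\an,\bNil}$. The spectrum objects only use $V$ with $\dim V<\sigma$, so finitely many elementary steps only produce algebras built from $\sigma$-small square-zero extensions, essentially $\CAlg_{R/ /k}^{\an,\bNil,\sigma}$. \cref{prop-smallsurjections} instead allows arbitrary modules. This does not affect (D1)--(D4), but it is why \cref{cor-smallLP} identifies $\LieAlgd_{\Delta,k/R}^{\pi,\blacksquare,\sigma}$ with $\Moduli_{R/k}^{\blacksquare,\sigma}$ rather than with functors on all bounded nilpotent algebras.
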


\begin{proof}
	Let $\catD \subset \LieAlgd_{\Delta, k/R}^{\pi, \blacksquare, \sigma}$ be the full subcategory spanned by the image of $\CAlg_{R/ /k}^{\an, \bNil, \sigma}$ under $\KD_\sigma$. We need to check conditions (D1)-(D4) of (\cite{DAGX}, Definition 1.3.1 and Definition 1.3.9).

	Firstly, the $\infty$-category $\LieAlgd_{\Delta, k/R}^{\pi, \blacksquare, \sigma}$ is presentable (\cite{monads}, Corollary 6.8). By \cref{construction-smallKD}, $\KD_\sigma$ has a right adjoint $C^*_\sigma : (\LieAlgd_{\Delta, k/R}^{\pi, \blacksquare, \sigma})^{\op} \to \CAlg^{\an, \surj}_{R/ /k}$. This verifies (D1) and (D2).

	By the proof of \cref{prop-KDfullyfaithful}, the Koszul duality adjunction restricts to an adjoint equivalence $\KD_\sigma : \CAlg_{R/ /k}^{\an, \bNil, \sigma} \leftrightarrows \catD^{\op} : C^*$. It follows that for any $\mathfrak{g} \in \catD$ the unit map $\mathfrak{g} \to \KD_\sigma(C_\sigma^* (\mathfrak{g}))$ is an equivalence. This verifies (D3).

	To each spectrum object $\{k \oplus V[n]\}_{n > 0}$ where $V \in \Vect_k^{\sigma}$, we have a functor $\LieAlgd_{\Delta, k/R}^{\pi, \blacksquare, \sigma} \to \Sp$ as follows: given $\mathfrak{g} \in \LieAlgd_{\Delta, k/R}^{\pi, \blacksquare, \sigma}$, the collection of spaces $\{\Map_{\LieAlgd_{\Delta, k/R}^{\pi, \blacksquare, \sigma}} (\KD (k \oplus V[n]), \mathfrak{g}) \}_{n > 0}$ forms a spectrum. Additionally, $\KD(k \oplus V[n]) \simeq \Lie_{\Delta, k/R}^\pi ( V \xrightarrow{0} L_{k/R}^\vee [1] )$, so we can identify this spectrum with $\Map^s _{\Solid_k^\sigma} (V^\vee,	\fib( \mathfrak{g} \to L_{k/R}^\vee [1] ))$, the mapping spectrum in the stable category $\Solid_k^\sigma$. If we consider $\fib (\mathfrak{g} \to L_{k/R}^\vee [1]) \in \Solid_k^\sigma$ as a functor $(\ProVect_{< \sigma} )^{\op} \to \Sp$, this mapping spectrum is just $\fib (\mathfrak{g} \to L_{k/R}^\vee [1]) (V^\vee)$.

	The monad $\Lie_{\Delta, k/R}^\pi$ commutes with sifted colimits, and colimits of ultrasolid modules are computed pointwise, so the functor $\mathfrak{g} \mapsto \fib (\mathfrak{g} \to L_{k/R}^\vee [1]) (V^\vee)$ commutes with sifted colimits. Additionally, the collection of these functors (where we take all $V \in \Vect_k^{\sigma}$) detects equivalences. This is because a map of partition Lie algebroids is an equivalence if and only if it is an equivalence of the underlying ultrasolid modules, and a map of $\sigma$-ultrasolid modules is an equivalence if and only if it is an equivalence at every section. This verifies (D4) and we are done.
\end{proof}

We can now conclude the following by combining \cref{prop-deftheory} and (\cite{DAGX}, Theorem 1.3.12).

\begin{cor}
	\label{cor-smallLP}
	Let $\sigma$ be an uncountable strong limit cardinal. Then, there is an equivalence of $\infty$-categories
	\begin{align*}
		\LieAlgd_{\Delta, k/R}^{\pi, \blacksquare, \sigma} \xrightarrow{\simeq} \Moduli_{R/k}^{\blacksquare, \sigma}
	\end{align*}
\end{cor}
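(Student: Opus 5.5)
The plan is to invoke Lurie's axiomatic theorem (\cite{DAGX}, Theorem 1.3.12) directly, using \cref{prop-deftheory}, and then to identify the target category. That theorem says that for a deformation theory $(\catC, \KD, \{E_\alpha\})$ with presentable Koszul dual $\catD$, the functor
\begin{align*}
	\Psi: \catD \to \Fun(\catC^{\art}, \Space), \qquad \mathfrak{g} \mapsto \Map_{\catD}(\KD(-), \mathfrak{g})
\end{align*}
is an equivalence onto the formal moduli problems of $\catC$. Here $\catC^{\art}$ is the full subcategory of objects built from the final object by finitely many pullbacks along the $\Omega^\infty$ of the chosen spectrum objects. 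So the first step is to apply this with $\catD = \LieAlgd_{\Delta, k/R}^{\pi, \blacksquare, \sigma}$ and spectrum objects $\{k \oplus V[n]\}_{n>0}$ for $V \in \Vect_k^\sigma$. This gives an equivalence between $\LieAlgd_{\Delta, k/R}^{\pi, \blacksquare, \sigma}$ and functors on the "Artinian" objects of this deformation theory that preserve the relevant pullbacks.

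The second step is to identify these Artinian objects with $\CAlg_{R/ /k}^{\an, \bNil, \sigma}$. They are the objects obtained from $k$ by iterated pullbacks $A = B \times_{k \oplus V[n]} k$ with $V \in \Vect_k^\sigma$ and $n > 0$. Every such object is small, hence bounded and nilpotent by \cref{prop-smallsurjections}, and the dimension bounds show it lies in the $\sigma$-small part. Conversely, for $A \in \CAlg_{R/ /k}^{\an, \bNil, \sigma}$, the refined Postnikov tower (\cref{construction-refinedPostnikov}) writes $A$ as a finite sequence of such pullbacks. The layers there are $\mathfrak{m}^i\pi_{n}(A)/\mathfrak{m}^{i+1}$, which are $k$-vector spaces of dimension $< \sigma$ by definition of the $\sigma$-small subcategory. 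The same argument identifies Lurie's pullback condition with condition (2) in the definition of $\Moduli_{R/k}^{\blacksquare,\sigma}$. By the remark following \cref{lemma-cotcomplexasfunctor}, together with \cite{DAGX}, Proposition 1.1.19, preserving $\pi_0$-surjective pullbacks is equivalent to preserving the elementary pullbacks along $k \to k \oplus V[n]$.

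The last step is to check that $\Psi$ agrees with the tangent complex, so that the equivalence is the expected one. For $\mathfrak{g}$ in the source, $\Psi(\mathfrak{g})(k \oplus_\alpha V) \simeq \Map(\KD(\sqz(V)), \mathfrak{g}) \simeq \Map(V^\vee, \mathfrak{g})$ over $L_{k/R}^\vee[1]$, since $\KD(\sqz(V))$ is free by construction. This matches $T_{k/F}[1]$ under \cref{lemma-cotcomplexasfunctor}.

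I expect the main obstacle to be the second step, namely matching Lurie's Artinian objects exactly with the $\sigma$-small bounded nilpotent algebras. Cardinality control in the refined Postnikov tower is not literally stated as closure under the relevant pullbacks. I would handle it by checking directly, from the long exact sequence of homotopy groups, that a pullback $B \times_{k\oplus V[n]} k$ with $B$ $\sigma$-small and $\dim V < \sigma$ is again $\sigma$-small. This uses that $\sigma$ is a strong limit, so finite extensions of $\sigma$-small graded pieces stay $\sigma$-small.
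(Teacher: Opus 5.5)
Your proposal is correct and follows the paper's argument: the paper proves \cref{cor-smallLP} by combining \cref{prop-deftheory} with (\cite{DAGX}, Theorem 1.3.12), and your identification of Lurie's Artinian objects (and of his pullback condition) with $\CAlg_{R/ /k}^{\an, \bNil, \sigma}$ and condition (2), via refined Postnikov towers, spells out what the paper leaves implicit. One small correction: closure of the $\sigma$-small condition under the extensions in your long exact sequence only needs $\sigma$ to be infinite, not a strong limit.
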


We will now put together this result for all cardinals.

\begin{theorem}
	\label{theorem-nilpotentLP}
	The tangent complex $T_{k/-}: \Moduli_{R/k}^\blacksquare \to \Solid_k$ lifts to an equivalence of $\infty$-categories 
	\begin{align*}
		\Moduli_{R/k}^{\blacksquare} \simeq \LieAlgd_{\Delta, k/R}^{\pi, \blacksquare}
	\end{align*}
\end{theorem}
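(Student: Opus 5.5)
The plan is to assemble the cardinal-by-cardinal equivalences of \cref{cor-smallLP} into a global one. Both sides are filtered unions over uncountable strong limit cardinals $\sigma$. On the moduli side, \cref{cor-modulicolim} gives $\Moduli_{R/k}^{\blacksquare} \simeq \varinjlim_\sigma \Moduli_{R/k}^{\blacksquare, \sigma}$, with fully faithful transition maps given by left Kan extension (\cref{prop-smallfmp}). On the Lie side, $\LieAlgd_{\Delta, k/R}^{\pi, \blacksquare, \sigma} \subset \LieAlgd_{\Delta, k/R}^{\pi, \blacksquare}$ is a full subcategory by \cref{construction-smallKD}, and these inclusions are nested. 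Every partition Lie algebroid has underlying ultrasolid module in $\Solid_k^\sigma$ for some $\sigma$, since $\Solid_k = \bigcup_\sigma \Solid_k^\sigma$. Hence $\LieAlgd_{\Delta, k/R}^{\pi, \blacksquare} \simeq \varinjlim_\sigma \LieAlgd_{\Delta, k/R}^{\pi, \blacksquare, \sigma}$. This holds because a filtered union of full subcategories along fully faithful inclusions, exhausting the objects, is the colimit.

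The first main step is to check that the equivalences $\Psi_\sigma: \LieAlgd_{\Delta, k/R}^{\pi, \blacksquare, \sigma} \to \Moduli_{R/k}^{\blacksquare, \sigma}$ of \cref{cor-smallLP} are compatible for $\sigma < \sigma'$. Concretely, $\Psi_\sigma(\mathfrak{g})$ is the functor $A \mapsto \Map(\KD_\sigma(A), \mathfrak{g})$ on $\CAlg_{R/ /k}^{\an, \bNil, \sigma}$, and we need $\Phi(\Psi_\sigma(\mathfrak{g})) \simeq \Psi_{\sigma'}(\mathfrak{g})$. I would use \cref{lemma-smallKD}: for $A$ with $\sigma$-small cotangent fibre, $\KD_\sigma(A) \simeq \KD_{\sigma'}(A) \simeq \KD(A)$. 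This identifies $\Psi_{\sigma'}(\mathfrak{g})$ and $\Psi_\sigma(\mathfrak{g})$ on $\CAlg_{R/ /k}^{\an, \bNil, \sigma}$. Here one must note that objects of $\CAlg_{R/ /k}^{\an, \bNil, \sigma}$ have $\sigma$-small $\cot$, which follows from the refined Postnikov tower and the pushout property \cref{prop-KDpushout}.

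To conclude $\Phi(\Psi_\sigma(\mathfrak{g})) \simeq \Psi_{\sigma'}(\mathfrak{g})$, I would use that $\Psi_{\sigma'}(\mathfrak{g})$ has tangent complex $\fib(\mathfrak{g} \to L_{k/R}^\vee[1])$. This is $\sigma$-ultrasolid, so by \cref{prop-smallfmp} the functor $\Psi_{\sigma'}(\mathfrak{g})$ lies in the image of $\Phi$, that is, it is left Kan extended from its restriction. This yields a commuting system of equivalences. Passing to the colimit gives the equivalence $\Moduli_{R/k}^\blacksquare \simeq \LieAlgd_{\Delta, k/R}^{\pi, \blacksquare}$.

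The second main step is to identify the underlying module with the tangent complex. Evaluating $\Psi(\mathfrak{g})$ on $k \oplus_\alpha V$ gives $\Map(\Lie^\pi_{\Delta,k/R}(V^\vee \to L_{k/R}^\vee[1]), \mathfrak{g}) \simeq \Map_{/L_{k/R}^\vee[1]}(V^\vee, \mathfrak{g})$. Under \cref{lemma-cotcomplexasfunctor}, this says exactly that $T_{k/\Psi(\mathfrak{g})}[1]$ corresponds to $\mathfrak{g}$. So the forgetful functor composed with the inverse equivalence is $T_{k/-}[1]$, as in the proof of \cref{prop-deftheory}.

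I expect the main obstacle to be the coherence of the compatibility. The issue is to promote the objectwise identifications into natural equivalences of functors compatible with the transition maps, rather than only on objects. I would handle this by working with the adjunctions directly. The restriction of $\Psi_{\sigma'}$ to $\LieAlgd^{\sigma}$, followed by restriction along $\CAlg^{\bNil,\sigma} \subset \CAlg^{\bNil,\sigma'}$, is naturally equivalent to $\Psi_\sigma$. The reason is that both functors are Yoneda-type functors built from $\KD_\sigma$ and $\KD_{\sigma'}$, which agree naturally by \cref{lemma-smallKD}. Full faithfulness of $\Phi$ then upgrades this to the required natural equivalence.
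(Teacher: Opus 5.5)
Your proposal is correct and follows essentially the same route as the paper. You reduce to commutativity of the square $\Phi_\sigma^{\sigma'}\circ\Psi_\sigma \simeq \Psi_{\sigma'}\circ(\text{inclusion})$, identify both sides on $\CAlg_{R/ /k}^{\an,\bNil,\sigma}$ via \cref{lemma-smallKD}, and conclude that $\Psi_{\sigma'}(\mathfrak g)$ is left Kan extended because its tangent complex is $\sigma$-ultrasolid; the paper cites \cref{prop-accessiblefmp} directly for this last step, which is what underlies the criterion in \cref{prop-smallfmp} that you invoke. Your identification of the underlying module with $T_{k/F}[1]$ matches the paper's closing remark, and you usefully make explicit that objects of $\CAlg_{R/ /k}^{\an,\bNil,\sigma}$ need $\sigma$-small cotangent fibre for \cref{lemma-smallKD} to apply, a point the paper uses without comment.
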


\begin{proof}
	We just need to check that the equivalence $\LieAlgd_{\Delta, k/R}^{\pi, \blacksquare, \sigma} \simeq \Moduli_{R/k}^{\blacksquare, \sigma}$ in \cref{cor-smallLP} is compatible with the filtered colimits $\LieAlgd_{\Delta, k/R}^{\pi, \blacksquare} \simeq \varinjlim_\sigma \LieAlgd_{\Delta, k/R}^{\pi, \blacksquare, \sigma}$ and $\Moduli_{R/k}^{\blacksquare} \simeq \varinjlim_\sigma \Moduli_{R/k}^{\blacksquare, \sigma}$ (\cref{cor-modulicolim}).

	For an uncountable strong limit cardinal $\sigma$, by the proof of (\cite{DAGX}, Theorem 1.3.12), the equivalence of $\infty$-categories
	\begin{align*}
		\LieAlgd_{\Delta, k/R}^{\pi, \blacksquare, \sigma} \xrightarrow{\Psi_\sigma} \Moduli_{R/k}^{\blacksquare, \sigma}
	\end{align*}
	is given by $\Psi_\sigma(\mathfrak{g}) \simeq \Map_{\LieAlgd_{\Delta, k/R}^{\pi, \blacksquare}} (\KD_\sigma(-), \mathfrak{g}) \simeq \Map_{\CAlg_{R/ /k}^{\an}} (C^*_\sigma(\mathfrak{g}), -) \in \Fun ( \CAlg_{R/ /k}^{\an, \bNil, \sigma}, \Space)$. For $\sigma < \sigma'$, we get a square
	\[
		\begin{tikzcd}
			\LieAlgd_{\Delta, k/R}^{\pi, \blacksquare, \sigma} \arrow{d} \arrow{r}{\Psi_\sigma} & \Moduli_{R/k}^{\blacksquare, \sigma} \arrow{d}{\Phi_\sigma^{\sigma'}} \\
			\LieAlgd_{\Delta, k/R}^{\pi, \blacksquare, \sigma'} \arrow{r}{\Psi_{\sigma'}} & \Moduli_{R/k}^{\blacksquare, \sigma'}
		\end{tikzcd}
	\]
	where the left vertical arrow is given by the fully faithful inclusion $\Solid_k^\sigma \subset \Solid_k^{\sigma'}$, and the functor $\Phi_\sigma^{\sigma'}$ is left Kan extension (\cref{prop-smallfmp}) and it is fully faithful. Thus, it suffices to show that this diagram is commutative.

	Given $F \in \Moduli_{R/k}^{\blacksquare, \sigma}$, $\Phi_\sigma^{\sigma'} (F)$ is the left Kan extension of $F$.	Let $\mathfrak{g} \in \LieAlgd_{\Delta, k/R}^{\pi, \blacksquare, \sigma}$. Then, $\KD_\sigma | _{\CAlg_{R/ /k}^{\an, \bNil, \sigma}} \simeq \KD_{\sigma'}|_{\CAlg_{R/ /k}^{\an, \bNil, \sigma}}$ (\cref{lemma-smallKD}), so 
	\begin{align*}
		\Phi_\sigma^{\sigma'} (\Psi_\sigma (\mathfrak{g})) |_{\CAlg_{R/ /k}^{\an, \bNil, \sigma}} & \simeq \Map_{\LieAlgd_{\Delta, k/R}^{\pi, \blacksquare}} (\KD_\sigma (-), \mathfrak{g}) \\
		&  \simeq \Map_{\LieAlgd_{\Delta, k/R}^{\pi, \blacksquare}} (\KD_{\sigma'} (-), \mathfrak{g})  \\
		& \simeq \Psi_{\sigma'} (\mathfrak{g})|_{\CAlg_{R/ /k}^{\an, \bNil, \sigma}}
	\end{align*}
	Thus, it suffices to show that $\Psi_{\sigma'} (\mathfrak{g})$ is left Kan extended from $ \CAlg_{R/ /k}^{\an, \bNil, \sigma}$. By \cref{prop-accessiblefmp}, it suffices to check that the tangent complex of $\Psi_{\sigma'} (\mathfrak{g})$ is $\sigma$-ultrasolid. By construction, the tangent complex of $\Psi_{\sigma'}(\mathfrak{g})$ is the underlying chain complex of $\mathfrak{g}$, which is $\sigma$-ultrasolid, so the result follows.
\end{proof}

\begin{rem}
	We can easily check that, given a formal moduli problem $F$, with associated partition Lie algebroid $\mathfrak{g}$, the underlying chain complex of $\mathfrak{g}$ is precisely $T_{k/F}[1]$, as stated in the theorem. Indeed, we can write $F = \Psi (\mathfrak{g})$, and then for $V \in (\Mod_{k, \geq 0}^{< \infty})_{L_{k/R}[-1]/}$,
	\begin{align*}
		\Psi(\mathfrak{g}) (\sqz (V)) \simeq \Map_{\LieAlgd_{\Delta, k/R}^{\pi, \blacksquare}} (\KD (\sqz (V)), \mathfrak{g}) \simeq \Map_{(\Solid_k)_{/L_{k/R}^\vee [1]}} (V^\vee, \mathfrak{g})
	\end{align*}
	where we used that $\KD(\sqz (V))$ is the free partition Lie algebroid on $V^\vee$.
\end{rem}

\newpage
\bibliographystyle{amsalpha}
\bibliography{references}

\providecommand{\bysame}{\leavevmode\hbox to3em{\hrulefill}\thinspace}
\providecommand{\MR}{\relax\ifhmode\unskip\space\fi MR }
% \MRhref is called by the amsart/book/proc definition of \MR.
\providecommand{\MRhref}[2]{%
  \href{http://www.ams.org/mathscinet-getitem?mr=#1}{#2}
}
\providecommand{\href}[2]{#2}
\begin{thebibliography}{BGMN22}

\bibitem[AR01]{sifted}
J.~Adámek and J.~Rosický, \emph{On {S}ifted {C}olimits {A}nd {G}eneralized
  {V}arieties}, Theory and Applications of Categories \textbf{8} (2001), no.~3.

\bibitem[Art74]{Artin}
Michael Artin, \emph{Versal deformations and algebraic stacks}, Inventiones
  mathematicae \textbf{27} (1974), 165--189.

\bibitem[Bar14]{barwick_spectra}
C.~Barwick, \emph{Spectral mackey functors and equivariant algebraic k-theory
  (i)}, 2014.

\bibitem[BCN24]{explicit}
Lukas Brantner, Ricardo Campos, and Joost Nuiten, \emph{{PD} {O}perads and
  {E}xplicit {P}artition {L}ie {A}lgebras}, 2024.

\bibitem[BGMN22]{polynomialfunctors}
Clark Barwick, Saul Glasman, Akhil Mathew, and Thomas Nikolaus, \emph{K-theory
  and polynomial functors}, 2022.

\bibitem[BK25]{condensed_localisation}
Christopher Brav and Grigorii Konovalov, \emph{Beilinson-parshin adeles via
  solid algebraic geometry}, 2025.

\bibitem[BM23]{partition}
Lukas Brantner and Akhil Mathew, \emph{Deformation {T}heory and {P}artition
  {L}ie {A}lgebras}, 2023.

\bibitem[BMN25]{foliations}
Lukas Brantner, Kirill Magidson, and Joost Nuiten, \emph{Formal integration of
  derived foliations}, 2025.

\bibitem[BW23]{BrantnerWaldron}
Lukas Brantner and Joe Waldron, \emph{{Purely {I}nseparable {G}alois theory
  {I}: {T}he {F}undamental {T}heorem}}, 2023.

\bibitem[Chi05]{Ching}
Michael Ching, \emph{Bar constructions for topological operads and the
  {G}oodwillie derivatives of the identity}, Geometry \& Topology \textbf{9}
  (2005), no.~2, 833–934.

\bibitem[Del86]{Deligne}
Pierre Deligne, \emph{Letter to {J}. {M}illson and {W}. {G}oldman}, 1986.

\bibitem[Dri14]{Drinfeld}
Vladimir Drinfeld, \emph{A letter from {Kharkov} to {Moscow}}, EMS Surveys in
  Mathematical Sciences \textbf{1} (2014), no.~2, 241--248, Originally written
  in 1988 as a letter to V. Schechtman.

\bibitem[GGN15]{Gepner_2015}
David Gepner, Moritz Groth, and Thomas Nikolaus, \emph{Universality of
  multiplicative infinite loop space machines}, Algebraic \&; Geometric
  Topology \textbf{15} (2015), no.~6, 3107–3153.

\bibitem[GM88]{Goldman}
William~M. Goldman and John~J. Millson, \emph{The deformation theory of
  representations of fundamental groups of compact {K}ähler manifolds},
  Publications Mathématiques de l'IHÉS \textbf{67} (1988), 43--96 (eng).

\bibitem[GM24]{Guillou}
Bertrand~J Guillou and J~Peter May, \emph{Models of g–spectra as presheaves
  of spectra}, Algebraic \& Geometric Topology \textbf{24} (2024), no.~3,
  1225–1275.

\bibitem[HM21]{monads}
Simon Henry and Nicholas~J. Meadows, \emph{Higher {T}heories and {M}onads},
  2021.

\bibitem[hs]{goodtheorysolid}
Peter~Scholze (https://mathoverflow.net/users/6074/peter scholze), \emph{Is
  there a good theory of solid vector spaces?}, MathOverflow,
  URL:https://mathoverflow.net/q/435760 (version: 2022-12-03).

\bibitem[Kat81]{KatzSerreTate}
Nicholas~M. Katz, \emph{Serre--{T}ate local moduli}, Surfaces Algébriques,
  Lecture Notes in Mathematics, vol. 868, Springer, Berlin, Heidelberg, 1981,
  pp.~138--202.

\bibitem[Kon03]{Kontsevich}
Maxim Kontsevich, \emph{{Deformation Quantization of Poisson Manifolds}},
  Letters in Mathematical Physics \textbf{66} (2003), no.~3, 157--216.

\bibitem[KS58]{Kodaira-Spencer}
K.~Kodaira and D.~C. Spencer, \emph{On {D}eformations of {C}omplex {A}nalytic
  {S}tructures, {I}}, Annals of Mathematics \textbf{67} (1958), no.~2,
  328--401.

\bibitem[KS06]{categories}
Masaki Kashiwara and Pierre Schapira, \emph{Categories and {S}heaves},
  Grundlehren der mathematischen Wissenschaften \textbf{332} (2006).

\bibitem[DAG X]{DAGX}
Jacob Lurie, \emph{Derived {A}lgebraic {G}eometry {X}: {F}ormal {M}oduli
  {P}roblems}, Preprint from the author's website.

\bibitem[HTT]{HTT}
\bysame, \emph{Higher {T}opos {T}heory}, Princeton University Press, 2009.

\bibitem[HA]{HA}
\bysame, \emph{Higher {A}lgebra}, Preprint from the author's website, 2017.

\bibitem[SAG]{SAG}
\bysame, \emph{Spectral {A}lgebraic {G}eometry}, Preprint from the author's
  website, 2018.

\bibitem[MA24]{ultrasolid}
Sofía Marlasca~Aparicio, \emph{Ultrasolid {H}omotopical {A}lgebra}, 2024.

\bibitem[Man04]{Manetti}
Marco Manetti, \emph{{Lectures on Deformations of Complex Manifolds}},
  Rendiconti di Matematica e delle sue Applicazioni \textbf{24} (2004), no.~1,
  1--183.

\bibitem[Mao24]{mao}
Zhouhang Mao, \emph{Revisiting derived crystalline cohomology}, 2024.

\bibitem[Pri19]{Pridham}
J.~P. Pridham, \emph{Unifying derived deformation theories}, 2019.

\bibitem[Ram02]{atomised}
Daniel~A. Ramras, \emph{Connectivity of the coset poset and the subgroup poset
  of a group}, 2002.

\bibitem[Sch68]{Schlessinger}
Michael Schlessinger, \emph{Functors of {A}rtin rings}, Transactions of the
  American Mathematical Society \textbf{130} (1968), no.~2, 208--222.

\bibitem[Sch19a]{analytic}
Peter Scholze, \emph{Lectures on {A}nalytic {G}eometry},
  \url{https://www.math.uni-bonn.de/people/scholze/Analytic.pdf}, October 2019.

\bibitem[Sch19b]{condensed}
\bysame, \emph{Lectures on {C}ondensed {M}athematics},
  \url{https://www.math.uni-bonn.de/people/scholze/Condensed.pdf}, May 2019.

\bibitem[Yek10]{completions}
Amnon Yekutieli, \emph{{On {F}latness and {C}ompletion for {I}nfinitely
  {G}enerated {M}odules over {N}oetherian {R}ings}}, 2010.

\end{thebibliography}

\end{document}